%% LyX 2.3.5.2 created this file.  For more info, see http://www.lyx.org/.
%% Do not edit unless you really know what you are doing.
\documentclass[11pt]{article}
\usepackage[latin9]{inputenc}
\usepackage{geometry}
\geometry{verbose,tmargin=2cm,bmargin=2cm,lmargin=2.2cm,rmargin=2.2cm}
\setcounter{tocdepth}{2}
\usepackage{color}
\usepackage{array}
\usepackage{calc}
\usepackage{multirow}
\usepackage{amsmath}
\usepackage{amsthm}
\usepackage{amssymb}
\usepackage{graphicx}
\PassOptionsToPackage{normalem}{ulem}
\usepackage{ulem}
\usepackage[unicode=true,pdfusetitle,
 bookmarks=true,bookmarksnumbered=false,bookmarksopen=false,
 breaklinks=false,pdfborder={0 0 1},backref=page,colorlinks=true]
 {hyperref}
\hypersetup{
 linkcolor=blue}

\makeatletter

%%%%%%%%%%%%%%%%%%%%%%%%%%%%%% LyX specific LaTeX commands.
%% Because html converters don't know tabularnewline
\providecommand{\tabularnewline}{\\}

%%%%%%%%%%%%%%%%%%%%%%%%%%%%%% Textclass specific LaTeX commands.
\numberwithin{equation}{section}
\numberwithin{figure}{section}
\theoremstyle{plain}
\newtheorem{thm}{\protect\theoremname}[section]
\theoremstyle{plain}
\newtheorem{lem}[thm]{\protect\lemmaname}
\theoremstyle{definition}
\newtheorem{defn}[thm]{\protect\definitionname}
\theoremstyle{remark}
\newtheorem{rem}[thm]{\protect\remarkname}
\theoremstyle{plain}
\newtheorem{cor}[thm]{\protect\corollaryname}
\theoremstyle{plain}
\newtheorem{prop}[thm]{\protect\propositionname}
\theoremstyle{definition}
\newtheorem{example}[thm]{\protect\examplename}
\theoremstyle{plain}
\newtheorem{fact}[thm]{\protect\factname}
\theoremstyle{remark}
\newtheorem{claim}[thm]{\protect\claimname}

%%%%%%%%%%%%%%%%%%%%%%%%%%%%%% User specified LaTeX commands.
\usepackage{graphicx}
\usepackage{appendix}

\usepackage{enumitem}
\usepackage[backref=page]{hyperref}

\makeatother

\providecommand{\claimname}{Claim}
\providecommand{\corollaryname}{Corollary}
\providecommand{\definitionname}{Definition}
\providecommand{\examplename}{Example}
\providecommand{\factname}{Fact}
\providecommand{\lemmaname}{Lemma}
\providecommand{\propositionname}{Proposition}
\providecommand{\remarkname}{Remark}
\providecommand{\theoremname}{Theorem}

\begin{document}
\global\long\def\defi{\stackrel{\mathrm{def}}{=}}%
 
\global\long\def\Hom{\mathrm{Hom}}%
 
\global\long\def\wedger{{\textstyle \bigvee^{r}S^{1}} }%
 
\global\long\def\MCG{\mathrm{MCG}}%
\global\long\def\mcg{\MCG}%
 
\global\long\def\trw{{\cal T}r_{w} }%
 
\global\long\def\tr{{\cal T}r }%
\global\long\def\trwl{{\cal T}r_{w_{1},\ldots,w_{\ell}} }%
\global\long\def\wl{w_{1},\ldots,w_{\ell}}%
\global\long\def\wsl{\wl}%
 
\global\long\def\cl{{\cal \mathrm{cl}} }%
\global\long\def\ch{\mathrm{chi}}%
\global\long\def\wg{{\cal \mathrm{Wg}} }%
\global\long\def\moeb{\mathrm{M\ddot{o}b} }%
\global\long\def\F{\mathrm{\mathbf{F}} }%
\global\long\def\id{\mathrm{id}}%
\global\long\def\e{\varepsilon}%
\global\long\def\U{\mathcal{\mathrm{U}}}%
\global\long\def\Aut{\mathrm{Aut}}%
\global\long\def\E{\mathbb{E}}%
 
\global\long\def\T{{\cal T}}%
\global\long\def\tps{\left|{\cal T}\right|_{\ps}}%
\global\long\def\tips{\left|{\cal T}_{\infty}\right|_{\ps}}%
\global\long\def\ps{\mathrm{poly}}%

\global\long\def\match{\mathrm{MATCH}}%
\global\long\def\matchr{\mathrm{\overline{MATCH}}}%
\global\long\def\matchmap{\mathbf{match}}%
\global\long\def\bijs{\mathrm{bijs}}%
\global\long\def\g{\gamma}%
\global\long\def\sur{\surfaces}%
 
\global\long\def\sx{\overline{\sigma_{x}}}%
\global\long\def\os{\overrightarrow{S^{1}}}%

\global\long\def\rank{\mathrm{rank}}%
\global\long\def\chain{\mathbf{Ch}}%
\global\long\def\mod{\mathbf{mod}}%
 
\global\long\def\res{\mathrm{res}}%
\global\long\def\eq{\mathrm{eq}}%
 
\global\long\def\Stab{\mathrm{Stab}}%
\global\long\def\Cells{\mathrm{Cells}}%
\global\long\def\Cold{\mathcal{C}}%
 
\global\long\def\BIJS{\mathcal{BIJS}}%
\global\long\def\cd{\mathrm{cd}}%
\global\long\def\vcd{\mathrm{vcd}}%

\global\long\def\Z{\mathbf{\mathbf{Z}}}%
 
\global\long\def\C{\mathbf{\mathbf{C}}}%
 
\global\long\def\Q{\mathbb{\mathbb{\mathbf{Q}}}}%
\global\long\def\d{\delta}%
 
\global\long\def\G{\Gamma}%
\global\long\def\bij{\mathrm{BIJ}}%
\global\long\def\g{\gamma}%
\global\long\def\surfaces{\mathsf{Surfaces}}%
\global\long\def\sx{\overline{\sigma_{x}}}%
\global\long\def\os{\overrightarrow{S^{1}}}%
\global\long\def\B{\mathcal{B}}%
 
\global\long\def\sing{\mathrm{sing}}%
 
\global\long\def\R{\mathbf{R}}%
\global\long\def\Q{\mathbb{\mathbb{\mathbf{Q}}}}%
 
\global\long\def\image{\mathrm{image}}%
 
\global\long\def\sing{\mathrm{sing}}%
 
\global\long\def\Rep{\mathrm{Rep}}%
\global\long\def\N{\mathcal{N}}%
\global\long\def\thick{\mathbb{D}}%
\global\long\def\P{{\cal P}}%
 
\global\long\def\O{\mathrm{O}}%
\global\long\def\Sp{\mathrm{Sp}}%
 
\global\long\def\CC{\mathbf{C}}%
\global\long\def\End{\mathrm{End}}%
 
\global\long\def\G{\mathbb{G}}%
\global\long\def\trace{\mathrm{trace}}%
 
\global\long\def\sign{\mathrm{sign}}%
 
\global\long\def\k{\kappa}%
 
\global\long\def\m{\mathbf{m}}%
 
\global\long\def\kk{\mathbf{\kappa}}%
\global\long\def\I{\mathcal{I}}%
 
\global\long\def\a{\mathbf{a}}%
\global\long\def\Id{\mathrm{Id}}%
 
\global\long\def\sql{\mathrm{sql}}%
\global\long\def\GL{\mathrm{GL}}%
\global\long\def\cut{\mathrm{cut}}%
 
\global\long\def\A{\mathcal{A}}%
 
\global\long\def\sm{\mathrm{SMATCH}}%
\global\long\def\forget{\mathsf{forget}}%

\title{Matrix Group Integrals, Surfaces, and Mapping Class Groups II:\\
$\O\left(n\right)$ and $\Sp\left(n\right)$}
\author{Michael Magee and Doron Puder\thanks{D.~P.~was supported by the Israel Science Foundation (grant No.~1071/16).}}
\maketitle
\begin{abstract}
Let $w$ be a word in the free group on $r$ generators. The expected
value of the trace of the word in $r$ independent Haar elements of
$\O(n)$ gives a function $\trw^{\O}(n)$ of $n$. We show that $\trw^{\O}(n)$
has a convergent Laurent expansion at $n=\infty$ involving maps on
surfaces and $L^{2}$-Euler characteristics of mapping class groups
associated to these maps. This can be compared to known, by now classical,
results for the GUE and GOE ensembles, and is similar to previous
results concerning $\U\left(n\right)$, yet with some surprising twists.

A priori to our result, $\trw^{\O}(n)$ does not change if $w$ is
replaced with $\alpha(w)$ where $\alpha$ is an automorphism of the
free group. One main feature of the Laurent expansion we obtain is
that its coefficients respect this symmetry under $\Aut(\F_{r})$.

As corollaries of our main theorem, we obtain a quantitative estimate
on the rate of decay of $\tr_{w}^{\O}(n)$ as $n\to\infty$, we generalize
a formula of Frobenius and Schur, and we obtain a universality result
on random orthogonal matrices sampled according to words in free groups,
generalizing a theorem of Diaconis and Shahshahani.

Our results are obtained more generally for a tuple of words $\wl$,
leading to functions $\trwl^{\O}$. We also obtain all the analogous
results for the compact symplectic groups $\Sp(n)$ through a rather
mysterious duality formula.

\tableofcontents{}
\end{abstract}

\section{Introduction\label{sec:Introduction}}

Let $\F_{r}$ be the free group on $r$ generators with a fixed basis
$B=\{x_{1},\ldots,x_{r}\}$. For any word $w\in\F_{r}$ and group
$H$ there is a \emph{word map}
\[
w:H^{r}\to H
\]
defined by substitutions, for example, if $r=2$ and $w=x_{1}^{2}x_{2}^{-2}$
then $w(h_{1},h_{2})=h_{1}^{2}h_{2}^{-2}$. In this paper we consider
the case that $H$ is a compact orthogonal or symplectic group. For
$n\in\Z_{\geq1}$, the orthogonal group $\O(n)=\U\left(n,\mathbb{R}\right)$
is the group of $n\times n$ real matrices $A$ such that $A^{T}A=AA^{T}=\Id_{n}$.
For $n\in\Z_{\geq1}$ the compact symplectic group $\Sp(n)=\U\left(n,\mathbb{H}\right)$
is the group of $n\times n$ quaternion matrices $A$ such that $A^{*}A=AA^{*}=\mathrm{Id}_{n}$
(here $A^{*}$ is the adjoint matrix $\overline{A^{T}}$). However,
we use here the isomorphic description of $\Sp\left(n\right)$ as
$2n\times2n$ complex matrices which are both unitary and complex-symplectic
in the sense that they preserve a skew-symmetric form. Namely, we
use the definition,
\begin{equation}
\Sp\left(n\right)\defi\left\{ A\in M_{2n}\left(\mathbb{C}\right)\,\middle|\,A^{*}A=AA^{*}=\Id_{2n}\mathrm{~and~}A^{T}JA=J\right\} ,\label{eq:Sp definition}
\end{equation}
where $J$ is the matrix
\begin{equation}
J\defi\left(\begin{array}{cc}
0_{n} & \Id_{n}\\
-\Id_{n} & 0_{n}
\end{array}\right).\label{eq:J}
\end{equation}
The isomorphism $U\left(n,\mathbb{H}\right)\cong\U\left(2n\right)\cap\Sp\left(2n,\mathbb{C}\right)$
is given by 
\begin{equation}
A+Bj\mapsto\left(\begin{array}{cc}
A & -B\\
\overline{B} & \overline{A}
\end{array}\right)\label{eq:isom of two forms of Sp(n)}
\end{equation}
where $A,B\in M_{n}\left(\mathbb{C}\right)$ (see \cite[\S 1.2.8]{hall2015matrix}
for more details). Notice, in particular, that all matrices in $\Sp\left(n\right)$,
as defined in (\ref{eq:Sp definition}), have real trace.

We fix words $\wl\in\F_{r}$. Our main object of study is the following
type of matrix integral
\[
\trwl^{G}(n)\stackrel{\mathrm{def}}{=}\int_{G(n)^{r}}\mathrm{tr}(w_{1}(g_{1},\ldots,g_{r}))\cdots\mathrm{tr}(w_{\ell}(g_{1},\ldots,g_{r}))d\mu_{n}(g_{1})\ldots d\mu_{n}(g_{r})
\]
where $G(n)$ is either the orthogonal group $\O(n)$ or compact symplectic
group $\Sp(n)$ defined in (\ref{eq:Sp definition}), and $\mu_{n}$
is the Haar probability measure on the respective group. We write
$G=\O$ or $\Sp$ respectively to refer to the type of group being
studied.

These values, the expected value of the trace and product of traces,
are natural objects of study. First, the measure induced by a word
$w$ on $\O\left(n\right)$ or $\Sp\left(n\right)$ (see Remark \ref{rem:progress on Aners conjecture}
for the precise definition) is completely determined by these values
when $\wl$ are various (positive) powers of $w$. Second, these values
are parallel to well-studied quantities in various models of random
matrices and in free probability, and are related to questions on
representation varieties and more. See the introduction of \cite{MP2}
for more details.

Our paper is centered around writing the functions $\trwl^{G}(n)$
as Laurent series in the variable $(n+1-\alpha_{G})^{-1}$ with positive
radii of convergence\footnote{Of course, since $(n+1-\alpha_{G})^{-1}$ and $n^{-1}$ are related
by $z\mapsto\frac{z}{(z+1-\alpha_{G})}$, which fixes $0$ and is
a local biholomorphism there, Laurent series in $(n+1-\alpha_{G})^{-1}$
give rise to Laurent series in $n^{-1}$.}, where $\alpha_{\O}=2$ and $\alpha_{\Sp}=\frac{1}{2}$ are called
`Jack parameters' in the literature \cite{Novaes}. In $\S$\ref{subsec:Duality-between-}
we discuss the history of these parameters and their necessary role
in the current paper.

In fact, the functions $\trwl^{G}(n)$ are not only meromorphic at
$\infty$, but actually given by rational functions of $n$, with
integer coefficients, when $n$ is sufficiently large. This is a reasonably
straightforward consequence of the Weingarten Calculus and is proved
in Corollaries \ref{cor:orth-rational-function} and \ref{cor:symp-rational-function}
below. See Table \ref{tab:examples} for some examples.

The ability to find a Laurent series at $\infty$ for $\trwl^{G}(n)$
using diagrammatic expansions will not be surprising to experts in
Free Probability Theory and in particular, those who have worked with
the Weingarten Calculus. \emph{However, this is not the main point
of this paper.} Rather, the main point is what we now explain. The
integrals $\trwl^{G}(n)$ have a priori symmetries under $\Aut(\F_{r})$,
the automorphism group of $\F_{r}$, according to the following lemma.
\begin{lem}
If $\alpha$ is an automorphism of $\F_{r}$, then $\trwl^{G}(n)=\tr_{\alpha(w_{1}),\ldots,\alpha(w_{\ell})}^{G}(n)$.
\end{lem}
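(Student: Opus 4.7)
The plan is to exhibit, for each automorphism $\alpha$ of $\F_r$, a measure-preserving bijection $\Phi_\alpha\colon G(n)^r\to G(n)^r$ that intertwines substitution with the action of $\alpha$ on words, and then apply a change of variables.

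First I would define $\Phi_\alpha$ by
\[
\Phi_\alpha(g_1,\ldots,g_r)\stackrel{\mathrm{def}}{=}\bigl(\alpha(x_1)(g_1,\ldots,g_r),\,\ldots,\,\alpha(x_r)(g_1,\ldots,g_r)\bigr),
\]
and verify the key algebraic identity $\alpha(w)(g_1,\ldots,g_r)=w\bigl(\Phi_\alpha(g_1,\ldots,g_r)\bigr)$ for every $w\in\F_r$. This is a direct consequence of the definition of the word map together with the fact that $\alpha(w)$ is obtained from $w$ by substituting $\alpha(x_i)$ for $x_i$; a one-line induction on the length of $w$ formalises it. It immediately gives $\Phi_{\alpha\beta}=\Phi_\beta\circ\Phi_\alpha$, hence $\Phi_\alpha$ is a bijection with inverse $\Phi_{\alpha^{-1}}$.

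The crux is showing that $\Phi_\alpha$ preserves the product Haar measure $\mu_n^{\otimes r}$ on $G(n)^r$. Since $\Aut(\F_r)$ is generated by the Nielsen transformations (inverting a basis element, permuting basis elements, and replacing one basis element by its product with another), it suffices by composition to check this for each of these generators. Inversion $g_1\mapsto g_1^{-1}$ preserves Haar measure on the compact group $G(n)$; permutations preserve the product measure tautologically; and the transformation $(g_1,g_2,\ldots,g_r)\mapsto(g_1g_2,g_2,\ldots,g_r)$ preserves $\mu_n^{\otimes r}$ by Fubini, because for each fixed value of $g_2$ the map $g_1\mapsto g_1g_2$ is a right translation on $G(n)$ and therefore $\mu_n$-preserving.

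With these two ingredients in hand, the conclusion follows by a change of variables:
\[
\tr^{G}_{\alpha(w_1),\ldots,\alpha(w_\ell)}(n)=\int_{G(n)^r}\prod_{i=1}^{\ell}\mathrm{tr}\bigl(w_i(\Phi_\alpha(\bar g))\bigr)\,d\mu_n^{\otimes r}(\bar g)=\int_{G(n)^r}\prod_{i=1}^{\ell}\mathrm{tr}\bigl(w_i(\bar h)\bigr)\,d\mu_n^{\otimes r}(\bar h)=\trwl^{G}(n),
\]
where the middle equality is the substitution $\bar h=\Phi_\alpha(\bar g)$. The only step with any real content is the Haar-invariance of the generating substitutions, and even that reduces to translation-invariance of Haar measure plus Fubini, so no genuine obstacle is anticipated; the statement and proof are formally identical to the analogous fact for $\U(n)$.
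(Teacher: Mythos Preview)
Your proof is correct and is essentially the standard argument: the paper itself does not give a proof here but defers to \cite[\S 2.2]{MP} for the case $\ell=1$ and remarks that the general $\ell$ follows identically, and the argument there is exactly the one you have written---reduce to Nielsen generators and use translation- and inversion-invariance of Haar measure to see that the induced map on $G(n)^r$ is measure-preserving, then change variables.
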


This is proved in the case $\ell=1$ in \cite[\S 2.2]{MP} and the
case of general $\ell$ can be proved using the same argument. In
particular, if $w\in\F_{r}$ then $\trw^{G}(n)$ is a function of
$n$ that only depends on $w$ up to automorphism; in other words,
it reflects algebraic properties of $w$, not just combinatorial properties.
So a priori, any Laurent series expansion of $\tr_{\wl}^{G}(n)$ will
involve $\Aut(\F_{r})$-invariants of $\wl$ and this brings us to
the true goal of the paper:\emph{ to find Laurent series expansions
at $\infty$ for $\tr_{\wl}^{G}(n)$ in terms of $\Aut(\F_{r})$-invariants
of $\wl$}. Indeed, our main result, Theorem \ref{thm:main-theorem},
as well as its corollaries in Section \ref{subsec:Consequences-of-the},
are all given in terms of \emph{$\Aut(\F_{r})$}-invariants of the
words.

In fact, the expressions for $G=\O$ or $\Sp$ are closely related;
we will prove
\begin{thm}
\label{thm:orth-symp-relation}There is\footnote{We define $N$ precisely in (\ref{eq:N0}).}
$N=N(\wl)\geq0$ such that when $2n\geq N$ we have
\[
\trwl^{\Sp}(n)=(-1)^{\ell}\trwl^{\O}(-2n).
\]
The quantity $\trwl^{\O}(-2n)$ is interpreted using the rational
function form of $\trwl^{\O}$ for $2n\geq N$. In particular, this
identity relates the Laurent series at $\infty$ of $\trwl^{\Sp}(n)$
and $\trwl^{\O}(n)$.
\end{thm}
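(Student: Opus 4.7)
\medskip
\noindent\textbf{Proof proposal.} The plan is to expand both $\trwl^{\O}(n)$ and $\trwl^{\Sp}(n)$ using the Weingarten calculi for $\O(n)$ and $\Sp(n)$, match the resulting combinatorial expansions term-by-term, and exploit the classical orthogonal--symplectic Weingarten duality under $n\mapsto -2n$. First, for the orthogonal integral, I would write each letter of each word $w_k$ as a matrix entry of some $g_j$ (using $g_j^{-1}=g_j^{T}$), so that each $\mathrm{tr}(w_k(g_1,\ldots,g_r))$ becomes a sum of monomials in matrix entries indexed by letter positions. Integrating generator-by-generator via the orthogonal Weingarten formula produces a sum over pairs of matchings $(\sigma,\tau)$ of those positions, weighted by entries of $\wg^{\O(n)}(\sigma^{-1}\tau)$ and by a power of $n$ counting closed index-loops. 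For the symplectic integral, the preliminary step is to use the identity $g^{-1}=J g^{T}J^{-1}$ (which is immediate from $g^{T}Jg=J$) to replace every inverse letter by $J g_j^{T}J^{-1}$; this brings every $g$-factor into the form $g_j$ or $g_j^{T}$ at the cost of inserting $J$-entries into the integrand, and the symplectic Weingarten formula then yields an analogous matching expansion with weights from $\wg^{\Sp(n)}$.

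Second, I would invoke the classical $\O$--$\Sp$ Weingarten duality (Collins--Śniady / Matsumoto): as rational functions in $n$,
\[
\wg^{\Sp(n)}(\pi) \;=\; \varepsilon(\pi)\,\wg^{\O}(\pi)\Big|_{n\mapsto -2n},
\]
where $\varepsilon(\pi)\in\{\pm 1\}$ depends only on the Brauer coset-type of $\pi$. The threshold $N=N(\wl)$ in (\ref{eq:N0}) can be chosen so that both the $\O$- and $\Sp$-expansions coincide, for $2n\geq N$, with the rational functions provided by Corollaries \ref{cor:orth-rational-function} and \ref{cor:symp-rational-function}; this legitimizes the substitution $n\mapsto -2n$ at the level of rational functions. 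Matching $(\sigma,\tau)$ by $(\sigma,\tau)$, the Weingarten factor of the symplectic expansion then equals, up to the sign $\varepsilon(\sigma^{-1}\tau)$, the corresponding factor of the orthogonal expansion with $n$ replaced by $-2n$.

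Third, and this is where the real work lies, I would track the remaining signs coming from the $J$-entries introduced in the $\Sp$-expansion. Each $J_{ab}$ or $(J^{-1})_{ab}$ is $\pm 1$ or $0$, and the product of these $J$-signs along a given matching term must combine with $\varepsilon(\sigma^{-1}\tau)$, together with the $(-2)^{c(\sigma,\tau)}$ versus $n^{c(\sigma,\tau)}$ discrepancy that the substitution $n\mapsto -2n$ already absorbs, to give exactly $(-1)^{\ell}$ independently of $(\sigma,\tau)$. The natural way to perform this accounting is inside the surface-map framework of the paper: matchings together with the cyclic data of $\wl$ assemble into (possibly non-orientable) ribbon graphs, and the $J$-signs and the Brauer sign $\varepsilon$ admit a topological reinterpretation that localizes exactly one extra sign of $-1$ per trace-cycle, yielding the global $(-1)^\ell$.

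The main obstacle is precisely this sign bookkeeping: showing that the cancellation between the $J$-entries from the identity $g^{-1}=Jg^{T}J^{-1}$, the Brauer sign $\varepsilon(\sigma^{-1}\tau)$, and the orientation/cycle data of the underlying surface map is uniform in $(\sigma,\tau)$, so that only one overall $(-1)$ per trace-factor survives. I expect the cleanest route is to phrase the whole expansion as a signed sum over surface maps, at which point the duality $n\mapsto -2n$ and the factor $(-1)^{\ell}$ follow from a single orientation-flipping involution on the relevant combinatorial objects.
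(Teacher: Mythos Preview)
Your high-level plan matches the paper's proof exactly: expand $\trwl^{\O}(n)$ and $\trwl^{\Sp}(n)$ via the respective Weingarten calculi, invoke Matsumoto's identity $\wg_k^{\Sp}(m_1,m_2;n)=(-1)^k\sign(\sigma_{m_1}\sigma_{m_2}^{-1})\wg_k^{\O}(m_1,m_2;-2n)$, and then show that the residual sign (coming from the $J$-entries introduced by $g^{-1}=Jg^TJ^{-1}$ together with the Brauer sign) equals $(-1)^{\ell}$ uniformly over matchings. You have also correctly identified this last sign bookkeeping as the only real obstacle.

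Where your proposal diverges from the paper is in how that sign identity is established. You anticipate a clean topological mechanism---``a single orientation-flipping involution'' that localizes one $-1$ per trace-cycle. The paper does \emph{not} find such an involution. Instead it packages all the signs into a quantity $\Xi(\wl;\m)$ and proves $\Xi\equiv(-1)^{\ell}$ by a connectivity argument: it verifies the identity for one explicit choice of matchings (all letters positive, with a particular nearest-neighbor pairing), and then shows $\Xi$ is invariant under three types of local moves (reversing a matching arc, swapping termini of two arcs, flipping a letter from positive to negative), each of which requires a short but genuine case analysis on how the relevant type-$o$ discs split, merge, or reflect. This is more laborious than your hoped-for involution, and the paper's description of the proof as a ``technical combinatorial comparison'' suggests the authors did not find a slicker route either. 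Your involution idea may well exist, but as stated it is speculative; if you pursue it, be aware that the $J$-signs interact with the disc structure of $\Sigma_{\m}$ in a way that does not obviously factor as one sign per boundary component without the kind of local analysis the paper carries out.
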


This type of duality between $\O$ and $\Sp$ has been observed previously
in various contexts, some of which we discuss in $\S$\ref{subsec:Duality-between-}.

\renewcommand{\arraystretch}{2}
\begin{center}
\begin{table*}[t]
\begin{centering}
\begin{tabular}{|c|>{\centering}p{3cm}|c|c|>{\raggedright}p{5.7cm}|}
\hline 
\noalign{\vskip\doublerulesep}
$\ell$ & $\wl$ & $\tr_{\wl}^{\O}\left(n\right)$ & $\chi_{\max}$ & Admissible maps with $\chi\left(\Sigma\right)=\chi_{\max}$\tabularnewline[\doublerulesep]
\hline 
\hline 
\noalign{\vskip\doublerulesep}
\multirow{6}{*}{1} & $x^{2}y^{2}$ & {\Large{}$\frac{1}{n}$} & $-1$ & one $\left(P_{2,1},f\right)$ with $\mcg\left(f\right)=\left\{ 1\right\} $\tabularnewline[\doublerulesep]
\cline{2-5} \cline{3-5} \cline{4-5} \cline{5-5} 
\noalign{\vskip\doublerulesep}
 & $x^{4}y^{4}$ & {\Large{}$\frac{1}{n}$} & $-1$ & one $\left(P_{2,1},f\right)$ with $\mcg\left(f\right)=\left\{ 1\right\} $\tabularnewline[\doublerulesep]
\cline{2-5} \cline{3-5} \cline{4-5} \cline{5-5} 
\noalign{\vskip\doublerulesep}
 & $[x,y]^{2}$ & {\Large{}$\frac{n^{3}+n^{2}-2n-4}{n(n+2)(n-1)}$} & $0$ & one $\left(P_{1,1},f\right)$ with $\mcg\left(f\right)=\left\{ 1\right\} $\tabularnewline[\doublerulesep]
\cline{2-5} \cline{3-5} \cline{4-5} \cline{5-5} 
\noalign{\vskip\doublerulesep}
 & $xy^{3}x^{-1}y^{-1}$  & {\Large{}$0$} & $-2$ & one $\left(P_{3,1},f\right)$ with $\mcg\left(f\right)\cong\Z$\tabularnewline[\doublerulesep]
\cline{2-5} \cline{3-5} \cline{4-5} \cline{5-5} 
\noalign{\vskip\doublerulesep}
 & $xy^{4}x^{-1}y^{-2}$ & {\Large{}$\frac{1}{n}$} & $-1$ & one $\left(P_{2,1},f\right)$ with $\mcg\left(f\right)=\left\{ 1\right\} $\tabularnewline[\doublerulesep]
\cline{2-5} \cline{3-5} \cline{4-5} \cline{5-5} 
\noalign{\vskip\doublerulesep}
 & $xyx^{2}yx^{3}y^{2}$ & {\Large{}$\frac{3n+2}{n(n+2)(n-1)}$} & $-2$ & three $\left(P_{3,1},f\right)$ with $\mcg\left(f\right)=\left\{ 1\right\} $\tabularnewline[\doublerulesep]
\hline 
\noalign{\vskip\doublerulesep}
\multirow{2}{*}{2} & $w,w$ for $w=x^{2}y^{2}$ & {\Large{}$\frac{n^{3}+n^{2}+2n+4}{n(n+2)(n-1)}$} & $0$ & one annulus with $\mcg\left(f\right)=\left\{ 1\right\} $\tabularnewline[\doublerulesep]
\cline{2-5} \cline{3-5} \cline{4-5} \cline{5-5} 
\noalign{\vskip\doublerulesep}
 & $w,w$ for $w=x^{2}y$ & {\Large{}$1$} & $0$ & one annulus with $\mcg\left(f\right)=\left\{ 1\right\} $\tabularnewline[\doublerulesep]
\hline 
\noalign{\vskip\doublerulesep}
3 & $w,w,w$ for $w=x^{2}y^{2}$ & {\Large{}$\frac{3(n^{4}+3n^{3}-2n^{2}+6n+16)}{(n-2)(n-1)n(n+2)(n+4)}$} & $-1$ & three (annulus $\sqcup~P_{2,1}$) with $\mcg\left(f\right)=\left\{ 1\right\} $\tabularnewline[\doublerulesep]
\hline 
\end{tabular}
\par\end{centering}
\caption{Some examples of the rational expression for $\protect\tr_{\protect\wl}^{\protect\O}\left(n\right)$.
All these examples contain words in $\protect\F_{2}$ with generators
$\left\{ x,y\right\} $. The notation $\left[x,y\right]$ is for the
commutator $xyx^{-1}y^{-1}$. We let $\chi_{\max}=\chi_{\max}\left(\protect\wl\right)$
denote the maximal Euler characteristic of a surface in $\protect\sur^{*}\left(\protect\wl\right)$
-- see Page \pageref{chi_max}. See Definitions \ref{def:admissible-maps-and-surfaces}
and \ref{def:MCG f} for the notions of admissible maps and $\protect\mcg\left(f\right)$
appearing in the right column. In that column, $P_{g,b}$ denotes
the non-orientable surface of genus $g$ with $b$ boundary components
(so $\chi\left(P_{g,b}\right)=2-g-b$). We give more details in Section
\ref{subsec:More-examples} and Table \ref{tab:examples-elaborated}.
\label{tab:examples}}
\end{table*}
\par\end{center}

\vspace{-30bp}

\subsection{The main theorem}

Understanding the functions $\trwl^{G}(n)$ involves studying maps
from surfaces to the wedge of $r$ circles, denoted by $\wedger$,
as was also the case in \cite{MP2}, where the corresponding theorems
were proved for the functions $\trwl^{U}(n)$ that arise from compact
unitary groups $\U\left(n\right)=\U\left(n,\mathbb{C}\right)$. One
central difference that appears here is that for unitary groups, only
orientable surfaces featured in the description of $\trwl^{U}(n)$,
whereas the description of $\trwl^{G}(n)$ for $G=\O$ or $\Sp$ involves
non-orientable surfaces as well.

We call the point in $\wedger$ at which the circles are wedged together
$o$. We fix an orientation of each circle that gives us an identification
$\pi_{1}(\wedger,o)\cong\F_{r}$ and we identify the generator $x\in B$
with the loop that traverses the circle $S_{x}^{1}$ corresponding
to $x$ according to its given orientation.
\begin{defn}[Admissible maps]
\label{def:admissible-maps-and-surfaces}Given $\wl\in\F_{r}$, consider
pairs $(\Sigma,f)$ where
\begin{itemize}
\item $\Sigma$ is a compact surface, not necessarily connected, with $\ell$
ordered and oriented boundary components $\delta_{1},\ldots,\delta_{\ell}$
and a marked point $v_{j}$ on each $\delta_{j}$. Note the orientation
of the $j$th boundary component specifies a generator $[\delta_{j}]$
of $\pi_{1}(\delta_{j},v_{j})$. We also require that $\Sigma$ has
no closed components\footnote{So every connected component of a surface $\Sigma$ in an admissible
map is either the orientable $\Sigma_{g,b}$ of genus $g\ge0$ and
with $b\ge1$ boundary components, or the non-orientable $P_{g,b}$
of genus $g\ge1$ and with $b\ge1$ boundary components. Recall that
the Euler characteristics of these surfaces are $\chi\left(\Sigma_{g,b}\right)=2-2g-b$
and $\chi\left(P_{g,b}\right)=2-g-b$.}.
\item $f:\Sigma\to\wedger$ is a continuous map such that $f(v_{j})=o$
and $f_{*}([\delta_{j}])=w_{j}\in\pi_{1}(\wedger,o)\cong\F_{r}$ for
all $1\leq j\leq\ell$.
\end{itemize}
We call such a pair an \emph{admissible map} (for $\wl)$. Consider
two admissible maps $(\Sigma,f)$ and $(\Sigma',f')$ with boundary
components $\delta_{1},\ldots,\delta_{\ell}$ and $\delta'_{1},\ldots,\delta'_{\ell}$
and marked points $\{v_{j}\}$ and $\{v'_{j}\}$ respectively. We
say $(\Sigma,f)$ and $(\Sigma',f')$ are \emph{equivalent} and write
$(\Sigma,f)\approx(\Sigma',f')$ if 
\begin{itemize}
\item there exists a homeomorphism $F:\Sigma\to\Sigma'$ such that $F(v_{j})=v_{j}'$
and $F_{*}([\delta_{j}])=[\delta'_{j}]$ for all $1\leq j\leq\ell$.
Here $F_{*}$ are the maps induced on the relevant fundamental groups.
This condition says that $F$ preserves the orientation of each boundary
component. And,
\item there is a homotopy between $f'\circ F$ and $f$ on $\Sigma$. This
homotopy is relative to the points $\{v_{j}\}$.
\end{itemize}
The set $\surfaces^{*}(\wl)$ \marginpar{$\protect\surfaces^{*}$}is
defined to be the resulting collection of equivalence classes $[(\Sigma,f)]$
of admissible maps for $\wl$.
\end{defn}

\begin{figure}[t]
\begin{centering}
\includegraphics[scale=1.1]{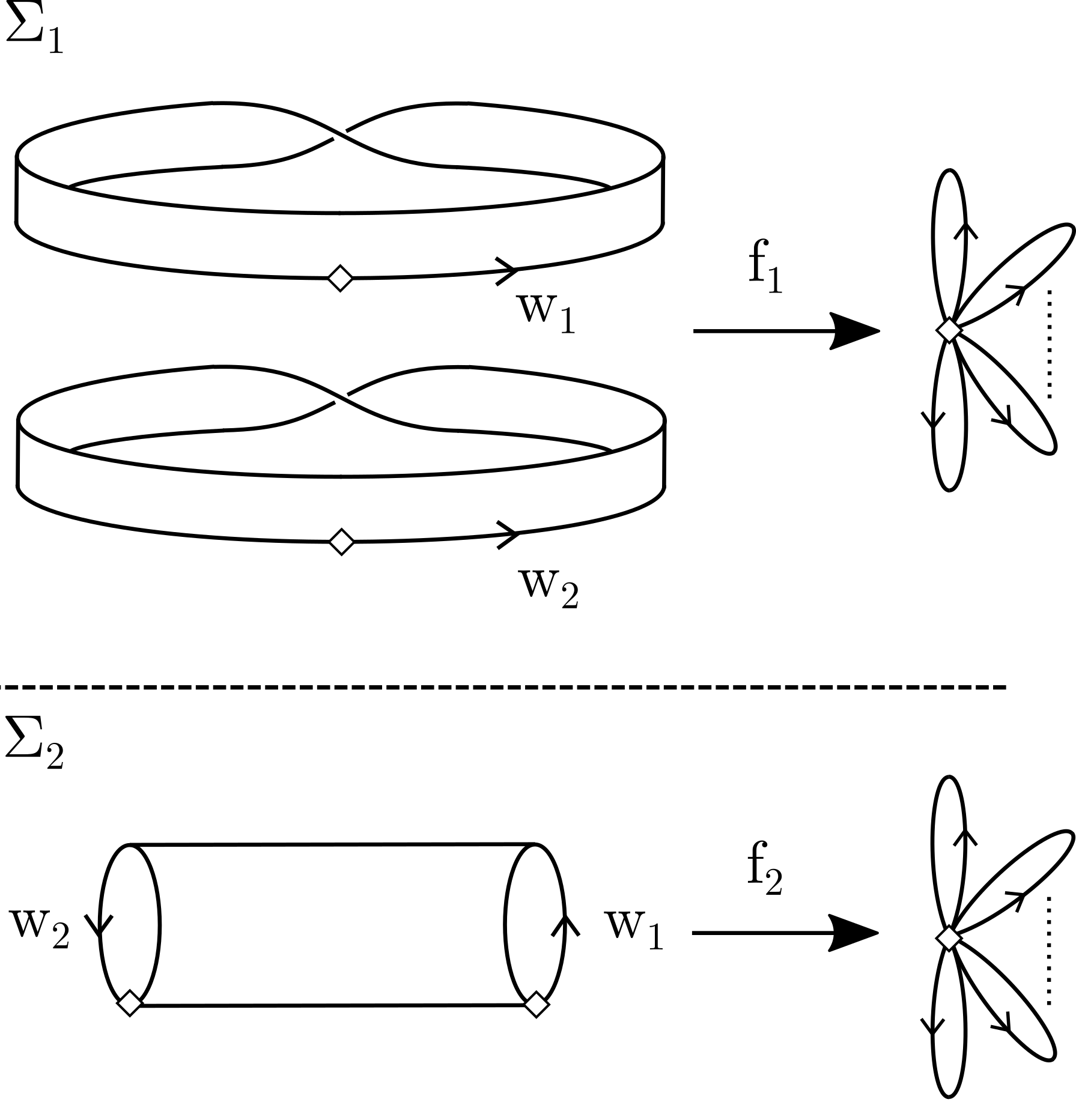}\-\-
\par\end{centering}
\caption{\label{fig:Schematic-illustration}Schematic illustration of two admissible
maps $(\Sigma_{1},f_{1})$ and $(\Sigma_{2},f_{2})$ when $\ell=2$
and there are two words $w_{1},w_{2}$. Base points $v_{j}$ are marked
by diamonds, and an oriented boundary component labeled by $w_{j}$
maps to $w_{j}$ under $(f_{i})_{*}$, for the relevant $i=1,2$.}

\end{figure}

Figure \ref{fig:Schematic-illustration} illustrates the concept of
admissible maps. Definition \ref{def:admissible-maps-and-surfaces}
modifies the definition of $\surfaces(\wl)$ given in \cite[Def.~1.3]{MP2},
by dropping the requirement that $\Sigma$ is orientable, and any
compatibility between the orientations of boundary components of $\Sigma$.
Hence $\surfaces(\wl)\subset\surfaces^{*}(\wl)$. The set $\surfaces^{*}(\wl)$
will index the summation in our Laurent series expansion of $\trwl^{O}(n)$,
but to understand the contribution of a given $[(\Sigma,f)]$ we must
take into account the internal symmetries of this pair.

If $\Sigma$ is a compact surface, the mapping class group of $\Sigma$,
denoted $\MCG(\Sigma)$, is the collection of homeomorphisms of $\Sigma$
that fix the boundary pointwise, modulo homeomorphisms that are isotopic
to the identity through homeomorphisms of this type. Note that for
$\Sigma$ fixed, $\MCG(\Sigma)$ acts on the collection of homotopy
classes $[f]$ of $f$ such that $[(\Sigma,f)]\in\surfaces^{*}(\wl)$.
To take into account the function $f$ in our definition of symmetries,
we make the following definition.
\begin{defn}
\label{def:MCG f}For $[(\Sigma,f)]\in\surfaces^{*}(w_{1},\ldots,w_{\ell})$,
we define \marginpar{$\protect\MCG(f)$} $\MCG(f)$ to be the stabilizer
of $[f]$ in $\MCG(\Sigma)$. This is a well-defined subgroup of $\mcg\left(\Sigma\right)$
up to conjugation, and so a well-defined group up to isomorphism.
\end{defn}

A certain integer invariant of $\MCG(f)$, where $[(\Sigma,f)]\in\surfaces^{*}(\wl)$,
appears in our formula for $\trwl^{G}(n)$. The invariant that appears
is the \emph{$L^{2}$-Euler characteristic, }denoted by $\chi^{(2)}(\MCG(f))$\emph{,}
and defined precisely in \emph{$\S$}\ref{subsec:L2invariants}\emph{.}
This is defined for a class of groups that we will prove in $\S$\ref{subsec:Proof-of-Theorem-MAIN}
contains all $\MCG(f)$ where $[(\Sigma,f)]\in\surfaces^{*}(\wl)$.
Moreover, for certain $[(\Sigma,f)]$, including all those contributing
to the `leading and second-leading order' terms of the Laurent series
expansion of $\trwl^{G}(n)$, we will prove in $\S$\ref{subsec:The--Euler-characteristic-is the usual one}
that $\chi^{(2)}(\MCG(f))$ coincides with a much tamer invariant:
the Euler characteristic of a finite $CW$-complex that is an Eilenberg-Maclane
space of type $K(\MCG(f),1)$. We make this point precise in Theorem
\ref{thm:tame-euler-char}. Some examples are given in Table \ref{tab:examples}.

Another simpler type of Euler characteristic also appears in our formula.
If $[(\Sigma,f)]$ is in\linebreak{}
$\surfaces^{*}(\wl)$ we write $\chi(\Sigma)$ for the usual topological
Euler characteristic of $\Sigma$. Clearly this does not depend on
the representative chosen for $[(\Sigma,f)]$.

We can now state our main theorem.
\begin{thm}
\label{thm:main-theorem}There is\footnote{See (\ref{eq:Mdef}) for the precise definition of $M$.}
$M=M(\wl)\geq0$ such that for $n>M$, $\trwl^{\O}(n)$ is given by
the following absolutely convergent Laurent series in $(n-1)^{-1}$:

\begin{equation}
\trwl^{\O}(n)=\sum_{[(\Sigma,f)]\in\surfaces^{*}(\wl)}(n-1)^{\chi(\Sigma)}\chi^{(2)}(\MCG(f)).\label{eq:main_thm}
\end{equation}
This also gives a Laurent series expansion in $(n+\frac{1}{2})^{-1}$
for $\trwl^{\Sp}(n)$ in view of Theorem \ref{thm:orth-symp-relation}.
\end{thm}

\begin{rem}
We have $\chi\left(\Sigma\right)\le\ell$ for $[(\Sigma,f)]\in\surfaces^{*}(\wl)$
(see Lemma \ref{lem:chi-max-non-pos} and its preceding paragraph).
Hence the powers of $n-1$ that appear in (\ref{eq:main_thm}) are
bounded above.
\end{rem}

\begin{rem}
In the course of the proof of Theorem \ref{thm:main-theorem} we prove
that for each fixed $\chi_{0}$, there are only finitely many $[(\Sigma,f)]\in\surfaces^{*}(\wl)$
such that $\chi(\Sigma)=\chi_{0}$ and $\chi^{(2)}(\MCG(f))\neq0$.
We also prove that each $\chi^{(2)}(\MCG(f))\in\Z$, so the Laurent
series has integer coefficients.
\end{rem}

\begin{rem}
Although Theorem \ref{thm:main-theorem} holds if some of the $w_{i}=1$,
it simplifies the paper to assume that all $w_{i}\neq1$, which we
do from now on.
\end{rem}

\begin{rem}
As claimed above, the statement of Theorem \ref{thm:main-theorem}
indeed gives an expansion of $\trwl^{\O}\left(n\right)$ in terms
of $\mathrm{Aut}\left(\F_{r}\right)$-invariants of the words. Indeed,
if $\alpha\in\mathrm{Aut}\left(\F_{r}\right)$, there is a corresponding
map $g_{\alpha}\colon\left(\wedger,o\right)\to\left(\wedger,o\right)$
such that $\left(g_{\alpha}\right)_{*}=\alpha$, and then every $\left[\left(\Sigma,f\right)\right]\in\surfaces^{*}\left(\wl\right)$
corresponds to $\left[\left(\Sigma,g_{\alpha}\circ f\right)\right]\in\surfaces^{*}\left(\alpha\left(w_{1}\right),\ldots,\alpha\left(w_{\ell}\right)\right)$,
satisfying $\mcg\left(f\right)\cong\mcg\left(g_{a}\circ f\right)$.
For a more detailed argument, see \cite[Lem.~4.4]{brodsky2022word}.
\end{rem}

Theorem \ref{thm:main-theorem} can be viewed as analogous to the
genus expansions of GOE matrix integrals in terms of Euler characteristics
of mapping class groups obtained by Goulden-Jackson \cite{GJ} and
Goulden, Harer, and Jackson \cite{GHJ}. These results extended previous
results of Harer and Zagier \cite{HARERZAGIER} on the GUE ensemble.

The connection between $\O(n)$ matrix integrals and not-necessarily-oriented
surfaces was previously pointed out by Mingo and Popa \cite{MingoPopa}
and Redelmeier \cite{Redelmeier}.

\subsection{Consequences of the main theorem\label{subsec:Consequences-of-the}}

Theorem \ref{thm:main-theorem} has several important corollaries.
Firstly, for a fixed word $w\in\F_{r}$ the rate of decay of $\trw^{G}(n)$
as $n\to\infty$ can be bounded in terms of the \emph{square length
of w }and the \emph{commutator length of $w$.}
\begin{defn}
The \emph{square length of $w\in\F_{r}$, }denoted $\sql(w)$, is
the minimum number $s$ such that $w$ can be written as the product
of $s$ squares, or $\infty$ if it is not possible to write $w$
as the product of squares. The \emph{commutator length of $w$, }denoted
$\cl(w)$, is the minimum number $g$ such that $w$ can be written
as the product of $g$ commutators, or $\infty$ if $w$ cannot be
written as the product of commutators.
\end{defn}

One has the elementary inequality
\begin{equation}
\sql(w)\leq2\cl(w)+1.\label{eq:sql-cl-ineq}
\end{equation}
This follows from the identities 
\[
[a,b]=(ab)^{2}(b^{-1}a^{-1}b)^{2}(b^{-1})^{2},\quad a^{2}[b,c]=(a^{2}ba^{-1})^{2}(ab^{-1}a^{-1}ca^{-1})^{2}(ac^{-1})^{2}
\]
for any $a,b,c\in\F_{r}$. In particular, if $\sql(w)=\infty$ then
$\cl(w)=\infty$.
\begin{cor}
\label{cor:decay-rate}For $G=\O$ or $\Sp$, we have 
\[
\trw^{G}(n)=O\left(n^{1-\min(\sql(w),2\cl(w))}\right)
\]
as $n\to\infty$. We interpret the right hand side as $0$ when $\sql(w)=\infty$.
\end{cor}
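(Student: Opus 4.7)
The plan is to combine Theorem \ref{thm:main-theorem} with a standard topological bound on the Euler characteristic of surfaces with one boundary component. By Theorem \ref{thm:main-theorem}, for $n > M$ the function $\trw^{\O}(n)$ is given by an absolutely convergent Laurent series in $(n-1)^{-1}$, so its size as $n \to \infty$ is controlled by $n^{\chi_{\max}(w)}$, where
\[
\chi_{\max}(w) := \sup\{\chi(\Sigma) : [(\Sigma,f)] \in \surfaces^{*}(w)\}.
\]
It therefore suffices to establish $\chi_{\max}(w) \leq 1 - \min(\sql(w), 2\cl(w))$ in the orthogonal case.

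The key step is to bound $\chi_{\max}(w)$ via the standard presentations of surface fundamental groups. Since $\ell = 1$ and $\Sigma$ has no closed components (Definition \ref{def:admissible-maps-and-surfaces}), every admissible $\Sigma$ is a connected compact surface with a single boundary component $\delta$ satisfying $f_{*}([\delta]) = w$. I split into cases by orientability. If $\Sigma$ is orientable of genus $g$, then $\pi_{1}(\Sigma)$ is free of rank $2g$ on loops $a_1, b_1, \ldots, a_g, b_g$ with $[\delta] = \prod_{i=1}^{g}[a_i, b_i]$, so pushing forward by $f_*$ expresses $w$ as a product of $g$ commutators in $\F_r$; hence $g \geq \cl(w)$ and $\chi(\Sigma) = 1 - 2g \leq 1 - 2\cl(w)$. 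If $\Sigma$ is non-orientable of genus $k$, then $\pi_{1}(\Sigma)$ is free of rank $k$ on $a_1, \ldots, a_k$ with $[\delta] = a_{1}^{2}a_{2}^{2}\cdots a_{k}^{2}$, so $f_*$ expresses $w$ as a product of $k$ squares; hence $k \geq \sql(w)$ and $\chi(\Sigma) = 1 - k \leq 1 - \sql(w)$. Taking the maximum over all admissible $\Sigma$ yields $\chi_{\max}(w) \leq \max(1 - 2\cl(w), 1 - \sql(w)) = 1 - \min(2\cl(w), \sql(w))$, as desired.

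Combining the bound on $\chi_{\max}(w)$ with the absolute convergence of the Laurent series gives $\trw^{\O}(n) = O(n^{1 - \min(\sql(w), 2\cl(w))})$. For the degenerate case $\sql(w) = \infty$, the contrapositive of (\ref{eq:sql-cl-ineq}) forces $\cl(w) = \infty$, and then the case analysis above shows that no admissible $\Sigma$ can exist, so $\surfaces^{*}(w) = \emptyset$ and $\trw^{\O}(n) \equiv 0$ for $n > M$, matching the stated convention. The symplectic estimate is then immediate from Theorem \ref{thm:orth-symp-relation}: since $\trw^{\Sp}(n) = -\trw^{\O}(-2n)$ for $n$ large and $\trw^{\O}$ has leading Laurent order $\chi_{\max}(w)$ at infinity, the substitution $n \mapsto -2n$ preserves the polynomial order in $n$. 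I anticipate no real obstacle: Theorem \ref{thm:main-theorem} absorbs all of the representation-theoretic and topological machinery, and what remains is only the identification $\chi_{\max}(w) \leq 1 - \min(\sql(w), 2\cl(w))$ via standard surface presentations, plus bookkeeping for the orientability dichotomy and the $\sql(w) = \infty$ edge case.
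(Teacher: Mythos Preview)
Your argument is correct, but it takes a heavier route than the paper's. The paper explicitly remarks that this corollary can be proved \emph{without} any of the $L^{2}$-invariant machinery, and its proof does exactly that: rather than invoking Theorem~\ref{thm:main-theorem}, it uses only the combinatorial Laurent expansion of Proposition~\ref{prop:First-Laurent-Expansion} (the sum $\sum_{\m\in\matchr^{*}}(-1)^{|\k(\m)|}n^{\chi(\Sigma_{\m})}$), observes that every $\Sigma_{\m}$ is an admissible surface so that the exponents are bounded by $\chi_{\max}(w)$, and then quotes Lemma~\ref{lem:Culler2} (itself a direct consequence of Culler's Lemma~\ref{lem:culler}) for the identification $\chi_{\max}(w)=1-\min(\sql(w),2\cl(w))$. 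Your approach instead treats Theorem~\ref{thm:main-theorem} as a black box and re-derives the easy direction of Lemma~\ref{lem:Culler2} by hand via the standard surface presentations. Both are valid; the paper's version is logically lighter (it needs none of Sections~\ref{sec:The-transverse-map}--\ref{sec:The-action-of}), while yours is shorter to state once the main theorem is in hand. For the degenerate case $\sql(w)=\infty$ the paper uses Lemmas~\ref{lem:when is surfaces nonempty} and~\ref{lem:integral-is-zero} to get $\trw^{\O}(n)=0$ for \emph{all} $n$, whereas your argument only yields vanishing for $n>M$; this is harmless for the asymptotic statement. The symplectic case is handled the same way in both, via Theorem~\ref{thm:orth-symp-relation}.
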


\begin{rem}
The inequality (\ref{eq:sql-cl-ineq}) implies that $\trw^{G}(n)=O(n^{1-\sql(w)})$
unless (\ref{eq:sql-cl-ineq}) is an equality. In that case, $\sql(w)=2\cl(w)+1$
and $\trw^{G}(n)=O(n^{2-\sql(w)})$. An analogous result holds for
$G=\U$ the unitary group: $\trw^{\U}\left(n\right)=O\left(n^{1-2\cl\left(w\right)}\right)$
\cite[Cor.~1.8]{MP2}. Yet another analogous result, albeit with quite
a different flavor, holds in the case of the symmetric group. Let
$\tr_{w}^{\mathrm{Sym}}\left(n\right)$ denote the trace of the $\left(n-1\right)$-dimensional
irreducible ``standard'' representation of $\mathrm{Sym}\left(n\right)$.
Then $\tr_{w}^{\mathrm{Sym}}\left(n\right)=C\cdot n^{1-\pi\left(w\right)}+O\left(n^{-\pi\left(w\right)}\right)$,
where $\pi\left(w\right)$ is the smallest rank of a subgroup of $\F_{r}$
containing $w$ as a non-free-generator and $C=C\left(w\right)$ is
some positive integer \cite[Thm~1.8]{PP15}. See also \cite[Thm.~1.11]{MP3}
for similar results in the case of generalized symmetric groups.
\end{rem}

\begin{rem}
Although we have stated Corollary \ref{cor:decay-rate} as a corollary
of Theorem \ref{thm:main-theorem}, the proof is significantly simpler
and does not require any discussion of $L^{2}$-invariants. We explain
the proof of Corollary \ref{cor:decay-rate} in $\S$\ref{subsec:Proof-of-Corollary-decay}
below.
\end{rem}

\begin{rem}
\label{rem:coro-remark}The consequence of Corollary \ref{cor:decay-rate}
that $\trw^{G}(n)=0$ when $w$ cannot be written as the product of
squares can also be proved directly from the definition of $\trw^{G}(n)$
-- see Lemmas \ref{lem:integral-is-zero} and \ref{lem:when is surfaces nonempty}.
\end{rem}

\begin{rem}
When $w=x_{1}^{2}x_{2}^{2}\ldots x_{s}^{2}\in\F_{r}$, $r\geq s$,
a result of Frobenius and Schur \cite{frobenius1906reellen} (for
$s=1$) and a straightforward generalization (see \cite[\S 2]{MP3}
or \cite[Prop.~3.1(3)]{parzanchevski2014fourier}) give
\begin{equation}
\trw^{\O}(n)=\frac{1}{n^{s-1}},\quad\trw^{\Sp}(n)=\frac{(-1)^{s}}{(2n)^{s-1}}.\label{eq:frobenius}
\end{equation}
In fact, analogs of these formulas hold for any compact group $G$.
Thus Theorem \ref{thm:main-theorem} and Corollary \ref{cor:decay-rate}
can be viewed as a generalization of these formulas to arbitrary words
in $\F_{r}$. However, this generalization is not as simple as it
might appear on the surface. For example, when $w=x_{1}^{2}x_{2}^{2}$,
combining Theorem \ref{thm:main-theorem} with (\ref{eq:frobenius})
gives for large $n$
\[
n^{-1}=\sum_{[(\Sigma,f)]\in\surfaces^{*}(x_{1}^{2}x_{2}^{2})}(n-1)^{\chi(\Sigma)}\chi^{(2)}(\MCG(f)).
\]
Since $n^{-1}$ does not agree with any Laurent polynomial of $n-1$
for large $n$, this implies that there are infinitely many different
$\chi_{0}$ such that there is $[(\Sigma,f)]$ in $\surfaces^{*}(x_{1}^{2}x_{2}^{2})$
with $\chi(\Sigma)=\chi_{0}$ and $\chi^{(2)}(\MCG(f))\neq0$. We
analyze these $[(\Sigma,f)]$ in Example \ref{sub:frob-schur-example}.
\end{rem}

\begin{rem}
\label{rem:progress on Aners conjecture}In another direction, in
\cite{MP3} we show using in part Corollary \ref{cor:decay-rate}
that the word $x_{1}^{2}\ldots x_{s}^{2}$ is uniquely determined,
up to automorphisms, by the `word measures' induced by the word on
compact groups, that is, the pushforwards of Haar measures on $G^{r}$
under the word map $w$.
\end{rem}

\begin{cor}
\label{cor:limit-counting-formula}If all $\wl$ are not equal to
$1$, the limit $\lim_{n\to\infty}\trwl^{\O}(n)$ exists, and is an
integer that counts the number of pairs $[(\Sigma,f)]\in\surfaces^{*}(\wl)$
such that all the connected components of $\Sigma$ are annuli or
Möbius bands.

In algebraic terms, this integer is the weighted number of ways to
partition $\wl$ into singletons and pairs, such that the word in
every singleton is a square, and every pair $\{w,w'\}$ has the property
that $w'$ is conjugate to either $w$ or $w^{-1}$. The weight given
to such a partition is 
\[
\prod_{\{w,w'\}}\max\{d\geq1\::\exists u~\mathrm{s.t.}\:w=u^{d}\}
\]
where $\{w,w'\}$ run over the pairs of the partition.
\end{cor}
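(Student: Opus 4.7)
The plan is to apply Theorem \ref{thm:main-theorem} and take the $n\to\infty$ limit of the Laurent series term by term, identifying exactly which classes $[(\Sigma,f)] \in \surfaces^*(\wl)$ survive. First I would observe that, since every $w_i \neq 1$, no component of $\Sigma$ can be a disk (otherwise that boundary component would map to $1 \neq w_j$), and by definition no component is closed. Any compact surface with boundary having neither disk nor closed components satisfies $\chi \leq 0$, with equality if and only if every component is an annulus or a Möbius band. Thus among the $[(\Sigma,f)] \in \surfaces^*(\wl)$ only those with $\Sigma$ a disjoint union of annuli and Möbius bands can possibly contribute in the limit.

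Combining this with the absolute convergence of the series and the finiteness-per-$\chi_0$ remark following Theorem \ref{thm:main-theorem}, one obtains
\[
\lim_{n\to\infty} \trwl^{\O}(n) \;=\; \sum_{\substack{[(\Sigma,f)] \in \surfaces^*(\wl)\\ \chi(\Sigma)=0}} \chi^{(2)}(\MCG(f)),
\]
as a finite integer sum. To evaluate it, I would compute $\MCG(f)$ on each surviving class. Since the mapping class group of a disjoint union (rel boundary) factors as the direct product over components, it suffices to analyze one component at a time. For an annulus $A$ with boundaries mapping to $w_j$ and $w_k$ and core element $g \in \F_r$, the group $\MCG(A) \cong \Z$ is generated by the core Dehn twist, which acts on admissible $[f]$ by multiplying the conjugator between $w_j$ and $w_k^{\pm 1}$ by $g$; since $g \neq 1$ and $\F_r$ is torsion-free, this action is free and the stabilizer is trivial. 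A parallel analysis on the Möbius band component shows that its contribution to $\MCG(f)$ is also trivial. Consequently $\MCG(f) = 1$, hence $\chi^{(2)}(\MCG(f)) = 1$, and the limit equals the number of such equivalence classes $[(\Sigma,f)]$.

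Finally I would convert this topological count into the algebraic count of weighted partitions. A valid $[(\Sigma,f)]$ partitions the ordered boundary set $\{\delta_1,\ldots,\delta_\ell\}$ into singletons (one per Möbius component) and pairs (one per annulus). A singleton $\{j\}$ forces $w_j$ to be a square $w_j = u^2$; by uniqueness of square roots in $\F_r$, the admissible Möbius map is unique up to equivalence. A pair $\{j,k\}$ forces $w_k$ to be conjugate to $w_j^{\pm 1}$, and the equivalence classes of admissible annulus maps are in bijection with the cosets of $\langle w_j\rangle$ in the centralizer $\mathrm{C}_{\F_r}(w_j) = \langle u \rangle$, where $u$ is the primitive root of $w_j = u^d$. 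This yields exactly $d = \max\{e \geq 1 : w_j = u^e\}$ classes per pair. Taking the product over components of a partition and then summing over partitions produces the stated weighted count.

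The main obstacle will be the careful computation of $\MCG(f)$ on Möbius band components, which requires understanding $\MCG(M,\partial M)$ and its action on homotopy classes of admissible maps, together with the bookkeeping of boundary-orientation conventions needed to justify the parameterization of admissible annulus maps by their core element and conjugator, and the identification of the Dehn twist orbit with the coset $c \langle w_j\rangle$.
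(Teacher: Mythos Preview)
Your proposal is correct and follows essentially the same route as the paper: apply Theorem~\ref{thm:main-theorem}, use Lemma~\ref{lem:chi-max-non-pos} to isolate the $\chi=0$ terms, show $\MCG(f)=1$ on those (the paper's Lemma~\ref{lem:annuli-and-mobius-chi-equal-1}), and then count annulus and M\"obius classes algebraically (the paper's Lemmas~\ref{lem:counting-annuli} and~\ref{lem:counting-Mobius}). The one place where you are making life harder than necessary is the M\"obius band component: no ``parallel analysis'' of the action on $[f]$ is needed, because $\MCG(M,\partial M)$ is already the trivial group by Epstein's theorem (recorded in the paper as Lemma~\ref{lem:The-mapping-class-group-of-disc-or-mobius}), so the stabilizer is automatically trivial.
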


We prove Corollary \ref{cor:limit-counting-formula} in $\S$\ref{subsec:Proof-of-Corollary-limit-counting}.
It implies, in turn, the following corollary:
\begin{cor}
\label{cor:power-limits}Suppose that $w\neq1$ and $w=u^{d}$ with
$d\geq1$ such that $u\in\F_{r}$ is not a proper power of any other
element of $\F_{r}$. Then for all $\ell\geq1$ and $j_{1},\ldots,j_{\ell}\in\Z$,
the limit
\[
\lim_{n\to\infty}\tr_{w^{j_{1}},\ldots,w^{j_{\ell}}}^{\O}(n)
\]
 exists and only depends on $d$ and the $j_{k}$, not on $u$. Moreover,
this collection of limits determines $d$. The same result holds with
$\O$ replaced by $\Sp$.
\end{cor}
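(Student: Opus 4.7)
My plan is to derive the corollary directly from Corollary~\ref{cor:limit-counting-formula} applied to the tuple $(w^{j_{1}},\ldots,w^{j_{\ell}})$ in place of $(\wl)$; this is permitted since each $w^{j_{k}}\neq 1$ once we assume, as we must for the limit to exist, that $j_{k}\neq 0$. That corollary realizes the limit as a weighted count of partitions of $\{1,\ldots,\ell\}$ into singletons (whose corresponding word must be a square) and pairs (whose two words must be conjugate up to inversion). To rewrite this count as a function of $d$ and the $j_{k}$ alone, I will invoke two standard facts about the free group with $u$ a non-proper-power: (a)~by uniqueness of roots in $\F_{r}$, $u^{m}$ is a square iff $m$ is even, and $\max\{s\geq 1:u^{m}=v^{s}\}=|m|$; and (b)~by a cyclic-word analysis together with the fact that no non-trivial element of a free group is conjugate to its inverse, $u^{a}$ is conjugate to $u^{b}$ iff $a=b$, and $u^{a}$ is conjugate to $u^{-b}$ iff $a=-b$. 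Substituting $w^{j_{k}}=u^{dj_{k}}$ turns the singleton condition into ``$dj_{k}$ is even'', the pair condition into ``$j_{k'}=\pm j_{k}$'', and the weight of a pair into $d|j_{k}|$. Consequently the limit equals
\[
L_{\O}(d;j_{1},\ldots,j_{\ell})=\sum_{\pi}\prod_{\{k,k'\}\in\pi}d\,|j_{k}|,
\]
summed over valid partitions $\pi$, depending only on $d$ and the $j_{k}$.

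To verify that the collection of such limits determines $d$, I will exhibit two explicit specializations. With $\ell=1$ and $j_{1}=1$, only the singleton partition exists and is valid iff $d$ is even, so $L_{\O}(d;1)$ is $0$ or $1$ according to the parity of $d$. With $\ell=2$ and $j_{1}=j_{2}=1$, the pair partition contributes $d$ unconditionally and the singleton partition contributes $1$ iff $d$ is even, giving $L_{\O}(d;1,1)=d+\mathbf{1}_{d\text{ even}}$. Combining the two specializations pins down $d$.

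For the symplectic statement, Theorem~\ref{thm:orth-symp-relation} supplies $\trwl^{\Sp}(n)=(-1)^{\ell}\trwl^{\O}(-2n)$ for $n$ large, and the Laurent expansion of Theorem~\ref{thm:main-theorem} being convergent at infinity forces $\trwl^{\O}(-2n)\to L_{\O}$ as $n\to\infty$. Hence the symplectic limits equal $(-1)^{\ell}L_{\O}$, inheriting both the dependence property and the determination of $d$ (the sign $(-1)^{\ell}$ being known from $\ell$). The main obstacle is really the careful bookkeeping of fact~(b); once that is in hand, the rest is a clean unpacking of Corollary~\ref{cor:limit-counting-formula}.
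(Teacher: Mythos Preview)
Your proof is correct and follows essentially the same route as the paper's own argument: both deduce the result from Corollary~\ref{cor:limit-counting-formula}, use the same free-group facts to see that the partition conditions and weights depend only on $d$ and the $j_k$, exhibit the same two specializations ($\ell=1$ and $\ell=2$ with all $j_k=1$) to recover $d$, and invoke Theorem~\ref{thm:orth-symp-relation} for the symplectic case. Your write-up is in fact more explicit than the paper's, spelling out the formula $L_{\O}(d;j_{1},\ldots,j_{\ell})=\sum_{\pi}\prod_{\{k,k'\}\in\pi}d\,|j_{k}|$ and carefully justifying facts~(a) and~(b).
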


The phenomenon observed in Corollary \ref{cor:power-limits} is also
known to be present for other families of groups including unitary
groups $U(n)$ \cite{MSS07,Radulescu06}, symmetric groups $S_{n}$
\cite{Nica94} (complemented in \cite{Linial2010,hanany2022word}),
and $\GL_{n}(\mathbb{F}_{q})$ \cite{PW}. See also \cite[Thm.~1.3]{dubach2021words}
for a similar phenomenon for Ginibre ensembles. We will prove Corollary
\ref{cor:power-limits} in $\S$\ref{subsec:Proof-of-Corollary-limit-counting}.

In the case $r=1$, so that there is only one letter $x=x_{1}$, and
$w_{k}=x^{j_{k}}$ for $k=1,\ldots,\ell,$ we have 
\[
\tr_{x^{j_{1}},\ldots,x^{j_{\ell}}}^{\O}(n)=\int_{\O(n)}\mathrm{tr}\left(g^{j_{1}}\right)\cdots\mathrm{tr}\left(g^{j_{\ell}}\right)d\mu_{n}(g).
\]
Diaconis and Shahshahani \cite[Thm.~4]{Diaconis1994} prove that in
this case, $\tr_{x^{j_{1}},\ldots,x^{j_{\ell}}}^{\O}(n)$ is \emph{exactly}
the integer described in Corollary \ref{cor:limit-counting-formula}
for any $n$ sufficiently large, and give a closed formula for this
integer. Moreover in\emph{ (ibid.) }Diaconis and Shahshahani use this
fact to prove that for $j\in\mathbb{N}$, the collection of random
variables $\mathrm{tr}(g),\mathrm{tr}\left(g^{2}\right)\ldots,\mathrm{tr}\left(g^{j}\right)$,
where $g$ is chosen according to Haar measure on $\O(n)$, converge
in probability as $n\to\infty$ to independent normal variables with
different centers and variances. Using the method of moments, Corollary
\ref{cor:power-limits} implies that one has the same result if $x_{1}$
is replaced by any non-trivial word $w$ that is not a proper power.
More precisely, we have the following result.
\begin{cor}[Universality for traces of non-powers]
\label{cor:moment-convergence}Let $w\in\F_{r}$, $w\neq1$, and
$w$ not a proper power of another element in $\F_{r}$. For fixed
$\ell\geq1$, consider the real-valued random variables $T_{n}\left(w^{j}\right)\stackrel{\mathrm{def}}{=}\mathrm{tr}\left(w\left(g_{1},\ldots,g_{r}\right){}^{j}\right)$
on the probability space $(\O(n)^{r},\mu_{n}^{r})$ for $j=1,\ldots,\ell$.
The $T_{n}\left(w^{j}\right)$ converge in probability as $n\to\infty$:
\[
\left(T_{n}(w),T_{n}\left(w^{2}\right),\ldots,T_{n}\left(w^{\ell}\right)\right)\xrightarrow[\text{probability}]{n\to\infty}\left(Z_{1},Z_{2},\ldots,Z_{\ell}\right),
\]
where $Z_{1},\ldots,Z_{\ell}$ are independent real normal random
variables, such that when $j$ is odd, $Z_{j}$ has mean 0 and variance
$j$, and when $j$ is even, $Z_{j}$ has mean $1$ and variance $j+1$.
\end{cor}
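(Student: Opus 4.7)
The plan is to apply the method of moments. Since the target vector $(Z_1, \ldots, Z_\ell)$ consists of independent Gaussian random variables with finite moments, its joint distribution is uniquely determined by its joint moments, so it suffices to prove that for every $(k_1, \ldots, k_\ell) \in \Z_{\geq 0}^\ell$,
\[
\lim_{n\to\infty}\E\left[\prod_{j=1}^\ell T_n(w^j)^{k_j}\right] = \E\left[\prod_{j=1}^\ell Z_j^{k_j}\right].
\]

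The first step is to identify the left-hand expectation with a $\tr^{\O}$-integral. Expanding the product of traces and using the definition of $\tr^{\O}$ from the introduction yields
\[
\E\left[\prod_{j=1}^\ell T_n(w^j)^{k_j}\right] = \tr_{\mathbf{v}}^{\O}(n),
\]
where $\mathbf{v}$ is the tuple of length $k_1 + \cdots + k_\ell$ consisting of $k_j$ copies of $w^j$ for each $1 \leq j \leq \ell$.

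Next, since $w$ is not a proper power, Corollary \ref{cor:power-limits} applies with $d = 1$: the limit $\lim_{n\to\infty}\tr_{\mathbf{v}}^{\O}(n)$ exists and depends only on $d=1$ and on the tuple of exponents, not on $w$ itself. In particular, this limit coincides with the limit obtained when $r = 1$ and $w$ is replaced by the generator $x_1$ (which is also not a proper power). For this case, \cite[Thm.~4]{Diaconis1994} computes the joint moments of $\bigl(\tr(g), \tr(g^2), \ldots, \tr(g^\ell)\bigr)$ (with $g$ Haar-distributed on $\O(n)$) explicitly for $n$ sufficiently large, and shows that their limits are precisely the joint moments of $(Z_1, \ldots, Z_\ell)$. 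Transferring this identity of limits back to our tuple $\mathbf{v}$ via the preceding sentence yields the required moment convergence, and hence the claimed convergence in law.

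The main obstacle here is only conceptual rather than technical: the heavy lifting has already been done by Theorem \ref{thm:main-theorem} (through Corollary \ref{cor:power-limits}) and by the classical Diaconis--Shahshahani moment computation, and the present statement is essentially a formal combination of the two. The one point to verify carefully is that the joint moment genuinely corresponds to a $\tr^{\O}$-integral with the claimed tuple $\mathbf{v}$, but this is immediate from unwinding the definitions of the trace, the expectation over $\mu_n^r$, and $\tr_{\mathbf{v}}^{\O}(n)$ as a single integral.
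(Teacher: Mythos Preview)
Your proof is correct and follows essentially the same approach as the paper: reduce the joint moments to the single-generator case via Corollary \ref{cor:power-limits} (with $d=1$), invoke the Diaconis--Shahshahani computation for that case, and conclude by the method of moments. The only extra detail you spell out is the identification of the mixed moment with $\tr_{\mathbf{v}}^{\O}(n)$, which the paper leaves implicit.
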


Some related results were previously obtained by Mingo and Popa in
\cite{MingoPopa} where they obtain `real second order freeness' of
independent Haar elements of $\O(n)$. This concept does not seem
to imply the explicit statement of Corollary \ref{cor:moment-convergence}.

\subsection{Duality between $\protect\Sp$ and $\protect\O$, and the parameters
$\alpha_{G}$\label{subsec:Duality-between-}}

\subsubsection*{Duality between $\protect\Sp$ and $\protect\O$ }

The formula that appears in Theorem \ref{thm:orth-symp-relation}
was suggested by Deligne \cite{DELIGNE} in a private communication
to the second named author of this paper, without the explicit calculation
of the sign $(-1)^{\ell}$. Deligne's reasoning is that one has the
identities of `supergroups'
\[
\O(-2n)=\O(0|2n)=\Sp(n).
\]
We do not know how to make this into a rigorous concise proof of Theorem
\ref{thm:orth-symp-relation} at the moment. The proof we give in
the Appendix relies on a technical combinatorial comparison of the
terms arising in $\tr_{\wl}^{\O}$ and $\tr_{\wl}^{\Sp}$ from the
Weingarten Calculus. 

Formulas similar to Theorem \ref{thm:orth-symp-relation} were observed
by Mkrtchyan \cite{MKRTCHYAN1981} in the early 1980s in the setting
of $\O$ vs $\Sp$ gauge theory. The duality also shows up as a duality
between the GOE and GSE ensembles \cite{MW}. The introduction to
\emph{(ibid.)} also contains an overview of what was known about the
$\O$-$\Sp$ duality at that time. The $\O(-2n)=\Sp(n)$ formula has
more recently been interpreted in a different way, in terms of Casimir
operators, by Mkrtchyan and Veselov in \cite{MV}.

\subsubsection*{The parameters $\alpha_{G}$}

In this section we mention other occurrences of the parameters $\alpha_{G}$
in random matrix theory. We have observed in \cite{MP2} and the current
paper that it is most natural to expand $\tr_{\wl}^{G}(n)$ as a Laurent
series in $(n+1-\alpha_{G})^{-1}$, where $\alpha_{G}=2,1,\frac{1}{2}$
for $G=\O,\U,\Sp$ respectively. It is more common in the literature
that the parameter
\[
\beta_{G}\stackrel{\mathrm{def}}{=}\frac{2}{\alpha_{G}}
\]
appears. Hence $\beta_{G}=1,2,4$ for $G=\O,\U,\Sp$ respectively.
One sees that $1,2,4$ correspond to the dimensions of the real, complex,
and quaternion numbers as real vector spaces, and this is surely the
fundamental source of the parameter $\beta_{G}$. 

One historical role that these parameters play in random matrix theory
is that they give unified expressions for the joint eigenvalue distributions
of Dyson's Orthogonal, Unitary, and Symplectic Circular Ensembles\footnote{The COE of dimension $n$ is the space of symmetric unitary matrices.
This can be identified with $\U(n)/\O(n)$ and as such, has a natural
probability measure coming from Haar measure on $\U(n)$. The CUE
of dimension $n$ is $\U(n)$ with its Haar measure. The CSE of dimension
$2n$ is the space of self-dual unitary matrices, that can be identified
with $\U(2n)/\Sp(n)$ and hence given the probability measure coming
from Haar measure on $\U(2n)$.} (COE/CUE/CSE) introduced by Dyson in \cite{DYSONI}. Results of Weyl
\cite{Weyl} (for the CUE) and Dyson \cite[Thm.~8]{DYSONI} (for the
COE and CSE) say that the joint eigenvalue density of a matrix in
one of these ensembles is given by
\[
C_{\beta}\prod_{k\neq l}\left|\exp(i\theta_{k})-\exp(i\theta_{l})\right|^{\beta}
\]
where $\exp(i\theta_{k})$ are the eigenvalues of the random matrix,
$C_{\beta}$ is a normalizing constant, and $\beta=1$ for COE, $\beta=2$
for CUE, and $\beta=4$ for CSE. 

More recently, and in a context more closely related to the current
paper, the parameters $\alpha_{G}$ appear in the work of Novaes \cite{Novaes}
who calculates Laurent series in $(n+1-\alpha_{G})^{-1}$ for Weingarten
functions on $G(n)$ with $G=\U,\O,\Sp$. The origin of the shifted
parameter in (\emph{loc.~cit.}) is its use of the following result
of Forrester \cite[eq.~3.10]{Forrester}: if $B$ is sampled from
$\U(n),\O(n),$ or $\Sp(n)$ according to Haar measure, then the density
function of the top left $m_{1}\times m_{2}$ submatrix $A$ of $g$
is given by a determinant involving $A$ raised to an exponent that
is an explicit function of $\alpha_{G}$. This is very different to
the appearance of $\alpha_{G}$ in the current paper.

Indeed, the origin of $\alpha_{G}$ in the current paper is \emph{topological}
and based on the following observations:
\begin{itemize}
\item There are exactly two types of connected surfaces with boundary that
have trivial mapping class group: a disc, and a Möbius band (cf.~Lemma
\ref{lem:The-mapping-class-group-of-disc-or-mobius} and Proposition
\ref{prop:dehn-twists-generate-free-abelian}).
\item As a result, when we compute the terms $\chi^{(2)}(\MCG(f))$ that
appear in Theorem \ref{thm:main-theorem}, we need to enumerate surfaces
that are formed by gluing together discs \textbf{and}\emph{ }Möbius
bands.
\item On the other hand, using the Weingarten calculus to expand $\tr_{\wl}^{\O}(n)$
leads to a formula that involves enumerating surfaces that are formed
only by gluing discs (i.e., given as \emph{CW}-complexes). This formula
is given in Proposition \ref{prop:First-Laurent-Expansion}.
\end{itemize}
Hence, one must at some stage pass from a formula involving surfaces
given as \emph{CW}-complexes to a formula involving surfaces formed
by gluing together discs and Möbius bands. This is somewhat surprisingly
accomplished simply by replacing the parameter $n$ by $n-1$ in the
case $G=\O$, and is given by Proposition \ref{prop:Second-Laurent-Expansion}. 

We suspect that all these appearances of the parameter $\alpha_{G}$
(or $\beta_{G}$) in different results in random matrix theory are
all related, but we do not know how to directly explain this relation.

\subsection{Notation and paper organization}

For $n\in\mathbf{N},$ we use the notation $[n]$ for the set $\{1,2,\ldots,n\}$.
If $f$ is a map between topological spaces with base points, then
$f_{*}$ is the induced map between the fundamental groups of the
spaces. We write $\emptyset$ for the empty set. If $w\in\F_{r}$
we write $|w|$ for the word length of $w$ in reduced form. We write
$\log$ for the natural logarithm (base $e$).

The paper is organized as follows. Section \ref{sec:Maps-on-surfaces}
describes how one can construct admissible maps in $\sur^{*}\left(\wl\right)$
from sets of matchings of the letters of $\wl$, and gives some basic
definitions and facts about general elements of $\sur^{*}\left(\wl\right)$.
In Section \ref{sec:A-combinatorial-formula} we discuss the Weingarten
calculus, give a combinatorial formula for $\trwl^{\O}\left(n\right)$
(Theorem \ref{thm:orth-wg-exp}), derive two different Laurent series
expansions of $\trwl^{\O}\left(n\right)$ (Propositions \ref{prop:First-Laurent-Expansion}
and \ref{prop:Second-Laurent-Expansion}), and reduce our main theorem,
Theorem \ref{thm:main-theorem}, to a theorem about a single admissible
map in $\sur^{*}\left(\wl\right)$ (Theorem \ref{thm:Formula-for-l2-euler-char}).
Section \ref{sec:The-transverse-map} introduces the complex of transverse
maps associated with some $\left[\left(\Sigma,f\right)\right]\in\sur^{*}\left(\wl\right)$
and proves it is contractible, following closely with the analogous
result in \cite{MP2}. In Section \ref{sec:The-action-of} we finish
the proof of Theorems \ref{thm:Formula-for-l2-euler-char} and \ref{thm:main-theorem},
and in Section \ref{sec:Remaining-proofs-and} we prove Corollaries
\ref{cor:limit-counting-formula}, \ref{cor:power-limits} and \ref{cor:moment-convergence},
and discuss some concrete examples. Finally, Appendix \ref{sec:symplectic formula}
gives a combinatorial proof of Theorem \ref{thm:orth-symp-relation}.

\subsection*{Acknowledgements}

We thank Pierre Deligne for the formulation of Theorem \ref{thm:orth-symp-relation}.
We also thank Marcel Novaes and Sasha Veselov for helpful discussions
related to this work, and the anonymous referee for several suggestions
that improved the presentation of this work.

\section{Maps on surfaces\label{sec:Maps-on-surfaces}}

In the rest of this paper, we view $\wl\in\F_{r}$ as fixed. For a
given word $w_{j}\in\F_{r}$ we may write

\begin{equation}
w_{j}=x_{i_{1}^{j}}^{\varepsilon_{1}^{j}}x_{i_{2}^{j}}^{\varepsilon_{2}^{j}}\ldots x_{i_{|w_{j}|}^{j}}^{\varepsilon_{|w_{j}|}^{j}},\quad\varepsilon_{u}^{j}\in\{\pm1\},i_{u}^{j}\in[r],\label{eq:combinatorial-word}
\end{equation}
where if $i_{u}^{i}=i_{u+1}^{j}$, then $\varepsilon_{u}^{j}=\varepsilon_{u+1}^{j}$.
In other words, we write each $w_{j}$ in reduced form. Recall that
$B$ is the basis $x_{1},\ldots,x_{r}$. We define the \emph{total
unsigned exponent} \marginpar{total\protect \\
unsigned\protect \\
exponent}of a generator $x_{t}\in B$ in $\wl$ to be 
\[
\sum_{j=1}^{\ell}\#\{1\leq u\leq|w_{j}|\,:\,i_{u}^{j}=t\}.
\]

\subsection{Construction of maps on surfaces from matchings\label{subsec:Construction-of-maps}}

In this section we assume that the total unsigned exponent of $x$
in $\wl$ is even for each $x\in B$, and write\marginpar{$L_{x}$}
$2L_{x}$ for this quantity.

\subsubsection{Matchings and permutations\label{subsec:Matchings-and-permutations}}

A \emph{matching} of the set $[2k]=\{1,\ldots,2k\}$ is a partition
of $[2k]$ into pairs. The collection of matchings of $[2k]$ is denoted
by\marginpar{$M_{k}$} $M_{k}$. We use two ways to identify a matching
in $M_{k}$ with a permutation in $S_{2k}$. In the first we identify
a matching $m$ with a permutation whose cycle decomposition consists
of disjoint transpositions given by the matched pairs of $m$. We
call the resulting permutation $\pi_{m}$.\marginpar{$\pi_{m}$}

In the second, given a matching $m\in M_{k}$ we canonically view
$m$ as an ordered list of ordered pairs $((m_{(1)},m_{(2)}),(m_{(3)},m_{(4)}),\ldots,(m_{(2k-1)},m_{(2k)}))$
with 
\begin{equation}
m_{(1)}<m_{(3)}<\ldots<m_{(2k-1)},\quad m_{(2r-1)}<m_{(2r)},\,r\in[k].\label{eq:m-order1}
\end{equation}
As such, we have an embedding $M_{k}\to S_{2k}$ , $m\mapsto\sigma_{m}$
by sending the matching $m$ to the permutation $\sigma_{m}:i\mapsto m_{(i)}$.
\marginpar{$\sigma_{m}$}

Following Collins and \'{S}niady \cite{CS} we introduce a metric
on $M_{k}$ as follows. For a permutation $\sigma$, write $|\sigma|$
for the minimum number of transpositions that $\sigma$ can be written
as a product of. For matchings $m,m'\in M_{k}$, we define \marginpar{$\rho$}
\begin{equation}
\rho(m,m')=\frac{|\pi_{m}\pi_{m'}|}{2}.\label{eq:rho-def}
\end{equation}
Since both permutations $\pi_{m}$ and $\pi_{m'}$ have the same sign,
$\rho(m,m')\in\Z_{\geq0}$.

\subsubsection{\label{subsec:Markings-of-points-on-wedge}Markings of $\protect\wedger$}

We now describe certain markings of $\wedger$ that will be used in
our construction of maps on surfaces. For any given tuple of positive
integers $\{\k_{x}\}_{x\in B}$ we will mark additional points on
the circles of $\wedger$ as follows. On the circle corresponding
to the each generator $x\in B$ we mark $\k_{x}+1$ distinct ordered
points $(x,0),\ldots,(x,\k_{x})$, placed consecutively along the
circle according to its orientation, and disjoint from $o$. This
is illustrated in the bottom part of Figure \ref{fig:Constructing-a-surface}
for the case $B=\left\{ x,y\right\} $ and $\kappa_{x}=\kappa_{y}=1$.

\subsubsection{\label{subsec:map-on-circle-from-word}Construction of a map on a
circle from a combinatorial word}

Recall our ongoing assumptions that all words are $\neq1$. For each
word $w\neq1$ we construct an oriented marked circle $C(w)$. Write
\begin{equation}
w=x_{j_{1}}^{\varepsilon_{1}}x_{j_{2}}^{\varepsilon_{2}}\ldots x_{j_{|w|}}^{\varepsilon_{|w|}}\label{eq:w for def of C(w)}
\end{equation}
in reduced form (as in (\ref{eq:combinatorial-word})). Begin with
$|w|$ disjoint copies of $[0,1]$, and denote the $u$th copy by
$[0,1]_{u}$. Give each interval the orientation from $0$ to $1$.
On each interval choose arbitrarily a map
\[
\gamma_{u}:[0,1]_{u}\to\wedger
\]
such that $\gamma_{u}(0)=\gamma_{u}(1)=o$, $\gamma_{u}:(0,1)_{u}\to S_{x_{j_{u}}}^{1}-\{o\}$
is a diffeomorphism, and the loop in $\wedger$ parameterized by $\gamma_{u}$,
based at $o$, corresponds to $x_{j_{u}}^{\varepsilon_{u}}\in\F_{r}$
at the level of the fundamental group. Now cyclically concatenate
all the intervals and maps together to obtain a circle $C(w)$ and
a map $\gamma_{w}:C(w)\to\wedger$.

Let $v_{w}$ be the initial point $0_{1}\in[0,1]_{1}$ of this circle
$C(w)$. The map $\gamma_{w}$ has the property that $\gamma_{w}(v_{w})=o$
and $(\gamma_{w})_{*}$ maps a generator of $\pi_{1}(C(w),v_{w})$
to $w\in\F_{r}$. We give $C(w)$ the orientation such that the order
of the intervals read, beginning at $v_{w}$ and following the orientation,
matches the left to right order  of (\ref{eq:w for def of C(w)}).
As such, the intervals of $C(w)$ are in one-to-one correspondence
with the letters of $w$.

To clarify and summarize, by definition $w\in\F_{r}=\pi_{1}(o,\wedger)$
corresponds to a homotopy class of a loop based at $o$. What we have
done here is pick a particular representative, $\gamma_{w}$, of this
homotopy class such that the sequence of circles traversed by the
loop is prescribed by $w$ through (\ref{eq:w for def of C(w)}),
and each traversal of a circle is monotone.

\subsubsection{Construction of a map on a surface from words and matchings\label{subsec:Construction-of-amap-on-surface}}

In this section we will describe a construction that takes in the
following
\begin{center}
\textbf{}%
\noindent\fbox{\begin{minipage}[t]{1\columnwidth - 2\fboxsep - 2\fboxrule}%
\textbf{Input.}
\begin{itemize}
\item A tuple $\k=\{\k_{x}\}_{x\in B}$ of non-negative integers.
\item A collection $\m=\{(m_{x,0},\ldots,m_{x,\k_{x}})\}_{x\in B}$ that
contains for each generator $x\in B$, an ordered $(\k_{x}+1)$-tuple
$(m_{x,0},\ldots,m_{x,\k_{x}})$ of elements in $M_{L_{x}}$, where
$2L_{x}$ is the unsigned exponent of $x$ in $\wsl$. We denote by
$\match^{\k}=\match^{\kappa}(\wl)$ the collection of all possible
such $\m$, for fixed $\k$. We write $\match^{*}=\cup_{\k\in\Z_{\geq0}^{B}}\match^{\k}$.
If $\m\in\match^{\k}$ we will say $\kappa(\m)=\kappa$. \textbf{Warning:}\textbf{\emph{
}}our notation $\match$ in this paper is not exactly the same as
in \cite{MP2}\footnote{In \cite{MP2} the matchings are more restrictive: it was assumed
that the matchings $m_{x,i}$ were subordinate to a partition of $[2L_{x}]$
into two blocks, corresponding to the occurrences of $x^{+1}$ and
$x^{-1}$ in $\wl$, whereas here our matchings are arbitrary matchings
of $[2L_{x}]$.}.
\end{itemize}
\end{minipage}}
\par\end{center}

The output of our construction is the following\marginpar{$\protect\match^{\kappa}$ $\protect\match^{*}$}
\begin{center}
\textbf{}%
\noindent\fbox{\begin{minipage}[t]{1\columnwidth - 2\fboxsep - 2\fboxrule}%
\textbf{Output.}
\begin{itemize}
\item A surface $\Sigma_{\m}$ with $\ell$ oriented boundary components
$C(w_{1}),\ldots,C(w_{\ell})$, with a given $CW$-complex structure,
and a marked point $v_{j}$ on each boundary component $C(w_{j})$.
\item A continuous function $f_{\m}:\Sigma_{\m}\to\wedger$ such that $f_{\m}\lvert_{C(w_{j})}$
is the function $\gamma_{w_{j}}$ constructed in $\S$\ref{subsec:map-on-circle-from-word}.
In particular, $f_{\m}(v_{j})=o$ for each $1\leq i\leq\ell$ and
$(f_{\m})_{*}$ maps the generator of $\pi_{1}(C(w_{j}),v_{j})$ specified
by the orientation of $C(w_{j})$ to $w_{j}$.
\end{itemize}
\end{minipage}}
\par\end{center}

Note that the resulting $(\Sigma_{\m},f_{\m})$ is an admissible map
for $\wl$, as in Definition \ref{def:admissible-maps-and-surfaces}.
As will become clearer in the sequel (mostly $\S$\ref{sec:The-transverse-map}),
different collections $\m$ of matchings may result in the same equivalence
class of admissible pairs $\left(\Sigma_{\m},f_{\m}\right)$.

The construction is essentially the same as in \cite[\S 2.2, \S 2.5]{MP2},
except in the current case there are less restrictions on the matchings
present, so the construction can lead to non-orientable surfaces.
The construction of a surface from a single matching per basis element
(so $\kappa_{x}=0$ for all $x\in B$) already appears in \cite{CULLER}.
We give the details now.

\textbf{I. The one-skeleton. }We first perform the construction of
$\S$\ref{subsec:map-on-circle-from-word} for each word $w_{j}$.
This gives us a collection of oriented based circles $(C(w_{j}),v_{w_{j}})$
that are respectively subdivided into $|w_{j}|$ intervals, and maps
$\gamma_{w_{j}}:C(w_{j})\to\wedger$. These oriented circles will
form the oriented boundary
\[
\delta\Sigma_{\m}\stackrel{\mathrm{def}}{=}\bigcup_{j=1}^{\ell}C(w_{j})
\]
of $\Sigma_{\m}$ and the map 
\begin{align*}
\gamma & \stackrel{\mathrm{def}}{=}\cup_{j=1}^{\ell}\gamma_{w_{j}}:\delta\Sigma_{\m}\to\wedger
\end{align*}
will be the restriction of $f_{\m}$ to the boundary of $\Sigma_{\m}$.
We write $v_{j}=v_{w_{j}}$ in the sequel for the marked points on
the $C(w_{j})$.

For each $x\in B$, the preimage 
\[
\gamma^{-1}(S_{x}^{1}-\{o\})
\]
is a disjoint union of $2L_{x}$ open sub-intervals of $\delta\Sigma_{\m}$.
This collection of intervals is denoted by\linebreak{}
\marginpar{$\protect\I_{x}$} $\I_{x}=\I_{x}$($\wsl)$. We identify
$\I_{x}$ with $[2L_{x}]$ in some arbitrary but fixed way. Moreover,
each element of $\I_{x}$ corresponds to an appearance of $x$ or
$x^{-1}$ in a unique $w_{j}$.

Next we add to the one-skeleton of $\Sigma_{\m}$ by gluing some arcs
by their endpoints to $\delta\Sigma_{\m}$. We will call these arcs\marginpar{matching arcs}
\emph{matching arcs. }For each $x\in B$ we mark points $(x,0),\ldots,(x,\k_{x})$
on $S_{x}^{1}$ as in $\S$\ref{subsec:Markings-of-points-on-wedge}.
Since $\gamma$ maps each interval $I\in\I_{x}$ monotonically to
$S_{x}^{1}$, there are uniquely specified distinct points \marginpar{$p_{I}\left(k\right)$}$p_{I}(0),\ldots,p_{I}(\k_{x})$
in $I$ such that $\gamma(p_{I}(k))=(x,k)$ for $0\leq k\leq\k_{x}$.

Now, for every pair of intervals $I$ and $J$ that are matched by
$m_{x,k}$, glue a matching arc to $\delta\Sigma_{\m}$ with endpoints
at $p_{I}(k)$ and $p_{J}(k)$. Carrying out this process for all
generators $x$, now every $p_{I}(k)$ point in $\delta\Sigma_{\m}$
is the endpoint of a unique arc. Call the resulting one dimensional
\emph{CW}-complex $\Sigma_{\m}^{(1)}$. It consists of $\delta\Sigma_{\m}$
together with the matching-arcs described in the current paragraph.
More precisely, we consider $\Sigma_{\m}^{(1)}$ to have the following
0 and 1-cells:
\begin{itemize}
\item The 0-cells (vertices) are just the points $p_{I}(k)$. Since there
are $2L_{x}$ intervals in $\I_{x}$ labeled by $x$ and $\k_{x}+1$
points $p_{I}(k)$ in each such interval, there are $2\sum_{x\in B}L_{x}(\k_{x}+1)$
vertices in the $0$-skeleton.
\item The $1$-cells (edges) are of two different types. Firstly, there
is an edge in $\delta\Sigma_{\m}$ for each connected component of
$\delta\Sigma_{\m}-\{p_{I}(k)\}$. Hence there are $2\sum_{x\in B}L_{x}(\k_{x}+1)$
such edges. Secondly, there is an edge for each matching-arc. Since
the matching arcs give a matching of the points $p_{I}(k)$, there
are $\sum_{x\in B}L_{x}(\k_{x}+1)$ such edges. Hence in total there
are $3\sum_{x\in B}L_{x}(\k_{x}+1)$ edges.
\end{itemize}
Additionally, each component $C(w_{j})$ of $\delta\Sigma_{\m}$ contains
a marked point $v_{j}$. These are not part of the \emph{CW}-complex
structure (nor is any point of $\gamma^{-1}\left(o\right)$). We define
$f_{\m}^{(1)}$ on $\Sigma_{\m}^{(1)}$ by $f_{\m}^{(1)}\lvert_{\delta\Sigma_{\m}}=\gamma$
and by requiring that $f_{\m}^{(1)}$ is constant on all matching
arcs. This completely specifies $f_{\m}^{(1)}$, since the endpoints
of matching arcs are in $\delta\Sigma_{\m}$, so the value of $f_{\m}^{(1)}$
on matching arcs is specified by $\gamma$, and by construction of
the matching arcs, the two endpoints of any two matching arcs have
the same value under $\gamma$.

\textbf{II. The two-skeleton. }Next we complete the construction of
$\Sigma_{\m}$ by gluing in discs that will be the 2-cells of the
\emph{CW}-complex. We glue in different types of discs as follows.

\emph{Type-($x,k)$ discs. }For fixed $x$, if $0\leq k<\kappa_{x}$,
let $R_{x,k}$ be the open interval in $S_{x}^{1}-\{(x,i)\}-\{o\}$
that abuts the points $(x,k)$ and $(x,k+1)$. Let $\overline{R_{x,k}}$
be the closure of this component. The preimage $(f_{\m}^{(1)})^{-1}(\overline{R_{x,k}})$
is a collection of disjoint cycles in $\Sigma_{\m}^{(1)}$ and for
each of these cycles we glue the boundary of a disc simply along the
cycle. These discs are called \emph{type-$(x,k)$ discs.}

\emph{Type-o discs. }Let $R_{o}$ be the star-like connected component
of $\wedger-\{(x,i)\}$ that contains the point $o$ and abuts points
of the form $(x,0)$ and $(x,\kappa_{x})$ (for all $x$). Let $\text{\ensuremath{\overline{R_{o}}}}$
be the closure of this component. The preimage $(f_{\m}^{(1)})^{-1}(\overline{R_{o}})$
is a one-dimensional subcomplex in $\Sigma_{\m}^{(1)}$.\emph{ }We
consider simple cycles $c$ in $(f_{\m}^{(1)})^{-1}(\overline{R_{o}})$
with the property that $f_{\m}^{(1)}\lvert_{c}$ \uline{never traverses
a point} $(x,0)$ or $(x,\kappa_{x})$. Namely, when going along $c$,
whenever $f_{\m}^{\left(1\right)}\left(c\right)$ reaches some $\left(x,0\right)$
or $\left(x,\kappa_{x}\right)$, it then stays at this point for a
while (as $c$ itself travels along a matching-arc) and then leaves
in a backtracking move. For such a cycle we glue the boundary of a
disc simply along the cycle. We call the disc a \emph{type-o }disc.

\emph{The resulting (oriented) surface obtained by gluing in these
discs is $\Sigma_{\m}$. Its CW-complex structure is the CW-complex
we described for $\Sigma_{\m}^{(1)}$ together with the glued discs
as two-cells.}

Note that each disc $D$ of $\Sigma_{\m}$ has its boundary mapped
to a nullhomotopic curve in $\wedger$; for type-$(x,k)$ discs this
is because the boundary is mapped into an interval, and for type-$o$
discs this is due to our condition that $f_{\m}^{(1)}$ never traverses
a point $(x,k)$ when restricted to the boundary of the disc. Hence
we can extend $f_{\m}^{(1)}$ to a continuous function from the disc
to $\overline{R_{o}}$ or $\overline{R_{x,k}}$ respectively, such
that only the points in the preimage of $\{(x,k)\}$ are in $\Sigma_{\m}^{(1)}$.
In other words, the extended function maps the interior of the disc
to either $R_{o}$ or some $R_{x,k}$. We pick such an extension for
each disc and this defines $f_{\m}$ on all of $\Sigma_{\m}$. Note
that the extension at every disk is unique up to homotopy, and therefore
$f_{\m}$ in general is well-defined up to homotopy. \emph{This concludes
the construction of the pair $(\Sigma_{\m},f_{\m})$. }This construction
is illustrates in Figure \ref{fig:Constructing-a-surface}.

\begin{figure}[t]
\centering{}\includegraphics[scale=0.75]{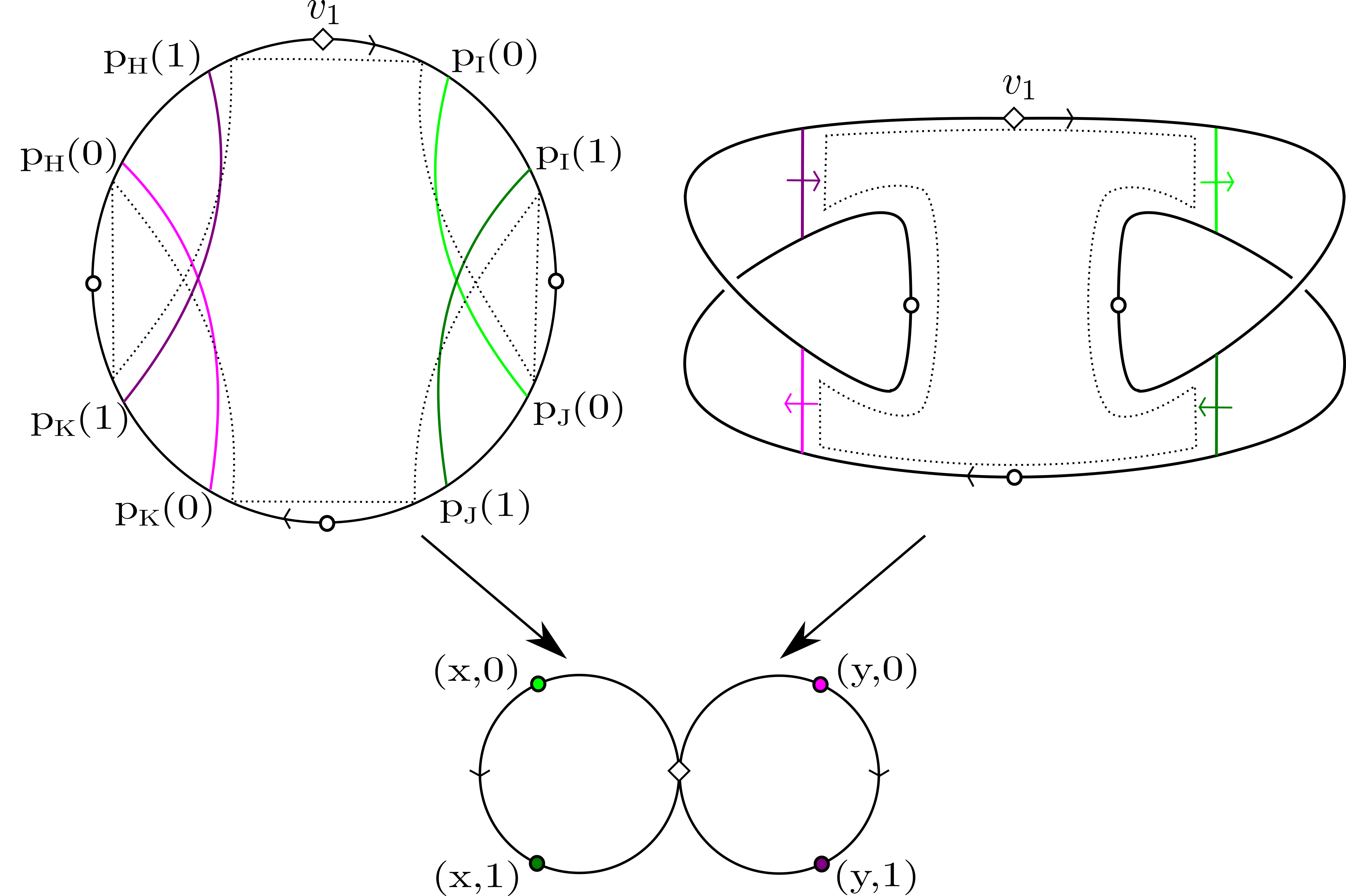}\caption{Constructing a pair $(\Sigma_{\protect\m},f_{\protect\m})$ in the
case that $r=2$, $\ell=1$, and $w_{1}=x^{2}y^{2}$. We write $x,y$
for the generators of $\protect\F_{2}$. Here $\kappa_{x}=\kappa_{y}=1$,
so there are two matchings per generator. Also, $L_{x}=L_{y}=1$ so
$\left|M_{x}\right|=\left|M_{y}\right|=1$ and the matchings are dictated.
The top left picture shows the matching arcs attached to $C(w_{1})$.
The top right picture shows the constructed $\Sigma_{\protect\m}$.
The dotted line in both top pictures follows along the matching arcs
to show how the single type-$o$ disc should be glued to $\Sigma_{\protect\m}^{(1)}$.
We have called the boundary intervals $I,J,K,H$. The colors of the
points marked in $\vee^{2}S^{1}$ in the bottom picture match with
the matching arcs in their preimage under $f_{\protect\m}$. The arrows
in the top right picture mark the normal direction on the matching
arcs coming from the fixed orientation on the petals of the bouquet.\label{fig:Constructing-a-surface}}
\end{figure}

\subsubsection{The Euler characteristic of the constructed surface $\Sigma_{\protect\m}$}

In this section we calculate the Euler characteristic of $\Sigma_{\m}$.
Recall the definition of $\rho$ from (\ref{eq:rho-def}) and that
$\m\in\match^{\k}(\wl)$ is a collection of matchings.
\begin{lem}
\label{lem:Euler-char-calc}We have 
\begin{equation}
\chi(\Sigma_{\m})=-\sum_{x\in B}L_{x}+\#\{\text{type-\ensuremath{o} discs of \ensuremath{\Sigma_{\m}\}}}-\sum_{x\in B,0\leq k<\k_{x}}\rho(m_{x,k},m_{x,k+1}).\label{eq:chi-formula}
\end{equation}
\end{lem}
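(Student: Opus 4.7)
The plan is to compute $\chi(\Sigma_{\m})=V-E+F$ directly from the explicit $CW$-structure and then identify the number of type-$(x,k)$ discs with a quantity involving $\rho$.

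First I would read off $V$ and $E$ from the description in Section I of the construction: there are $V=2\sum_{x\in B}L_x(\k_x+1)$ vertices (the points $p_I(k)$) and $E=3\sum_{x\in B}L_x(\k_x+1)$ edges (the boundary arcs and the matching arcs). The $2$-cells come in two flavors: type-$(x,k)$ discs (for each $x\in B$ and each $0\le k<\k_x$) and type-$o$ discs. Plugging in,
\[
\chi(\Sigma_{\m})=-\sum_{x\in B}L_x(\k_x+1)+\#\{\text{type-$(x,k)$ discs}\}+\#\{\text{type-$o$ discs}\}.
\]
Comparing with the target formula \eqref{eq:chi-formula}, it therefore suffices to prove
\[
\#\{\text{type-$(x,k)$ discs}\}=\sum_{x\in B}L_x\k_x-\sum_{x\in B,\,0\le k<\k_x}\rho(m_{x,k},m_{x,k+1}).
\]

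Next I would fix $x\in B$ and $0\le k<\k_x$ and count type-$(x,k)$ discs. By construction, these are in bijection with the connected components of the cycle graph $(f_{\m}^{(1)})^{-1}(\overline{R_{x,k}})$. Following this preimage around, one sees it is the $4$-valent graph that alternates between (i) matching arcs of $m_{x,k}$ joining points $p_I(k),p_{I'}(k)$, (ii) sub-intervals of $I$ from $p_I(k)$ to $p_I(k+1)$, (iii) matching arcs of $m_{x,k+1}$ joining $p_J(k+1),p_{J'}(k+1)$, and (iv) sub-intervals joining back to a $p_{(\cdot)}(k)$. Collapsing each pair of interval sub-pieces at a fixed $I$ to a single vertex identifies this set of cycles with the connected components of the $2$-regular graph on $\I_x\cong[2L_x]$ whose edges are the pairs of $m_{x,k}$ together with the pairs of $m_{x,k+1}$.

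Finally I would invoke the standard bijective correspondence between connected components of the union of two matchings on $[2L_x]$ and half the cycles of the product $\pi_{m_{x,k}}\pi_{m_{x,k+1}}$: a cycle of length $2s$ in the double-matching graph corresponds to two cycles of length $s$ of the product permutation (this can be checked by tracking how the permutation alternates between the two matchings around a cycle). Therefore the number of components is
\[
\tfrac{1}{2}c\!\left(\pi_{m_{x,k}}\pi_{m_{x,k+1}}\right)=\tfrac{1}{2}\bigl(2L_x-\lvert\pi_{m_{x,k}}\pi_{m_{x,k+1}}\rvert\bigr)=L_x-\rho(m_{x,k},m_{x,k+1}),
\]
using the definition $\rho=\tfrac{1}{2}|\pi_m\pi_{m'}|$ from \eqref{eq:rho-def} and the identity $|\sigma|=N-c(\sigma)$ for $\sigma\in S_N$. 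Summing over $x$ and $0\le k<\k_x$ and substituting into the $V-E+F$ formula gives \eqref{eq:chi-formula}.

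The only nontrivial step is the cycle count in the last paragraph; everything else is bookkeeping of the $CW$-structure. I expect the main obstacle, if any, will simply be writing the alternating-cycle argument cleanly enough to justify the factor of $\tfrac{1}{2}$ between cycles of the double matching graph and cycles of $\pi_{m_{x,k}}\pi_{m_{x,k+1}}$; no new ideas beyond this classical combinatorial identity are required.
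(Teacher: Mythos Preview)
Your proposal is correct and follows essentially the same route as the paper: both compute $V-E+F$ from the explicit cell counts, reduce to showing $\#\{\text{type-}(x,k)\text{ discs}\}=L_x-\rho(m_{x,k},m_{x,k+1})$, and establish this via the 2-to-1 correspondence between cycles of $\pi_{m_{x,k}}\pi_{m_{x,k+1}}$ and type-$(x,k)$ discs (equivalently, components of the double-matching graph). Your intermediate description via the double-matching graph on $\I_x$ is just a slightly more explicit phrasing of the same observation the paper makes in one sentence.
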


\begin{proof}
We have
\begin{align}
\chi(\Sigma_{\m}) & =V-E+F\nonumber \\
 & =2\sum_{x\in B}L_{x}(\k_{x}+1)-3\sum_{x\in B}L_{x}(\k_{x}+1)+\#\{\text{type-\ensuremath{o} discs of \ensuremath{\Sigma_{\m}\}}}+\nonumber \\
 & \sum_{x\in B,0\leq k<\k_{x}}\#\{\text{type-\ensuremath{(x,k)} discs of \ensuremath{\Sigma_{\m}\}}}\nonumber \\
 & =-1\sum_{x\in B}L_{x}(\k_{x}+1)+\#\{\text{type-\ensuremath{o} discs of \ensuremath{\Sigma_{\m}\}}}+\sum_{x\in B,0\leq k<\k_{x}}\#\{\text{type-\ensuremath{(x,k)} discs of \ensuremath{\Sigma_{\m}\}}}.\label{eq:chi-sigma-1}
\end{align}
Recall from $\S$\ref{subsec:Matchings-and-permutations} that we
can associate to $m\in M_{k}$ a permutation $\pi_{m}$ in $S_{2k}$
all of whose cycles have length 2. As one follows along the boundary
of some type-$\left(x,k\right)$ disc, namely, along one of the disjoint
cycles in $\left(f_{\m}^{(1)}\right)^{-1}\left(\overline{R_{x,k}}\right)$,
the matching arcs alternate between arcs mapped to $\left(x,k\right)$
and arcs mapped to $\left(x,k+1\right)$, and so the boundary corresponds
to a cycle of $\pi_{m_{x,k}}\pi_{m_{x,k+1}}$. In the other orientation
of the boundary of the same disk we get a second cycle of $\pi_{m_{x,k}}\pi_{m_{x,k+1}}$,
so every type-$(x,k)$ disc corresponds to exactly two cycles of $\pi_{m_{x,k}}\pi_{m_{x,k+1}}$.
On the other hand, every cycle of $\pi_{m_{x,k}}\pi_{m_{x,k+1}}$
corresponds a to unique type-$\left(x,k\right)$ disc. For example,
if $L_{x}=3$ and, through the identification of $\I_{x}$ with $\left[2L_{x}\right]$,
$\pi_{m_{x,k}}=\left(12\right)\left(34\right)\left(56\right)$ and
$\pi_{m_{x,k+1}}=\left(13\right)\left(24\right)\left(56\right)$,
then $\pi_{m_{x,k}}\pi_{m_{x,k+1}}=\left(14\right)\left(23\right)\left(5\right)\left(6\right)$,
and $\left(\Sigma_{\m},f_{\m}\right)$ contains two type $\left(x,k\right)$-discs:
one corresponding to the cycles $\left(14\right)$ and $\left(23\right)$
of $\pi_{m_{x,k}}\pi_{m_{x,k+1}}$, and one corresponding to the cycles
$\left(5\right)$ and $\left(6\right)$.

Since for any $\sigma\in S_{2L_{x}}$ we have $|\sigma|=2L_{x}-\#\{\text{cycles of \ensuremath{\sigma}\}}$,
we have 
\begin{align*}
\rho(m_{x,k},m_{x,k+1}) & =\frac{\left|\pi_{m_{x,k}}\pi_{m_{x,k+1}}\right|}{2}=L_{x}-\frac{\#\{\text{cycles of }\text{\ensuremath{\pi_{m_{x,k}}\pi_{m_{x,k+1}}}\}}}{2}\\
 & =L_{x}-\#\{\text{type-\ensuremath{(x,k)} discs of \ensuremath{\Sigma_{\m}\}.}}
\end{align*}
Therefore $\#\{\text{type-\ensuremath{(x,k)} discs of \ensuremath{\Sigma_{\m}\}}}=L_{x}-\rho(m_{x,k},m_{x,k+1})$
and from (\ref{eq:chi-sigma-1}) we get
\begin{align}
\chi(\Sigma_{\m}) & =-1\sum_{x\in B}L_{x}(\k_{x}+1)+\#\{\text{type-\ensuremath{o} discs of \ensuremath{\Sigma_{\m}\}}}+\sum_{x\in B,0\leq k<\k_{x}}L_{x}-\rho(m_{x,k},m_{x,k+1})\nonumber \\
 & =-\sum_{x\in B}L_{x}+\#\{\text{type-\ensuremath{o} discs of \ensuremath{\Sigma_{\m}\}}}-\sum_{x\in B,0\leq k<\k_{x}}\rho(m_{x,k},m_{x,k+1}).
\end{align}
\end{proof}

\subsection{The anatomy of $\protect\surfaces^{*}(\protect\wl)$}

The passage between the algebraic and topological view on $\surfaces^{*}(\wl)$
stems from the following result of Culler \cite[1.1]{CULLER}:
\begin{lem}[Culler]
\label{lem:culler}An element $w\in\F_{r}$ is a product of $g$
commutators (resp.~$s$ squares) if and only if there exists an admissible
map $(\Sigma,f)$ for $w$ with $\Sigma$ a genus-$g$ orientable
surface (resp.~a connected sum of $s$ copies of $\R P^{2}$) with
a disc removed.
\end{lem}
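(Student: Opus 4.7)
The plan is to use the standard one-relator presentations of fundamental groups of compact surfaces with one boundary component, and exploit the fact that $\wedger$ is a $K(\F_r,1)$.

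I will first recall the relevant topological facts. Let $\Sigma_g$ denote the compact orientable surface of genus $g$ with one boundary component, and let $\Sigma_s'$ denote the connected sum of $s$ copies of $\R P^2$ with a disc removed. Each of these surfaces admits a standard $CW$-structure with one $0$-cell, $2g$ (respectively $s$) $1$-cells, and a single $2$-cell; after removing the open $2$-cell, what remains deformation retracts onto the $1$-skeleton, which is a wedge of $2g$ (respectively $s$) circles. Consequently $\pi_1(\Sigma_g)$ is free on generators $a_1,b_1,\ldots,a_g,b_g$, and $\pi_1(\Sigma_s')$ is free on generators $a_1,\ldots,a_s$. Picking the marked point $v_1$ to be the image of the $0$-cell and orienting the boundary compatibly with the standard polygonal identifications, the class $[\delta_1]\in\pi_1(\Sigma,v_1)$ is represented by
\[
[a_1,b_1]\cdots [a_g,b_g]\quad\text{in the orientable case,}\qquad a_1^2\cdots a_s^2\quad\text{in the non-orientable case.}
\]

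For the $(\Leftarrow)$ direction, suppose an admissible map $(\Sigma,f)$ exists with $\Sigma$ of the given homeomorphism type. By the definition of admissibility, $f_*([\delta_1])=w$ in $\pi_1(\wedger,o)\cong\F_r$. Applying $f_*$ to the boundary word above and setting $A_i:=f_*(a_i)$, $B_i:=f_*(b_i)$, one reads off $w=[A_1,B_1]\cdots[A_g,B_g]$ (respectively $w=A_1^2\cdots A_s^2$), as desired.

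For the $(\Rightarrow)$ direction, suppose $w=[A_1,B_1]\cdots[A_g,B_g]$ in $\F_r$. Let $\Sigma=\Sigma_g$ with its standard $CW$-structure, basepoint $v_1$ on the boundary, and generators $a_i,b_i$ of $\pi_1(\Sigma,v_1)$ as above. Define a homomorphism $\phi:\pi_1(\Sigma,v_1)\to\F_r\cong\pi_1(\wedger,o)$ by $\phi(a_i)=A_i$ and $\phi(b_i)=B_i$; this is possible because $\pi_1(\Sigma,v_1)$ is free on the $a_i,b_i$. Since $\wedger$ is an Eilenberg--MacLane space $K(\F_r,1)$, standard obstruction theory produces a continuous map $f:\Sigma\to\wedger$ with $f(v_1)=o$ realizing $\phi$ on fundamental groups. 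Then $f_*([\delta_1])=\phi([\delta_1])=[A_1,B_1]\cdots[A_g,B_g]=w$, so $(\Sigma,f)$ is admissible. The square case is identical, using $\Sigma_s'$ and the relation $a_1^2\cdots a_s^2$.

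The only mild subtlety is fixing conventions so that the boundary class in the standard presentation is indeed $\prod[a_i,b_i]$ or $\prod a_i^2$ with the orientation prescribed in Definition \ref{def:admissible-maps-and-surfaces}; this is a matter of choosing the identification of the boundary of the fundamental polygon. No essential obstacle arises, and the proof is complete once these conventions are pinned down.
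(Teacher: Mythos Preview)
Your proof is correct and follows the standard argument: identify the fundamental group of the one-holed surface as free on the standard generators with the boundary class equal to the surface word, and then use that $\wedger$ is a $K(\F_r,1)$ to realize any homomorphism by a map. The paper does not give its own proof of this lemma; it simply cites Culler \cite[1.1]{CULLER}, so there is nothing to compare against beyond noting that your argument is essentially the classical one underlying Culler's result.
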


We first describe when $\surfaces^{*}(\wl)$ is empty.

\begin{lem}
\label{lem:when is surfaces nonempty}The following are equivalent:
\begin{enumerate}
\item $\surfaces^{*}(\wl)$ is non-empty.
\item The unsigned exponent of each $x\in B$ in $\wl$ is even.
\item $w_{1}w_{2}\cdots w_{\ell}$ can be written as a product of squares
in $\F_{r}$.
\end{enumerate}
\end{lem}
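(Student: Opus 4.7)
The plan is to prove the cycle (1)$\Rightarrow$(2)$\Leftrightarrow$(3)$\Rightarrow$(1), with Culler's lemma (Lemma~\ref{lem:culler}) as the only substantive external input.

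For (1)$\Rightarrow$(2), I would use mod-$2$ homology. Given an admissible map $(\Sigma,f)$, the class $[\partial\Sigma]$ is the boundary of the $\Z/2$-fundamental class of $\Sigma$ and hence vanishes in $H_{1}(\Sigma;\Z/2)$, so $f_{*}[\partial\Sigma]=0$ in $H_{1}(\wedger;\Z/2)\cong(\Z/2)^{r}$. The $x$-coordinate of $f_{*}[\partial\Sigma]$ is $\sum_{k=1}^{\ell}(\text{signed exponent sum of }x\text{ in }w_{k})\bmod 2$, which is the total unsigned exponent of $x$ in $\wl$ modulo $2$ (since $\pm 1\equiv 1 \bmod 2$), so all unsigned exponents are even.

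For (2)$\Leftrightarrow$(3), let $N$ be the subgroup of $\F_{r}$ generated by all squares. It is normal because $(gwg^{-1})^{2}=gw^{2}g^{-1}$, and $\F_{r}/N$ is an abelian group of exponent $2$ generated by the $r$ images of the $x_{i}$, hence a quotient of $(\Z/2)^{r}$. Conversely, the abelianization-mod-$2$ map $\F_{r}\to(\Z/2)^{r}$ kills every square, so factors through $\F_{r}/N$; the two resulting surjections compose to the identity on generators in each direction, giving an isomorphism $\F_{r}/N\cong(\Z/2)^{r}$. Hence a word lies in $N$ iff every unsigned exponent of a generator in it is even; applying this to $w_{1}\cdots w_{\ell}$ gives the equivalence.

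For (3)$\Rightarrow$(1), I would build $(\Sigma,f)$ by a pair-of-pants construction. Let $P$ be a disc with $\ell$ open sub-discs removed, with outer boundary $\delta_{0}$ and inner boundaries $\delta_{1},\ldots,\delta_{\ell}$; pick marked points $v_{k}\in\delta_{k}$, a base point $v\in\delta_{0}$, and based paths $p_{k}$ from $v$ to $v_{k}$ satisfying $[\delta_{0}]=\prod_{k=1}^{\ell}p_{k}[\delta_{k}]p_{k}^{-1}$ in $\pi_{1}(P,v)$. Define $f_{P}\colon P\to\wedger$ on the natural spine by sending each $[\delta_{k}]$ (based at $v_{k}$) to $w_{k}$ and each $p_{k}$ to the constant path at $o$, and extend to $P$; then $f_{P}|_{\delta_{0}}$ traces $w_{1}\cdots w_{\ell}$. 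By (3), $(w_{1}\cdots w_{\ell})^{-1}$ is also a product of squares (invert and reverse the squares), so Culler's lemma produces a possibly non-orientable surface $\Sigma_{0}$ with a single boundary circle together with a map $f_{0}\colon\Sigma_{0}\to\wedger$ whose restriction to $\partial\Sigma_{0}$ traces $(w_{1}\cdots w_{\ell})^{-1}$. Gluing $\Sigma_{0}$ to $P$ along $\delta_{0}$ with matching boundary maps (the orientation flip of $\delta_{0}$ between the two pieces absorbs the sign inversion) produces $(\Sigma,f)\in\surfaces^{*}(\wl)$. The only real care needed anywhere in this argument is the orientation and base-point bookkeeping in the final gluing, which is unobstructed because admissibility constrains only the values of $f_{*}[\delta_{k}]$.
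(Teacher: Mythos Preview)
Your proof is correct. The implication (1)$\Rightarrow$(2) via mod-$2$ homology is essentially identical to the paper's argument. The differences lie elsewhere.

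For (2)$\Leftrightarrow$(3), you give a self-contained algebraic argument identifying the quotient of $\F_{r}$ by the subgroup generated by squares with $(\Z/2\Z)^{r}$; the paper instead reduces to the single-word case and invokes Culler's lemma (Lemma~\ref{lem:culler}) together with the already-established (1)$\Leftrightarrow$(2). Your route here is more elementary and does not require Culler. For closing the cycle, you prove (3)$\Rightarrow$(1) by a pair-of-pants gluing capped with a Culler surface, whereas the paper proves (2)$\Rightarrow$(1) directly using the explicit matching construction of $\S$\ref{subsec:Construction-of-maps}. The paper's choice is natural in context because that construction is already built and is needed throughout the paper anyway; your construction is cleaner in isolation but introduces an auxiliary surface that plays no further role. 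Both are fine, and the bookkeeping in your gluing step is indeed unobstructed for the reason you give.
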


\begin{proof}
This is very close to standard facts, in particular the results appearing
in Culler \cite{CULLER}, but we give a proof here for completeness.

We first prove that the first statement implies the second. Consider
the map
\[
h:\pi_{1}(\wedger,o)\to H_{1}(\wedger,\Z/2\Z)
\]
 which induces a map $H:\F_{r}\to(\Z/2\Z)^{|B|}$. Then $H$ maps
$x_{i}\in B$ to the corresponding standard generator $e_{i}$ of
$(\Z/2\Z)^{|B|}$. If $[(\Sigma,f)]\in\surfaces^{*}(\wl)$ and $\delta_{1},\ldots,\delta_{\ell}$
are the boundary components of $\Sigma$ then we have
\[
0=\sum_{i=1}^{\ell}[\delta_{i}]\in H_{1}(\Sigma,\Z/2\Z).
\]
Applying the map induced by $f$ on homology to this equation implies
$\sum_{i=1}^{\ell}H(w_{i})=0$, which means the unsigned exponent
of each $x\in B$ in $\wl$ is even.

Conversely, if the unsigned exponent of each $x\in B$ in $\wl$ is
even, then the construction of $\S$\ref{subsec:Construction-of-maps}
shows that $\surfaces^{*}(\wl)$ is non-empty. This proves that the
second statement implies the first.

The second statement is easily seen to be equivalent to the statement
that the unsigned exponent of each $x\in B$ in $w_{1}w_{2}\ldots w_{\ell}$
is even. By what we have proved, this is equivalent to $\surfaces^{*}(w_{1}w_{2}\ldots w_{\ell})\neq\emptyset$.
Then by Lemma \ref{lem:culler}, this is equivalent to $w_{1}w_{2}\ldots w_{\ell}$
being either the product of commutators or the product of squares.
But since any commutator is a product of squares (see $\S$\ref{subsec:Consequences-of-the}),
this proves that the second statement is equivalent to the third statement.
\end{proof}
For given $\wl$, let \marginpar{$\chi_{\max}$}\label{chi_max}
\[
\chi_{\max}(\wl)\stackrel{\mathrm{def}}{=}\max\left\{ \chi\left(\Sigma\right)\::\:[(\Sigma,f)]\in\surfaces^{*}(\wl)\right\} ,
\]
or $-\infty$ if $\surfaces^{*}(\wl)$ is empty. Note that if $\surfaces^{*}(\wl)$
is non-empty the maximum clearly exists. Indeed, any connected component
of $\Sigma$ with $[(\Sigma,f)]\in\surfaces^{*}(\wl)$ must contain
a boundary component, so $\Sigma$ has at most $\ell$ components.
Moreover, any connected component of $\Sigma$ has Euler characteristic
at most $1$, so $\chi(\Sigma)\leq\ell$.
\begin{lem}
\label{lem:chi-max-non-pos}Assume that all $w_{j}\neq1$. Then $\chi_{\max}(\wl)\leq0$.
Moreover, an admissible map $[(\Sigma,f)]\in\surfaces^{*}(\wl)$ satisfies
$\chi(\Sigma)=0$ if and only if all the connected components of $\Sigma$
are annuli or Möbius bands.
\end{lem}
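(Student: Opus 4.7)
The plan is to argue componentwise using the classification of compact surfaces with boundary, together with the admissibility condition that forces the restriction $f|_{\delta_j}$ to represent $w_j \neq 1$ in $\pi_1(\wedger, o)$.

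First, I would recall that by Definition \ref{def:admissible-maps-and-surfaces}, $\Sigma$ has no closed components, so every connected component of $\Sigma$ carries at least one boundary component $\delta_j$. Next, I would invoke the classification of compact surfaces with boundary: a connected compact orientable surface of genus $g$ with $b \geq 1$ boundary components has Euler characteristic $2 - 2g - b$, and a connected non-orientable one that is a connect sum of $g \geq 1$ copies of $\R P^2$ with $b \geq 1$ boundary components has Euler characteristic $2 - g - b$. Solving $\chi = 1$ in each case yields only the disc (orientable, $g=0$, $b=1$), and solving $\chi = 0$ yields only the annulus (orientable, $g=0$, $b=2$) or the Möbius band (non-orientable, $g=1$, $b=1$).

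The key step is to rule out disc components. If some connected component $\Sigma_0$ of $\Sigma$ is a disc, its unique boundary circle is one of the $\delta_j$'s. Since this loop bounds a disc in $\Sigma_0$, its class in $\pi_1(\Sigma_0)$ is trivial, and pushing through $f_*$ we get $w_j = f_*([\delta_j]) = 1$ in $\pi_1(\wedger, o) \cong \F_r$, contradicting the hypothesis that all $w_j \neq 1$. Hence every connected component of $\Sigma$ has $\chi \leq 0$.

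The lemma now follows by additivity: $\chi(\Sigma) = \sum_i \chi(\Sigma_i) \leq 0$, so $\chi_{\max}(\wl) \leq 0$. Equality $\chi(\Sigma) = 0$ forces $\chi(\Sigma_i) = 0$ for every component $\Sigma_i$, which by the classification recalled above means each $\Sigma_i$ is either an annulus or a Möbius band. Conversely, any admissible map whose components are all annuli or Möbius bands has $\chi(\Sigma) = 0$. I do not anticipate a real obstacle here; the only subtlety is making sure the admissibility condition is being used to exclude discs (and implicitly closed components, which are already excluded by definition).
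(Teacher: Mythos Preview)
Your proof is correct and follows essentially the same approach as the paper's: argue componentwise, use the classification of compact surfaces with boundary to see that $\chi=1$ forces a disc and $\chi=0$ forces an annulus or M\"obius band, rule out discs via the admissibility condition (a disc boundary is nullhomotopic, forcing $w_j=1$), and conclude by additivity of $\chi$. The paper's proof is a bit terser but the logic is identical.
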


\begin{proof}
As mentioned above, if $[(\Sigma,f)]\in\surfaces^{*}(\wl)$ then writing
$\Sigma=\cup_{k=1}^{m}\Sigma_{k}$ as a union of connected components,
we have $m\leq\ell$, each $\Sigma_{k}$ has at least one boundary
component, and $\chi(\Sigma_{k})\leq1$. However, $\chi(\Sigma_{k})$
is equal to $1$ only if $\Sigma_{k}$ is a disc, which can only happen
if the $w_{j}$ that marks the boundary component of the disc is $w_{j}=1$.
This is because the boundary component of the disc is nullhomotopic
in the disc. So given all $w_{j}\neq1$, we have $\chi(\Sigma_{k})\leq0$.
Given $\chi(\Sigma)=0$, this implies $\chi$$(\Sigma_{k})=0$ for
$1\leq k\leq m$. Finally, by the classification of surfaces, we have
$\chi(\Sigma_{k})=0$ only if $\Sigma_{k}$ is an annulus or a Möbius
band. Conversely, if all the connected components of $\Sigma$ are
annuli or Möbius bands then obviously $\chi(\Sigma)=0$.
\end{proof}
\begin{lem}
\label{lem:Culler2}If $w\in\F_{r}$ then $\chi_{\max}(w)=1-\min(\sql(w),2\cl(w))$.
\end{lem}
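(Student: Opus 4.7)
The plan is to combine Culler's Lemma (Lemma \ref{lem:culler}) with the classification of compact surfaces having exactly one boundary component. The argument breaks into showing each inequality.

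First I would observe that under our standing assumption $w\neq 1$, any $[(\Sigma,f)]\in\sur^{*}(w)$ has $\Sigma$ connected with exactly one boundary component: by Definition \ref{def:admissible-maps-and-surfaces} $\Sigma$ has $\ell=1$ boundary component and no closed components, and each connected component must contain a boundary component, forcing connectedness. By the classification of compact surfaces with boundary, such a $\Sigma$ is either (a) an orientable surface of genus $g$ with one open disc removed, with $\chi(\Sigma)=1-2g$; or (b) a non-orientable surface, i.e.\ a connected sum of $s$ copies of $\R P^{2}$ with one open disc removed, with $\chi(\Sigma)=1-s$.

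For the upper bound $\chi_{\max}(w)\leq 1-\min(\sql(w),2\cl(w))$, take any $[(\Sigma,f)]\in\sur^{*}(w)$. In case (a), Culler's Lemma gives that $w$ is a product of $g$ commutators, so $g\geq\cl(w)$ and $\chi(\Sigma)=1-2g\leq 1-2\cl(w)$. In case (b), Culler's Lemma gives that $w$ is a product of $s$ squares, so $s\geq\sql(w)$ and $\chi(\Sigma)=1-s\leq 1-\sql(w)$. Either way $\chi(\Sigma)\leq 1-\min(\sql(w),2\cl(w))$.

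For the lower bound, I would use the converse direction of Culler's Lemma. If $\cl(w)=g<\infty$, then there exists $[(\Sigma,f)]\in\sur^{*}(w)$ with $\Sigma$ orientable of genus $g$ with a disc removed, contributing $\chi(\Sigma)=1-2\cl(w)$. Similarly, if $\sql(w)=s<\infty$, then there exists an admissible map with $\Sigma$ non-orientable of genus $s$ with a disc removed, contributing $\chi(\Sigma)=1-\sql(w)$. Taking the maximum yields $\chi_{\max}(w)\geq 1-\min(\sql(w),2\cl(w))$. Finally, if $\sql(w)=\cl(w)=\infty$, then by Lemma \ref{lem:when is surfaces nonempty} we have $\sur^{*}(w)=\emptyset$, so $\chi_{\max}(w)=-\infty$, matching $1-\infty$ on the right-hand side. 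There is no serious obstacle here; the entire argument is a clean application of Culler's Lemma together with the fact that the classification of surfaces with a single boundary component splits exactly according to orientability into the two genus-types Culler's Lemma handles.
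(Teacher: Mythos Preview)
Your proof is correct and follows essentially the same approach as the paper: both arguments reduce immediately to Culler's Lemma together with the classification of connected surfaces with one boundary component and the computation of their Euler characteristics. The paper's proof is simply a one-sentence version of what you wrote out; your handling of the two inequalities and the $\sql(w)=\infty$ case makes explicit what the paper leaves implicit.
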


\begin{proof}
Since any $[(\Sigma,f)]\in\surfaces^{*}(w)$ has $\Sigma$ connected,
Culler's Lemma \ref{lem:culler} is easily seen to imply the result,
since the Euler characteristic of a genus $g$ orientable surface
with one boundary component is $1-2g$ and the Euler characteristic
of a connected sum of $s$ copies of $\R P^{2}$ with a disc removed
is $1-s$.
\end{proof}

\subsection{Proof of Theorem \ref{thm:main-theorem} when $\protect\surfaces^{*}(\protect\wl)$
is empty}
\begin{lem}
\label{lem:integral-is-zero}If the total unsigned exponent of some
$x\in B$ in $\wl$ is not even, then $\tr_{\wl}^{G}(n)=0$ for any
$n\geq1$ and for $G=\O,\Sp$.
\end{lem}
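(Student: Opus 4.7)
The plan is to exploit translation invariance of Haar measure under multiplication by the central element $-\Id$, which lies in both $\O(n)$ and $\Sp(n)$: for $\O(n)$ this is obvious, and for $\Sp(n)$ one checks $(-\Id_{2n})^*(-\Id_{2n}) = \Id_{2n}$ and $(-\Id_{2n})J(-\Id_{2n})^T = J$.

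First I would fix some $x = x_j \in B$ whose total unsigned exponent $N := \sum_{k=1}^{\ell} N_k$ in $\wl$ is odd, where $N_k = \#\{1\leq u \leq |w_k|\ :\ j_u^k = j\}$ is the number of occurrences of $x_j^{\pm 1}$ in $w_k$. Then I would consider the change of variables $g_j \mapsto -g_j$ on $G(n)^r$, leaving the other $g_i$ fixed. Because $-\Id \in G(n)$ and Haar measure is invariant under translation by any fixed group element, this transformation preserves the measure $\mu_n^r$.

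On the other hand, for each word $w_k$, every occurrence of $g_j^{\pm 1}$ in the product $w_k(g_1,\ldots,g_r)$ becomes $(-g_j)^{\pm 1} = -g_j^{\pm 1}$, so substituting $-g_j$ for $g_j$ multiplies the matrix $w_k(g_1,\ldots,g_r)$ by $(-1)^{N_k}$. Taking traces and multiplying over all $k$ gives
\[
\prod_{k=1}^{\ell}\mathrm{tr}\bigl(w_k(g_1,\ldots,-g_j,\ldots,g_r)\bigr) = (-1)^{N}\prod_{k=1}^{\ell}\mathrm{tr}\bigl(w_k(g_1,\ldots,g_r)\bigr).
\]
Integrating and using the invariance of $\mu_n^r$ under $g_j \mapsto -g_j$ would yield $\trwl^G(n) = (-1)^{N}\trwl^G(n)$, and since $N$ is odd the lemma follows.

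There is no real obstacle here; the only point requiring a (trivial) check is that $-\Id_{2n} \in \Sp(n)$, which I would verify in one line from the defining equations of $\Sp(n)$ given in the introduction.
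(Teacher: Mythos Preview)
Your proof is correct and is essentially identical to the paper's own argument: both use the substitution $g_j\mapsto -g_j$ via the central element $-\Id\in G(n)$ together with translation invariance of Haar measure to obtain $\trwl^{G}(n)=(-1)^{N}\trwl^{G}(n)$ with $N$ odd. The paper likewise remarks that the only extra check for $G=\Sp$ is that $-\Id_{2n}\in\Sp(n)$.
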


\begin{proof}
Suppose, for example, that $x=x_{1}$. Let $u$ be the (odd) total
unsigned exponent of $x_{1}$ in $\wl$. Note that minus the identity,
$-I_{n}$, is in the center of $\O(n)$. By the translation-invariance
of Haar measure, we have
\begin{align*}
\trwl^{\O}(n) & =\int_{\O(n)^{r}}\mathrm{tr}(w_{1}(g_{1},\ldots,g_{r}))\ldots\mathrm{tr}(w_{\ell}(g_{1},\ldots,g_{r}))d\mu_{n}(g_{1})\ldots d\mu_{n}(g_{r})\\
 & =\int_{\O(n)^{r}}\mathrm{tr}(w_{1}(-I_{n}g_{1},\ldots,g_{r}))\ldots\mathrm{tr}(w_{\ell}(-I_{n}g_{1},\ldots,g_{r}))d\mu_{n}(g_{1})\ldots d\mu_{n}(g_{r})\\
 & =(-1)^{u}\int_{\O(n)^{r}}\mathrm{tr}(w_{1}(g_{1},\ldots,g_{r}))\ldots\mathrm{tr}(w_{\ell}(g_{1},\ldots,g_{r}))d\mu_{n}(g_{1})\ldots d\mu_{n}(g_{r})\\
 & =-\int_{\O(n)^{r}}\mathrm{tr}(w_{1}(g_{1},\ldots,g_{r}))\ldots\mathrm{tr}(w_{\ell}(g_{1},\ldots,g_{r}))d\mu_{n}(g_{1})\ldots d\mu_{n}(g_{r})\\
 & =-\trwl^{\O}(n).
\end{align*}
The proof for $G=\Sp$ is the same, using that $-I_{2n}$ is in the
center of $\Sp(n)$.
\end{proof}
\begin{proof}[Proof of Theorem \ref{thm:main-theorem} when $\surfaces^{*}(\wl)$
is empty]
 If $\surfaces^{*}(\wl)$ is empty, then by Lemma \ref{lem:when is surfaces nonempty},
the total unsigned exponent of some $x\in B$ in $\wl$ is not even.
Therefore by Lemma \ref{lem:integral-is-zero}, $\trwl^{\O}(n)=0$
for any $n\geq1$.
\end{proof}

\subsection{Incompressible and almost-incompressible maps}

Certain types of admissible maps $(\Sigma,f)$ will pay a special
role later in the paper (see $\S$\ref{subsec:The--Euler-characteristic-is the usual one}).
These are the \emph{incompressible }and \emph{almost-incompressible}
maps.
\begin{defn}
\label{def:incompressible and almost-incompressible}Let $(\Sigma,f)$
be an admissible map. We say that $(\Sigma,f)$ is \emph{compressible
}if there is a non-nullhomotopic simple closed curve $c\subset\Sigma$
such that $f(c)$ is nullhomotopic in $\wedger$. We call $c$ a \emph{compressing
curve. }We say $(\Sigma,f)$ is \emph{incompressible }if it is not
compressible. If every compressing curve is non-generic in the sense
of Definition \ref{def:generic curve} below, namely, if every compressing
curve is either one-sided\footnote{\label{fn:one-sided curve}A curve is \emph{one-sided} if a thickening
of the curve is a Möbius band, or equivalently, cutting along the
curve results in a surface with only one new boundary component. } or bounds a Möbius band, we say that $\left(\Sigma,f\right)$ is
\emph{almost-incompressible}.
\end{defn}

It is clear that if $(\Sigma,f)$ is (almost-) incompressible and
$(\Sigma',f')\approx(\Sigma,f)$ then $(\Sigma',f')$ is also (almost-)
incompressible. So there is a well-defined notion of an element $[(\Sigma,f)]\in\surfaces^{*}(\wl)$
being (almost-) incompressible.
\begin{lem}
\label{lem:max-chi-implies-incompressible}Let $[(\Sigma,f)]\in\surfaces^{*}(\wl)$.
If $\chi(\Sigma)=\chi_{\max}(\wl)$ then $[(\Sigma,f)]$ is incompressible,
and if $\chi(\Sigma)=\chi_{\max}(\wl)-1$ then $[(\Sigma,f)]$ is
almost-incompressible.
\end{lem}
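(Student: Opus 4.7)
The plan is to prove both assertions by contrapositive via a \emph{compression} operation that trades a compressing curve for a gain in Euler characteristic. Given $[(\Sigma,f)]\in\surfaces^{*}(\wl)$ with a compressing curve $c$, I will construct a new admissible map $[(\hat{\Sigma},\hat{f})]\in\surfaces^{*}(\wl)$ whose Euler characteristic exceeds $\chi(\Sigma)$ by at least $2$ when $c$ is \emph{generic}, meaning two-sided and not bounding a Möbius band, and by at least $1$ when $c$ is non-generic. Because $f(c)$ is nullhomotopic in $\wedger$ by the very definition of a compressing curve, $f$ will extend continuously across any discs introduced by the construction.

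For the construction I distinguish three cases according to a regular neighborhood of $c$. (i) If $c$ is two-sided and does not bound a Möbius band, cut $\Sigma$ along $c$ to obtain a surface $\Sigma'$ with two extra boundary circles and glue a disc to each; cutting preserves $\chi$ while gluing two discs adds $2$, so before any discarding, $\chi(\hat{\Sigma})=\chi(\Sigma)+2$. (ii) If $c$ is one-sided, excise an open Möbius-band regular neighborhood $M$ of $c$, whose boundary $\partial M$ is a two-sided curve in $\Sigma$ mapping under $f$ to a loop homotopic to $f(c)^{2}$ and therefore nullhomotopic; glue a single disc to $\partial M$, gaining $\chi(D)-\chi(M)=+1$. (iii) If $c$ is two-sided and bounds an embedded Möbius band $M\subset\Sigma$, excise $M$ and glue a disc to $c$, again gaining $+1$. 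In all three cases, extend $f$ across the glued discs using the nullhomotopies of their boundary images in $\wedger$.

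To force $[(\hat{\Sigma},\hat{f})]\in\surfaces^{*}(\wl)$, discard any closed components that appear; since the original boundary components $\delta_{1},\dots,\delta_{\ell}$ and the homotopy classes of $f$ along them are untouched, the conditions of Definition \ref{def:admissible-maps-and-surfaces} are preserved. The one subtlety is that this discarding must not erase the Euler characteristic gain. In case (i), a closed component appears exactly when $c$ separates off a subsurface $\Sigma_{i}$ of $\Sigma$ that contains no original boundary component; essentiality of $c$ rules out $\Sigma_{i}$ being a disc, and the generic hypothesis rules out $\Sigma_{i}$ being a Möbius band, so by the classification of surfaces $\chi(\Sigma_{i})\leq -1$ and the capped closed surface has $\chi\leq 0$, leaving the net gain at least $+2$. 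Analogous checks take care of cases (ii) and (iii). The lemma then follows: if $\chi(\Sigma)=\chi_{\max}(\wl)$, any compressing curve, generic or not, would contradict maximality, so $(\Sigma,f)$ is incompressible; if $\chi(\Sigma)=\chi_{\max}(\wl)-1$, a generic compressing curve would produce an admissible map with $\chi\geq\chi_{\max}(\wl)+1$, so every compressing curve must be non-generic, which is precisely almost-incompressibility. The main obstacle will be the bookkeeping in case (i) when $c$ separates $\Sigma$, to confirm that the closed components created upon capping indeed have non-positive Euler characteristic and therefore preserve the full $+2$ gain.
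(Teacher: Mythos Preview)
Your proof is correct and follows essentially the same compression argument as the paper. The only difference is organizational: the paper splits into the cases $b=1$ (one-sided $c$) and $b=2$ (two-sided $c$), handling your case (iii) within the $b=2$ analysis by noting that the discarded closed component is $\R P^{2}$ with $\chi=1$, whereas you treat the Möbius-bounding case by direct excision; the Euler-characteristic bookkeeping and conclusions are identical.
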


\begin{proof}
Denote $\chi_{\max}\stackrel{\mathrm{def}}{=}\chi_{\max}\left(\wl\right)$
and assume that $\chi\left(\Sigma\right)\ge\chi_{\max}-1$. If $(\Sigma,f)$
is compressible, let $c\subset\Sigma$ be a compressing curve. We
can cut $\Sigma$ along $c$ to obtain a new surface $\Sigma_{\cut}$
with either \textbf{$b=1$ }or \textbf{$b=2$} new boundary components.
Then we can cap discs on any new boundary components of $\Sigma_{\cut}$
to obtain a new surface $\Sigma'$. Moreover, since $f(c)$ was nullhomotopic,
it is possible to extend $f$ across these discs to obtain a new pair
$(\Sigma',f')$ with $\chi(\Sigma')=\chi(\Sigma)+b$.

If $b=1$, then $(\Sigma',f')$ is admissible, since we must have
cut along a one-sided curve and hence did not create any new connected
components. Since $\chi(\Sigma')=\chi(\Sigma)+1$, we must have $\chi(\Sigma)=\chi_{\max}-1$.

If $b=2$ and every connected component of $\Sigma'$ has boundary,
then the pair $(\Sigma',f')$ is admissible and $\chi\left(\Sigma'\right)=\chi\left(\Sigma\right)+2$,
in contradiction to $\chi\left(\Sigma\right)\ge\chi_{\max}-1$. However,
in $\Sigma'$ we might have created at most one closed component $S$,
in which case the pair $(\Sigma',f')$ is not admissible. In this
case we can simply delete $S$ to obtain an admissible map $(\Sigma'',f'')$.
Since $c$ was non-nullhomotopic, $S$ is not a sphere, hence $\chi(S)\leq1$
and $\chi(\Sigma'')=\chi(\Sigma')-\chi(S)\geq\chi(\Sigma')-1=\chi(\Sigma)+1$
and therefore $\chi\left(\Sigma\right)\le\chi_{\max}-1$. Furthermore,
if $c$ was generic, $S$ is also not $\R P^{2}$, and then $\chi(S)\leq0$
and $\chi\left(\Sigma\right)\le\chi_{\max}-2$, a contradiction.
\end{proof}

\section{A combinatorial Laurent series for $\protect\trwl^{G}(n)$\label{sec:A-combinatorial-formula}}

Since we have proved Theorem \ref{thm:main-theorem} when $\surfaces^{*}(\wl)=\emptyset$,
we assume the contrary for the rest of the paper, and hence in view
of Lemma \ref{lem:when is surfaces nonempty} we assume that the total
unsigned exponent of each $x$ in $\wl$ is even. As in Section \ref{subsec:Construction-of-maps},
we write $2L_{x}$ for the total unsigned exponent of $x$ in $\wl$.

\subsection{The Weingarten calculus}

Denote by $\langle\bullet,\bullet\rangle=\langle\bullet,\bullet\rangle_{\O}$
the standard inner form on $\mathbf{R}^{n}$, and by $\langle\bullet,\bullet\rangle_{\Sp}$
the symplectic form on $\CC^{2n}$ given by 
\[
\langle v,w\rangle_{\Sp}\stackrel{\mathrm{def}}{=}v^{T}Jw
\]
(recall (\ref{eq:J})). We let\marginpar{$e_{i}$} $\{e_{i}\}$ be
the standard basis of $\mathbf{R}^{n}$ or $\CC^{2n}$. The \emph{Weingarten
calculus }allows one to interpret integrals of products of matrix
coefficients in $G(n)$, for $G=\O,\Sp$, in terms of matchings. The
source of the Weingarten calculus is Brauer-Schur-Weyl duality between
$G$ and an appropriate Brauer algebra.

Let $\Q(n)$ denote the ring of rational functions of $n$ with rational
coefficients. The following Theorem was proved for $G=\O$ by Collins
and \'{S}niady in \cite[Cor.~3.4]{CS}. The corresponding theorem
for $G=\Sp$ had its proof outlined by Collins and \'{S}niady in \cite[Thm.~4.1 and following discussion]{CS}.
The theorem was subsequently precisely stated and proved by Collins
and Stolz in \cite[Prop.~3.2]{CoSt}; see also Matsumoto \cite[Thm.~2.4]{MAT}.
Recall that $M_{k}$ marks the set of matchings on $\left[2k\right]$.
\begin{thm}
\label{thm:weingarten-integration}For $G=\O,\Sp$, there are unique,
computable, functions
\[
\wg_{k}^{G}:M_{k}\times M_{k}\to\Q(n)
\]
with the following properties. For $m_{1},m_{2}\in M_{k}$ and $n\in\Z_{\ge1}$,
let us write $\wg_{k}^{G}(m_{1},m_{2};n)$ for the evaluation of the
rational function $\wg_{k}^{G}\left(m_{1},m_{2}\right)$ at $n$.
Assume that $n\geq k$ if $G=\O$ and $2n\geq k$ if $G=\Sp$. We
have
\[
\int_{G(n)}g_{i_{1}j_{1}}\ldots g_{i_{2k}j_{2k}}d\mu_{n}(g)=\sum_{m_{1},m_{2}\in M_{k}}\delta_{{\bf i},m_{1}}^{G}\delta_{{\bf j},m_{2}}^{G}\wg_{k}^{G}(m_{1},m_{2};n).
\]
Here ${\bf i}=(i_{1},\ldots,i_{2k})$, ${\bf j}=(j_{1},\ldots,j_{2k})$,
and
\[
\delta_{\mathbf{i},m}^{G}\stackrel{\mathrm{def}}{=}\prod_{r=1}^{k}\langle e_{i_{m_{(2r-1)}}},e_{i_{m_{(2r)}}}\rangle_{G}
\]
where $m_{(j)}$ are as in (\ref{eq:m-order1}). The integral of the
product of an odd number of matrix coefficients is $0$.
\end{thm}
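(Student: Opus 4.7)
The plan is to prove this via Schur--Weyl--Brauer duality and invariant theory, following the approach of Collins--Śniady and Collins--Stolz. First I would identify the integral on the left-hand side as a matrix coefficient of the orthogonal projection $P_G$ onto the $G(n)$-invariants inside $V^{\otimes 2k}$, where $V = \CC^n$ for $G = \O$ and $V = \CC^{2n}$ for $G = \Sp$. Concretely, by the elementary identity
\[
\int_{G(n)} g_{i_1 j_1}\cdots g_{i_{2k}j_{2k}}\, d\mu_n(g) \;=\; \bigl\langle P_G\bigl(e_{j_1}\!\otimes\cdots\otimes e_{j_{2k}}\bigr),\; e_{i_1}\!\otimes\cdots\otimes e_{i_{2k}}\bigr\rangle,
\]
the integrand is the $(\mathbf{i},\mathbf{j})$-matrix entry of $P_G$ in the standard basis. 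The vanishing for an odd number of coefficients follows immediately from the fact that $-\mathrm{Id} \in G(n)$, so the integrand changes sign under $g \mapsto -g$.

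Next I would describe the invariant space explicitly. By the first fundamental theorem of invariant theory for $\O$ and $\Sp$, the space $\bigl((\CC^n)^{\otimes 2k}\bigr)^{\O(n)}$ is spanned by the ``matching vectors''
\[
v_m^{\O} \;\stackrel{\mathrm{def}}{=}\; \sum_{\mathbf{i}:\,\delta_{\mathbf{i},m}^{\O}=1} e_{i_1}\otimes\cdots\otimes e_{i_{2k}}, \qquad m \in M_k,
\]
and similarly for $\Sp$, with $\delta^{\Sp}_{\mathbf{i},m}$ incorporating the antisymmetry of $\langle \cdot,\cdot\rangle_{\Sp}$. The key structural input here is Brauer's theorem, which shows that the Brauer algebra on $k$ strands surjects onto $\mathrm{End}_G(V^{\otimes 2k})$; the invariant subspace of $V^{\otimes 2k}$ is then spanned by the images of its diagrammatic basis, which are indexed by matchings. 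Under the hypothesis $n \geq k$ (respectively $2n \geq k$), these $v_m^G$ are moreover linearly independent.

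The third step is to invert the Gram matrix. Let $\mathrm{Gr}_G(m_1, m_2) = \langle v_{m_1}^G, v_{m_2}^G \rangle$; a direct combinatorial calculation shows that $\mathrm{Gr}_G(m_1,m_2)$ depends only on $|\pi_{m_1}\pi_{m_2}|$ and is a polynomial in $n$, equal to $n^{k-\rho(m_1,m_2)}$ up to a sign in the $\Sp$ case. Since the projection $P_G$ can be written as
\[
P_G \;=\; \sum_{m_1,m_2 \in M_k} \wg_k^G(m_1,m_2)\, v_{m_1}^G \otimes (v_{m_2}^G)^{\vee},
\]
where $(\wg_k^G)$ is defined as the inverse of $\mathrm{Gr}_G$ on the span of the matching vectors, evaluating in the standard basis gives the claimed formula. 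Uniqueness of $\wg_k^G$ as a rational function in $n$ follows from the uniqueness of this inverse, together with the fact that $\mathrm{Gr}_G$ is a matrix of polynomials in $n$ with non-zero determinant as a rational function (this determinant is explicitly computed in the literature, but one only needs its non-vanishing for generic $n$).

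The main obstacle in writing out the proof in full is verifying the linear independence of the $v_m^G$ in the stated range (and extending $\wg_k^G$ to a rational function valid even in small $n$, where the $v_m^G$ become linearly dependent and one must pass to a pseudo-inverse interpretation). For $G = \O$ this goes back to Brauer and is classical; for $G = \Sp$ some sign bookkeeping arising from the alternating form is needed, which is the reason the $\Sp$ case was completed later by Collins--Stolz and Matsumoto. Since the statement is standard and appears in the cited literature, I would ultimately just cite \cite{CS,CoSt,MAT} rather than reproduce the argument.
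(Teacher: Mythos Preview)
Your proposal is correct and matches the paper's treatment: the paper does not prove this theorem at all but simply cites it as a known result due to Collins--\'Sniady \cite{CS}, Collins--Stolz \cite{CoSt}, and Matsumoto \cite{MAT}. The sketch you give (projection onto $G(n)$-invariants, Brauer's description of the invariant space via matching vectors, and inversion of the Gram matrix) is exactly the argument in those references, so your final decision to cite rather than reproduce is precisely what the paper does.
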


The function $\wg_{k}^{G}$ is called the \emph{Weingarten function
of $G$}. Although here we take Theorem \ref{thm:weingarten-integration}
as the definition of the Weingarten functions, it is certainly worth
pointing out that the papers \cite{CS,MAT} contain explicit formulas
for the Weingarten functions. It follows from these formulas that
for any $m_{1},m_{2}\in M_{k}$, the rational function $\wg_{k}^{G}(m_{1},m_{2})$
can be computed by an (explicit) algorithm in a finite number of steps.

The following lemma of Matsumoto \cite[\S 2.3.2]{MAT} shows that
the Weingarten functions of $\O$ and $\Sp$ are related in a simple
way.
\begin{lem}
\label{lem:connection-of-Weingarten}For $n\in\Z_{\geq0}$ with $2n\geq k$
and $m_{1},m_{2}\in M_{k}$
\[
\wg_{k}^{\Sp}(m_{1},m_{2};n)=(-1)^{k}\sign(\sigma_{m_{1}}\sigma_{m_{2}}^{-1})\wg_{k}^{\O}(m_{1},m_{2};-2n).
\]
\end{lem}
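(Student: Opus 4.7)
The plan is to prove the identity by exploiting a classical characterization of both Weingarten functions as inverse Gram matrices, and noticing that the required transformation is a diagonal conjugation compatible with matrix inversion.

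First, I would recall (or derive from Theorem \ref{thm:weingarten-integration} applied to tensor-invariant vectors) that for $G=\O$ or $\Sp$, the Weingarten function $\wg_k^G(\cdot,\cdot;n)$ is characterized as the inverse of the Gram matrix
\[
G_k^G(m_1,m_2;n) \;=\; \sum_{\mathbf{i}} \delta_{\mathbf{i},m_1}^G \, \delta_{\mathbf{i},m_2}^G,
\]
indexed by $m_1,m_2 \in M_k$. An elementary calculation gives $G_k^{\O}(m_1,m_2;n) = n^{\ell(m_1,m_2)}$, where $\ell(m_1,m_2)$ is the number of loops in the multigraph on vertex set $[2k]$ with edge set $m_1 \sqcup m_2$. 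The symplectic form $\langle \cdot,\cdot \rangle_{\Sp}$ is alternating, which yields the analogous formula $G_k^{\Sp}(m_1,m_2;n) = \epsilon(m_1,m_2)\,(2n)^{\ell(m_1,m_2)}$ for a sign $\epsilon(m_1,m_2) \in \{\pm 1\}$ accumulated by tallying the alternating-form signs around each loop.

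Second, and this is the combinatorial heart of the argument, I would establish the Gram-matrix identity
\[
G_k^{\Sp}(m_1,m_2;n) \;=\; (-1)^k \sign(\sigma_{m_1} \sigma_{m_2}^{-1}) \, G_k^{\O}(m_1,m_2;-2n).
\]
After extracting the factor $(-1)^{\ell(m_1,m_2)}$ from $(-2n)^{\ell(m_1,m_2)}$ on the right, this reduces to the purely combinatorial identity
\[
\epsilon(m_1,m_2) \,(-1)^{\ell(m_1,m_2)} \;=\; (-1)^k \sign(\sigma_{m_1}\sigma_{m_2}^{-1}).
\]
Verifying this requires a careful analysis loop-by-loop: each loop in $m_1 \sqcup m_2$ has an even number of edges alternating between the two matchings, and one must reconcile the signs contributed at each edge by the symplectic form with the parities of the ordered-list permutations $\sigma_{m_1},\sigma_{m_2}$ defined via the canonical ordering (\ref{eq:m-order1}).

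Third, I would observe that the multiplier $c(m_1,m_2) := (-1)^k \sign(\sigma_{m_1}\sigma_{m_2}^{-1})$ factors as $c(m_1,m_2) = \chi(m_1)\chi(m_2)$ where $\chi(m) := i^k \sign(\sigma_m)$, using that $\sign$ is its own inverse on $S_{2k}$. Hence Step 2 says $G^{\Sp}(n) = D \, G^{\O}(-2n) \, D$ as matrices indexed by $M_k \times M_k$, where $D = \mathrm{diag}(\chi(m))$. Since matrix inversion commutes with two-sided conjugation by a diagonal matrix, inverting both sides produces $\wg^{\Sp}(\cdot,\cdot;n) = D^{-1} \wg^{\O}(\cdot,\cdot;-2n) D^{-1}$, which unwinds to the desired equality $\wg_k^{\Sp}(m_1,m_2;n) = c(m_1,m_2)\, \wg_k^{\O}(m_1,m_2;-2n)$. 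The main obstacle is Step 2, the sign bookkeeping tying $\epsilon(m_1,m_2)$ to $\sign(\sigma_{m_1}\sigma_{m_2}^{-1})$ and $\ell(m_1,m_2)$; the passage from Gram matrices to Weingarten functions in Step 3 is then purely formal.
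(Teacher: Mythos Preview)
The paper does not supply its own proof of this lemma; it simply attributes the result to Matsumoto \cite[\S 2.3.2]{MAT}. So there is no in-paper argument to compare against, only the cited source.

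Your outline is correct and is in fact the standard route to this identity (and essentially what Matsumoto does). The characterization of $\wg_k^G$ as the inverse of the Gram matrix of Brauer invariants is exactly how these functions are introduced in \cite{CS}, and the factorization $c(m_1,m_2)=\chi(m_1)\chi(m_2)$ with $\chi(m)=i^k\sign(\sigma_m)$ is the right observation to pass from the Gram identity to the Weingarten identity by diagonal conjugation; since $c(m_1,m_2)\in\{\pm1\}$, the sign survives inversion unchanged. One small point worth making explicit in your Step~3 is that the inversion is carried out at the level of rational functions of the dimension parameter, so that evaluating $\wg_k^{\O}$ at $-2n$ is meaningful and the matrix identity $G^{\O}\cdot\wg^{\O}=I$ persists there. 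As you correctly flag, the only real content is the loop-by-loop sign count in Step~2, namely $\epsilon(m_1,m_2)(-1)^{\ell(m_1,m_2)}=(-1)^k\sign(\sigma_{m_1}\sigma_{m_2}^{-1})$; this is elementary but requires care with the canonical ordering convention (\ref{eq:m-order1}) for $\sigma_m$.
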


The following theorem of Collins and \'{S}niady gives the full Laurent
expansion for $\wg_{k}^{\O}(m_{1},m_{2})$ at $n=\infty$ and estimates
the order of vanishing. In view of Lemma \ref{lem:connection-of-Weingarten},
one has analogous results for $G=\Sp$.
\begin{thm}
\label{thm:laurent-exp-of-weingarten}Fix $m_{1},m_{2}\in M_{k}$.
\begin{enumerate}
\item \cite[Lem.~3.12]{CS} We have 
\[
\wg_{k}^{\O}(m_{1},m_{2};n)=n^{-k}\sum_{\ell\geq0}\sum_{\substack{m_{1}=m'_{0},m'_{1},\ldots,m'_{\ell}=m_{2}\in M_{k}\\
m'_{i}\neq m'_{i+1}
}
}\left(-1\right)^{\ell}n^{-\rho(m'_{0},m'_{1})-\ldots-\rho(m'_{\ell-1},m'_{\ell})}.
\]
The sum is absolutely convergent for $n\geq k$.
\item \cite[Thm.~3.13]{CS}
\[
\wg_{k}^{\O}(m_{1},m_{2};n)=O_{n\to\infty}(n^{-k-\rho(m_{1},m_{2})}).
\]
\end{enumerate}
\end{thm}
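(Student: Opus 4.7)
I would identify the $M_{k}\times M_{k}$ matrix $[\wg_{k}^{\O}(m_{1},m_{2};n)]$ as the inverse of an explicit Gram matrix, then expand the inverse as a Neumann series. The Laurent expansion of part~(1) then drops out of the walk interpretation of matrix powers, and part~(2) follows by the triangle inequality for $\rho$.

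\textbf{Step 1 (Gram matrix).} Brauer--Schur--Weyl duality identifies the space of $\O(n)$-invariants in $(\CC^{n})^{\otimes 2k}$ with the span of the vectors $v_{m}\in(\CC^{n})^{\otimes 2k}$ with coordinates $(v_{m})_{\mathbf{i}}=\delta_{\mathbf{i},m}^{\O}$, indexed by $m\in M_{k}$. A direct count (contracting along the pairs of $m_{1}$ and $m_{2}$ gives one free index per cycle of the superposition) yields the Gram matrix
\[
G_{k}(m_{1},m_{2};n)\stackrel{\mathrm{def}}{=}\sum_{\mathbf{i}}\delta_{\mathbf{i},m_{1}}^{\O}\delta_{\mathbf{i},m_{2}}^{\O}=n^{k-\rho(m_{1},m_{2})},
\]
using that the number of loops in the superposition multigraph of $m_{1}$ and $m_{2}$ equals $k-\rho(m_{1},m_{2})$. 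Tracing through Theorem \ref{thm:weingarten-integration} shows that the characterising property of $\wg_{k}^{\O}$ is precisely the matrix identity $G_{k}(n)\cdot\wg_{k}^{\O}(n)=I$, and for $n\geq k$ the classical Brauer/Weyl result that the $v_{m}$ are linearly independent makes $G_{k}(n)$ invertible, so $\wg_{k}^{\O}(n)=G_{k}(n)^{-1}$.

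\textbf{Step 2 (Neumann series).} Factor out the diagonal: $G_{k}(n)=n^{k}(I-D(n))$ where
\[
D(n)(m_{1},m_{2})=\begin{cases}-n^{-\rho(m_{1},m_{2})} & m_{1}\neq m_{2},\\ 0 & m_{1}=m_{2}.\end{cases}
\]
Inverting formally yields $\wg_{k}^{\O}(n)=n^{-k}\sum_{l\geq 0}D(n)^{l}$. The $(m_{1},m_{2})$-entry of $D(n)^{l}$ is a sum over walks $m_{1}=m'_{0},m'_{1},\ldots,m'_{l}=m_{2}$ with $m'_{i}\neq m'_{i+1}$, each contributing $(-1)^{l}n^{-\rho(m'_{0},m'_{1})-\cdots-\rho(m'_{l-1},m'_{l})}$. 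This is exactly the formula claimed in part~(1); there is nothing to do beyond expanding the matrix product.

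\textbf{Step 3 (Convergence at $n\geq k$).} This is the main technical point. For fixed $m_{1}$, the row sum of $|D(n)|$ is $\sum_{m'\neq m_{1}}n^{-\rho(m_{1},m')}$, and one needs to control the iterated row sums as well. I would bound the number of matchings at $\rho$-distance exactly $d$ from a fixed $m_{1}$ using the combinatorial description of unit $\rho$-steps (each such step rewires two pairs $\{a,b\},\{c,d\}$ of the matching into one of the two alternative pairings), and then combine these bounds to show that the full double sum $\sum_{l\geq 0}\sum_{\text{walks}}n^{-\sum\rho(m'_{i},m'_{i+1})}$ converges absolutely whenever $n\geq k$.

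\textbf{Step 4 (Order of vanishing).} The map $m\mapsto\pi_{m}$ sends $\rho$ to half the Cayley word metric on the conjugacy class of fixed-point-free involutions in $S_{2k}$, so $\rho$ satisfies the triangle inequality
\[
\rho(m'_{0},m'_{1})+\rho(m'_{1},m'_{2})+\cdots+\rho(m'_{l-1},m'_{l})\geq\rho(m_{1},m_{2})
\]
whenever $m'_{0}=m_{1}$ and $m'_{l}=m_{2}$. Consequently every term in the double sum from Step~2 is $O\!\left(n^{-k-\rho(m_{1},m_{2})}\right)$, and the absolute convergence established in Step~3 transfers this bound to the full series, giving part~(2).

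The main obstacle is Step~3 at the sharp threshold $n\geq k$. Soft operator-norm arguments give absolute convergence only for $n$ larger than some constant depending on $k$; pulling this down to the optimal $n\geq k$ requires the careful combinatorial estimate on the growth of $\rho$-balls in $M_{k}$ alluded to above.
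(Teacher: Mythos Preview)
The paper does not prove this theorem; it is quoted verbatim from Collins--\'Sniady \cite{CS} and used as a black box. Your outline is correct and is essentially the argument given in \cite{CS}: identify $\wg_{k}^{\O}$ as the inverse of the Gram matrix $G_{k}(m_{1},m_{2};n)=n^{k-\rho(m_{1},m_{2})}$, expand $(n^{k}(I-D(n)))^{-1}$ as a Neumann series to get the walk expansion of part~(1), and use that $\rho$ is a genuine metric (since $\rho(m_{1},m_{2})=\tfrac{1}{2}d_{\mathrm{Cayley}}(\pi_{m_{1}},\pi_{m_{2}})$) to bound every walk from below by $\rho(m_{1},m_{2})$, giving part~(2). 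Your identification of Step~3 as the only delicate point is accurate: getting absolute convergence all the way down to $n\geq k$ (rather than merely $n\gg k$) does require the combinatorial estimate on $\rho$-balls that you allude to, and this is exactly what \cite[Lemma~3.12]{CS} does.
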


\subsection{A rational function form of $\protect\trwl^{\protect\O}$\label{subsec:A-rational-function-form-for-orth}}

We wish to find a rational function of $n$ which agrees with the
integral
\begin{equation}
\trwl^{\O}(n)=\int_{\O(n)^{r}}\mathrm{tr}(w_{1}(g_{1},\ldots,g_{r}))\ldots\mathrm{tr}(w_{\ell}(g_{1},\ldots,g_{r}))d\mu_{n}(g_{1})\ldots d\mu_{n}(g_{r})\label{eq:trace-fn-definition-O}
\end{equation}
for sufficiently large $n$, depending only on $\wl$. We denote by
$\k\equiv1$ the tuple that assigns $1$ to each $x\in B$. We now
define \marginpar{$N$}
\begin{equation}
N=N(\wl)\stackrel{\mathrm{def}}{=}\max\{L_{x}\::\:x\in B\}.\label{eq:N0}
\end{equation}

\begin{thm}
\label{thm:orth-wg-exp}For $n\geq N$, we have
\begin{align}
\trwl^{\O}(n) & =\sum_{\m\in\match^{\k\equiv1}}n^{\#\{\text{type-\ensuremath{o} discs of \ensuremath{\Sigma_{\m}}\}}}\prod_{x\in B}\wg_{L_{x}}^{\O}(m_{x,0},m_{x,1};n).\label{eq:orth-wg-exp}
\end{align}
\end{thm}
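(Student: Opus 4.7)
The strategy is to expand the product of traces as a sum over index tuples, apply the Weingarten formula (Theorem \ref{thm:weingarten-integration}) generator-by-generator, and identify the resulting output with a sum indexed by $\match^{\k\equiv1}$ weighted by the claimed power of $n$. Write each $\mathrm{tr}(w_k(g_1,\ldots,g_r)) = \sum_{\mathbf{i}^k} \prod_{u=1}^{|w_k|} (g_{j_u^k})_{r_u^k, c_u^k}$, where $\mathbf{i}^k = (i_1^k,\ldots,i_{|w_k|}^k) \in [n]^{|w_k|}$ are the junction indices and, using $(g_x^{-1})_{a,b} = (g_x)_{b,a}$, the pair $(r_u^k, c_u^k)$ equals $(i_u^k, i_{u+1}^k)$ when $\varepsilon_u^k = +1$ and $(i_{u+1}^k, i_u^k)$ when $\varepsilon_u^k = -1$. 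Multiplying over $k$, each generator $g_x$ appears in exactly $2L_x$ matrix coefficients, one per $I \in \I_x$. Since $g_1,\ldots,g_r$ are independent, the Haar integral factorizes over $B$, and Theorem \ref{thm:weingarten-integration} (applicable since $n \geq N \geq L_x$) produces a sum over pairs $(m_{x,0}, m_{x,1}) \in M_{L_x}^2$, with $m_{x,0}$ the row matching, $m_{x,1}$ the column matching, and weight $\wg_{L_x}^{\O}(m_{x,0}, m_{x,1}; n)$. The resulting collection $\{(m_{x,0}, m_{x,1})\}_{x \in B}$ is precisely an element $\m \in \match^{\k\equiv1}$.

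The remaining task is to show that, after evaluating the Weingarten delta factors, the surviving sum over index tuples contributes exactly $n^{\#\{\text{type-}o\text{ discs of }\Sigma_\m\}}$. The key identification, independent of $\varepsilon_u^k$, is that the row index of the letter on the interval $I$ is the junction index on the $p_I(0)$-side of $I$ in $C(w_k)$, and the column index is the one on the $p_I(1)$-side; hence $m_{x,0}$ and $m_{x,1}$ correspond exactly to the $p_I(0)$- and $p_I(1)$-matchings from Section \ref{subsec:Construction-of-amap-on-surface}. Now the subcomplex $(f_\m^{(1)})^{-1}(\overline{R_o}) \subset \Sigma_\m^{(1)}$ consists of the matching arcs (which map to marker points $(x,k) \in \overline{R_o}$) together with the junction arcs (those components of $\delta\Sigma_\m \setminus \{p_I(k)\}$ containing an inter-interval junction, mapping through $o$); the interior boundary arcs instead map into the regions $R_{x,0}$ and are excluded. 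At every vertex $p_I(k)$ this subcomplex has degree exactly $2$ (one junction arc meeting one matching arc), so it decomposes as a disjoint union of simple cycles, each automatically satisfying the non-traversal condition since no edge of the subcomplex maps into any $R_{x,k}$; these cycles are exactly the boundaries of the type-$o$ discs. Each junction arc carries one junction-index variable, and each matching arc is precisely a Weingarten equality between the junction indices of its two adjacent junction arcs; the equivalence classes of junction indices are therefore in bijection with the cycles of the subcomplex, so the free index sum contributes the factor $n^{\#\{\text{type-}o\text{ discs of }\Sigma_\m\}}$.

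The main obstacle is the second paragraph's bookkeeping: establishing the sign-independent correspondence between the Weingarten row/column matchings and the $p_I(0)/p_I(1)$ matchings of the construction, and then translating the Weingarten equivalence on junction indices into the cycle structure of $(f_\m^{(1)})^{-1}(\overline{R_o})$. Once this identification is in hand, assembling the Weingarten weights $\prod_x \wg_{L_x}^{\O}(m_{x,0},m_{x,1};n)$ with the factor $n^{\#\{\text{type-}o\text{ discs of }\Sigma_\m\}}$ and summing over $\m \in \match^{\k\equiv1}$ yields the stated formula.
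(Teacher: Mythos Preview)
Your proposal is correct and follows essentially the same route as the paper's proof: expand the traces into matrix coefficients, use $g^{-1}=g^{T}$ to write every factor as $g(I)_{\a(p_I(0)),\a(p_I(1))}$, apply the Weingarten formula generator-by-generator, swap the sums, and count the surviving index assignments. Your verification that the row/column indices correspond to the $p_I(0)$/$p_I(1)$ sides regardless of the sign $\varepsilon_u^k$, and your degree-$2$ analysis of $(f_\m^{(1)})^{-1}(\overline{R_o})$ to identify its cycles with the type-$o$ discs, make explicit exactly the step the paper compresses into the single sentence ``the condition $\a\vdash\m$ holds if and only if for every type-$o$ disc of $\Sigma_\m$, $\a$ is constant on the set of $p_I(k)$ that meet the boundary of that disc.''
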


\begin{proof}
Assume $n\geq N$. We assume each $w_{j}=x_{i_{1}^{j}}^{\varepsilon_{1}^{j}}x_{i_{2}^{j}}^{\varepsilon_{2}^{j}}\ldots x_{i_{|w_{j}|}^{j}}^{\varepsilon_{|w_{j}|}^{j}}$
is written as a reduced word, as in (\ref{eq:combinatorial-word}),
and aim to evaluate (\ref{eq:trace-fn-definition-O}). Hence we have
\begin{equation}
\mathrm{tr}(w_{j}(g_{1},\ldots,g_{r}))=\sum_{q_{1}^{j},\ldots,q_{|w_{j}|}^{j}}\left(g_{i_{1}^{j}}^{\varepsilon_{1}^{j}}\right)_{q_{1}^{j}q_{2}^{j}}\left(g_{i_{2}^{j}}^{\varepsilon_{2}^{j}}\right)_{q_{2}^{j}q_{3}^{j}}\ldots\left(g_{i_{|w_{j}|}^{j}}^{\varepsilon_{|w_{j}|}^{j}}\right)_{q_{|w_{j}|}^{j}q_{1}^{j}}.\label{eq:trace-word-expansion}
\end{equation}
It will be helpful to think about the indices appearing in the above
expression in the following alternative way. Recall from $\S$\ref{subsec:map-on-circle-from-word}
that we constructed a collection of marked circles $\bigcup_{j=1}^{\ell}C(w_{j})$
and a map $\gamma:\bigcup_{j=1}^{\ell}C(w_{j})\to\wedger$. As in
$\S$\ref{subsec:Markings-of-points-on-wedge}, mark two points $(x,0)$
and $(x,1)$ on the circle in $\wedger$ corresponding to $x\in B$
for each such $x$. As in $\S$\ref{subsec:Construction-of-amap-on-surface},
mark points $p_{I}(0)$ and $p_{I}(1)$ on $\bigcup_{j=1}^{\ell}C(w_{j})$.
Recall the collection of intervals $\I_{x}=\I_{x}(\wl)$, and that
we have identified $\I_{x}$ with $[2L_{x}]$ for each $x\in B$.
Let $\I=\cup_{x\in B}\I_{x}$. For $x\in B$ we will let $\I_{x}^{\pm}$
be the collection of intervals that correspond to occurrences of $x^{\pm}$
in $\wl$. We denote $\I^{\pm}=\cup_{x\in B}\I_{x}^{\pm}$.\marginpar{$\protect\I,\protect\I_{x}^{\pm},\protect\I^{\pm}$}

For each $x$ and $k\in\{0,1\}$ we accordingly identify the points
$\{p_{I}(k):I\in\I_{x}\}$ with the set $[2L_{x}]$. This allows us
to think of the choices of $q_{1}^{j},\ldots,q_{|w_{j}|}^{j}$ over
the various $w_{j}$ as an assignment 
\[
\a:\{p_{I}(k)\::\:k=0,1\:\}\to[n]
\]
with the property that two immediately adjacent marked points $p,q$
in $\bigcup_{j=1}^{\ell}C(w_{j})$ that are not of the form $\{p,q\}=\{p_{I}(0),p_{I}(1)\}$
(i.e., internal to some interval) must have $\a(p)=\a(q)$. Write\marginpar{$\protect\A$}
$\A=\A(\wl)$ for the collection of all such assignments $\a$.

To each interval $I\in I_{x}$ we attach the group element $g(I)=g_{i}$
where $x=x_{i}\in B$. Using that $g^{-1}=g^{T}$, we can rewrite
the product over $k$ of the expressions in (\ref{eq:trace-word-expansion})
as
\begin{equation}
\prod_{j=1}^{\ell}\mathrm{tr}(w_{j}(g_{1},\ldots,g_{r}))=\sum_{\mathbf{a}\in\A}\prod_{x\in B}\prod_{I\in\I_{x}}g(I)_{\mathbf{a}(p_{I}(0))\mathbf{\mathbf{a}}(p_{I}(1))}.\label{eq:word-as-matrix-coefs}
\end{equation}
For $\mathbf{a}\in\A$ and $\m=\{(m_{x,0},m_{x,1})\}_{x\in B}\in\match^{\k\equiv1}$
a collection of matchings, we say\marginpar{$\protect\a\vdash\protect\m$}
$\a\vdash\m$ if whenever $m_{x,i}$ matches $p_{I}(i)$ and $p_{J}(i)$,
these points are assigned the same index by $\a$. In this case we
also say that $m_{x,0}$ and $m_{x,1}$ respect $\mathbf{a}$.

If $x=x_{i}$, a direct consequence of Theorem \ref{thm:weingarten-integration}
is 
\[
\int_{g\in\O(n)}\prod_{I\in\I_{x}}g_{\mathbf{a}(p_{I}(0))\mathbf{\mathbf{a}}(p_{I}(1))}d\mu_{n}(g_{i})=\sum_{\substack{\text{matchings \ensuremath{m_{x,0}} of \ensuremath{\{p_{I}(0)}:\ensuremath{I\in\I_{x}\}} that respect \ensuremath{\mathbf{a}}}\\
\text{matchings \ensuremath{m_{x,1}} of \ensuremath{\{p_{I}(1)}:\ensuremath{I\in\I_{x}\}} that respect \ensuremath{\mathbf{a}}}
}
}\wg_{L_{x}}^{\O}(m_{x,0},m_{x,1};n).
\]
Hence
\begin{equation}
\int_{\O(n)^{r}}\prod_{x\in B}\prod_{I\in\I_{x}}g(I)_{\mathbf{a}(p_{I}(0))\mathbf{\mathbf{a}}(p_{I}(1))}d\mu_{n}(g_{1})\ldots d\mu_{n}(g_{r})=\sum_{\m\in\match^{\k\equiv1}\,:\,\a\vdash\m}\prod_{x\in B}\wg_{L_{x}}^{\O}(m_{x,0},m_{x,1};n).\label{eq:integral-of-product-of-matrix-multivar}
\end{equation}
Reordering summation and integration, we obtain
\begin{align}
\trwl^{\O}(n) & =\sum_{\mathbf{a}\in\A}\sum_{\m\in\match^{\k\equiv1}\,:\,\a\vdash\m}\prod_{x\in B}\wg_{L_{x}}^{\O}(m_{x,0},m_{x,1};n).\nonumber \\
 & =\sum_{\m\in\match^{\k\equiv1}}\#\{\a\in\A\,:\,\a\vdash\m\}\prod_{x\in B}\wg_{L_{x}}^{\O}(m_{x,0},m_{x,1};n).\label{eq:marker}
\end{align}
For fixed $\m$, the condition $\a\vdash\m$ holds if and only if
for every type-$o$ disc of $\Sigma_{\m}$, $\a$ is constant on the
set of $p_{I}(k)$ that meet the boundary of that disc. Hence 
\begin{align*}
\#\{\a & \in\A\,:\,\a\vdash\m\}=n^{\#\{\text{type-\ensuremath{o} discs of \ensuremath{\Sigma_{\m}}\}}}.
\end{align*}
This proves the theorem.
\end{proof}
Theorem \ref{thm:orth-wg-exp} has the following easy corollary that
was stated in the Introduction.
\begin{cor}
\label{cor:orth-rational-function}There is a computable rational
function $\overline{\trwl^{\O}}\in\Q(n)$ such that for $n\geq N$,
$\trwl^{\O}(n)$ is given by evaluating $\overline{\trwl^{\O}}$ at
$n$. 
\end{cor}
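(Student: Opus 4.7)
The plan is to read off the corollary directly from Theorem \ref{thm:orth-wg-exp}, using only that rational functions form a ring and that the sum appearing there is finite. First I would observe that the index set $\match^{\k\equiv 1}$ is finite: it consists of pairs of matchings of $[2L_x]$ for each $x\in B$, and for each $x$ the set $M_{L_x}$ is finite, so $|\match^{\k\equiv 1}| = \prod_{x\in B} |M_{L_x}|^{2} < \infty$. Hence the sum in Theorem \ref{thm:orth-wg-exp} has a finite (and bounded in terms of $\wl$ alone) number of terms.

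Next I would note that each term is a rational function of $n$. The factor $n^{\#\{\text{type-}o\text{ discs of }\Sigma_{\m}\}}$ is a monomial in $n$ with integer exponent (this exponent depends only on $\m$ and $\wl$, not on $n$). By Theorem \ref{thm:weingarten-integration}, each $\wg_{L_x}^{\O}(m_{x,0},m_{x,1})$ is an element of $\Z(n)$, so the product $\prod_{x\in B}\wg_{L_x}^{\O}(m_{x,0},m_{x,1})$ lies in $\Z(n)$, and multiplying by the monomial keeps us in $\Z(n)$. Summing a finite collection of elements of $\Z(n)$ gives an element of $\Z(n)$, so defining
\[
\overline{\trwl^{\O}} \stackrel{\mathrm{def}}{=} \sum_{\m\in\match^{\k\equiv 1}} n^{\#\{\text{type-}o\text{ discs of }\Sigma_{\m}\}}\prod_{x\in B}\wg_{L_x}^{\O}(m_{x,0},m_{x,1}) \ \in\ \Z(n)
\]
produces the desired rational function. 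For $n\geq N$, the hypotheses of Theorem \ref{thm:orth-wg-exp} apply (since $N = \max_{x\in B} L_x$), so evaluating $\overline{\trwl^{\O}}$ at $n$ recovers $\trwl^{\O}(n)$.

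For the computability claim, the finite list of matchings $\match^{\k\equiv 1}$ is clearly enumerable from the words $\wl$ (one simply enumerates all matchings of the sets $[2L_x]$). For each $\m$ one can determine the type-$o$ discs combinatorially from $\Sigma_{\m}^{(1)}$ by tracing the prescribed cycles in the construction of $\S$\ref{subsec:Construction-of-amap-on-surface}. Finally, the remark following Theorem \ref{thm:weingarten-integration} states that each $\wg_{L_x}^{\O}(m_{x,0},m_{x,1})\in\Z(n)$ can be computed explicitly in finitely many steps by the algorithms of \cite{CS,MAT}. Thus every ingredient of $\overline{\trwl^{\O}}$ is effectively computable, and there is no substantive obstacle here: the corollary is a direct bookkeeping consequence of the formula in Theorem \ref{thm:orth-wg-exp} together with the rationality of the Weingarten functions.
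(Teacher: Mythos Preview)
Your proof is correct and follows essentially the same approach as the paper's own argument: the paper's proof is a single sentence observing that the formula in Theorem~\ref{thm:orth-wg-exp} expresses $\trwl^{\O}$ as a finite sum of computable rational functions since $\match^{\k\equiv1}$ is finite. You have simply unpacked this observation in more detail, including the explicit justification of computability.
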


\begin{proof}
The formula given in Theorem \ref{thm:orth-wg-exp} expresses $\trwl^{\O}$
as a finite sum of computable rational functions since $\match^{\k\equiv1}$
is finite.
\end{proof}

\subsection{First Laurent series expansion at infinity\label{subsec:Power-series-expansion}}

Due to Theorem \ref{thm:orth-symp-relation}, we only need to discuss
$\tr_{\wl}^{\O}(n)$ throughout the rest of the paper. The full Laurent
series expansion of $\tr_{\wl}^{\O}(n)$ at $n=\infty$ involves elements
of $\match^{*}(\wl)$ with extra restrictions. Recall that an element
of $\match^{*}=\match^{*}(\wl)$ is, for some $\k=\{\k_{x}\}_{x\in B}\in\Z_{\geq0}^{B}$,
a collection $\m=\{(m_{x,0},\ldots,m_{x,\k_{x}})\}_{x\in B}$ of tuples
of matchings, where $m_{x,i}$ is a matching of $[2L_{x}]$. For any
$\k$ we write\marginpar{$\protect\matchr^{\protect\k}$} $\matchr^{\k}=\matchr^{\k}(\wl)$
for the elements $\m$ of $\match^{\k}$ with the additional constraint
that $m_{x,i}\neq m_{x,i+1}$ for each $0\leq i<\k_{x}$, and similarly
define\marginpar{$\protect\matchr^{*}$} $\matchr^{*}$. We will also
write\marginpar{$|\protect\k|$} $|\k|=\sum_{x\in B}\k_{x}$.
\begin{prop}
\label{prop:First-Laurent-Expansion}For $n\geq N$, $\tr_{\wl}^{\O}(n)$
is given by the following absolutely convergent series:
\begin{align}
\tr_{\wl}^{\O}(n) & =\sum_{\m\in\matchr^{*}}(-1)^{|\k(\m)|}n^{\chi(\Sigma_{\m})}.\label{eq:first-laurent}
\end{align}
\end{prop}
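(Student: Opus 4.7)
The plan is to start from Theorem \ref{thm:orth-wg-exp}, which already expresses $\trwl^{\O}(n)$ as a finite sum over $\m \in \match^{\k\equiv 1}$ of the quantity $n^{\#\{\text{type-}o\text{ discs of }\Sigma_\m\}} \prod_{x \in B} \wg_{L_x}^{\O}(m_{x,0}, m_{x,1}; n)$, and to substitute the Laurent expansion from Theorem \ref{thm:laurent-exp-of-weingarten}(1) for each Weingarten factor.

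For each $x \in B$, that expansion writes $\wg_{L_x}^{\O}(m_{x,0}, m_{x,1}; n)$ as a sum indexed by $l_x \geq 0$ and by sequences $m_{x,0}=m'_{x,0},\,m'_{x,1},\,\ldots,\,m'_{x,l_x}=m_{x,1}$ in $M_{L_x}$ with $m'_{x,i} \neq m'_{x,i+1}$. Combining these expansions over all $x$, the joint data (an $\m$ together with an expansion sequence per $x$) is in natural bijection with tuples $\m' \in \matchr^{*}$: one sets $\k(\m')_x = l_x$, reads off the full sequence $(m'_{x,0},\ldots,m'_{x,\k'_x})$ as the element of $\matchr^{\k'}$, and recovers the underlying $\m$ via $m_{x,0} := m'_{x,0}$ and $m_{x,1} := m'_{x,\k'_x}$. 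Under this reorganization the overall sign is $\prod_{x}(-1)^{l_x} = (-1)^{|\k(\m')|}$, and the exponent of $n$ in the term indexed by $\m'$ becomes
\[
\#\{\text{type-}o\text{ discs of }\Sigma_\m\} \;-\; \sum_{x \in B} L_x \;-\; \sum_{x \in B}\sum_{i=0}^{\k'_x - 1} \rho(m'_{x,i}, m'_{x,i+1}).
\]
By Lemma \ref{lem:Euler-char-calc} applied to $\m'$, this equals $\chi(\Sigma_{\m'})$ \emph{provided} the two disc counts agree, i.e.\ $\#\{\text{type-}o\text{ discs of }\Sigma_\m\} = \#\{\text{type-}o\text{ discs of }\Sigma_{\m'}\}$.

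The main obstacle is precisely this invariance of the type-$o$ disc count under refinement of $\k$ (with extremal matchings fixed). I would establish it directly from the construction in $\S$\ref{subsec:Construction-of-amap-on-surface}: the preimage $(f^{(1)})^{-1}(\overline{R_o})$ only meets the matching arcs at the extremal indices $(x,0)$ and $(x,\k_x)$, since every intermediate matching arc at $(x,k)$ with $0<k<\k_x$ is mapped by $f^{(1)}$ to the point $(x,k)$, which lies in the interior of the closed segment $\overline{R_{x,k-1}} \cup \overline{R_{x,k}}$ and hence outside $\overline{R_o}$. At the two extremal layers the matchings $m'_{x,0}$ and $m'_{x,\k'_x}$ agree, by construction, with $m_{x,0}$ and $m_{x,1}$. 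Hence the combinatorial graph in which the cycles bounding type-$o$ discs are enumerated, together with the ``never traverses $(x,0)$ or $(x,\k_x)$'' condition that selects them, depends only on the word data and the extremal matchings, so the two counts coincide.

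Finally I would address absolute convergence for $n \geq N$. Since $n \geq N = \max_x L_x$, each Weingarten expansion is absolutely convergent by Theorem \ref{thm:laurent-exp-of-weingarten}(1); the product of finitely many absolutely convergent series (indexed by the finite set $B$) is absolutely convergent; and the outer sum over $\match^{\k\equiv 1}$ is finite, so absolute convergence is preserved and the double sum may be reorganized. This reorganization identifies the whole expression with $\sum_{\m' \in \matchr^*}(-1)^{|\k(\m')|}\,n^{\chi(\Sigma_{\m'})}$, establishing Proposition \ref{prop:First-Laurent-Expansion}.
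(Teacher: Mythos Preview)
Your proposal is correct and follows essentially the same route as the paper's proof: start from Theorem~\ref{thm:orth-wg-exp}, plug in the Laurent expansion of Theorem~\ref{thm:laurent-exp-of-weingarten}(1) for each Weingarten factor, reindex the resulting data as an element $\m'\in\matchr^{*}$, and then invoke Lemma~\ref{lem:Euler-char-calc} to identify the exponent of $n$ with $\chi(\Sigma_{\m'})$. The paper handles the invariance of the type-$o$ disc count in a single clause (``since they only depend on the outer matchings''), whereas you spell out explicitly from the construction in \S\ref{subsec:Construction-of-amap-on-surface} why only the extremal matching arcs touch $(f^{(1)})^{-1}(\overline{R_o})$; this is a welcome elaboration, not a different argument.
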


\begin{proof}
Putting the power series expansion in $n^{-1}$ for the orthogonal
Weingarten function given in Theorem \ref{thm:laurent-exp-of-weingarten}
into Theorem \ref{thm:orth-wg-exp} gives
\begin{eqnarray*}
\trwl^{\O}\left(n\right) & = & \sum_{\m\in\match^{\k\equiv1}}n^{\#\{\text{\ensuremath{o}-discs of \ensuremath{\Sigma_{\m}}\}}}\prod_{x\in B}\wg_{L_{x}}^{\O}\left(m_{x,0},m_{x,1};n\right)\\
 & = & \sum_{\m=\{(m_{x,0},m_{x,1})\}_{x\in B}}n^{\#\{\text{\ensuremath{o}-discs of \ensuremath{\Sigma_{\m}}\}}}\prod_{x\in B}n^{-L_{x}}\cdot\\
 &  & \cdot\left\{ \sum_{\kappa_{x}\geq0}\sum_{{\scriptstyle \substack{m_{x,0}=m'_{x,0},\ldots,m'_{x,\k_{x}}=m_{x,1}\in M_{L_{x}}\\
m'_{x,i}\neq m'_{x,i+1}
}
}}\left(-1\right)^{\k_{x}}n^{-\rho\left(m'_{x,0},m'_{x,1}\right)-\ldots-\rho\left(m'_{x,\k_{x}-1},m'_{x,\k_{x}}\right)}\right\} \\
 & = & n^{-\sum_{x\in B}L_{x}}\sum_{(\k_{x})_{x\in B}\in\Z_{\geq0}^{B}}(-1)^{\sum_{x\in B}\k_{x}}\cdot\\
 &  & \cdot\left\{ \sum_{\m'\in\matchr^{\k}}n{}^{\#\left\{ \text{\ensuremath{o}-discs of \ensuremath{\Sigma_{\m'}}\}}\right\} }\prod_{x\in B}n^{-\rho\left(m'_{x,0},m'_{x,1}\right)-\ldots-\rho\left(m'_{x,\k_{x}-1},m'_{x,\k_{x}}\right)}\right\} 
\end{eqnarray*}
Here we used the fact that the number of type-$o$ discs of $\Sigma_{\m'}$
are the same as those of $\Sigma_{\m}$, since they only depend on
the outer matchings $(m'_{x,0},m'_{x,\k_{x}})_{x\in B}$, that are
the same as in $\m$. Also note that each of the expressions for $\wg^{\O}$
are absolutely convergent when $n\geq N$, and we only used finitely
many such expressions, corresponding to the finitely many choices
of $\m\in\match^{\k\equiv1}(\wl)$. Using Lemma \ref{lem:Euler-char-calc},
the above can be rewritten as
\[
\sum_{\m'\in\matchr^{*}}(-1)^{|\k(\m')|}n^{\chi(\Sigma_{\m'})}.
\]
\end{proof}

\subsection{Signed matchings and a new Laurent series expansion\label{subsec:Signed-matchings}}

In the section we modify our previous definitions to get a new combinatorial
Laurent expansion for $\tr_{\wl}^{O}(n)$ in the shifted parameter
$(n-1)^{-1}$, or equivalently, an expansion for $\tr_{\wl}^{O}(n+1)$
in the parameter $n^{-1}$. The reason for doing this is that we want
to add into our Laurent expansion additional surfaces that are constructed
from discs and Möbius bands. This has no analog in \cite{MP2}. The
resulting marked surfaces are the ones that are not stabilized by
any non-trivial element of the mapping class group of the surface;
see Lemma \ref{lem:isotropy-groups} for the precise statement. The
introduction of these extra surfaces is essential in allowing us to
give clean expressions for the coefficients as in Theorem \ref{thm:main-theorem}.
We formalize this as follows.
\begin{defn}
\label{def:smatch}Let\marginpar{${\scriptstyle \protect\sm^{*}}$}
$\sm^{*}=\sm^{*}(\wl)$ be the collection of pairs $(\m,\varepsilon)$
where
\begin{itemize}
\item $\m\in\match^{*}(\wl)$,
\item $\varepsilon$ is a function from the 2-cells of $\Sigma_{\m}$ to
$\{-1,1\}$,
\item if $m_{x,i}=m_{x,i+1}$ then at least one type-$(x,i)$ disc of $\Sigma_{\m}$
must be assigned $-1$ by $\varepsilon$.
\end{itemize}
Let $\kappa(\m,\varepsilon)\stackrel{\mathrm{def}}{=}\kappa(\m)$.
We call a pair $(\m,\varepsilon)$ a \emph{signed matching.}
\end{defn}

\begin{defn}
\label{def:surface-from-signed-matching}Given $(\m,\varepsilon)\in\sm^{*}$,
we construct a new pair $(\Sigma_{\m,\varepsilon},f_{\m,\varepsilon})$
where $\Sigma_{\m,\varepsilon}$ is a surface and $f_{\m,\varepsilon}:\Sigma_{\m,\varepsilon}\to\wedger$
as follows:
\begin{itemize}
\item Let $\Sigma_{\m,\varepsilon}$ be the surface obtained by connected
summing a real projective plane $\R P^{2}$ onto each 2-cell of $\Sigma_{\m}$
that is assigned $-1$ by $\varepsilon$.
\item On the neighborhood of each disc that was cut out to perform a connected
sum, homotope $f_{\m}$ to be a constant other than $\{o\}\cup\{(x,k)\}$,
while maintaining the property that the only points in the preimage
of $\{(x,k)\}$ are in $\Sigma_{\m}^{(1)}$. This is possible because
$f_{\m}$ maps each open 2-cell of $\Sigma_{\m}$ to a contractible
piece of $\wedger$. Now extend the function by the constant to the
added $\R P^{2}$. Performing this homotopy then extension for each
$\R P^{2}$ added to $\Sigma_{\m}$ yields $f_{\m,\varepsilon}$.
See Figure \ref{fig:example of a signed matching}.
\end{itemize}
The resulting pair $(\Sigma_{\m,\varepsilon},f_{\m,\varepsilon})$
is an admissible map in the sense of Definition \ref{def:admissible-maps-and-surfaces}. 
\end{defn}

\begin{rem}
Note that $\Sigma_{\m,\varepsilon}$ is no longer a $CW$-complex;
rather it is a $CW$-complex with some 2-cells replaced by Möbius
bands.
\end{rem}

\begin{figure}[t]
\includegraphics[scale=0.65]{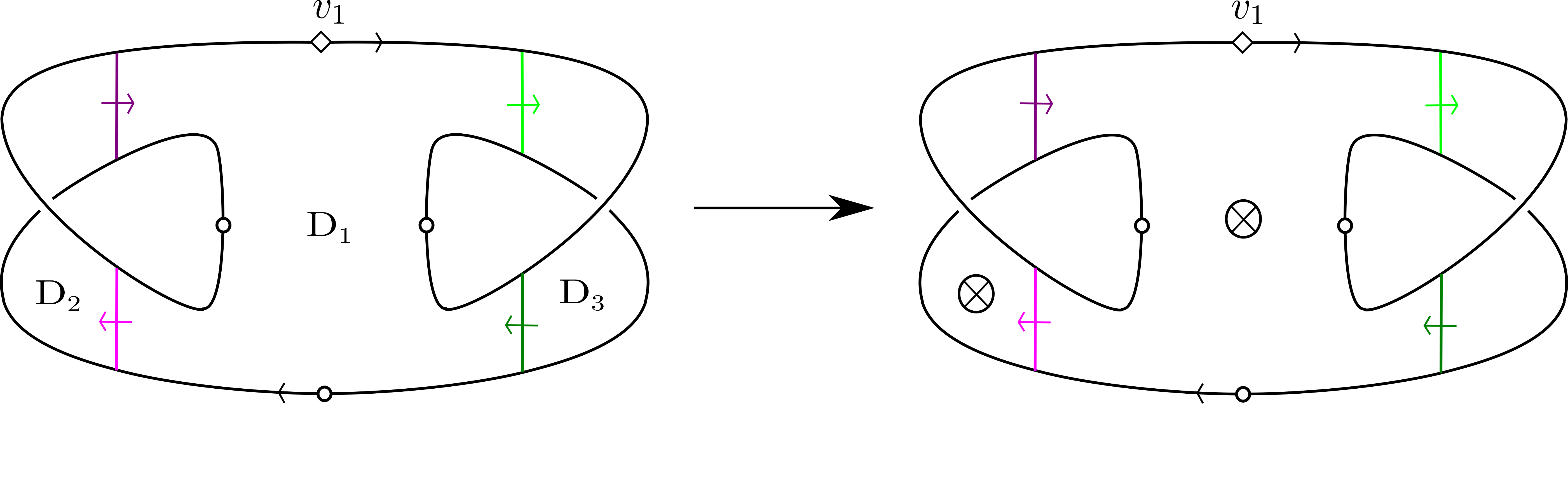}\caption{In this example $\protect\m$ is the same matching data as from Figure
\ref{fig:Constructing-a-surface}. On the left is $\Sigma_{\protect\m}$
with $2$-cells $D_{1},D_{2},D_{3}$ labeled. Here, $\protect\e$
is given by $\protect\e(D_{1})=\protect\e(D_{2})=-1$ and $\protect\e(D_{3})=1$.
The resulting $\Sigma_{\protect\m,\protect\e}$ is drawn on the right,
where we draw a $\otimes$ on the surface to mean an $\protect\R P^{2}$
has been connected summed there.\label{fig:example of a signed matching}}

\end{figure}

The following Lemma is an obvious consequence of the fact that $\chi(\R P^{2})=1$.
\begin{lem}
The Euler characteristic of $\Sigma_{\m,\varepsilon}$ is 
\[
\chi(\Sigma_{\m,\varepsilon})=\chi(\Sigma_{\m})-|\varepsilon^{-1}(\{-1\})|.
\]
\end{lem}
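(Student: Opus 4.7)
The plan is very short since this is essentially a single-line verification once the right Euler-characteristic identities are in place. The key fact I would invoke is the standard connected-sum formula for surfaces:
\[
\chi(A \# B) = \chi(A) + \chi(B) - 2,
\]
which follows from writing the connected sum as $A$ and $B$ with an open disc removed from each, glued along the resulting boundary circles (and using inclusion-exclusion on Euler characteristic, noting $\chi(D^2) = 1$ and $\chi(S^1) = 0$). Applying this with $B = \R P^2$, and using $\chi(\R P^2) = 1$, gives that connect-summing a single $\R P^2$ onto any surface $\Sigma$ decreases the Euler characteristic by exactly $1$:
\[
\chi(\Sigma \# \R P^2) = \chi(\Sigma) + 1 - 2 = \chi(\Sigma) - 1.
\]

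Now I would apply this iteratively. By Definition \ref{def:surface-from-signed-matching}, $\Sigma_{\m,\varepsilon}$ is obtained from $\Sigma_{\m}$ by performing one connected sum with $\R P^2$ at each $2$-cell of $\Sigma_{\m}$ labelled $-1$ by $\varepsilon$. These connected sums take place in pairwise disjoint small discs inside distinct $2$-cells, so the operations commute and may be performed independently. Inducting on $k = |\varepsilon^{-1}(\{-1\})|$ (with the base case $k=0$ being trivial since then $\Sigma_{\m,\varepsilon} = \Sigma_{\m}$), each step reduces $\chi$ by $1$, and after $k$ steps we obtain
\[
\chi(\Sigma_{\m,\varepsilon}) = \chi(\Sigma_{\m}) - k = \chi(\Sigma_{\m}) - |\varepsilon^{-1}(\{-1\})|,
\]
as claimed. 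There is no substantial obstacle; the only thing worth remarking on is that the homotopy modification of $f_{\m}$ done in Definition \ref{def:surface-from-signed-matching} plays no role here, since Euler characteristic depends only on the homeomorphism type of the underlying surface.
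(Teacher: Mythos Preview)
Your proof is correct and matches the paper's approach: the paper simply states the lemma as ``an obvious consequence of the fact $\chi(\R P^{2})=1$'' without further detail, and you have spelled out exactly the one-line connected-sum computation that underlies that remark.
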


\begin{defn}
There is a map $\forget:\sm^{*}\to\matchr^{*}$ as follows. Given
$(\m,\varepsilon)$, let $\forget(\m,\varepsilon)\in\matchr^{*}$
be the set of matching data obtained by repeatedly replacing pairs
of the form $m_{x,i}=m_{x,i+1}$ by $m_{x,i}$, and re-indexing, until
there are no consecutive duplicate matchings. Note that because of
this removal of duplicates, $\forget$ \uline{does not} respect
the $\Z^{B}$ gradings of $\sm^{*}$ and $\matchr^{*}$ by $\kappa$.
\end{defn}

We now define \marginpar{$M$}
\begin{equation}
M=M(\wl)=\frac{\max\{L_{x}\::\:x\in B\}}{\log2}=\frac{N}{\log2}\label{eq:Mdef}
\end{equation}
where $\log$ denotes the natural logarithm. This $M$ is the quantity
that appears in Theorem \ref{thm:main-theorem}. Note that $M>N$.
The reason for this choice of parameter will be explained in the proof
of the next lemma.
\begin{lem}
\label{lem:local-shift}For $n>M$, for any $\m\in\matchr^{*}$ we
have 
\begin{equation}
(-1)^{|\kappa(\m)|}(n+1)^{\chi(\Sigma_{\m})}=\sum_{(\m',\varepsilon):\forget(\m',\varepsilon)=\m}(-1)^{|\kappa(\m',\varepsilon)|}n^{\chi(\Sigma_{\m',\varepsilon})},\label{eq:generating-function-shift}
\end{equation}
where the right hand side is absolutely convergent.
\end{lem}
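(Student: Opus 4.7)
The plan is to parametrize the fiber of $\forget$ over $\m$, reduce the identity to a factored sum of geometric series, and match the resulting exponent of $(1+n^{-1})$ to $\chi(\Sigma_{\m})$ via Lemma \ref{lem:Euler-char-calc}.

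First I would describe the fiber. Since $\m \in \matchr^{*}$ has no adjacent repetitions, any $\m' \in \match^{*}$ that forgets to $\m$ is obtained, letter by letter, by replacing each entry $m_{x,i}$ of $\m$ by $a_{x,i} \ge 1$ consecutive copies of itself; write $b_{x,i} = a_{x,i}-1 \ge 0$. This parametrization gives $|\kappa(\m')| = |\kappa(\m)| + \sum_{x,i} b_{x,i}$ and, using Lemma \ref{lem:Euler-char-calc} together with the fact that the type-$o$ discs depend only on the outer matchings and that each duplicated transition contributes $\rho = 0$, the invariance $\chi(\Sigma_{\m'}) = \chi(\Sigma_{\m})$.

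Next I would classify the $2$-cells of $\Sigma_{\m'}$ into two groups. The \emph{old} discs are the type-$o$ discs together with the type-$(x,i)$ discs at the $\kappa_x$ genuine transitions already present in $\m$; their total number is
\[
F_u \;=\; \#\{\text{type-}o\text{ discs of }\Sigma_{\m}\} + \sum_{x \in B} L_x\kappa_x - \sum_{x,\,0\le i<\kappa_x}\rho(m_{x,i},m_{x,i+1}).
\]
The \emph{new} discs come in groups of $L_x$ discs, one group per inserted duplicate transition, subject (by Definition \ref{def:smatch}) to the constraint that each group is not assigned identically $+1$ by $\varepsilon$. Since $\chi(\Sigma_{\m',\varepsilon}) = \chi(\Sigma_{\m}) - |\varepsilon^{-1}(-1)|$ and the sign $(-1)^{|\kappa(\m',\varepsilon)|}$ depends only on $\m'$, the sum over $\varepsilon$ factors completely over $2$-cells: each old disc contributes $1 + n^{-1}$, and each new group contributes $(1+n^{-1})^{L_x} - 1$.

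Third, I would carry out the outer sum over the $b_{x,i}$. Pairing the sign $(-1)^{b_{x,i}}$ with the $b_{x,i}$-th power of the group contribution, the geometric series at each position $(x,i)$ evaluates to $(1+n^{-1})^{-L_x}$, provided $(1+n^{-1})^{L_x} < 2$. Using $\log(1+1/n) < 1/n$, the hypothesis $n > M = N/\log 2$ ensures $L_x \log(1+1/n) < \log 2$ for every $x$, giving absolute convergence. Collecting all factors,
\[
\text{RHS} \;=\; (-1)^{|\kappa(\m)|}\, n^{\chi(\Sigma_{\m})}\, (1+n^{-1})^{F_u - \sum_{x\in B} L_x(\kappa_x+1)}.
\]
A direct substitution of the formula for $F_u$ above into Lemma \ref{lem:Euler-char-calc} shows that the exponent equals $\chi(\Sigma_{\m})$, so the RHS simplifies to $(-1)^{|\kappa(\m)|}(n(1+n^{-1}))^{\chi(\Sigma_{\m})} = (-1)^{|\kappa(\m)|}(n+1)^{\chi(\Sigma_{\m})}$, which is the LHS.

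The main obstacle is bookkeeping: one must correctly pair the combinatorial counting of inserted duplicate positions with the precise sign exponent $|\kappa(\m')|$, and must recognize that the convergence threshold $M = N/\log 2$ is not arbitrary but arises naturally from the geometric series $\sum_{b \ge 0}((1+n^{-1})^{L_x}-1)^b$. Once the fiber is correctly parametrized and the sign is properly pulled out, the remainder is a mechanical verification using Lemma \ref{lem:Euler-char-calc}.
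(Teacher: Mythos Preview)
Your proof is correct and follows essentially the same approach as the paper: parametrize the fiber of $\forget$ by duplication counts, factor the sum over $\varepsilon$ into contributions from ``old'' discs (all $2$-cells of $\Sigma_{\m}$, each giving $1+n^{-1}$) and ``new'' duplicate blocks (each giving $(1+n^{-1})^{L_x}-1$), sum the resulting geometric series, and match the exponent of $1+n^{-1}$ to $\chi(\Sigma_{\m})$. The only cosmetic difference is that the paper invokes the intermediate identity $\chi(\Sigma_{\m}) = \#\{\text{discs of }\Sigma_{\m}\} - \sum_{x}(\kappa_x+1)L_x$ directly, whereas you expand $F_u$ in terms of type-$o$ discs and $\rho$-values and then appeal to Lemma~\ref{lem:Euler-char-calc}; these are the same calculation.
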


\begin{proof}
Write $\m=\{(m_{x,0},\ldots,m_{x,\k_{x}})\}_{x\in B}$. To obtain
$(\m',\varepsilon)$ as in the right hand side of (\ref{eq:generating-function-shift})
from $\m$ we make the following choices, that we split into two types.

\textbf{A.} For each $x\in B$ and $0\leq k\leq\k_{x}$ we choose
$d(x,k)\in\Z_{\geq0}$ and replace $m_{x,k}$ with $d(x,k)+1$ repeats
of $m_{x,k}$. Let $\m'=\{(m'_{x,0},\ldots,m'_{x,\k'_{x}})\}_{x\in B}$
be the resulting new tuples of matchings. Let $J_{x,k}$ be the collection
of $k'$ such that $m'_{x,k'}=m'_{x,k'+1}$ and $m_{x,k'}$ was formed
by duplicating $m_{x,k}$. Hence $|J_{x,k}|=d(x,k)$. For each $k'\in J_{x,k}$
we furthermore have to choose $\varepsilon_{x,k'}$ on the type-$(x,k')$
discs of $\Sigma_{\m'}$, such that $\varepsilon_{x,k'}$ assigns
$-1$ to at least one of these discs. Note that all of these type-$(x,k')$
discs are rectangles, and there are $L_{x}$ of them.

\textbf{B. }Independently of the above, we choose some $\varepsilon_{0}$
on the 2-cells of $\Sigma_{\m}$, since these correspond bijectively
to the two cells of $\Sigma_{\m'}$ that were not created by the previous
step.

Consider the generating function 
\begin{equation}
G(n)=\sum_{(\m',\varepsilon):\forget(\m',\varepsilon)=\m}(-1)^{|\kappa(\m',\varepsilon)|-|\k(\m)|}n^{\chi(\Sigma_{\m',\varepsilon})-\chi(\Sigma_{\m})}.\label{eq:G-def}
\end{equation}
Whatever choices we make in the two steps above, they affect both
$\chi(\Sigma_{\m',\varepsilon})-\chi(\Sigma_{\m})$ and $|\kappa(\m',\varepsilon)|-|\k(\m)|$
independently of one another. Therefore the generating function $G$
splits as a product over $m_{x,k}$ (type \textbf{A }above) and the
discs of $\Sigma_{\m}$ (type \textbf{B }above).

We explain the contribution from the choices of type \textbf{A. }Since
the effect of the choice made for a given $m_{x,k}$ is to contribute
$d(x,k)$ to $|\k(\m',\varepsilon)|-|\k(\m)|$, and for each $k'\in J_{x,k}$
the contribution of $\varepsilon_{x,k'}$ to $\chi(\Sigma_{\m',\varepsilon})-\chi(\Sigma_{\m})$
is $-|\varepsilon_{x,k'}^{-1}(\{-1\})|$, the multiplicative contribution
from a fixed $m_{x,k}$ to $G(n)$ is 
\begin{align*}
\sum_{d(x,k)\geq0}(-1)^{d(x,k)}\prod_{k'\in J_{x,k}}\sum_{\varepsilon_{x,k'}\not\equiv1}n^{-|\varepsilon_{x,k'}^{-1}(\{-1\})|}= & \sum_{d(x,k)\geq0}(-1)^{d(x,k)}\prod_{k'\in J_{x,k}}\left((1+n^{-1})^{L_{x}}-1\right)\\
= & \sum_{d(x,k)\geq0}(-1)^{d(x,k)}\left((1+n^{-1})^{L_{x}}-1\right)^{d(x,k)}\\
= & \frac{1}{1+(1+n^{-1})^{L_{x}}-1}=\frac{1}{(1+n^{-1})^{L_{x}}}.
\end{align*}
All the sums are absolutely convergent when $\left|(1+n^{-1})^{L_{x}}-1\right|<1$
that holds when $n>M$ (this is the reason for the choice of $M$).
Multiplying all these contributions together over all $m_{x,k}$ the
total multiplicative contribution is
\begin{equation}
\prod_{x\in B}\prod_{0\leq k\leq\k_{x}}\frac{1}{(1+n^{-1})^{L_{x}}}=\frac{1}{(1+n^{-1})^{\sum_{x\in B}(\k_{x}+1)L_{x}}}.\label{eq:edges-G}
\end{equation}

Now we explain the contribution from choices of type \textbf{B. }The
choices made contribute $0$ to $|\k(\m',\varepsilon)|-|\k(\m)|$
and $-|\varepsilon_{0}^{-1}(\{-1\})|$ to $\chi(\Sigma_{\m',\varepsilon})-\chi(\Sigma_{\m})$.
Hence the multiplicative contribution of these choices to $G(n)$
is more simply 
\begin{equation}
\sum_{\varepsilon_{0}}n^{-|\varepsilon_{0}^{-1}(\{1\})|}=(1+n^{-1})^{\#\{\text{discs of \ensuremath{\Sigma_{\m}\}}}}.\label{eq:vertices-G}
\end{equation}
Multiplying (\ref{eq:edges-G}) and (\ref{eq:vertices-G}) together
and using (\ref{eq:chi-sigma-1}) we obtain
\begin{align}
G(n) & =(1+n^{-1})^{\#\{\text{discs of \ensuremath{\Sigma_{\m}\}}}-\sum_{x\in B}(\k_{x}+1)L_{x}}\nonumber \\
 & =(1+n^{-1})^{\chi(\Sigma_{\m})}=\left(\frac{n+1}{n}\right)^{\chi(\Sigma_{\m})}.\label{eq:G-formula}
\end{align}
Equating (\ref{eq:G-def}) and (\ref{eq:G-formula}) and rearranging
gives the result.
\end{proof}
\begin{prop}
\label{prop:Second-Laurent-Expansion}For $n>M$, $\tr_{\wl}^{\O}(n+1)$
is given by the following absolutely convergent series:
\begin{align}
\tr_{\wl}^{\O}(n+1) & =\sum_{(\m,\varepsilon)\in\sm^{*}}(-1)^{|\k(\m,\varepsilon)|}n^{\chi(\Sigma_{\m,\varepsilon})}.\label{eq:second-Laurent}
\end{align}
\end{prop}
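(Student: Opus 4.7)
The plan is to combine Proposition \ref{prop:First-Laurent-Expansion} (applied at the shifted argument $n+1$) with Lemma \ref{lem:local-shift} (applied termwise) and then collapse the resulting double sum into a single sum indexed by $\sm^*$ via the partition $\sm^* = \bigsqcup_{\m \in \matchr^*} \forget^{-1}(\m)$. Since $M > N > N-1$, whenever $n > M$ we have $n+1 \geq N$, so Proposition \ref{prop:First-Laurent-Expansion} gives the absolutely convergent expansion
\[
\tr_{\wl}^{\O}(n+1) = \sum_{\m \in \matchr^*} (-1)^{|\kappa(\m)|} (n+1)^{\chi(\Sigma_{\m})}.
\]
Applying Lemma \ref{lem:local-shift} to each term rewrites the summand indexed by $\m$ as an (absolutely convergent) sum over its $\forget$-fiber; if we are entitled to swap the order of summation, we obtain exactly the claimed expression
\[
\sum_{(\m,\varepsilon)\in\sm^*}(-1)^{|\kappa(\m,\varepsilon)|}n^{\chi(\Sigma_{\m,\varepsilon})}.
\]

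The main obstacle is therefore the Fubini-type justification: absolute convergence of the double sum
\[
\sum_{\m \in \matchr^*}\;\sum_{(\m',\varepsilon) \in \forget^{-1}(\m)} n^{\chi(\Sigma_{\m',\varepsilon})}.
\]
For this, I would redo the generating-function computation from the proof of Lemma \ref{lem:local-shift} with all signs replaced by absolute values. The type-$\mathbf{A}$ factor per pair $m_{x,k}$ becomes the strictly positive geometric series $\sum_{d\geq 0}\bigl((1+n^{-1})^{L_x}-1\bigr)^d$, which converges precisely when $(1+n^{-1})^{L_x}<2$, i.e.\ when $n > L_x/\log 2$; this is exactly why the threshold $M=N/\log 2$ was chosen in \eqref{eq:Mdef}. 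The type-$\mathbf{B}$ factor becomes $(1+n^{-1})^{\#\{\text{discs of }\Sigma_{\m}\}}$. The same factorization argument as in Lemma \ref{lem:local-shift} then yields, for each $\m$, a closed form
\[
\sum_{(\m',\varepsilon) \in \forget^{-1}(\m)} n^{\chi(\Sigma_{\m',\varepsilon})} = (n+1)^{\chi(\Sigma_{\m})} \prod_{x \in B} \left(\frac{(1+n^{-1})^{L_x}}{2-(1+n^{-1})^{L_x}}\right)^{\kappa_x(\m)+1}.
\]

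To see that summing these over $\m \in \matchr^*$ is still finite, I would use the bound $\chi(\Sigma_\m) \leq C_0 - |\kappa(\m)|$ coming from \eqref{eq:chi-formula} together with the fact that $|\matchr^{\kappa}|$ is finite for each fixed $\kappa$ and grows at most like a power of $(2L_{\max}-1)!!$ in $|\kappa|$; the factor $(n+1)^{-|\kappa|}$ in $(n+1)^{\chi(\Sigma_\m)}$ beats the polynomially-bounded ratio $\bigl(\tfrac{(1+n^{-1})^{L_x}}{2-(1+n^{-1})^{L_x}}\bigr)^{\kappa_x+1}$ for $n$ sufficiently large — and in fact, a small strengthening of the convergence window underlying Theorem \ref{thm:laurent-exp-of-weingarten}(1) (which already gives absolute convergence of the first Laurent expansion) delivers exactly this. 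Once absolute convergence is established, Fubini allows the interchange, $\sm^* = \bigsqcup_{\m\in\matchr^*} \forget^{-1}(\m)$ collapses the double sum to a single sum over $\sm^*$, and \eqref{eq:second-Laurent} follows.
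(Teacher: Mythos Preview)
Your approach is essentially the same as the paper's: apply Proposition~\ref{prop:First-Laurent-Expansion} at $n+1$, substitute Lemma~\ref{lem:local-shift} termwise, and collapse the double sum over the fibres of $\forget$ into a single sum over $\sm^*$. The paper's proof is in fact quite terse and simply asserts the absolute convergence of the resulting double sum; your attempt to justify this Fubini step more carefully (by redoing the generating-function computation of Lemma~\ref{lem:local-shift} with absolute values) is a welcome addition, though the final outer-sum estimate could be tightened.
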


\begin{proof}
Assume $n>M$. Since $M>N$, we have by Proposition \ref{prop:First-Laurent-Expansion}
and Lemma \ref{lem:local-shift}
\begin{align*}
\tr_{\wl}^{\O}(n+1) & =\sum_{\m\in\matchr^{*}}(-1)^{|\k(\m)|}(n+1)^{\chi(\Sigma_{\m})}\\
 & =\sum_{\m\in\matchr^{*}}\sum_{(\m',\varepsilon):\forget(\m',\varepsilon)=\m}(-1)^{|\kappa(\m',\varepsilon)|}n^{\chi(\Sigma_{\m',\varepsilon})},
\end{align*}
where the right hand side is absolutely convergent. This clearly gives
(\ref{eq:second-Laurent}).
\end{proof}
\begin{cor}
\label{cor:finitely-many-matchings-of-fixed-chi0}For fixed $\wl$
and fixed $\chi_{0}\in\Z$, there are only finitely many elements
$(\m,\varepsilon)$ of $\sm^{*}(\wl)$ with $\chi(\Sigma_{\m,\varepsilon})=\chi_{0}$. 
\end{cor}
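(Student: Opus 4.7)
The plan is to deduce this corollary immediately from the absolute convergence of the Laurent expansion in Proposition \ref{prop:Second-Laurent-Expansion}, rather than by any direct combinatorial enumeration. The observation is that when every term of a series has absolute value of the form $n_0^{e_i}$ with $n_0 > 1$ and $e_i \in \Z$ bounded above, absolute convergence forces each exponent value to be attained only finitely often.

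First I would fix any specific $n_0 > M$, say $n_0 = \lceil M \rceil + 1$. By Proposition \ref{prop:Second-Laurent-Expansion}, the series
\[
\tr_{\wl}^{\O}(n_0 + 1) \;=\; \sum_{(\m,\varepsilon)\in\sm^*} (-1)^{|\kappa(\m,\varepsilon)|}\, n_0^{\chi(\Sigma_{\m,\varepsilon})}
\]
is absolutely convergent. Since each summand has absolute value $n_0^{\chi(\Sigma_{\m,\varepsilon})} > 0$, this is exactly the statement that
\[
\sum_{(\m,\varepsilon)\in\sm^*} n_0^{\chi(\Sigma_{\m,\varepsilon})} \;<\; \infty.
\]

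Next I would argue by contradiction: suppose for some $\chi_0 \in \Z$ that the set $S(\chi_0) = \{(\m,\varepsilon) \in \sm^*(\wl) : \chi(\Sigma_{\m,\varepsilon}) = \chi_0\}$ is infinite. Restricting the absolutely convergent sum above to $S(\chi_0)$ gives a subsum equal to $|S(\chi_0)| \cdot n_0^{\chi_0} = \infty$, which contradicts the finiteness just established. Hence $S(\chi_0)$ must be finite, and the corollary follows.

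There is essentially no obstacle here once Proposition \ref{prop:Second-Laurent-Expansion} is in hand; the proof is a one-line consequence of absolute convergence. An alternative route would be a direct combinatorial bound: use Lemma \ref{lem:Euler-char-calc} together with $\chi(\Sigma_{\m,\varepsilon}) = \chi(\Sigma_{\m}) - |\varepsilon^{-1}(\{-1\})|$ to argue that fixing $\chi(\Sigma_{\m,\varepsilon}) = \chi_0$ simultaneously bounds $|\kappa(\m)|$, the total discrepancy $\sum_{x,k} \rho(m_{x,k},m_{x,k+1})$, and $|\varepsilon^{-1}(\{-1\})|$, from which finiteness of $\sm^*$ at each $\chi_0$ can be read off. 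I would prefer the analytic route since it extracts the content from an already-proved absolute-convergence statement and sidesteps juggling the individual contributions.
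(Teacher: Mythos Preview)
Your argument is correct and matches the paper's own proof essentially verbatim: the paper also deduces the corollary from the absolute convergence in Proposition~\ref{prop:Second-Laurent-Expansion}, noting that infinitely many $(\m,\varepsilon)$ with $\chi(\Sigma_{\m,\varepsilon})=\chi_0$ would yield infinitely many summands of absolute value $n^{\chi_0}$. The paper likewise remarks (as you do) that a direct combinatorial argument in the style of \cite[Claim~2.10]{MP2} is an alternative.
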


\begin{proof}
This could be proven by a direct combinatorial argument similarly
to \cite[Claim 2.10]{MP2}. It is also a direct consequence of the
sum (\ref{eq:second-Laurent}) in Proposition \ref{prop:Second-Laurent-Expansion}
being absolutely convergent for $n>M$. Indeed, if there were infinitely
many $\m\in\sm^{*}$ with $\chi(\Sigma_{\m})=\chi_{0}$ then there
would be infinitely many summands in (\ref{eq:second-Laurent}) with
absolute value $n^{\chi_{0}}$.
\end{proof}
Since every $(\m,\varepsilon)\in\sm^{*}$ gives rise to an admissible
map $(\Sigma_{\m,\varepsilon},f_{\m,\varepsilon})$, it makes sense
to partition elements of $\sm^{*}$ according to the equivalence class
of $(\Sigma_{\m,\varepsilon},f_{\m,\varepsilon})$ in $\surfaces^{*}(\wl)$.
Thus, given $[(\Sigma,f)]\in\surfaces^{*}(\wl)$ we define\marginpar{${\scriptscriptstyle \protect\sm^{*}(\Sigma,f)}$}
\begin{eqnarray*}
\sm^{*}\left(\Sigma,f\right) & \defi & \sm^{*}\left(\wl;\Sigma,f\right)\stackrel{\mathrm{def}}{=}\\
 & \defi & \left\{ (\m,\varepsilon)\in\sm^{*}(\wl)\::\:\left(\Sigma_{\m,\varepsilon},f_{\m,\varepsilon}\right)\approx(\Sigma,f)\right\} .
\end{eqnarray*}
Then we can rewrite Proposition \ref{prop:Second-Laurent-Expansion}
as,
\begin{cor}
\label{cor:Laurent-with-surfaces}For $n>M$,
\[
\tr_{\wl}^{\O}(n+1)=\sum_{[(\Sigma,f)]\in\surfaces^{*}(\wl)}n^{\chi(\Sigma)}\sum_{(\m,\varepsilon)\in\sm^{*}(\Sigma,f)}(-1)^{|\k(\m,\varepsilon)|}.
\]
\end{cor}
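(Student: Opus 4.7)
The plan is to obtain Corollary \ref{cor:Laurent-with-surfaces} by reindexing the absolutely convergent sum of Proposition \ref{prop:Second-Laurent-Expansion} according to the equivalence class $[(\Sigma_{\m,\varepsilon}, f_{\m,\varepsilon})] \in \surfaces^*(\wl)$ that each pair $(\m,\varepsilon)$ produces via Definition \ref{def:surface-from-signed-matching}.

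First, I would observe that the construction $(\m,\varepsilon) \mapsto (\Sigma_{\m,\varepsilon}, f_{\m,\varepsilon})$ gives a well-defined function from $\sm^*(\wl)$ to $\surfaces^*(\wl)$, and the sets $\{\sm^*(\wl; \Sigma, f)\}_{[(\Sigma,f)]\in \surfaces^*(\wl)}$ form a partition of $\sm^*(\wl)$ by the defining property of $\sm^*(\wl;\Sigma,f)$. Next, for any $(\m,\varepsilon) \in \sm^*(\wl;\Sigma,f)$ we have $\chi(\Sigma_{\m,\varepsilon}) = \chi(\Sigma)$, since the topological Euler characteristic is invariant under the homeomorphism promised by Definition \ref{def:admissible-maps-and-surfaces}. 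This means that in every equivalence class the exponent of $n$ in Proposition \ref{prop:Second-Laurent-Expansion} is constant, so the factor $n^{\chi(\Sigma)}$ can be pulled out of the inner sum.

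The potentially subtle point is justifying the reindexing, since we are reorganizing an infinite sum. However, Proposition \ref{prop:Second-Laurent-Expansion} asserts that the sum converges \emph{absolutely} for $n > M$, so any rearrangement (in particular, summing first over $[(\Sigma,f)]$ and then within each class) yields the same value. It is also worth noting, though not strictly needed for the rearrangement, that for any fixed equivalence class $[(\Sigma,f)]$ the inner sum $\sum_{(\m,\varepsilon)\in \sm^*(\wl;\Sigma,f)} (-1)^{|\k(\m,\varepsilon)|}$ is in fact a \emph{finite} sum: by Corollary \ref{cor:finitely-many-matchings-of-fixed-chi0}, only finitely many $(\m,\varepsilon) \in \sm^*(\wl)$ satisfy $\chi(\Sigma_{\m,\varepsilon}) = \chi(\Sigma)$, and $\sm^*(\wl;\Sigma,f)$ is contained in this finite set.

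There is no real obstacle here; once absolute convergence is invoked the calculation is a one-line rearrangement:
\[
\tr_{\wl}^{\O}(n+1) = \sum_{(\m,\varepsilon)\in\sm^*} (-1)^{|\k(\m,\varepsilon)|} n^{\chi(\Sigma_{\m,\varepsilon})} = \sum_{[(\Sigma,f)]\in \surfaces^*(\wl)}\ \sum_{(\m,\varepsilon)\in \sm^*(\wl;\Sigma,f)} (-1)^{|\k(\m,\varepsilon)|} n^{\chi(\Sigma)},
\]
and factoring $n^{\chi(\Sigma)}$ out of the inner sum gives exactly the stated identity. The closest thing to a subtlety is making sure the partition $\{\sm^*(\wl;\Sigma,f)\}$ is genuinely indexed by $\surfaces^*(\wl)$, i.e.\ that every equivalence class either is empty or contributes a nonempty block — but this is automatic since empty blocks contribute zero to the outer sum.
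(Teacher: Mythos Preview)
Your proposal is correct and matches the paper's approach exactly: the paper simply states that Corollary~\ref{cor:Laurent-with-surfaces} is obtained by rewriting Proposition~\ref{prop:Second-Laurent-Expansion} after partitioning $\sm^{*}$ according to the equivalence class of $(\Sigma_{\m,\varepsilon},f_{\m,\varepsilon})$ in $\surfaces^{*}(\wl)$, which is precisely what you do. Your added remarks on absolute convergence and the finiteness of the inner sum via Corollary~\ref{cor:finitely-many-matchings-of-fixed-chi0} are the natural justifications the paper leaves implicit.
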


Corollary \ref{cor:Laurent-with-surfaces} reduces the proof of our
main theorem (Theorem \ref{thm:main-theorem}), when all unsigned
exponents of $x\in B$ in $\wl$ are even, to the following.
\begin{thm}
\label{thm:Formula-for-l2-euler-char}For $[(\Sigma,f)]\in\surfaces^{*}(\wl)$,
the $L^{2}$-Euler characteristic $\chi^{(2)}(\MCG(f))$ is well-defined,
and given by

\begin{equation}
\chi^{(2)}(\MCG(f))=\sum_{(\m,\varepsilon)\in\sm^{*}(\Sigma,f)}(-1)^{|\k(\m,\varepsilon)|}.\label{eq:euler-char-mcgf-formula-1}
\end{equation}
\end{thm}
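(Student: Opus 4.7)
The plan is to follow the strategy used for the unitary case in \cite{MP2}, adapting it to the non-orientable setting. The main input, developed in Section \ref{sec:The-transverse-map} of this paper, is a simplicial complex $\T(\Sigma,f)$ of transverse maps, on which $\MCG(f)$ acts, and which is contractible. Granting this, the $L^2$-Euler characteristic formula comes from the standard identity (in the sense of Wall, Chatterji--Mislin, etc.)
\[
\chi^{(2)}(\MCG(f)) = \sum_{\sigma \in \Cells(\T)/\MCG(f)} \frac{(-1)^{\dim \sigma}}{|\Stab_{\MCG(f)}(\sigma)|},
\]
provided that this sum makes sense and $\MCG(f)$ admits a suitable finite-type model. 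The well-definedness claim will therefore reduce to a verification that the cells have finite stabilizers and that the relevant finiteness/integrability conditions hold for $\MCG(f)$; this will be carried out together with the identification of cells in Section \ref{sec:The-action-of}.

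First I would set up a bijection between orbits of cells of $\T(\Sigma,f)$ and the set $\sm^*(\wl;\Sigma,f)$. Each transverse map gives, by pulling back the marked points $(x,k)$ on the circles $S^1_x$, a system of matching arcs on $\Sigma$ grouped into tuples $m_{x,0},\ldots,m_{x,\kappa_x}$; the cell dimension is $|\kappa(\m)|$ (up to a shift controlled by the choice of reference). The subtle point -- and the reason for introducing signed matchings -- is that when two consecutive matchings coincide ($m_{x,i}=m_{x,i+1}$), the resulting band of rectangles in $\Sigma_\m$ may be replaced by Möbius bands in some of its components; the sign function $\varepsilon$ records which $2$-cells of $\Sigma_\m$ have been ``twisted'' in passing from $\Sigma_\m$ to the actual underlying surface $\Sigma$. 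The constraint that some disc must be assigned $-1$ when $m_{x,i}=m_{x,i+1}$ is exactly what is needed to ensure that the combinatorial data has no residual symmetry, i.e.\ that the stabilizer in $\MCG(f)$ is trivial; this is the analog of \cite[Lem.~isotropy-groups]{MP2} referenced in the introduction of Definition \ref{def:smatch}.

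Next I would verify that under this bijection, the parity $(-1)^{\dim \sigma}$ matches $(-1)^{|\kappa(\m,\varepsilon)|}$. Here $|\kappa(\m,\varepsilon)| = |\kappa(\m)|$, and the dimension of the transverse-map cell is indeed $|\kappa(\m)|$ by the same counting as in the unitary case: each time we increase some $\kappa_x$ by one we add an edge in the complex of transverse maps. Since each stabilizer is trivial by the previous paragraph, the Wall formula collapses to
\[
\chi^{(2)}(\MCG(f)) = \sum_{(\m,\varepsilon)\in\sm^*(\wl;\Sigma,f)} (-1)^{|\kappa(\m,\varepsilon)|},
\]
which is the desired identity. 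Absolute convergence of this sum (which is implicit in asserting that $\chi^{(2)}$ is well-defined) follows by comparing to the absolutely convergent Laurent series of Proposition \ref{prop:Second-Laurent-Expansion} restricted to the surface $(\Sigma,f)$: by Corollary \ref{cor:finitely-many-matchings-of-fixed-chi0} there are only finitely many signed matchings with a given $\chi(\Sigma_{\m,\varepsilon})$, and $\chi(\Sigma_{\m,\varepsilon})=\chi(\Sigma)$ throughout the sum.

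The main obstacle, and where I expect the delicate work to lie, is the triviality-of-stabilizers step. In the orientable (unitary) setting of \cite{MP2}, ordinary matchings already give cells with trivial stabilizers. In the present non-orientable setting, a naive identification of cells with elements of $\matchr^*$ would produce non-trivial $\Z/2$ stabilizers coming from one-sided curves and Möbius-band boundary twists -- precisely the almost-incompressible phenomena singled out in Definition \ref{def:incompressible and almost-incompressible} and Lemma \ref{lem:max-chi-implies-incompressible}. Refining the indexing set from $\matchr^*$ to $\sm^*$ is the device that absorbs those $\Z/2$ symmetries into the combinatorics so that the $L^2$-count becomes a signed count with integer coefficients. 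Making this rigorous requires a careful analysis of how $\MCG(f)$ acts on thickened neighborhoods of the preimages of the marked points and of the basepoint $o$, and is the heart of what Section \ref{sec:The-action-of} must accomplish.
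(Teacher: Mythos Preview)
Your overall strategy is the paper's: use the contractible polysimplicial complex $|\T(\Sigma,f)|_{\ps}$, apply an $L^{2}$-Euler characteristic formula of the shape $\chi^{(2)}(\Gamma)=\sum_{[\sigma]}(-1)^{\dim\sigma}/|\Gamma_\sigma|$, and identify the surviving orbits with $\sm^{*}(\Sigma,f)$. However, your diagnosis of the key technical point is wrong in two intertwined ways, and as written the argument has a genuine gap.

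First, the stabilizers that need to be handled are \emph{not} finite $\Z/2$'s coming from ``M\"obius-band boundary twists''. The mapping class group of a M\"obius band is trivial (Lemma~\ref{lem:The-mapping-class-group-of-disc-or-mobius}), so almost-filling transverse maps (all zones discs or M\"obius bands) already have trivial isotropy. The problematic cells are the transverse maps in $\T_{\infty}$, i.e.\ those with a zone which is neither a disc nor a M\"obius band. Such a zone supports a generic two-sided simple closed curve, and Dehn twists about these curves generate an \emph{infinite} free abelian normal subgroup of the isotropy group $\Gamma_{[g]}$ (Proposition~\ref{prop:dehn-twists-generate-free-abelian}). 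The Wall-type formula you quote does not apply with infinite stabilizers; one needs the $L^{2}$-version, Theorem~\ref{thm:l2-chi-formula}, together with the Cheeger--Gromov input (Theorem~\ref{thm:infinite-normal-amenable-subgroup-in-B_infty}) that groups with an infinite normal amenable subgroup lie in $\B_{\infty}$. Only then do the $\T_{\infty}$-cells contribute $0$ and the sum reduces to orbits of almost-filling cells.

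Second, the r\^ole of $\varepsilon$ is not to ``absorb $\Z/2$ symmetries''. The map $(\m,\varepsilon):\T-\T_{\infty}\to\sm^{*}$ simply records, for an almost-filling transverse map, which zones happen to be M\"obius bands rather than discs; different such patterns give non-isotopic transverse maps with the \emph{same} underlying matchings $\m$, and $\varepsilon$ is precisely the extra label needed to make the map a bijection on $\Gamma$-orbits (Lemmas~\ref{lem:same-matching-implies-differ-by-homeo} and~\ref{lem:properties-of-m}). The condition ``at least one type-$(x,i)$ disc is assigned $-1$ when $m_{x,i}=m_{x,i+1}$'' has nothing to do with stabilizers: it is equivalent to \textbf{Restriction~3} (strictness), since if $m_{x,i}=m_{x,i+1}$ and every $(x,i)$-zone is a rectangle, the transverse map is not strict and hence not a cell of $\T$ at all.

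Once these two points are in place, your closing paragraph about finiteness via Corollary~\ref{cor:finitely-many-matchings-of-fixed-chi0} is exactly how the paper verifies the hypothesis~(\ref{eq:finiteness-condition}).
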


The proof of Theorem \ref{thm:Formula-for-l2-euler-char} will be
the subject of $\S$\ref{sec:The-transverse-map} and $\S$\ref{sec:The-action-of}.

\subsection{\label{subsec:Proof-of-Corollary-decay}Proof of Corollary \ref{cor:decay-rate}}
\begin{proof}[Proof of Corollary \ref{cor:decay-rate}]
By Lemma \ref{lem:Culler2}, if $1-\min(\sql(w),2\cl(w))=-\infty,$
then $\surfaces^{*}(w)$ is empty, and Lemmas \ref{lem:when is surfaces nonempty}
and \ref{lem:integral-is-zero} tell us $\trw^{\O}(n)=0$ for all
$n$, which gives the result.

If $1-\min(\sql(w),2\cl(w))$ is finite, then by Proposition \ref{prop:First-Laurent-Expansion},
(\ref{eq:first-laurent}) is a convergent Laurent series in $n^{-1}$
with positive radius of convergence, and the order of the zero at
$\infty$ is at least
\[
-\max_{\m\in\match^{*}(w)}\chi(\Sigma_{\m}).
\]
Note that we only have a bound here, since there could be cancellations
between the coefficients of $n^{\chi}$ for any given $\chi$. On
the other hand, every $\m\in\match^{*}$ gives an admissible map $(\Sigma_{\m},f_{\m})$
so $\max_{\m\in\match^{*}(w)}\chi(\Sigma_{\m})\leq\chi_{\max}(w).$
This implies
\[
\trw^{\O}(n)=O\left(n^{\chi_{\max}(w)}\right)
\]
 as $n\to\infty$. Finally, Lemma \ref{lem:Culler2} tells us we can
replace this by $O(n^{1-\min(\sql(w),2\cl(w))})$ as stated in Corollary
\ref{cor:decay-rate}.
\end{proof}
\begin{rem}
In fact, every admissible map $\left(\Sigma,f\right)\in\surfaces^{*}\left(\wl\right)$
of maximal Euler characteristic (so $\chi\left(\Sigma\right)=\chi_{\max}\left(\wl\right)$)
can be constructed from a suitable $\m\in\match^{*}\left(\wl\right)$,
so we have 
\[
\max_{\m\in\match^{*}(\wl)}\chi(\Sigma_{\m})=\chi_{\max}(\wl).
\]
Indeed, this is a simple generalization of \cite[Thm.~1.5]{CULLER}.
The leading exponent of $\trwl^{O}\left(n\right)$ is strictly smaller
than $\chi_{\max}(\wl)$ only if the coefficients $\chi^{\left(2\right)}\left(\MCG\left(f\right)\right)$
of the maps $\left[\Sigma,f\right]\in\sur^{*}\left(\wl\right)$ with
$\chi\left(\Sigma\right)=\chi_{\max}\left(\wl\right)$ sum up to zero.
\end{rem}

\section{The transverse map complex\label{sec:The-transverse-map}}

In this section we follow \cite[\S 3]{MP2} closely. Our goal here
is to extend the results of (\emph{loc.~cit.}) to surfaces that might
be non-orientable. Since our aim is to prove Theorem \ref{thm:Formula-for-l2-euler-char},
we now fix an admissible map $(\Sigma,f)$ for $\wl$. Recall the
notation from Definition \ref{def:admissible-maps-and-surfaces}.
It will be useful to mark an additional set of points $V_{o}$ on
the boundary of $\Sigma$ with the following properties:
\begin{itemize}
\item $V_{o}$ contains the original marked points $\{v_{j}\}\subset\delta\Sigma$.
\item $V_{o}\subset f^{-1}(\{o\})$, and $V_{o}$ is finite.
\item $\left|\delta_{j}\cap V_{o}\right|=\left|w_{j}\right|$ and so $\delta_{j}-V_{o}$
consists of $|w_{j}|$ intervals. Ordering these intervals according
to the orientation of $\delta_{j}$, beginning at $v_{j}$, the $u$th
interval, directed according to $\delta_{j}$, maps under $f$ to
a loop in $\wedger$ that corresponds to $x_{j_{u}^{k}}^{\varepsilon_{u}^{k}}\in\pi_{1}(\wedger,o)\cong\F_{r}$. 
\end{itemize}
We fix this choice of $V_{o}$ henceforth.

\subsection{Transverse maps on possibly non-orientable surfaces}

We use the terminology \emph{arc} to refer to an embedding of a closed
interval in a compact surface such that the endpoints of the arc are
in the boundary of the surface and these are the only points of the
arc in the boundary. We use the terminology \emph{curve }to refer
to an embedding of a circle in a surface, disjoint from the boundary
of the surface. Note that our notion of curve is what is usually referred
to as a simple closed curve.
\begin{defn}
Let $\Sigma$ be a compact surface. A continuous function $f:\Sigma\to\wedger$
is said to be \emph{transverse} to a point $p\in\wedger-\{o\}$ if
$f^{-1}(\{p\})$ is a disjoint union of arcs and curves, and every
arc or curve in the preimage has a tubular neighborhood that is cut
into two halves by the arc or curve, and the two halves map under
$f$ to two different (local) sides of $p$ in $\wedger$.
\end{defn}

Note that this definition prevents one-sided curves in $f^{-1}(\{p\})$
(see Footnote \ref{fn:one-sided curve} for the definition of a one-sided
curve). 
\begin{defn}
A \emph{transverse map }on $\Sigma$ is a tuple $\k=\{\k_{x}\}_{x\in B}$,
a choice for each $x\in B$ of $\k_{x}+1$ distinct \emph{transversion
points} $(x,0),\ldots,(x,\k_{x})$ in $S_{x}^{1}$, ordered according
to the orientation of $S_{x}^{1}$, and a continuous function $g:\Sigma\to\wedger$
that is transverse to all the points $\{(x,j):x\in B,0\leq j\leq\k_{x}\}$,
such that $g^{-1}\{o\}\cap\delta\Sigma=V_{o}$.

We say that a transverse map $g$ \emph{realizes} $(\Sigma,f)$ if
$g$ is homotopic to $f$ relative to $V_{o}$.

Let $J_{x,j}$ be the connected component of $\wedger-\{(x,j):x\in B,0\leq j\leq\k_{x}\}$
that is bounded by the points $(x,j)$ and $(x,j+1)$. Let $U_{o}$
be the connected component that contains $o$. We call a connected
component of $g^{-1}(U_{o})$ an $o$-zone of $g$ and a connected
component of $g^{-1}(J_{x,j})$ an $(x,j)$-zone of $g$, or if we
do not care about $x$ and $j$, simply an $x$-zone. We say that
a transverse map is \emph{filling} if all its zones are topological
discs. We say the map is \emph{almost-filling} if all its zones are
discs or Möbius bands.

Two transverse maps $g_{1}$ and $g_{2}$ on $\Sigma$ are said to
be \emph{isotopic} if they are homotopic through transverse maps with
the same parameters $\k$. In this homotopy, the points $(x,j)$ are
allowed to vary continuously in $S_{x}^{1}-\{o\}$.
\end{defn}

We refer to transverse maps realizing $(\Sigma,f)$ simply as\emph{
transverse maps. }As in \cite[\S 3]{MP2}, we think of isotopy classes
of transverse maps as isotopy classes of colored arcs and curves with
assigned normal direction: the $(x,j)$-colored arcs and curves are
the components of $g^{-1}\{(x,j)\}$ and the normal direction to the
curve is given by the order in which the two local sides of the arc
and curve map to two local sides of $(x,j)$ in $S_{x}^{1}$, with
the order coming from the fixed orientation of $S_{x}^{1}$.

The following definition is the same as in \cite[\S 3]{MP2}.
\begin{defn}[Loose and strict transverse maps]
We say a transverse map $g$ is \emph{loose }if it satisfies
\begin{description}
\item [{Restriction~1}] There are no $o$-zones or $z$-zones containing
no element of $V_{o}$ with the property that all the bounding arcs
and curves of the zone are pointing inwards, or all pointing outwards,
and all the bounding arcs and curves have the same color. Note this
rules out the possibility that there is a zone that is bounded by
one curve, e.g.~a disc or a Möbius band.
\item [{Restriction~2}] Any segment of the boundary of $\Sigma$ that
is bounded by two same colored endpoints of arcs, that are both directed
inwards or both outwards, must contain an element of $V_{o}$ and
hence be part of an $o$-zone.
\end{description}
The transverse map $g$ is called \emph{strict }if it also satisfies
\begin{description}
\item [{Restriction~3}] For every $x\in B$ and $0\leq j<\k_{x}$ there
must be an $(x,j)$-zone that is neither a rectangle (bounded by two
arcs and two boundary segments) nor an annulus bounded by two curves.
\end{description}
\end{defn}

\begin{rem}
Note that \textbf{Restriction 2} implies that if $g$ is a transverse
map with parameters $\{\k_{x}\}_{x\in B}$, any connected component
sub-interval of $\delta\Sigma-V_{o}$ contains for some $x\in B$
exactly $\k_{x}+1$ points that for some order of the sub-interval
map to $(x,0),\ldots,(x,\k_{x})$ respectively.
\end{rem}

\begin{example}
Consider the pairs $(\Sigma_{\m},f_{\m})$ constructed in $\S$\ref{subsec:Construction-of-amap-on-surface}.
Each of these are admissible maps. Each $f_{\m}$ is a loose transverse
map on $\Sigma_{\m}$, and it is, furthermore, strict, if and only
if $\m\in\matchr^{*}$. In this case, $V_{o}$ are the endpoints of
the intervals used in $\S\ref{subsec:map-on-circle-from-word}$. The
zones of $f_{\m}$ are the 2-cells of $\Sigma_{\m}$, hence $f_{\m}$
is filling. 
\end{example}

\begin{example}
\label{exa:fme-is-a-transverse-map}Consider now the pairs $(\Sigma_{\m,\varepsilon},f_{\m,\varepsilon})$
constructed in Definition \ref{def:surface-from-signed-matching}.
Each of these are admissible maps, and $f_{\m,\varepsilon}$ is a
strict transverse map on $\Sigma_{\m,\varepsilon}$. Now, the zones
of $f_{\m,\varepsilon}$ may be either discs or Möbius bands, depending
on $\varepsilon$. In this case, $f_{\m,\varepsilon}$ is almost-filling.
\end{example}

\subsection{Polysimplicial complexes of transverse maps}
\begin{defn}
The \emph{poset of transverse maps realizing $(\Sigma,f)$, }denoted
$(\T,\preceq)$, has underlying set $\T=\T(\Sigma,f)$ of isotopy
classes $[g]$ of \emph{strict} transverse maps $g$ realizing $(\Sigma,f)$.
The partial order $\preceq$ is defined by $[g_{2}]\preceq[g_{1}]$
if $g_{2}$ is obtained from $g_{1}$ by forgetting transversion points.
(After we forget transversion points we re-index the remaining $(x,i_{0}),\ldots,(x,i_{r})\mapsto(x,0),\ldots,(x,r)$.)
\end{defn}

As in \cite[\S 3]{MP2} we have the following lemmas. 
\begin{lem}
\label{lem:strict-transverse-maps-are-downwards-closed}If $g$ is
a strict transverse map realizing $(\Sigma,f)$ and $g'$ is a transverse
map obtained from $g$ by forgetting points of transversion then $g'$
is also a strict transverse map realizing $(\Sigma,f)$.
\end{lem}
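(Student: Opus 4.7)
The plan is to observe first that $g$ and $g'$ agree as continuous maps $\Sigma\to\wedger$; only the list of declared transversion points shrinks. Hence $g'$ is automatically homotopic to $f$ relative to $V_{o}$ (via the same homotopy witnessing that $g$ realizes $(\Sigma,f)$) and is transverse at each remaining transversion point, since transversality is a purely local condition unaffected by discarding other points. Thus it suffices to verify Restrictions 1, 2 and 3 for $g'$, and by induction on the number of forgotten points we may assume a single transversion point $(x,i_{0})$ is being forgotten.

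I would next record the effect on the zone decomposition. If $0<i_{0}<\kappa_{x}$, the old $(x,i_{0}-1)$- and $(x,i_{0})$-zones are merged along all arcs and curves of $g^{-1}(\{(x,i_{0})\})$ to form the new $(x,i_{0}-1)$-zones of $g'$, while all other zones (in particular all $o$-zones) are unchanged. If $i_{0}\in\{0,\kappa_{x}\}$, an analogous merger absorbs one family of old $x$-zones into the $o$-zones of $g'$. In either case, no bounding arc or curve of a merged zone carries color $(x,i_{0})$, since every such arc or curve has an old $(x,i_{0}-1)$-zone on one side and an old $(x,i_{0})$-zone on the other, so it is pushed into the interior of the merged zone.

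For Restrictions 1 and 2 I would argue by contradiction. A violating configuration in $g'$ either is supported in a zone or boundary stretch unchanged from $g$, immediately contradicting strictness of $g$, or involves a merged zone. In the latter case, since none of the bounding arcs/curves of the merged zone carry color $(x,i_{0})$, a mono-colored mono-directional configuration (Restriction 1) or a same-colored same-directional endpoint pair on $\delta\Sigma$ (Restriction 2) pulls back to a configuration of the same type supported on one of the constituent pieces in $g$, again contradicting strictness of $g$.

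The main obstacle is Restriction 3, and only at the new merged index; for all other new indices and for letters $y\in B$ with $y\neq x$, Restriction 3 is inherited verbatim from $g$. I would apply Restriction 3 to $g$ at old index $i_{0}-1$ to get an old $(x,i_{0}-1)$-zone $Z$ which is neither a rectangle nor an annulus bounded by two curves, and let $Z'\supseteq Z$ be the merged zone of $g'$ containing $Z$. Since $Z'$ is obtained from $Z$ by gluing old $(x,i_{0})$-zones to $Z$ along all $(x,i_{0})$-colored boundary arcs and curves of $Z$, the goal is to rule out that $Z'$ is a rectangle or a two-curve annulus. The key point is that such structures are too rigid to survive the inverse operation of un-gluing: a two-curve annulus $Z'$ has no arcs and no $\delta\Sigma$-boundary segments, so the same is forced on $Z$, making $Z$ itself either an annulus bounded by two curves or a Möbius band bounded by one curve (the latter being impossible for a zone by transversality and Restriction 1), contradicting the choice of $Z$; and a rectangle $Z'$, bounded by exactly two arcs and two $\delta\Sigma$-segments, also forces $Z$ into the rectangle combinatorial type. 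A short case analysis completes the argument. In the boundary cases $i_{0}\in\{0,\kappa_{x}\}$ there is no new $x$-index at which Restriction 3 must be verified, so there is nothing further to check.
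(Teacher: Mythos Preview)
Your proposal is correct and follows the natural line of argument: reduce to forgetting a single transversion point, track how zones merge, and verify each of Restrictions~1--3 by reducing any hypothetical violation for $g'$ to one for $g$. The paper itself gives no independent proof here; it simply cites \cite[Lemma~3.7]{MP2}, whose argument is exactly of this type. Your sketch is in fact more detailed than what the paper provides, and the points you label ``a short case analysis'' (ruling out bigons and disc-bounding curves inside a putative rectangle or two-curve annulus $Z'$ via Restriction~1 for $g$) are routine to fill in.
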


\begin{proof}
Same as \cite[Lem.~3.7]{MP2}.
\end{proof}
\begin{lem}
\label{lem:T-nonempty}$\T=\T(\Sigma,f)$ is not empty.
\end{lem}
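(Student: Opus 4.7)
The plan is to construct a strict transverse map realizing $(\Sigma,f)$ in which $\kappa_x=0$ for every $x\in B$; with this choice, there are no indices $j$ with $0\le j<\kappa_x$, so \textbf{Restriction~3} is vacuous, and only \textbf{Restrictions~1} and \textbf{2} have to be enforced.

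First, I would pick for each $x\in B$ an arbitrary interior point $(x,0)\in S^1_x\setminus\{o\}$. By standard transversality theory -- approximate $f$ by a smooth (or PL) map in general position, held fixed on $\delta\Sigma$ -- we may homotope $f$ relative to $V_o$ to a map $g_0$ that is transverse to each $(x,0)$ in the strong sense demanded by our definition. The point here is that every $S^1_x$ is orientable and $(x,0)$ has trivially oriented normal bundle in $S^1_x$, so the pullback normal bundle of $g_0^{-1}\{(x,0)\}$ in $\Sigma$ is trivial; hence no one-sided curve can appear in the preimage, and the two halves of a tubular neighborhood map to the two local sides of $(x,0)$ in $S^1_x$, as required. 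The identity $g_0^{-1}\{o\}\cap\delta\Sigma=V_o$ is inherited from $f$ since $g_0\lvert_{\delta\Sigma}=f\lvert_{\delta\Sigma}$.

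Next I would eliminate violations of \textbf{Restrictions~1} and \textbf{2} by a finite sequence of local cancellation homotopies, following the argument of \cite[Lemma 3.8]{MP2}. If two same-colored, co-directed endpoints of arcs sit on a component of $\delta\Sigma\setminus V_o$ with no point of $V_o$ between them, then the short boundary segment between them, together with initial subarcs, bounds a small disc in $\Sigma$ on which $g_0$ takes values near $(x,0)$; pushing $g_0$ off $(x,0)$ on this disc cancels the two arc endpoints and strictly decreases the total number of arcs plus curves in $\bigsqcup_{x} g_0^{-1}\{(x,0)\}$. Likewise, if a zone $Z$ contains no element of $V_o$ and is bounded entirely by arcs and curves of a single color all pointing the same way, then the image of a neighborhood of $Z$ under $g_0$ lies to one side of the relevant transversion point, and $g_0$ can be pushed off that point on a neighborhood of $Z$; this again removes arcs/curves. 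Since the total number of arcs and curves in the preimages is a non-negative integer that strictly decreases at each step, the process terminates at some map $g$ satisfying \textbf{Restrictions~1}, \textbf{2}, and (vacuously) \textbf{3}.

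The resulting $g$ is a strict transverse map realizing $(\Sigma,f)$ because every modification above was a homotopy relative to $V_o$, so $[g]\in\T(\Sigma,f)$ and $\T(\Sigma,f)\neq\emptyset$. The main obstacle is verifying that the local push-off moves really can be carried out in our setting -- i.e., that pushing the map off a transversion point on a small disc neighborhood can always be done without creating new violations of the transversality or boundary conditions elsewhere -- but because every such move is supported on an embedded disc in $\Sigma$, orientability of $\Sigma$ plays no role and the details are identical to those in the orientable case treated in \cite[\S 3]{MP2}.
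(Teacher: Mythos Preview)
Your proposal is correct and follows exactly the approach the paper takes: the paper's proof is simply ``Same as \cite[Lemma 3.8]{MP2}'', and what you have written is a sketch of that argument --- take $\kappa_x=0$ for all $x$ so that \textbf{Restriction~3} is vacuous, put $f$ in general position with respect to the single transversion point on each circle, and then remove violations of \textbf{Restrictions~1} and \textbf{2} by local push-off homotopies that strictly decrease the number of arcs and curves. One small quibble: the sentence ``$g_0^{-1}\{o\}\cap\delta\Sigma=V_o$ is inherited from $f$'' presumes $f$ itself already has this property on $\delta\Sigma$, which the paper does not assume; you should first homotope $f$ rel $V_o$ on a collar of the boundary so that on each component of $\delta_j\setminus V_o$ it agrees with the monotone map $\gamma_{w_j}$ of \S\ref{subsec:map-on-circle-from-word}, but this is routine.
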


\begin{proof}
Same as \cite[Lem.~3.8]{MP2}.
\end{proof}
A polysimplex is a subset of $\R^{k}$ of the form $\Delta_{k_{1}}\times\Delta_{k_{2}}\times\ldots\times\Delta_{k_{r}}$
where $\sum_{j=1}^{r}k_{j}=k$ and the $\Delta_{k_{j}}$ are standard
simplices in $\R^{k_{j}}.$ The polysimplex $\Delta_{k_{1}}\times\Delta_{k_{2}}\times\ldots\times\Delta_{k_{r}}$
has dimension $k$. A \emph{polysimplicial complex }is the natural
generalization of a simplicial complex that allows cells to be polysimplices.
\begin{defn}[Complex of transverse maps]
The \emph{complex of transverse maps realizing $(\Sigma,f)$} is
the polysimplicial complex with a polysimplex $\ps([g])\cong\prod_{x\in B}\Delta_{\k_{x}}$
for each element $[g]$ of $\T(\Sigma,f)$ with associated parameters
$\{\k_{x}\}_{x\in B}$. The faces of $\ps([g])$ are $\ps([g'])$
where $[g']\preceq[g]$. The resulting polysimplicial complex is denoted
$|\T|_{\ps}=\left|\T(\Sigma,f)\right|_{\ps}$. It can be naturally
identified with a closed subset of Euclidean space and is given the
subspace topology.
\end{defn}

\begin{rem}
Lemma \ref{lem:strict-transverse-maps-are-downwards-closed} implies
that the face relations of $|\T|_{\ps}$ make sense: the property
of being a strict transverse map is preserved under passing to sub-faces,
and it is obvious that if $g_{1}$ is a transverse map realizing $(\Sigma,f)$
and $g_{2}$ is obtained from $g_{1}$ by forgetting transversion
points, then $g_{1}$ and $g_{2}$ have the same underlying map and
hence $g_{2}$ realizes $(\Sigma,f)$.

Also note that \textbf{Restriction 3 }implies that any minimal element
$\left[g\right]$ of $\T$ corresponds to exactly one vertex of any
given polysimplex of $|\T|_{\ps}$ containing $\left[g\right]$, so
$|\T|_{\ps}$ is really a polysimplicial complex.
\end{rem}

The poset $(\T,\preceq)$ also gives rise to a simplicial complex
called the \emph{order complex} and denoted by $|\T|$. The $k-$simplices
of $|\T|$ are chains 
\[
[g_{0}]\precneqq[g_{1}]\precneqq\ldots\precneqq[g_{k}]
\]
in $(\T,\preceq)$, and passing to sub-faces corresponds to deleting
elements from chains.
\begin{fact}
\cite[Claim 3.10]{MP2} $|\T|$ is the barycentric subdivision of
$|\T|_{\ps}$. In particular, $|\T|$ and $|\T|_{\ps}$ are homeomorphic.
\end{fact}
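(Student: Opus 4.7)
The plan is to show that the face poset of the polysimplicial complex $|\T|_\ps$ is canonically isomorphic to the poset $(\T,\preceq)$, and then invoke the standard fact that the order complex of the face poset of a polyhedral complex is its barycentric subdivision. This is exactly the strategy of \cite[Claim 3.10]{MP2}, which should carry over essentially verbatim to the possibly-non-orientable setting since the combinatorics of transverse maps that enters the proof is insensitive to orientability.

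First I would unpack the face poset of $|\T|_\ps$. By construction, $|\T|_\ps$ has one closed cell $\ps([g])\cong\prod_{x\in B}\Delta_{\kappa_x}$ for each $[g]\in\T$. The closed faces of a product of simplices $\prod_{x\in B}\Delta_{\kappa_x}$ are exactly the sub-products $\prod_{x\in B}\Delta'_{x}$, where each $\Delta'_{x}$ is the simplex spanned by some non-empty subset of the vertices of $\Delta_{\kappa_x}$. Since a vertex of $\Delta_{\kappa_x}$ corresponds to a single transversion point $(x,j)$, passing to a sub-face amounts to forgetting some transversion points for each generator $x$, which is precisely the operation defining $\preceq$. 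Therefore the assignment $\ps([g])\leftrightarrow[g]$ identifies the face poset of $|\T|_\ps$ with $(\T,\preceq)$.

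Second, I would invoke the standard fact that for any polysimplicial complex the order complex of its face poset is canonically homeomorphic to its barycentric subdivision: the vertices of the subdivision are the barycenters of the cells, and its simplices are ascending chains of cells ordered by inclusion. Applying this with the face poset $(\T,\preceq)$ produces by definition the simplicial complex $|\T|$, whose $k$-simplices are the chains $[g_0]\precneqq\cdots\precneqq[g_k]$. Hence $|\T|$ is the barycentric subdivision of $|\T|_\ps$, and the two spaces are homeomorphic.

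The main thing to be careful about is verifying that the identification of the face poset with $(\T,\preceq)$ does not leave anything out once we restrict to \emph{strict} transverse maps. The issue is that \textbf{Restriction 3} could in principle fail on a sub-face obtained by forgetting transversion points, which would cause that sub-face to be absent from $|\T|_\ps$ and break the identification. This is exactly the content of Lemma \ref{lem:strict-transverse-maps-are-downwards-closed}, which guarantees that strictness is preserved under forgetting transversion points; once this is cited, the remaining work is the general polysimplicial-subdivision statement and is purely formal.
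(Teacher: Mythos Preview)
Your proposal is correct and follows exactly the approach of \cite[Claim 3.10]{MP2}: identify the face poset of $|\T|_\ps$ with $(\T,\preceq)$ via the correspondence between sub-faces of $\prod_{x\in B}\Delta_{\kappa_x}$ and forgetting transversion points, and then use the general fact that the order complex of the face poset of a polysimplicial complex is its barycentric subdivision. The paper itself gives no proof and simply observes that the argument of \cite{MP2} carries over since orientability plays no role; you have correctly reconstructed that argument, including the reliance on Lemma~\ref{lem:strict-transverse-maps-are-downwards-closed} to ensure that every sub-face still corresponds to a strict transverse map.
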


The proof of this fact has nothing to do with the issue of whether
$\Sigma$ is orientable, so carries over to the current situation.
\begin{lem}
The complex $|\T|_{\ps}$ is finite dimensional with $\dim\left(|\T|_{\ps}\right)\leq\frac{\ell}{2}-\chi(\Sigma)$.
\end{lem}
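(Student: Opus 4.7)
The dimension of $|\T|_{\ps}$ equals $\max_{[g]\in\T}|\k(g)|$ since each polysimplex $\ps([g])\cong\prod_{x}\Delta_{\k_x}$ has dimension $|\k(g)|=\sum_{x\in B}\k_x$. So it suffices to prove $|\k(g)|\leq\frac{\ell}{2}-\chi(\Sigma)$ for every strict transverse map $g$ realizing $(\Sigma,f)$. The plan is to do this via an Euler-characteristic analysis of the zone decomposition induced by $g$.

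First I will build a $1$-dimensional CW-subcomplex $G\subset\Sigma$ consisting of $\delta\Sigma$, all arcs of $g^{-1}(\text{transversion points})$, and all curves of $g^{-1}(\text{transversion points})$ with a single auxiliary $0$-cell placed on each such curve. This produces a genuine graph because transversality forces every curve in the preimage of a transversion point to be two-sided. Direct counting of vertices and edges, using that there are exactly $L_x$ arcs of each color $(x,j)$ and that each letter $x^{\pm 1}$ of $\wl$ contributes $\k_x+1$ transversion crossings to $\delta\Sigma$, yields
\[
\chi(G)=-\sum_{x\in B}(\k_x+1)L_x.
\]
The additive decomposition $\chi(\Sigma)=\chi(G)+\sum_Z\chi(Z)$ over zones (valid because $\partial Z$ is a disjoint union of circles, so $\chi(Z^{\text{int}})=\chi(Z)$) then combines with the segment-count $\sum_Z s(Z)=2\sum_{x}(\k_x+1)L_x$, where $s(Z)$ is the number of boundary segments of $Z$ on $\delta\Sigma$, to give the clean identity
\[
\chi(\Sigma)=\sum_Z\Bigl(\chi(Z)-\tfrac{s(Z)}{2}\Bigr).
\]

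Next I will classify zones by their contribution to $\chi(Z)-s(Z)/2$. A direct case analysis using the classification of surfaces (and the two-sidedness of curves in $g^{-1}$) shows that $\chi(Z)-s(Z)/2\leq 0$ for every zone, with equality exactly when $Z$ is a rectangle (disc bounded by two arcs and two boundary segments) or an annulus bounded by two curves. These are precisely the configurations excluded by Restriction~3, so for each pair $(x,j)$ with $0\leq j<\k_x$, at least one $(x,j)$-zone contributes $\leq -1$; summing over such pairs gives
\[
\sum_{\substack{x\in B\\0\leq j<\k_x}}\sum_{Z\ (x,j)\text{-zone}}\Bigl(\chi(Z)-\tfrac{s(Z)}{2}\Bigr)\leq -|\k|.
\]
For $o$-zones, Restrictions~1 and 2 are designed to rule out the corresponding degenerate configurations (single-curve-bounded discs or Möbius bands, and $\delta\Sigma$-segments bounded by equally-directed same-colored arcs that contain no $V_o$), and a parallel but more delicate case analysis, together with a count using that $\delta\Sigma$ has $\ell$ components, produces
\[
\sum_{Z\ o\text{-zone}}\Bigl(\chi(Z)-\tfrac{s(Z)}{2}\Bigr)\leq \tfrac{\ell}{2}.
\]
Combining these two estimates with the identity for $\chi(\Sigma)$ gives $\chi(\Sigma)\leq -|\k|+\ell/2$, hence $|\k|\leq\ell/2-\chi(\Sigma)$, as required.

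The main obstacle is the $o$-zone bound. This is where the non-orientable setting genuinely departs from \cite[\S 3]{MP2}, because Möbius-band $o$-zones are now admissible and contribute zero to $\chi$ while potentially contributing little to $s$; one must carefully combine the two-sidedness of curves with Restrictions~1 and 2 to keep the $o$-zone total at or below $\ell/2$. The orientable argument of \emph{(loc.~cit.)} provides the skeleton, and its amendments to handle Möbius bands and one-sided considerations—all of which are ruled out at the level of arcs and curves by transversality—constitute the principal work of this proof.
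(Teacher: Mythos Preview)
Your approach is essentially the paper's: the same identity $\chi(\Sigma)=\sum_Z\bigl(\chi(Z)-\tfrac12\,s(Z)\bigr)$ (where $s(Z)$ equals the number of arcs of $g$ in $\partial Z$), and the same use of Restriction~3 to extract a contribution $\le -1$ from each level $(x,j)$ with $0\le j<\k_x$. A few inaccuracies should be repaired. The blanket claim ``$\chi(Z)-s(Z)/2\le 0$ for every zone'' is false as stated: a monogon $o$-zone (a disc bounded by one arc and one boundary segment) contributes $+\tfrac12$, and these monogons are precisely the source of the $\ell/2$ term---each must contain a marked point $v_j$, so there are at most $\ell$ of them. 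For the $x$-zone step you already need Restriction~1, not only the classification of surfaces and two-sidedness: Restriction~1 is what excludes a disc or a M\"obius band bounded by a single curve (which would contribute $1$ or $0$, respectively). You also need the observation that every $x$-zone meets an even number of arcs, so its contribution is an integer; only then does ``not a rectangle or curve-bounded annulus'' force $\le -1$. Finally, the $o$-zone bound is not the main obstacle in the non-orientable setting: a M\"obius band bounded by a single curve is ruled out by Restriction~1, and any M\"obius-band zone with at least one arc already contributes $\le -\tfrac12$, so the orientable argument of \cite{MP2} carries over essentially unchanged once this is noted.
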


\begin{proof}
The proof is along the same lines as the proof of \cite[Lem.~3.12]{MP2}.
The point of the proof is that given a transverse map $g$
\begin{equation}
\chi(\Sigma)=\sum_{\Sigma'}\left(\chi(\Sigma')-\frac{1}{2}\#\{\text{arcs of \ensuremath{g} in the boundary of \ensuremath{\Sigma'\}} }\right)\label{eq:chi-sigma-split}
\end{equation}
where the sum is over zones of $g$ and an arc is counted twice for
$\Sigma'$ if it meets $\Sigma'$ on both sides. 

First we note that because of \textbf{Restriction 1 }$\chi(\Sigma')-\frac{1}{2}\#\{\text{arcs of \ensuremath{g} in the boundary of \ensuremath{\Sigma'\}} }$
is positive only when $\Sigma'$ is a disc that meets exactly one
arc, and on one side. This is still true after dropping the assumption
that $\Sigma$ is orientable, using the classification of surfaces.
Each such zone must be an $o$-zone containing a point $v_{j}$, and
this can only happen if $w_{j}$ is not cyclically reduced. Thus each
of these zones contributes $1/2$ to (\ref{eq:chi-sigma-split}) hence
the contribution of such zones to $\chi(\Sigma)$ is at most $\ell/2$.

As in \cite{MP2}, the zones $\Sigma'$ that contribute $0$ to $\chi(\Sigma)$
include annuli bounded by two curves and rectangles. As $\Sigma$
is not necessarily orientable, there is now the extra possibility
of a Möbius band bounded by a curve. However, this is forbidden by
\textbf{Restriction 1.}

Every $x$-zone $\Sigma'$ not considered thus far contributes at
most $-1$ to $\chi(\Sigma)$. Indeed,\linebreak{}
$\chi(\Sigma')-\frac{1}{2}\#\{\text{arcs of \ensuremath{g} in the boundary of \ensuremath{\Sigma'\}} }$
is an integer, since every $x$-zone meets an even number of arcs,
and we have classified the zones that contribute $\geq0$. Moreover,
for each $x\in B$ and $0\leq k<\k_{x}$, there is an $(x,k)$-zone
contributing at most $-1$ to (\ref{eq:chi-sigma-split}) by \textbf{Restriction
3}. Hence $\dim([g])=\sum_{x\in B}\k_{x}\leq\ell/2-\chi(\Sigma)$.
\end{proof}
The main goal of this $\S$\ref{sec:The-transverse-map} is to record
the following theorem.
\begin{thm}
\label{thm:The-polysimplicial-complex-is=00003Dcontractible}The polysimplicial
complex $|\T|_{\ps}$ is contractible.
\end{thm}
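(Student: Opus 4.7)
Since $|\T|$ is the barycentric subdivision of $|\T|_{\ps}$, it suffices to show that the order complex $|\T|$ is contractible. The plan is to mirror the strategy of \cite[\S 3]{MP2} by proving that the poset $(\T, \preceq)$ is upward directed: every pair (and hence every finite collection) of elements of $\T$ admits a common upper bound. A standard argument then yields contractibility --- for any finite subcomplex of $|\T|$, coning to a common upper bound gives a null-homotopy, so $\pi_k(|\T|)$ vanishes for all $k \geq 0$.

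First, given two strict transverse maps $g_1, g_2$ realizing $(\Sigma, f)$, I would apply small isotopies within their isotopy classes, fixing $V_o$ pointwise, so that for each $x \in B$ the $1$-submanifolds $g_1^{-1}\{(x,i)\}$ and $g_2^{-1}\{(x,j)\}$ are pairwise transverse in $\Sigma$ for all relevant $i, j$. Next, for each $x \in B$, I would interleave the transversion points of $g_1$ and $g_2$ on $S_x^1$, obtaining an ordered set of $\kappa_x(g_1) + \kappa_x(g_2) + 2$ new transversion points, and build a candidate transverse map $\tilde{g}$ whose preimage of each new transversion point is the corresponding $1$-submanifold inherited from $g_1$ or $g_2$.

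Next, I would modify $\tilde{g}$ to obtain an element of $\T$. The map $\tilde{g}$ may violate \textbf{Restriction 1} --- through zones bounded on one side by a single arc or a two-sided curve, including the possibility, peculiar to the non-orientable setting, of a M\"obius-band zone bounded by a two-sided curve --- or \textbf{Restriction 3}, if some interleaved transversion level admits only rectangle and annulus zones. These defects can be removed by a canonical sequence of simplifications: forgetting the offending transversion points, and eliminating spurious closed curves or arcs via isotopy. I would verify that the simplified map $g_{12}$ still satisfies $[g_1] \preceq [g_{12}]$ and $[g_2] \preceq [g_{12}]$ in $(\T, \preceq)$, so it serves as the required common upper bound.

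The main obstacle is this last step: ensuring the simplification procedure preserves domination in the poset from \emph{both} $[g_1]$ and $[g_2]$ simultaneously. The new technical point compared to \cite{MP2} is the handling of M\"obius-band zones. Since the definition of a transverse map explicitly forbids one-sided preimage curves, any M\"obius-band zone of $\tilde{g}$ must be bounded by a two-sided curve, and its elimination can be realized by an isotopy through a collar of the band that is compatible with the refinement structure inherited from $g_1$ and $g_2$. Checking this carefully, and in particular verifying that the simplification interacts correctly with \textbf{Restrictions 2} and \textbf{3} at points of $V_o$, is where the bulk of the work lies; the remaining combinatorics is then formally parallel to the orientable argument of \cite[\S 3]{MP2}.
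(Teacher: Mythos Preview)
Your approach has a genuine gap: the poset $(\T,\preceq)$ is \emph{not} upward directed in general, so the strategy cannot succeed. The problem appears already in your construction of $\tilde g$. For a single continuous map $\tilde g:\Sigma\to\wedger$, the preimages $\tilde g^{-1}\{(x,i)\}$ and $\tilde g^{-1}\{(x,j)\}$ are \emph{disjoint} whenever $i\neq j$, since $(x,i)$ and $(x,j)$ are distinct points of $\wedger$. Thus the arc and curve systems you wish to interleave must be realizable as pairwise \emph{disjoint} $1$-submanifolds, not merely pairwise transverse ones. But if $D\in\MCG(f)$ is a Dehn twist along a generic two-sided curve $c$ (such $D$ exist whenever $(\Sigma,f)$ is compressible, and more generally whenever $\T_\infty\neq\emptyset$; cf.\ Lemma~\ref{lem:isotropy-groups}), and $[g]\in\T-\T_\infty$ is almost-filling, then some arc of $g$ crosses $c$ essentially, and the corresponding arc of $D^n\cdot g$ will have essential intersection with that of $g$ for suitable $n$. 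No isotopy (even allowing the arc endpoints to slide within their boundary intervals) can remove such intersections, so $[g]$ and $[D^n\cdot g]$ admit no common upper bound in $\T$. Concretely, in the incompressible examples from Table~\ref{tab:examples-elaborated} with $\MCG(f)\cong\Z$, the complex $|\T|_{\ps}$ is homotopy equivalent to the real line, and the underlying poset is a ``fence'' rather than a directed set.

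This also means your proposal does \emph{not} mirror \cite[\S 3]{MP2}. The argument there (and the one this paper invokes) is substantially more delicate: it introduces an auxiliary structure of \emph{null-arc systems} and analyzes the complements of maximal such systems to build a contraction. The paper's contribution is to adapt one step of that analysis to the non-orientable setting, replacing the ``pair of pants'' argument by one that finds a M\"obius band and its waist curve when a complementary region is non-orientable. If you want to complete the proof, you will need to work through the null-arc machinery of \cite{MP2} rather than attempt a directedness shortcut.
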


The motivation for this theorem is that $|\T|_{\ps}$ carries an action
of $\MCG(f)$ that will allow us to calculate $\chi^{(2)}(\MCG(f))$
in terms of the orbits of $\MCG(f)$ on $\T$. On the other hand,
these orbits can be related to the terms in (\ref{eq:euler-char-mcgf-formula-1})
(see Lemma \ref{lem:properties-of-m}).

The proof of Theorem \ref{thm:The-polysimplicial-complex-is=00003Dcontractible}
is the same as the proof of \cite[Thm.~3.14]{MP2}. However, there
is one minor point that needs adjusting. Here we refer to terminology
of \cite{MP2} to explain the adjustment for the sake of completeness.
In the classification of maximal null-arc systems on pages 384-385
of \emph{(ibid.)}, it is argued that any component of the complement
of a maximal system $\Omega$ of null-arcs that has one boundary component
consisting of a closed null-arc, contains a pair of pants disjoint
from the curves of $g$, where $g$ is an auxiliary transverse map
with $\k_{x}=0$ for all $x$ and such that the arcs and curves of
$g$ are disjoint from $\Omega$. This should be replaced by the following
analysis. Let $\Sigma'$ be a component of the complement of $\Omega$
that is bounded by a single closed null-arc. If $\Sigma'$ is orientable
then it contains a pair of pants disjoint from the curves of $g$,
and this contradicts the maximality of $\Omega$ as in \cite[pg.~385]{MP2}.
If $\Sigma'$ is not orientable, then $\Sigma'$ contains a simple
closed curve $\gamma$ that bounds a Möbius band, both of which are
disjoint from the arcs and curves of $g$. On the Möbius band consider
the waist curve. We can add a new null-arc that is disjoint from the
old ones and essentially crosses the waist curve of the Möbius band
and is hence not homotopic to any null-arc in $\Omega$. This contradicts
the maximality of $\Omega$.

\section{\label{sec:The-action-of}The action of $\protect\MCG(f)$ on the
transverse map complex.}

\subsection{$L^{2}$-invariants\label{subsec:L2invariants}}

For a discrete group $G$, the $L^{2}$-Euler characteristic $\chi^{(2)}(G)$
is defined as follows. First of all, we make the following definition.
\begin{defn}
We say that $X$ is a $G$-$CW$-complex if $X$ is a $CW$-complex,
with a cellular action of $G$, such that if $g\in G$ preserves an
open cell of $X$, then $g$ acts as the identity on that cell.
\end{defn}

For a discrete group $G$, the \emph{group von Neumann algebra $\N(G)$
}is the algebra of $G$-equivariant bounded operators on $\ell^{2}(G)$.
Let $X$ be a $G$-$CW$-complex, and let $C_{*}(X)$ be the singular
chain complex of $X$. Since $C_{*}(X)$ is a complex of left $\Z[G]$-modules,
we can form the chain complex
\[
\ldots\to\N(G)\otimes_{\Z[G]}C_{p+1}(X)\xrightarrow{d_{p+1}}\N(G)\otimes_{\Z[G]}C_{p}(X)\xrightarrow{d_{p}}\N(G)\otimes_{\Z[G]}C_{p-1}(X)\to\ldots.
\]
This is a complex of left Hilbert $\N(G)$-modules, following \cite[Def.~1.15]{L},
and the boundary maps are bounded $G$-equivariant operators between
Hilbert spaces. We define

\[
H_{p}^{(2)}(X;G)\stackrel{\mathrm{def}}{=}\frac{\ker(d_{p})}{\mathrm{closure}(\mathrm{image}(d_{p+1}))}.
\]
Each of these homology groups is also a Hilbert $\N(G)$-module and
hence has an associated \emph{von Neumann dimension \cite[Def.~6.20]{L}}

\[
b_{p}^{(2)}(X;G)\stackrel{\mathrm{def}}{=}\dim_{\N(G)}H_{p}^{(2)}(X;G)\in[0,\infty].
\]
Following \cite[Def.~6.79]{L}, let
\[
\chi^{(2)}(X;G)\stackrel{\mathrm{def}}{=}\sum_{p\in\Z_{\geq0}}(-1)^{p}b_{p}^{(2)}(X,G)
\]
if the sum is absolutely convergent. Note this assumes at the very
least that all the $b_{p}^{(2)}(X;G)$ are finite. If $EG$ is a contractible
$G$-$CW$-complex with a free action of $G$, and the sum defining
$\chi^{(2)}(EG;G)$ is absolutely convergent, then we define
\[
b_{p}^{(2)}(G)\stackrel{\mathrm{def}}{=}b_{p}^{(2)}(EG;G),\quad\chi^{(2)}(G)\stackrel{\mathrm{def}}{=}\chi^{(2)}(EG;G).
\]
These quantities do not depend on $EG$, so give invariants of $G$
(when they are defined). The reason for this is that $b_{p}^{(2)}(EG;G)$
is invariant under $G$-equivariant homotopy equivalence of $EG$
\cite[Thm.~6.54]{L}, and $EG$ always exists and is unique up to
such homotopy equivalences \cite{tD1,tD2}.

In this paper we will calculate $\chi^{(2)}(\MCG(f))$, for $[(\Sigma,f)]\in\surfaces^{*}(\wl)$,
by other means, which make use of the following definition.
\begin{defn}
Let $\B_{\infty}$ be the class of discrete groups such that all $b_{p}^{(2)}(G)$
are defined and equal to $0$ for all $p\in\Z_{\geq0}$.
\end{defn}

If $G$ is a discrete group and $X$ a $G$-$CW$-complex, and $\sigma$
is a cell of $X$, then we define the \emph{isotropy group $G_{\sigma}$
}to be the stabilizer of $\sigma$ in $G$. We use the convention
that $\frac{1}{|G_{\sigma}|}=0$ if $G_{\sigma}$ is infinite. We
will use the following theorem to calculate $\chi^{(2)}(\MCG(f))$.
\begin{thm}
\label{thm:l2-chi-formula}Let $G$ be a discrete group, and $X$
be a $G$-$CW$-complex with the following properties
\begin{itemize}
\item $X$ is acyclic.
\item All the isotropy groups of $G_{\sigma}$ are either infinite and in
the class $\B_{\infty}$, or finite.
\item We have 
\begin{equation}
\sum_{[\sigma]\in G\backslash X}\frac{1}{|G_{\sigma}|}<\infty.\label{eq:finiteness-condition}
\end{equation}
\end{itemize}
Then $\chi^{(2)}(G)$ is well-defined and given by
\[
\chi^{(2)}(G)=\sum_{[\sigma]\in G\backslash X}(-1)^{\dim(\sigma)}\frac{1}{|G_{\sigma}|}.
\]
\end{thm}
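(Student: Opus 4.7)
The strategy is the additivity principle for $L^2$-Euler characteristics, modeled on Lück's treatment for proper $G$-CW-complexes (roughly Theorem~6.80 of his book on $L^2$-invariants), adapted here to allow infinite stabilizers lying in $\B_\infty$. I would work throughout with the Hilbert $\N(G)$-chain complex $C_*^{(2)}\stackrel{\mathrm{def}}{=}\N(G)\otimes_{\Z[G]}C_*^{\mathrm{cell}}(X)$, which decomposes orbit-wise as a direct sum $\bigoplus_{[\sigma]}\N(G)\otimes_{\Z[G_\sigma]}\Z$ over representatives of $G\backslash X$. The plan splits into two stages: first compute the chain-level Euler characteristic of $C_*^{(2)}$, and then identify it with $\chi^{(2)}(G)$.

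For the first stage, Lück's induction principle for von Neumann dimension identifies
\[
\dim_{\N(G)}\bigl(\N(G)\otimes_{\Z[G_\sigma]}\Z\bigr)=\dim_{\N(G_\sigma)}\bigl(\N(G_\sigma)\otimes_{\Z[G_\sigma]}\Z\bigr)=b_0^{(2)}(G_\sigma),
\]
which equals $1/|G_\sigma|$ when $G_\sigma$ is finite and $0$ when $G_\sigma$ is infinite (the zeroth $L^2$-Betti number always vanishes for infinite groups), exactly matching our convention. The hypothesis $\sum_{[\sigma]}1/|G_\sigma|<\infty$ then makes the chain-level Euler sum $\sum_{[\sigma]}(-1)^{\dim\sigma}/|G_\sigma|$ absolutely convergent.

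For the second stage, I would compare two spectral sequences converging to $\dim_{\N(G)}H_*(\N(G)\otimes^L_{\Z[G]}C_*^{\mathrm{cell}}(X))$. The acyclicity of $X$ forces $C_*^{\mathrm{cell}}(X)\simeq\Z$ as $\Z[G]$-complexes, so one spectral sequence degenerates to $\mathrm{Tor}^{\Z[G]}_p(\N(G),\Z)$, whose dimensions are by definition the $b_p^{(2)}(G)$. The second spectral sequence filters by the cellular skeleton of $X$: its $E^1$-page has terms of $\N(G)$-dimension equal to $b_p^{(2)}(G_\sigma)$, which vanishes for $p\geq1$ when $G_\sigma$ is finite and for \emph{all} $p$ when $G_\sigma\in\B_\infty$. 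Thus both spectral sequences degenerate at the level of von Neumann dimensions, and their Euler characteristics coincide. Combined with the Euler--Poincar\'e principle for Hilbert $\N(G)$-modules, which is valid in this setting because of the absolute convergence above, we conclude $\chi^{(2)}(G)=\sum_{[\sigma]}(-1)^{\dim\sigma}/|G_\sigma|$ and in particular its well-definedness.

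The main obstacle is to make these two degenerations rigorous in the presence of infinitely many orbits and possibly infinite stabilizers; the usual statements of additivity are given under cocompactness. The hypothesis $G_\sigma\in\B_\infty$ for infinite $G_\sigma$ is precisely what kills all higher homological contributions from infinite stabilizers in the skeletal spectral sequence, so that only the $p=0$ row contributes dimension; and $\sum 1/|G_\sigma|<\infty$ controls absolute convergence of each termwise sum and ensures that Euler--Poincar\'e applies. Both technical points rest on Lück's dimension theory, especially the compatibility of $\dim_{\N(G)}$ with induction from subgroups and with countable direct sums.
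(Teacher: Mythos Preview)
The paper does not give an independent proof of this theorem: it simply records that the statement is ``a synthesis of results in L\"uck [Thm.~6.80(1), Ex.~6.20]'' and moves on. Your sketch is therefore not competing with any argument in the paper; rather, you are outlining why those cited results from L\"uck's book yield the theorem, and your outline is in the right spirit. The orbit-wise decomposition of $C_*^{(2)}$, the induction formula $\dim_{\N(G)}\bigl(\N(G)\otimes_{\Z[G_\sigma]}\Z\bigr)=b_0^{(2)}(G_\sigma)$, and the observation that stabilizers in $\B_\infty$ contribute zero in every degree are exactly the ingredients L\"uck packages into his Theorem~6.80.

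One caution about your write-up: the spectral-sequence comparison you describe is more apparatus than is really needed, and the phrase ``both spectral sequences degenerate at the level of von Neumann dimensions'' is imprecise. What L\"uck actually proves is a direct additivity statement for $\chi^{(2)}$ under pushouts and colimits of $G$-$CW$-complexes, together with the identification $\chi^{(2)}(G/H;G)=\chi^{(2)}(H)$ for each orbit type; the finiteness hypothesis $\sum_{[\sigma]}1/|G_\sigma|<\infty$ is exactly his condition that the alternating sum of chain-module dimensions be absolutely convergent, and the $\B_\infty$ hypothesis ensures $\chi^{(2)}(G_\sigma)=0$ for the infinite stabilizers. If you want to flesh this out, it is cleaner to invoke those additivity and induction statements directly rather than to run two spectral sequences.
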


Theorem \ref{thm:l2-chi-formula} is a synthesis of results in Lück
\cite[Thm.~6.80(1), Ex.~6.20]{L}.

As in \cite[\S 4]{MP2}, we use the following theorem, essentially
due to Cheeger and Gromov (cf. \cite[Cor.~0.6]{CG}), as a source
of groups lying in $\B_{\infty}$. The precise statement we need can
be deduced from \cite[Thm.~7.2, items (1) and (2)]{L}. Recall that
a discrete group is called \emph{amenable} if it has a finitely additive
left invariant probability measure.
\begin{thm}[Cheeger-Gromov]
\label{thm:infinite-normal-amenable-subgroup-in-B_infty}If $G$
is a discrete group containing a normal infinite amenable subgroup
then $G\in\B_{\infty}$.
\end{thm}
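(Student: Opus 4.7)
The plan is to deduce the theorem as a combination of two more foundational facts about $L^{2}$-Betti numbers, both of which appear in Lück's book \cite{L} as parts of Theorem 7.2. Specifically, we need: (a) every infinite amenable group $A$ lies in $\B_{\infty}$, and (b) if $N\lhd G$ is a normal subgroup with $N\in\B_{\infty}$, then $G\in\B_{\infty}$. Granting (a) and (b), if $N$ is the given infinite normal amenable subgroup of $G$, then $N\in\B_{\infty}$ by (a), and so $G\in\B_{\infty}$ by (b). Thus the two tasks are really (a) and (b) separately.

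For (a), I would take a Følner sequence $(F_{k})$ in $A$ and approximate the reduced $L^{2}$-chain complex of a model for $EA$ (for instance, built from the bar construction) by finite-dimensional pieces indexed by $F_{k}$. The amenability of $A$ lets one construct cycles that are almost translation-invariant; pairing this with the defining property of the von Neumann dimension, namely that a non-zero projection in $\N(A)$ has strictly positive trace, forces $b_{p}^{(2)}(A)=0$ for every $p$. Alternatively, one can appeal to the Cheeger--Gromov original argument via measure equivalence, but the Følner route is simpler when $A$ is finitely presented and suffices in the general case after a limit argument over finitely generated subgroups.

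For (b), the natural tool is the Lyndon--Hochschild--Serre spectral sequence for group homology, transported into the $L^{2}$ setting using Lück's dimension theory for arbitrary $\N(G)$-modules. The point is to write $b_{p}^{(2)}(G)=\dim_{\N(G)}\mathrm{Tor}_{p}^{\Z G}(\N(G),\Z)$ and then use the change-of-rings spectral sequence in which the $E^{2}$-page involves $\mathrm{Tor}^{\Z N}_{q}(\N(G),\Z)$. The vanishing hypothesis $b_{q}^{(2)}(N)=0$ together with the fact that $\N(G)$ is $\dim$-flat over $\N(N)$ forces every such Tor-term to have zero $\N(G)$-dimension, and additivity of $\dim_{\N(G)}$ across short exact sequences then propagates this vanishing to the abutment, yielding $b_{p}^{(2)}(G)=0$ for all $p$.

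The main obstacle is step (b): the classical spectral sequence cannot be applied naively because the relevant modules are neither finitely generated nor projective in any standard sense, and one must rely on Lück's extended dimension function and its continuity and additivity properties to make every step legitimate. Step (a) is comparatively soft, once one is willing to use Følner approximation. Since both (a) and (b) are recorded in \cite[Theorem 7.2]{L} as items (1) and (2) respectively, the argument above is precisely the one the paper indicates.
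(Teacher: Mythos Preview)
Your proposal is correct and matches the paper's approach exactly: the paper does not prove this theorem but simply records it as a consequence of \cite[Theorem~7.2, items (1) and (2)]{L}, which is precisely the decomposition into your facts (a) and (b). Your additional sketches of how (a) and (b) themselves are proved go beyond what the paper does, but they point in the right directions (F{\o}lner approximation for (a), Lück's dimension theory/spectral sequence machinery for (b)); the paper is content to cite these as black boxes.
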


\subsection{Proof of Theorem \ref{thm:Formula-for-l2-euler-char}\label{subsec:Proof-of-Theorem-MAIN}}

When we apply Theorem \ref{thm:l2-chi-formula} to prove Theorem \ref{thm:Formula-for-l2-euler-char},
we will take $X=|\T|_{\ps}$. Recall that $\T=\T(\Sigma,f)$ for $[(\Sigma,f)]\in\surfaces^{*}(\wl)$.
Let $\Gamma=\MCG(f)$. We now prepare the necessary inputs for Theorem
\ref{thm:l2-chi-formula} in the following lemmas.
\begin{lem}
\label{lem:g-cw-complex}The action of $\Gamma$ on $\T$ makes $|\T|_{\ps}$
into a $\Gamma$-$CW$-complex.
\end{lem}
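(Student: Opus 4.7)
The plan is to verify the two requirements in the definition of a $\Gamma$-$CW$-complex: that $\Gamma=\MCG(f)$ acts cellularly on $|\T|_{\ps}$, and that any element of $\Gamma$ stabilizing an open polysimplex acts as the identity on it. This follows the same template as the analogous fact in \cite{MP2} but with polysimplices allowed to involve Möbius-band zones rather than only discs, which does not affect the structural argument.

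First I would define the action: a homeomorphism $\phi:\Sigma\to\Sigma$ fixing $\delta\Sigma$ pointwise and representing an element of $\MCG(f)$ acts on a strict transverse map $g$ by $\phi\cdot g\stackrel{\mathrm{def}}{=}g\circ\phi^{-1}$. The parameters $\k=\{\k_x\}$ and the transversion points $(x,j)$ live in $\wedger$ and are untouched by the action, so strictness (\textbf{Restrictions 1}--\textbf{3}), the position of $V_o$, and the preservation of the condition $g^{-1}(o)\cap\delta\Sigma=V_o$ are all automatic, being topological properties preserved by homeomorphisms. The realization condition $g\circ\phi^{-1}\simeq f$ follows from $g\simeq f$ together with $f\circ\phi^{-1}\simeq f$ (the defining property of $\MCG(f)$). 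One then checks that $\phi\cdot g$ descends to a well-defined action at the level of isotopy classes of transverse maps and mapping classes: isotopic $g_1,g_2$ give isotopic $g_i\circ\phi^{-1}$, and isotopic $\phi_1,\phi_2$ (rel $\delta\Sigma$) give isotopic $g\circ\phi_i^{-1}$. Because forgetting transversion points is an operation on the target $\wedger$, it commutes with precomposition by $\phi^{-1}$, so the action preserves $\preceq$ and induces a cellular action on $|\T|_{\ps}$.

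For the cell-stabilizer condition, the key observation is that each polysimplex $\ps([g])$ of $|\T|_{\ps}$ is uniquely determined by its maximal element $[g]\in\T$: the top-dimensional cell of $\prod_{x\in B}\Delta_{\k_x}$ corresponds to $[g]$ and its proper faces to the $[g']\precneqq[g]$. Hence, if $\phi\in\Gamma$ preserves $\ps([g])$ setwise, then $\phi\cdot[g]=[g]$. Since the action is order-preserving and commutes with forgetting transversion points, $\phi$ then fixes every $[g']\preceq[g]$; in particular $\phi$ fixes each vertex of $\ps([g])$ (the vertices being those $[g'']\preceq[g]$ with $\k''_x=0$ for all $x\in B$, one for each combined choice of transversion points). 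Any cellular action on an affine polysimplex that fixes every vertex is the identity on the polysimplex, so $\phi$ acts as the identity on $\ps([g])$.

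The main obstacle will be a small technical point in the second paragraph: verifying that the homotopy $f\circ\phi^{-1}\simeq f$ guaranteed by $\phi\in\MCG(f)$, which is originally relative only to $\{v_j\}$, can be arranged relative to the larger finite set $V_o\subset\delta\Sigma$. Because $\phi$ fixes $\delta\Sigma$ pointwise and both maps already agree on $V_o$ (sending it to $\{o\}$), this can be handled by a standard homotopy-extension argument near $\delta\Sigma$, reparameterizing the homotopy so as to remain stationary on $V_o$; this was tacit in the analogous setup of \cite{MP2} and only needs the minor additional remark that no new issues arise from $\Sigma$ being possibly non-orientable.
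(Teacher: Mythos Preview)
Your proposal is correct and follows essentially the same approach as the paper, which simply defers to \cite[Lemma~4.5]{MP2}; your write-up spells out exactly that argument. One minor streamlining: once you have $\phi\cdot[g]=[g]$, you can observe directly that the induced map $\ps([g])\to\ps(\phi\cdot[g])=\ps([g])$ is the identity under the canonical identification $\ps([g])\cong\prod_{x\in B}\Delta_{\k_x}$ (since forgetting transversion points commutes with the action), which avoids the detour through vertex-fixing.
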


\begin{proof}
The same as the proof of \cite[Lem.~4.5]{MP2}.
\end{proof}
Analogously to \cite[\S 4]{MP2}, we let $(\T_{\infty},\preceq)$\marginpar{$\protect\T_{\infty}$}
denote the subposet of $\T$ consisting of isotopy classes of transverse
maps that are not almost-filling. Recall a transverse map is almost-filling
if its zones are discs or Möbius bands. This definition marks an essential
departure from \cite{MP2}, where the presence of Möbius bands is
not possible due to the surfaces under consideration being orientable.
In fact, this difference is what is responsible for the shift by the
Jack parameter in our main theorem (Theorem \ref{thm:main-theorem}).
The reason discs and Möbius bands are singled out here is because
these are precisely the type of zones that have trivial mapping class
group:
\begin{lem}
\label{lem:The-mapping-class-group-of-disc-or-mobius}The mapping
class group\footnote{Mapping classes fix the boundary pointwise.}
of a disc or a Möbius band is trivial.
\end{lem}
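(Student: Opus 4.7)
The proof handles the two cases in turn. For the disc $D^{2}$, the classical Alexander trick gives the result directly: given $f \colon D^{2} \to D^{2}$ with $f|_{\partial D^{2}} = \id$, the formula
\[
F_{t}(x) = \begin{cases} t\, f(x/t) & |x| \le t,\\ x & |x| \ge t, \end{cases} \qquad F_{0} = \id,
\]
defines a continuous isotopy from the identity to $f$ through homeomorphisms fixing $\partial D^{2}$ pointwise.

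For the Möbius band $M$, the strategy is to reduce to the disc case by cutting along a carefully chosen arc. Fix a properly embedded arc $\alpha \subset M$ whose endpoints lie on $\partial M$ and whose complement $M \setminus \alpha$ is an open disc; any arc meeting the core circle of $M$ transversely in exactly one point will do. Given $f \colon M \to M$ fixing $\partial M$ pointwise, the plan is to isotope $f$ rel $\partial M$ to a homeomorphism equal to the identity on $\alpha$. Cutting $M$ along $\alpha$ then turns this modified $f$ into a boundary-fixing homeomorphism of the disc $M \setminus \alpha$, to which the disc case applies, yielding an isotopy to the identity that reassembles to an isotopy of the original $f$ to the identity rel $\partial M$.

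The crux is to show that $f(\alpha)$ and $\alpha$ are isotopic rel endpoints in $M$. For this, I pass to the universal cover $\widetilde{M} \cong \R \times [0,1]$ and lift $\alpha$ to an arc $\tilde{\alpha}$, whose two endpoints lie on the two distinct boundary lines of $\widetilde{M}$ (each of which is a universal cover of $\partial M$). A suitable choice of lift $\tilde{f}$ of $f$ can be shown to fix both endpoints of $\tilde{\alpha}$: indeed, since $f$ fixes $\partial M$ pointwise and the inclusion $\pi_{1}(\partial M) \hookrightarrow \pi_{1}(M) \cong \Z$ has image $2\Z$, the induced automorphism of $\pi_{1}(M)$ is forced to be the identity, and a lift fixing one endpoint of $\tilde{\alpha}$ is forced to fix the other as well. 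Since $\widetilde{M}$ is simply connected, $\tilde{f}(\tilde{\alpha})$ is then homotopic to $\tilde{\alpha}$ rel endpoints, and projecting produces a homotopy of $f(\alpha)$ to $\alpha$ in $M$. A standard bigon-reduction argument for properly embedded arcs on a surface promotes this homotopy to an ambient isotopy of $M$ rel $\partial M$ carrying $f(\alpha)$ onto $\alpha$, after which a final isotopy supported near $\alpha$ makes $f$ the identity on $\alpha$. The promotion from homotopy to ambient isotopy is the most technical step, but it is an entirely standard fact in surface topology.
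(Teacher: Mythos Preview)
Your argument is correct. The paper's own proof consists entirely of two citations: the Alexander Lemma (as in Farb--Margalit) for the disc, and Epstein's Theorem~3.4 for the M\"obius band. You have instead supplied a self-contained argument. For the disc you give the explicit Alexander isotopy, which is exactly the content of the cited lemma. For the M\"obius band your strategy---cut along a properly embedded arc crossing the core once, show via the universal cover that $f(\alpha)$ is homotopic (hence isotopic) to $\alpha$ rel endpoints, straighten $f$ along $\alpha$, and then apply the disc case to the cut-open surface---is the standard hands-on route and is essentially what one finds upon unpacking Epstein's argument. The one step you flag as ``most technical'' (promoting homotopy of properly embedded arcs to ambient isotopy rel boundary) is indeed a standard fact for surfaces, so there is no gap. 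Your proof buys self-containment at the cost of some length; the paper's buys brevity by outsourcing to the literature.
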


\begin{proof}
The statement for a disc is the Alexander Lemma \cite[Lem.~2.1]{FM}.
The statement for a Möbius band can be found in Epstein \cite[Thm.~3.4]{EPSTEIN}.
\end{proof}
\begin{defn}
\label{def:generic curve}We say that a two-sided simple closed curve
in $\Sigma$ is \emph{generic }if it is not homotopic to a boundary
component, and does not bound either a disc or a Möbius band.
\end{defn}

We also need the following proposition that appears in \cite[Prop.~4.4]{Stukow}.
\begin{prop}
\label{prop:dehn-twists-generate-free-abelian}If $\Sigma$ is any
surface with boundary, and $c_{1},\ldots,c_{r}$ are a collection
of disjoint, pairwise non-isotopic, generic two-sided simple closed
curves in $\Sigma$, then the Dehn twists in $c_{1},\ldots,c_{r}$
generate a subgroup of $\MCG(\Sigma)$ that is isomorphic to $\Z^{r}$.
\end{prop}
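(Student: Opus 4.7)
The plan is to exhibit test curves dual to the $c_i$ and use intersection-number estimates. First, since each $c_i$ is two-sided, it admits an annular tubular neighborhood $A_i$, and by pairwise disjointness of the $c_i$ we may take the $A_i$ pairwise disjoint. The twist $T_{c_i}$ is represented by a homeomorphism supported on $A_i$, so the $T_{c_i}$ have disjoint supports and commute. The map $\rho : \Z^r \to \MCG(\Sigma)$, $\rho(n_1,\ldots,n_r) \stackrel{\mathrm{def}}{=} \prod_i T_{c_i}^{n_i}$, is therefore a well-defined group homomorphism; it remains to show $\rho$ is injective.

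The key step is to produce, for each $k \in [r]$, a two-sided simple closed curve $d_k \subset \Sigma$ with $i(d_k, c_k) > 0$ and $i(d_k, c_j) = 0$ for every $j \neq k$. Let $\Sigma'$ be obtained by cutting $\Sigma$ open along $\bigcup_j c_j$; since each $c_j$ is two-sided, the boundary of $\Sigma'$ contains two copies $c_j^{\pm}$ of each $c_j$. If the two copies $c_k^{\pm}$ lie in the same component of $\Sigma'$, take a properly embedded arc $\alpha$ in $\Sigma'$ from $c_k^+$ to $c_k^-$; if they lie in distinct components, then by the genericity of $c_k$ neither component is a disc or a Möbius band, so each contains a properly embedded arc with both endpoints on $c_k^\pm$ whose concatenation with a suitable subarc of $c_k^\pm$ is essential, and we glue two such arcs. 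Gluing $\alpha$ back across the cut yields a closed curve in $\Sigma$; taking the boundary of a regular neighborhood of its union with $c_k$ gives the required two-sided $d_k$ with $i(d_k, c_k) \in \{1,2\}$ and $i(d_k, c_j) = 0$ for $j \neq k$.

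Finally, assume $\rho(\mathbf{n}) = \id$. Applying $\rho(\mathbf{n})$ to $[d_k]$ and using that $T_{c_j}$ fixes $[d_k]$ for $j \neq k$ (as $d_k$ is disjoint from $c_j$), we get $T_{c_k}^{n_k}[d_k] = [d_k]$. The standard bigon-count estimate for Dehn twists along two-sided simple closed curves yields
\[
i(T_{c_k}^{n_k}(d_k), d_k) \geq |n_k| \cdot i(c_k, d_k)^2,
\]
which forces $n_k = 0$ since $i(c_k, d_k) > 0$. I expect the main obstacle to be the construction in the previous paragraph: on a non-orientable $\Sigma$ a naive dual arc crossing a non-separating $c_k$ once may yield a one-sided curve in $\Sigma$, and ensuring two-sidedness relies precisely on the hypothesis that $c_k$ does not bound a Möbius band. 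The argument sketched here is essentially the content of Stukow's \cite[Prop. 4.4]{Stukow}, which may alternatively be invoked directly.
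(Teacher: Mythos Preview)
The paper does not give its own proof of this proposition; it simply records it as a result ``that appears in \cite[Prop.~4.4]{Stukow}'' and moves on. Your proposal actually sketches the argument behind that citation---commuting twists from disjoint supports, construction of dual test curves, and the intersection-number lower bound $i(T_{c_k}^{n_k}d_k,d_k)\ge |n_k|\,i(c_k,d_k)^2$---and then notes the same reference. So you have supplied strictly more than the paper does, and your outline is the standard one.

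One point to tighten if you keep the sketch rather than just citing: in the separating case your assertion that ``neither component is a disc or a M\"obius band'' needs the full force of genericity together with the pairwise non-isotopy of the $c_j$, since after cutting along \emph{all} the $c_j$ the component containing $c_k^{+}$ could a priori be an annulus cobounded with some $c_j^{\pm}$ or with a component of $\partial\Sigma$; you should say explicitly why this is ruled out (it would force $c_k$ to be isotopic to $c_j$ or to a boundary component). With that clarification the construction of $d_k$ goes through.
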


\begin{lem}
\label{lem:isotropy-groups}The isotropy groups $\Gamma_{[g]}$ of
the action of $\Gamma$ on $\T$ can be classified as follows
\begin{itemize}
\item $\Gamma_{[g]}=\{\mathrm{id}\}$ if $[g]\in\T-\T_{\infty}$,
\item $\Gamma_{[g]}$ is infinite and in the class $\B_{\infty}$ if $[g]\in\T_{\infty}$.
\end{itemize}
\end{lem}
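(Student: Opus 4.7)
The plan is to treat the two cases separately, following the strategy of the corresponding orientable result in \cite[\S 4]{MP2}, with modifications to accommodate the M\"obius-band zones now permitted.

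\textbf{Case $[g] \in \T - \T_\infty$ (almost-filling).} Every zone of $g$ is a disc or M\"obius band. Given $\phi \in \Gamma_{[g]}$, I would first use $\phi\cdot[g]=[g]$ to isotope $\phi$, rel $\partial\Sigma$, to a representative that preserves the coloured, normally-directed 1-skeleton of $g$ set-wise. Since $\phi$ fixes $\partial\Sigma$ pointwise, the labelled endpoints of the arcs of $g$ are fixed, forcing each arc to be preserved; a combinatorial argument on the cyclic sequence of arcs and curves around each zone then forces each curve, and each zone, of $g$ to be preserved as well. After a further isotopy $\phi$ fixes the 1-skeleton pointwise and restricts on each zone to a self-homeomorphism fixing the zone's boundary. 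By Lemma \ref{lem:The-mapping-class-group-of-disc-or-mobius} the mapping class group of a disc or M\"obius band rel boundary is trivial, so each restriction is isotopic to the identity rel boundary; gluing the per-zone isotopies yields an isotopy of $\phi$ to $\mathrm{id}_\Sigma$, whence $\phi = 1$ in $\Gamma_{[g]}$.

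\textbf{Case $[g] \in \T_\infty$ (not almost-filling).} Some zone $\Sigma'$ of $g$ is neither a disc nor a M\"obius band. By the classification of compact surfaces with boundary, $\Sigma'$ contains an essential two-sided simple closed curve $c$ that is generic in $\Sigma$ (not homotopic to $\partial\Sigma$, and not bounding a disc or a M\"obius band in $\Sigma$); since $c$ lies inside $\Sigma'$, disjoint from the 1-skeleton of $g$, the Dehn twist $T_c$ is supported in $\Sigma'$ and therefore lies in $\Gamma_{[g]}$. Proposition \ref{prop:dehn-twists-generate-free-abelian} then shows $T_c$ has infinite order in $\MCG(\Sigma)$, so $\Gamma_{[g]}$ is infinite. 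To conclude $\Gamma_{[g]} \in \B_\infty$ I would invoke Theorem \ref{thm:infinite-normal-amenable-subgroup-in-B_infty} and produce an infinite normal amenable subgroup $A \trianglelefteq \Gamma_{[g]}$. First let $K\trianglelefteq\Gamma_{[g]}$ be the kernel of the (finite) permutation action on the set of zones of $g$; a standard gluing argument identifies $K \cong \prod_\alpha \MCG(\Sigma'_\alpha,\partial\Sigma'_\alpha)$. Then choose a $\Gamma_{[g]}$-invariant family $\mathcal{C}$ of pairwise disjoint, pairwise non-isotopic generic two-sided simple closed curves contained in the non-disc, non-M\"obius zones --- for instance, equivariantly fix a pants decomposition of each $\Gamma_{[g]}$-orbit of such zones, adjoin the core of each annulus zone, and discard any curves that fail to be generic in $\Sigma$. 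Let $A$ be generated by $\{T_\gamma : \gamma \in \mathcal{C}\}$: Proposition \ref{prop:dehn-twists-generate-free-abelian} makes $A$ free abelian (hence amenable); normality in $\Gamma_{[g]}$ follows because conjugation by $\phi\in\Gamma_{[g]}$ satisfies $\phi T_\gamma \phi^{-1} = T_{\phi(\gamma)}$ and $\phi$ permutes $\mathcal{C}$ up to isotopy; and $A$ is infinite because it contains $T_c$.

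\textbf{Anticipated obstacle.} The subtlest step is the equivariant construction of the curve family $\mathcal{C}$: one has to ensure simultaneously that every non-disc, non-M\"obius zone contributes at least one generic (in $\Sigma$) two-sided curve, and that the whole collection is preserved up to isotopy by the permutation action of $\Gamma_{[g]}$ on zones, so that the resulting Dehn twists generate a genuinely normal free abelian subgroup. The remaining ingredients --- the isotopy and gluing arguments in the first case, and the identification of $K$ with the product of zone mapping class groups --- are standard surface-topology bookkeeping and closely parallel \cite[\S 4]{MP2}.
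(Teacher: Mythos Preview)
Your treatment of the almost-filling case is essentially the paper's argument: isotope the mapping class to respect the $1$-skeleton of $g$, then apply Lemma~\ref{lem:The-mapping-class-group-of-disc-or-mobius} zone by zone.

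For the non-almost-filling case your overall strategy (find an infinite free abelian normal subgroup of Dehn twists, then invoke Theorem~\ref{thm:infinite-normal-amenable-subgroup-in-B_infty}) matches the paper, but your construction of the curve family $\mathcal{C}$ is both more complicated than necessary and has a gap. You build $\mathcal{C}$ from equivariant pants decompositions of the zones, then discard non-generic curves, and finally assert that $A=\langle T_\gamma:\gamma\in\mathcal{C}\rangle$ contains the earlier twist $T_c$. But $c$ was an arbitrary generic curve in some zone $\Sigma'$, not a pants curve of $\Sigma'$; there is no reason $c\in\mathcal{C}$. Worse, $\mathcal{C}$ can be empty after your discarding step: if the unique non-disc, non-M\"obius zone is a pair of pants, its pants decomposition contains no curves at all, and you have adjoined nothing else from it. So neither the non-emptiness of $\mathcal{C}$ nor the inclusion $T_c\in A$ is established.

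The paper avoids this entirely by a simpler, canonical choice: for every boundary component of every zone of $g$, take the simple closed curve running just inside that zone parallel to that boundary component; then discard isotopy duplicates and non-generic curves. Because this collection is determined by the zone structure, and elements of $\Gamma_{[g]}$ permute the zones (and hence their boundary components), the collection is automatically $\Gamma_{[g]}$-invariant up to isotopy, so the subgroup generated by the corresponding Dehn twists is normal without any equivariant construction. Non-emptiness is immediate: any zone $Z$ that is neither a disc nor a M\"obius band has a boundary component whose parallel curve is generic in $\Sigma$. This is exactly the curve you were reaching for with your $c$, and making it part of the family from the outset closes the gap you identified as the anticipated obstacle.
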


\begin{proof}
The proof of the first statement (when $[g]$ is almost-filling) is
similar to the proof of \cite[Lem.~4.7]{MP2}, but incorporating Lemma
\ref{lem:The-mapping-class-group-of-disc-or-mobius} instead of simply
the Alexander Lemma.

The proof of the statement given when $[g]\in\T_{\infty}$ is similar
to the proof of \cite[Lem.~4.8]{MP2}. Given $[g]\in\T_{\infty}$,
we create a list of simple closed curves $c_{1},\ldots,c_{k}$ as
follows. For every boundary component of any zone of $g$, add to
the list the simple closed curve that follows close to the boundary
component inside the zone. After doing so, remove repeats of isotopic
curves (e.g.~if a zone of $g$ is bounded by a simple closed curve,
then in the previous step isotopic curves were created on both sides).
Also remove any curves that are not generic.

Note that by construction the $c_{i}$ are pairwise non-isotopic,
disjoint, two-sided, and generic. We should check that the collection
of $c_{i}$ is not empty. Indeed, since $[g]\in\T_{\infty}$, some
zone $Z$ of $g$ is not a Möbius band or a disc. Hence $Z$ must
have a boundary component that gave rise to a $c_{i}$ that is generic. 

Any Dehn twist $D_{c_{i}}$ in one of the $c_{i}$ is in $\Gamma_{[g]}$,
since $c_{i}$ is disjoint from the arcs and curves of $g$, and $[g]$
is determined by these. Since mapping classes in $\Gamma_{[g]}$ have
representatives that respect the zones of $g$, elements of $\Gamma_{[g]}$
permute the isotopy classes of the $c_{i}$. Hence the group generated
by the $D_{c_{i}}$ (we choose one Dehn twist for each $c_{i}$) is
a normal subgroup of $\Gamma_{[g]}$. By Proposition \ref{prop:dehn-twists-generate-free-abelian},
this subgroup is isomorphic to $\Z^{d}$ with $d\geq1$. Since $\Z^{d}$
is amenable by a result of von Neumann \cite{VN}, we deduce from
Theorem \ref{thm:infinite-normal-amenable-subgroup-in-B_infty} that
$\Gamma_{[g]}$ is in $\B_{\infty}$.
\end{proof}
Recall the sets of signed matchings $\sm^{*}=\sm^{*}(\wl)$ from $\S$\ref{subsec:Signed-matchings}.
Let $[g]\in\T-\T_{\infty}$. We will now describe how $[g]$ naturally
defines an element $(\m([g]),\varepsilon([g]))$ of $\sm^{*}$.

The points $V_{o}$ cut $\delta\Sigma$ into intervals, which by design,
are naturally identified with the sub-intervals of $\cup_{j=1}^{\ell}C(w_{j})$
that were used in their construction. Let $\k=\{\k_{x}\}_{x\in B}$
be the parameters of $[g]$. Consider, for each $x\in B$ and $0\leq k\leq\k_{x}$,
the collection $A(x,k)$ of arcs of $g$ that are in the preimage
of $(x,k)$. These arcs naturally give a matching $m_{x,k}(g)$ of
$\I_{x}(\wl)$. This matching does not change under isotopy of $g$.
Hence reading off all the matchings as $x$ and $k$ vary, we obtain
a tuple of matchings
\[
\m([g])=\{(m_{x,0}(g),\ldots,m_{x,\k_{x}}(g))\}_{x\in B}\in\match^{*}
\]
with $\kappa(\m([g]))=\kappa$. Note that \textbf{Restriction 3, }together
with $[g]\in\T-\T_{\infty}$\textbf{ }implies that if $m_{x,i}=m_{x,i+1}$
then at least one of the $(x,i)$-zones of $[g]$ is a Möbius band. 

By \textbf{Restriction 1}, together with $\left[g\right]\in\T-\T_{\infty}$,
we have that $g^{-1}\left(\left\{ \left(x,i\right)\right\} \right)$
contains no curves but only arcs. By construction, the matching arcs
of $\Sigma_{\m}$ corresponding to $m_{x,i}$ are in one-to-one correspondence
with the connected components of $g^{-1}(\{(x,i)\})$, and any zone
of $g$ corresponds to a 2-cell of $\Sigma_{\m}$ by matching up the
matching arcs on the boundary of the 2-cell. However, a zone of $g$
that is a Möbius band may correspond to a 2-cell of $\Sigma_{\m}$
that is a disc. To record this discrepancy, we define $\varepsilon=\varepsilon([g])$
to assign $1$ to each 2-cell of $\Sigma_{\m}$ that corresponds to
a zone of $g$ that is a disc, and define $\varepsilon$ to assign
$-1$ to any 2-cell of $\Sigma_{\m}$ that corresponds to a zone of
$g$ that is a Möbius band. 

Thus we have defined a map
\[
(\m,\varepsilon):\T-\T_{\infty}\to\sm^{*}.
\]

\begin{lem}
\label{lem:same-matching-implies-differ-by-homeo}Let $[(\Sigma_{i},f_{i})]\in\surfaces^{*}(\wl)$
and $[g_{i}]\in\T(\Sigma_{i},f_{i})$ for $i=1,2$. Then $(\m([g_{1}]),\varepsilon([g_{1}]))=(\m([g_{2}]),\varepsilon([g_{2}]))$
if and only if there is a homeomorphism 
\[
\phi:\Sigma_{1}\to\Sigma_{2}
\]
that respects all markings of the boundaries of the two surfaces and
such that
\[
[g_{2}\circ\phi]=[g_{1}]
\]
(as isotopy classes of transverse maps).
\end{lem}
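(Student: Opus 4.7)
The plan is to prove both implications separately; the nontrivial content is in the ``only if'' direction. For the ``if'' direction, observe that if such a $\phi$ exists, then since $g_2\circ\phi$ is isotopic to $g_1$ (rel $V_o$), $\phi$ carries the arcs and curves of $g_1$ onto those of $g_2$ label-preservingly, and maps zones of $g_1$ homeomorphically onto zones of $g_2$. Because $\phi$ respects boundary markings, the induced bijection on the endpoints of arcs in $\delta\Sigma_1$ agrees with the canonical identification of $\I_x(\wl)$ on both sides, so $m_{x,j}([g_1])=m_{x,j}([g_2])$ for every $x,j$. Since $\phi$ restricts to a homeomorphism on each zone, the topological type (disc versus Möbius band) is preserved, hence $\varepsilon([g_1])=\varepsilon([g_2])$.

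For the ``only if'' direction, assume $(\m,\varepsilon)\stackrel{\mathrm{def}}{=}(\m([g_1]),\varepsilon([g_1]))=(\m([g_2]),\varepsilon([g_2]))$, and note that necessarily $[g_1],[g_2]\in\T-\T_\infty$ for $(\m,\varepsilon)$ to be defined. I would construct $\phi$ by extending across successively higher-dimensional pieces of the decomposition of $\Sigma_i$ induced by $g_i$. First, define $\phi$ on $\delta\Sigma_1\to\delta\Sigma_2$ via the canonical identification of each boundary component with the corresponding $C(w_k)$ coming from the marked points and the letter structure of $w_k$; this automatically sends $g_1^{-1}\{(x,j)\}\cap\delta\Sigma_1$ onto $g_2^{-1}\{(x,j)\}\cap\delta\Sigma_2$. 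Next, each arc of $g_1$ in the preimage of $(x,j)$ is labeled by a unique pair in the matching $m_{x,j}$, and the same pair labels a unique arc of $g_2$; extend $\phi$ to a homeomorphism from the first arc to the second, agreeing with the boundary map on endpoints. At this stage $\phi$ is defined on the one-skeleton generated by $\delta\Sigma_1$ together with the arcs and curves of $g_1$.

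The crucial step is the extension across zones. For each zone $Z$ of $g_1$, its cyclic sequence of bounding arcs and boundary segments is determined by $\m$, and matches that of a unique corresponding zone $Z'$ of $g_2$; by the equality $\varepsilon([g_1])=\varepsilon([g_2])$, the zones $Z$ and $Z'$ are of the same topological type. Since $Z$ and $Z'$ are each a disc or each a Möbius band, Lemma \ref{lem:The-mapping-class-group-of-disc-or-mobius} shows that the homeomorphism $\phi|_{\partial Z}\to\partial Z'$ (already determined by the previous step) extends to a homeomorphism $Z\to Z'$, uniquely up to isotopy rel $\partial Z$. Gluing these extensions produces the global $\phi:\Sigma_1\to\Sigma_2$. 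Finally, to verify $[g_2\circ\phi]=[g_1]$, note the two maps agree on the one-skeleton by construction; within each zone both map into the same contractible region ($\overline{U_o}$ or $\overline{J_{x,j}}$) of $\wedger$, so they are homotopic rel boundary, and combining these homotopies yields an isotopy of transverse maps.

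The main obstacle is confirming that the purely combinatorial data $(\m,\varepsilon)$ really suffices to reconstruct the gluing of zones to the one-skeleton: one has to rule out any residual twisting ambiguity when attaching a Möbius band, and this is precisely what Lemma \ref{lem:The-mapping-class-group-of-disc-or-mobius} (triviality of the mapping class groups of the disc and Möbius band rel boundary) resolves. It is also important here that $[g_i]\in\T-\T_\infty$, so that all zones are discs or Möbius bands and the lemma applies.
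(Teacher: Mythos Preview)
Your proof is correct. The ``if'' direction matches the paper's one-line observation, and your ``only if'' direction is a valid direct construction.

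The paper takes a slightly different and more economical route for the ``only if'' direction: rather than building $\phi$ piece by piece, it factors through the model surface $(\Sigma_{\m,\varepsilon},f_{\m,\varepsilon})$ of Definition~\ref{def:surface-from-signed-matching}. The point is that the very definition of the map $(\m,\varepsilon)$ on $\T-\T_\infty$ already furnishes, for each $i$, a marking-respecting homeomorphism $\phi_i:\Sigma_{\m,\varepsilon}\to\Sigma_i$ with $[g_i]=[f_{\m,\varepsilon}\circ\phi_i^{-1}]$; one then simply takes $\phi=\phi_2\circ\phi_1^{-1}$. Your explicit skeleton-by-skeleton construction is essentially what underlies the paper's phrase ``by design of the map $(\m,\varepsilon)$'': the existence of each $\phi_i$ ultimately rests on the same ingredients you use (the matching data determines the gluing pattern of zones, and Lemma~\ref{lem:The-mapping-class-group-of-disc-or-mobius} kills any ambiguity in attaching discs or M\"obius bands). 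The paper's approach buys brevity and avoids rechecking the zone-extension step, since that work was already absorbed into the construction of $(\Sigma_{\m,\varepsilon},f_{\m,\varepsilon})$; your approach has the virtue of being self-contained and making the role of Lemma~\ref{lem:The-mapping-class-group-of-disc-or-mobius} fully explicit.
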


\begin{proof}
First suppose that $(\m([g_{1}]),\varepsilon([g_{1}]))=(\m([g_{2}]),\varepsilon([g_{2}]))$.
By design of the map $(\m,\varepsilon)$, there are homeomorphisms
\[
\phi_{i}:\Sigma_{\m([g_{i}]),\varepsilon([g_{i}])}\to\Sigma_{i}
\]
such that for $i=1,2$, $f_{\m([g_{i}]),\varepsilon([g_{i}])}\circ\phi_{i}^{-1}$
is a transverse map on $\Sigma_{i}$ that is isotopic to $g_{i}$,
and the $\phi_{i}$ respect the boundary markings of the surfaces.
The map $\phi=\phi_{2}\circ\phi_{1}^{-1}$ satisfies $[g_{2}\circ\phi]=[g_{1}]$.

On the other hand, if $\phi$ is as in the statement of the lemma,
then it is not hard to see that $(\m([g_{1}]),\varepsilon([g_{1}]))=(\m([g_{2}]),\varepsilon([g_{2}]))$.
\end{proof}
\begin{lem}
\label{lem:properties-of-m}The map $(\m,\varepsilon)$ has the following
properties:
\begin{enumerate}
\item $(\m,\varepsilon)$ is invariant under $\Gamma$, that is, for $\gamma\in\Gamma=\MCG(f)$
and $[g]\in\T-\T_{\infty}$,\\
 $(\m(\gamma[g]),\varepsilon(\gamma[g]))=(\m([g]),\varepsilon([g]))$.
\item The image of $(\m,\varepsilon)$ is $\sm^{*}(\Sigma,f)$.
\item $(\m,\varepsilon)$ descends to a bijection $\widehat{(\m,\varepsilon)}:\Gamma\backslash(\T-\T_{\infty})\to\sm^{*}(\Sigma,f)$
that respects the $\Z_{\geq0}^{B}$ gradings of the two sets given
by the two incarnations of $\kappa$ (on transverse maps and signed
matchings).
\end{enumerate}
\end{lem}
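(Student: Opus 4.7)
The plan for (1) is to represent $\gamma\in\Gamma$ by a homeomorphism that fixes $\delta\Sigma$ pointwise (and hence fixes $V_o$). Such a representative preserves the ordered interval decomposition of $\delta\Sigma-V_o$, so the matching $m_{x,j}(g)$, read off from the pairing of intervals by arcs of $g^{-1}(\{(x,j)\})$, is unchanged under $g\mapsto g\circ\gamma^{-1}$. Similarly $\gamma^{-1}$ permutes the zones of $g$ via a homeomorphism, preserving whether each zone is a disc or a Möbius band, hence $\varepsilon([g])$ is unchanged as well.

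For (2), I would reverse the construction of Definition \ref{def:surface-from-signed-matching}. Given $(\m,\varepsilon)\in\sm^{*}(\Sigma,f)$, by definition $(\Sigma_{\m,\varepsilon},f_{\m,\varepsilon})\approx(\Sigma,f)$, so there exists a homeomorphism $F:\Sigma_{\m,\varepsilon}\to\Sigma$ respecting the boundary markings, with $f\circ F$ homotopic to $f_{\m,\varepsilon}$ relative to base points. By Example \ref{exa:fme-is-a-transverse-map}, $f_{\m,\varepsilon}$ is a strict, almost-filling transverse map realizing $(\Sigma_{\m,\varepsilon},f_{\m,\varepsilon})$. Pushing it forward by $F$, the map $g\stackrel{\mathrm{def}}{=}f_{\m,\varepsilon}\circ F^{-1}$ is a strict, almost-filling transverse map on $\Sigma$ with $[g]\in\T-\T_\infty$ realizing $(\Sigma,f)$, and reading off matchings and zone types from $g$ recovers the original $(\m,\varepsilon)$.

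For (3), by (1) the map descends to $\widehat{(\m,\varepsilon)}:\Gamma\backslash(\T-\T_\infty)\to\sm^{*}(\Sigma,f)$, and by (2) it is surjective. For injectivity, assume $(\m([g_1]),\varepsilon([g_1]))=(\m([g_2]),\varepsilon([g_2]))$. Lemma \ref{lem:same-matching-implies-differ-by-homeo} then produces a homeomorphism $\phi:\Sigma\to\Sigma$ respecting boundary markings with $[g_2\circ\phi]=[g_1]$. Since $\phi|_{\delta\Sigma}$ fixes each $v_j$ and preserves the orientation of each $\delta_j$, it is isotopic through such maps of $\delta\Sigma$ to the identity; extending this isotopy inward through a collar of $\delta\Sigma$, I can modify $\phi$ to a homeomorphism $\widetilde{\phi}$ that fixes $\delta\Sigma$ pointwise while still satisfying $[g_2\circ\widetilde{\phi}]=[g_1]$. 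Then $f\sim g_1\sim g_2\circ\widetilde{\phi}\sim f\circ\widetilde{\phi}$ relative to $V_o$, which in particular provides a homotopy relative to $\{v_j\}\subseteq V_o$, so $[\widetilde{\phi}]\in\mcg(f)=\Gamma$ and $[\widetilde{\phi}]\cdot[g_1]=[g_2]$. That the bijection respects the $\k$-grading is tautological, since the parameters of any $[g]\in\T$ equal $\k(\m([g]),\varepsilon([g]))$ by construction.

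The main technical obstacle will be the collar-isotopy step in (3) and, most importantly, the verification that the resulting $\widetilde{\phi}$ lies in $\mcg(f)$ and not merely in $\mcg(\Sigma)$; parts (1) and (2) are essentially bookkeeping on top of the constructions already developed in Sections \ref{sec:Maps-on-surfaces} and \ref{sec:A-combinatorial-formula}.
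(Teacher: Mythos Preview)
Your proposal is correct and follows the same overall approach as the paper: invoke Lemma~\ref{lem:same-matching-implies-differ-by-homeo} for (1) and the injectivity in (3), and push forward $f_{\m,\varepsilon}$ along the equivalence $(\Sigma_{\m,\varepsilon},f_{\m,\varepsilon})\approx(\Sigma,f)$ for (2). The paper's own proof is considerably terser---it simply cites Lemma~\ref{lem:same-matching-implies-differ-by-homeo} for (3) without spelling out the collar-isotopy needed to make $\phi$ fix $\delta\Sigma$ pointwise or the chain of homotopies $f\sim g_1\sim g_2\circ\widetilde\phi\sim f\circ\widetilde\phi$ verifying $[\widetilde\phi]\in\MCG(f)$---so your write-up actually fills in details the paper leaves implicit.
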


\begin{proof}
\emph{Part 1. }This is a special case of one of the implications of
Lemma \ref{lem:same-matching-implies-differ-by-homeo}.

\emph{Part 2. }This follows from the fact that given any $(\m,\varepsilon)\in\sm^{*}(\Sigma,f)$,
by definition $(\Sigma_{\m,\varepsilon},f_{\m,\varepsilon})\approx(\Sigma,f)$.
So there is a homeomorphism $h:\Sigma_{\m,\varepsilon}\to\Sigma$
such that $f_{\m,\varepsilon}\circ h^{-1}$ is homotopic to $f$ relative
to $V_{o}$. On the other hand, $f_{\m,\varepsilon}\circ h^{-1}$
is a strict transverse map realizing $(\Sigma,f)$, all of whose zones
are discs or Möbius bands by construction. Hence $[f_{\m,\varepsilon}\circ h^{-1}]\in\T(\Sigma,f)-\T_{\infty}(\Sigma,f)$
with $\m([f_{m,\varepsilon}\circ h^{-1}])=\m$ and $\varepsilon(f_{\m,\varepsilon}\circ h^{-1})=\varepsilon$
(by Part 1).

\emph{Part 3. }The fact that $(\m,\varepsilon)$ descends to a surjective
map $\widehat{(\m,\varepsilon)}:\Gamma\backslash(\T-\T_{\infty})\to\sm^{*}(\Sigma,f)$
follows from Parts 1 and 2. We need to prove $\widehat{(\m,\varepsilon)}$
is injective, in other words, if $(\m([g_{1}]),\varepsilon([g_{1}]))=(\m([g_{2}]),\varepsilon([g_{2}]))$
then there is some $\gamma\in\MCG(f)$ such that $\gamma([g_{1}])=[g_{2}]$.
The needed $\gamma$ is furnished by Lemma \ref{lem:same-matching-implies-differ-by-homeo}
(taking $\Sigma_{1}=\Sigma_{2}=\Sigma$).
\end{proof}
\begin{cor}
\label{cor:quotient-is-finite.}$\Gamma\backslash(\T-\T_{\infty})$
is finite.
\end{cor}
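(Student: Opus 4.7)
The plan is to deduce this directly from the bijection established in the preceding lemma together with the earlier finiteness result on signed matchings.

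First, I would invoke part 3 of Lemma \ref{lem:properties-of-m}, which provides a bijection
\[
\widehat{(\m,\varepsilon)}:\Gamma\backslash(\T-\T_{\infty})\longrightarrow\sm^{*}(\Sigma,f).
\]
Hence finiteness of the quotient reduces to finiteness of $\sm^{*}(\Sigma,f)$.

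Next, I would observe that by definition $\sm^{*}(\Sigma,f)$ is contained in the set of signed matchings $(\m,\varepsilon)\in\sm^{*}(\wl)$ with $(\Sigma_{\m,\varepsilon},f_{\m,\varepsilon})\approx(\Sigma,f)$. In particular, every such $(\m,\varepsilon)$ satisfies $\chi(\Sigma_{\m,\varepsilon})=\chi(\Sigma)$, a fixed integer depending only on the fixed admissible map $(\Sigma,f)$.

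Finally, I would apply Corollary \ref{cor:finitely-many-matchings-of-fixed-chi0}, which asserts that for fixed $\wl$ and fixed $\chi_{0}\in\Z$, there are only finitely many elements $(\m,\varepsilon)\in\sm^{*}(\wl)$ with $\chi(\Sigma_{\m,\varepsilon})=\chi_{0}$. Taking $\chi_{0}=\chi(\Sigma)$ yields that $\sm^{*}(\Sigma,f)$ is finite, and transporting this along $\widehat{(\m,\varepsilon)}$ gives the desired conclusion. There is no real obstacle here: the entire content has been packaged into Lemma \ref{lem:properties-of-m} and Corollary \ref{cor:finitely-many-matchings-of-fixed-chi0}, and the corollary is a short consequence of chaining them together.
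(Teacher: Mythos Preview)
Your proof is correct and follows essentially the same approach as the paper: both use the bijection from Lemma \ref{lem:properties-of-m}, Part 3, and then appeal to Corollary \ref{cor:finitely-many-matchings-of-fixed-chi0} with $\chi_{0}=\chi(\Sigma)$. Your version simply spells out the intermediate observation about the Euler characteristic a bit more explicitly.
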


\begin{proof}
By Lemma \ref{lem:properties-of-m}, Part 3, $\Gamma\backslash(\T-\T_{\infty})$
has the same cardinality as $\sm^{*}(\Sigma,f)$, which is finite
by Corollary \ref{cor:finitely-many-matchings-of-fixed-chi0}, applied
with $\chi_{0}=\chi(\Sigma)$.
\end{proof}

\begin{proof}[Proof of Theorem \ref{thm:Formula-for-l2-euler-char}]
Combining Theorems \ref{thm:The-polysimplicial-complex-is=00003Dcontractible}
and \ref{thm:l2-chi-formula} (with $X=|\T|_{\ps}$ and $G=\Gamma$)
together with Lemmas \ref{lem:g-cw-complex} and \ref{lem:isotropy-groups}
and Corollary \ref{cor:quotient-is-finite.} shows that $\chi^{(2)}(\Gamma)$
is well-defined and given by
\[
\chi^{(2)}(\Gamma)=\sum_{[[g]]\in\Gamma\backslash(\T-\T_{\infty})}(-1)^{|\kappa([g])|}.
\]
Finally, Lemma \ref{lem:properties-of-m}, Part 3, shows that the
above sum can be replaced by 
\[
\chi^{(2)}(\Gamma)=\sum_{(\m,\varepsilon)\in\sm^{*}(\Sigma,f)}(-1)^{|\kappa(\m,\varepsilon)|}
\]
that gives the formula stated in Theorem \ref{thm:Formula-for-l2-euler-char}.
\end{proof}

\subsection{\label{subsec:The--Euler-characteristic-is the usual one}The $L^{2}$-Euler
characteristic is the usual one for almost-incompressible maps}

Recall Definition \ref{def:incompressible and almost-incompressible}
of incompressible and almost-incompressible maps.
\begin{thm}
\label{thm:tame-euler-char}If $[(\Sigma,f)]$ is an almost-incompressible
element of $\surfaces^{*}(\wl)$ then there exists a finite $CW$-complex
$X(f)$ such that $X(f)$ is an Eilenberg-Maclane space of type $K(\MCG(f),1)$
and 
\[
\chi^{(2)}(\MCG(f))=\chi(X(f))
\]
where the right hand side is the usual topological Euler characteristic.
\end{thm}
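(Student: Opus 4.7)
The plan is to realize $X(f)$ as $\Gamma \backslash Y$, where $\Gamma := \MCG(f)$ and $Y := |\T - \T_\infty|$ is the order complex of the sub-poset of almost-filling strict transverse maps realizing $(\Sigma,f)$. By Lemma \ref{lem:isotropy-groups} the $\Gamma$-action on $\T-\T_\infty$ is free, and since the order complex of a poset is naturally barycentrically subdivided (any element permuting a chain must fix it pointwise), the simplicial $\Gamma$-action on $Y$ is a free $\Gamma$-$CW$-action. By Corollary \ref{cor:quotient-is-finite.} this action is cocompact, so $X(f)$ is a finite CW complex. Provided $Y$ is contractible (the main obstacle, addressed below), $X(f)$ is then automatically a finite aspherical CW complex with $\pi_1(X(f))=\Gamma$, i.e., a finite $K(\Gamma,1)$.

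Granted that $X(f)$ is a finite $K(\Gamma,1)$, the Euler characteristic identity $\chi^{(2)}(\Gamma)=\chi(X(f))$ follows directly from Theorem \ref{thm:l2-chi-formula} applied with $X := Y$ and $G := \Gamma$: the isotropy groups are all trivial and $Y$ is acyclic, so that theorem specializes to
\[
\chi^{(2)}(\Gamma)\;=\;\sum_{[\sigma]\in\Gamma\backslash Y}(-1)^{\dim\sigma}\;=\;\chi(X(f)).
\]

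The main obstacle is proving contractibility of $Y$ under almost-incompressibility. The plan is to invoke the poset version of Quillen's Theorem A applied to the inclusion $\iota\colon\T-\T_\infty\hookrightarrow\T$: to deduce that $\iota$ is a homotopy equivalence on order complexes (and hence, combined with contractibility of $|\T|\simeq|\T|_\ps$ from Theorem \ref{thm:The-polysimplicial-complex-is=00003Dcontractible}, that $Y$ is contractible), it suffices to check that for every $[g]\in\T$ the refinement poset $R([g]) := \{[h]\in\T-\T_\infty : [g]\preceq[h]\}$ has contractible order complex. When $[g]\in\T-\T_\infty$ this is immediate, since $[g]$ is the minimum of $R([g])$. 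When $[g]\in\T_\infty$, any zone $Z$ of $g$ that is not a disc or a Möbius band carries a generic (i.e.~two-sided and not bounding a disc or Möbius band) essential simple closed curve or proper essential arc $c$; the almost-incompressibility hypothesis forbids $f(c)$ from being nullhomotopic in $\wedger$, which is precisely what permits $c$ to be promoted to an arc or curve in a strict refinement of $g$ by inserting a new transversion point on $f(c)$. Iterating this subdivision strictly reduces the Euler-characteristic complexity of each non-almost-filling zone and terminates in an almost-filling refinement of $g$, so $R([g])$ is non-empty. Contractibility of $R([g])$ should then follow by adapting the contracting-homotopy argument used to prove Theorem \ref{thm:The-polysimplicial-complex-is=00003Dcontractible} to the sub-poset $R([g])$; the delicate point, and the one I expect to be hardest, is that every intermediate move in that contraction must remain within almost-filling refinements. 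This is tailored to the \emph{almost} in almost-incompressibility, because one-sided compressing curves and those bounding Möbius bands (the only compressing curves permitted by hypothesis) are exactly those compatible with decompositions into discs and Möbius bands, so they do not obstruct the iterative refinement.
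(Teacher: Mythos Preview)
Your overall architecture is sound, but the ``main obstacle'' you identify is an illusion, and your attempt to overcome it contains a logical reversal. You write that for $[g]\in\T_\infty$ a generic essential curve $c$ in a bad zone $Z$ has $f(c)$ \emph{not} nullhomotopic by almost-incompressibility. This is backwards: since $c$ lies in a single zone of $g$, the image $g(c)$ is confined to a contractible piece of $\wedger$, so $g(c)$---and hence $f(c)$, as $g\simeq f$---\emph{is} nullhomotopic. Almost-incompressibility does not forbid this; rather, it says that any essential curve with nullhomotopic image must be non-generic. But you chose $c$ generic. So the existence of $[g]\in\T_\infty$ leads to a contradiction, and the conclusion is that $\T_\infty(\Sigma,f)=\emptyset$ whenever $(\Sigma,f)$ is almost-incompressible. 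This is exactly Lemma~\ref{lem:incompressible-have-filling-elements-only} in the paper.

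Once you know $\T_\infty=\emptyset$, your sub-poset $\T-\T_\infty$ equals $\T$ itself, so $Y=|\T|\cong|\T|_\ps$ is already contractible by Theorem~\ref{thm:The-polysimplicial-complex-is=00003Dcontractible}. There is no need for Quillen's Theorem~A, no refinement posets $R([g])$ to analyze, and no delicate contraction argument to adapt. The paper simply takes $X(f)=\MCG(f)\backslash|\T|_\ps$; freeness of the action (Lemma~\ref{lem:isotropy-groups}, using $\T_\infty=\emptyset$) and finiteness of the quotient (Corollary~\ref{cor:quotient-is-finite.}) then give a finite $K(\MCG(f),1)$, and the Euler characteristic identity follows from Theorem~\ref{thm:l2-chi-formula} exactly as you wrote. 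Your proposed refinement procedure (``promoting $c$ to an arc or curve in a strict refinement by inserting a new transversion point on $f(c)$'') cannot work as stated, since $f(c)$ never leaves the contractible region and so never meets a new transversion point.
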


\begin{rem}
Note that by Lemma \ref{lem:max-chi-implies-incompressible}, Theorem
\ref{thm:tame-euler-char} applies to all $[(\Sigma,f)]$ with\linebreak{}
$\chi(\Sigma)\ge\chi_{\max}(\wl)-1$. 
\end{rem}

\begin{rem}
Since such $X(f)$ are unique up to weak homotopy equivalence, $\chi(X(f))$
is an invariant of $\MCG(f)$ usually simply denoted by $\chi(\MCG(f))$.
\end{rem}

The proof of Theorem \ref{thm:tame-euler-char} relies on the following
lemma.
\begin{lem}
\label{lem:incompressible-have-filling-elements-only}Let $[(\Sigma,f)]\in\surfaces^{*}(\wl)$
be almost-incompressible and $\T=\T\left(\Sigma,f\right)$. Then all
$\left[g\right]\in\T$ are almost-filling, namely, $\T_{\infty}\left(\Sigma,f\right)$
is empty.
\end{lem}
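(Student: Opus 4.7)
The plan is to prove the contrapositive: supposing $\T_{\infty}(\Sigma,f)\neq\emptyset$, I will produce a two-sided essential simple closed curve $c\subset\Sigma$ with $f(c)$ nullhomotopic in $\wedger$ and not bounding a Möbius band, contradicting almost-incompressibility. Fix $[g]\in\T_{\infty}$ and a zone $Z$ of $g$ which is neither a disc nor a Möbius band. The key initial observation is that $g|_{Z}$ factors through the closure of a contractible open region of $\wedger$ (the $o$-region or an $(x,j)$-region, depending on the type of $Z$), so for every simple closed curve $c\subset Z$ both $g\circ c$ and $f\circ c$ (using $g\simeq f$ rel $V_{o}$) are nullhomotopic in $\wedger$. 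In particular, any boundary component of $Z$ that is a single simple closed curve $\gamma$ lying in $\mathrm{int}(\Sigma)$, i.e., a connected component of $g^{-1}\{p\}$ for the relevant transversion point $p$, is a two-sided simple closed curve in $\Sigma$ (by transversality of $g$) with $f(\gamma)$ nullhomotopic.

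First I would handle the case in which every boundary component of $Z$ is of this kind. For each such $\gamma$, almost-incompressibility forces $\gamma$ to bound either a disc or a Möbius band in $\Sigma$; since $Z$ is connected and is neither a disc nor a Möbius band, the capping region cannot lie on the $Z$-side of $\gamma$, for otherwise it would coincide with $Z$. Consequently, each such $\gamma$ bounds an entire complementary region of $Z$ in the component $\Sigma_{0}$ of $\Sigma$ containing $Z$, and this region is a disc or a Möbius band. Capping off all boundaries of $Z$ in this way exhibits $\Sigma_{0}$ as a closed surface, contradicting our standing assumption that $\Sigma$ has no closed components.

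In the remaining case, at least one boundary component $\beta$ of $Z$ meets $\partial\Sigma$ along arcs. I would first apply the above capping argument to each interior-curve boundary component of $Z$, thereby enlarging $Z$ to a subsurface $\widehat{Z}\subset\Sigma$ whose boundary components are exactly the $\partial\Sigma$-meeting boundaries of $Z$. Let $c\subset\mathrm{int}(\widehat{Z})$ be the two-sided simple closed curve obtained by pushing $\beta$ slightly inside. The curve $c$ is not boundary-parallel in $\Sigma$, since $f(c)$ is nullhomotopic while each $w_{j}\neq1$. Moreover the side of $c$ in $\Sigma$ containing the collar between $c$ and $\beta$ meets $\partial\Sigma$, and so cannot be a disc or Möbius band, because such complementary regions of $c\subset\mathrm{int}(\Sigma)$ must be disjoint from $\partial\Sigma$. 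The remaining task is to rule out the opposite side of $c$ being a disc or Möbius band: the plan is to show that if it were, then combining with the adjacent zones of $g$ across the $g$-arcs of $\beta$ (whose structure is controlled by Restrictions~1 and 3 of a strict transverse map) and iterating the capping argument for each newly-exposed interior-curve boundary would force $\Sigma_{0}$ to contain a closed component, or else force $Z$ itself to be a disc or Möbius band, in either case contradicting the initial assumption.

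The main obstacle is this last step, and in particular the configurations where $\widehat{Z}$ is itself topologically a disc or Möbius band---for instance, when $Z$ is a planar surface whose interior boundaries all cap off to discs. Handling these requires a careful topological analysis of how the zones of $g$ adjacent to $\widehat{Z}$ across the $g$-arcs of $\beta$ fit into the component $\Sigma_{0}$, combining Restriction~3 (strictness of $g$) with the no-closed-components hypothesis to rule out the remaining configurations.
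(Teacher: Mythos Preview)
Your argument has a real gap in case (b), as you yourself acknowledge, but there is also an earlier error in case (a), and both stem from the same omission: you never use \textbf{Restriction~1}, which is part of the hypothesis that $[g]\in\T$.

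In case (a), the assertion ``the capping region cannot lie on the $Z$-side of $\gamma$, for otherwise it would coincide with $Z$'' fails when $Z$ has more than one boundary curve: if $\gamma_i$ bounds a disc $D_i$ on the $Z$-side, one only gets $Z\subsetneq D_i$, with the remaining $\gamma_j$ sitting inside $D_i$. (You also omit ruling out that $\gamma$ is boundary-parallel, though that is easy from $w_j\neq 1$.) In case (b), capping off the curve-boundaries of $Z$ to form $\widehat Z$ is exactly what produces the problematic situation where $\widehat Z$ is a disc or M\"obius band, and this is where your argument stalls.

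The paper avoids all of this by first splitting on whether $g$ has any curves at all, and then invoking Restriction~1. If $g$ has a curve $\gamma$, take $c$ parallel to it: an innermost-curve argument together with Restriction~1 (no zone is bounded by a single curve) shows $c$ bounds neither a disc nor a M\"obius band in $\Sigma$, and $f(c)\simeq *$ rules out boundary-parallel since each $w_j\neq 1$; so $c$ is a generic compressing curve, contradicting almost-incompressibility. If $g$ has no curves, then every boundary component of every zone meets $\partial\Sigma$, so in particular $\widehat Z=Z$; now your own pushed-in curve $c$ parallel to a boundary $\beta$ of $Z$ works directly: the collar side meets $\partial\Sigma$ via $\beta$, and the other side either contains another boundary component of $Z$ (hence also meets $\partial\Sigma$) or, when $Z$ has a single boundary, equals $Z$ with a collar removed and is homeomorphic to $Z$ itself. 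Either way neither side is a disc or M\"obius band, so $c$ is generic.
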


\begin{proof}
Assume that $[g]\in\T$ is not almost-filling. Then there must be
a generic simple closed curve $c$ in $\Sigma$ that is disjoint from
the arcs and curves of $g$. Indeed, if $g$ contains any curves then
we can take $c$ to be parallel to one of these curves, and $c$ would
then be generic by \textbf{Restriction 1}. Otherwise, if there is
a zone of $g$ that is not a topological disc nor a Möbius band, then
we can take $c$ to be any generic simple closed curve in this zone.
Since $c$ lives in only one zone of $g$, $g(c)$ is confined to
a contractible region of $\wedger$, hence is nullhomotopic. Hence
$f(c)$ is also nullhomotopic, since by assumption $g$ is homotopic
to $f$. This contradicts our assumption, hence all $[g]\in\T(\Sigma,f)$
are almost-filling.
\end{proof}
\begin{proof}[Proof of Theorem \ref{thm:tame-euler-char}]
Let $X(f)=\MCG(f)\backslash|\T|_{\ps}$. This is a finite $CW$-complex
by Corollary \ref{cor:quotient-is-finite.} and Lemma \ref{lem:incompressible-have-filling-elements-only}.
Since by Lemma \ref{lem:isotropy-groups} the action of $\MCG(f)$
on $|\T|_{\ps}$ is free and by Theorem \ref{thm:The-polysimplicial-complex-is=00003Dcontractible}
$|\T|_{\ps}$ is contractible, standard arguments show that $X(f)$
is connected, $\pi_{1}(X(f))\cong\MCG(f)$, and the higher homotopy
groups $\pi_{k}(X(f))=0$ for $k\geq2$. This is the statement that
$X(f)$ is an Eilenberg-Maclane space of type $K(\MCG(f),1)$.

For the equality of Euler characteristics, we may use Theorem \ref{thm:l2-chi-formula}
to obtain
\[
\chi^{(2)}(\MCG(f))=\sum_{[\sigma]\in\MCG(f)\backslash\T}(-1)^{\dim(\sigma)}=\sum_{\tau\text{ a cell of \ensuremath{X(f)}}}(-1)^{\dim(\tau)}=\chi(X(f)).
\]
Note that the use of Theorem \ref{thm:l2-chi-formula} is valid since
the sum in (\ref{eq:finiteness-condition}) is finite by finiteness
of $X(f)$.
\end{proof}
\begin{cor}
\label{cor:finitely many almost incompressible}For fixed $\wl$,
there are only finitely many almost-incompressible elements in $\surfaces^{*}(\wl)$.
\end{cor}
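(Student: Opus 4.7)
The plan is to construct a surjection from a finite set onto the almost-incompressible elements of $\surfaces^*(\wl)$. The key idea is that for an almost-incompressible $[(\Sigma,f)]$ there is always a strict transverse map realizing it with the minimal possible parameter $\kappa\equiv 0$; since $\T_\infty=\emptyset$ in this case, such a map yields a signed matching $(\m,\varepsilon)\in\sm^*(\wl)$ with $|\kappa(\m)|=0$ satisfying $(\Sigma_{\m,\varepsilon},f_{\m,\varepsilon})\approx(\Sigma,f)$. Only finitely many signed matchings have $|\kappa|=0$, so only finitely many almost-incompressible maps exist.

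Concretely, given an almost-incompressible $[(\Sigma,f)]\in\surfaces^*(\wl)$, Lemma \ref{lem:T-nonempty} furnishes some $[g]\in\T(\Sigma,f)$. Using Lemma \ref{lem:strict-transverse-maps-are-downwards-closed}, I forget all transversion points except $(x,0)$ for each $x\in B$, arriving at some $[g_0]\in\T(\Sigma,f)$ with $\kappa([g_0])\equiv 0$. Because $[(\Sigma,f)]$ is almost-incompressible, Lemma \ref{lem:incompressible-have-filling-elements-only} gives $\T_\infty(\Sigma,f)=\emptyset$, so $[g_0]$ is almost-filling; the construction of $\S$\ref{subsec:Proof-of-Theorem-MAIN} then produces a signed matching $(\m([g_0]),\varepsilon([g_0]))\in\sm^*(\Sigma,f)$ with $|\kappa|=0$. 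By the definition of $\sm^*(\Sigma,f)$, this signed matching satisfies $(\Sigma_{\m,\varepsilon},f_{\m,\varepsilon})\approx(\Sigma,f)$.

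It remains to observe that the set $\{(\m,\varepsilon)\in\sm^*(\wl):|\kappa(\m)|=0\}$ is finite: such an $\m$ consists of a single matching of $[2L_x]$ for each $x\in B$ (and matchings of a finite set form a finite collection), and for each such $\m$ the surface $\Sigma_\m$ has finitely many $2$-cells, over which $\varepsilon$ takes values in $\{\pm 1\}$; the third condition in Definition \ref{def:smatch} is vacuous when $|\kappa|=0$. Since every almost-incompressible $[(\Sigma,f)]$ arises from some element of this finite set under $(\m,\varepsilon)\mapsto[(\Sigma_{\m,\varepsilon},f_{\m,\varepsilon})]$, the corollary follows.

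No serious obstacle appears, as all the necessary machinery is already assembled: the heart of the matter is that almost-incompressibility kills $\T_\infty$, removing the only barrier to producing a signed matching from any chosen transverse map. The one step worth double-checking is that forgetting transversion points down to $\kappa\equiv 0$ preserves membership in $\T$, which is exactly the content of Lemma \ref{lem:strict-transverse-maps-are-downwards-closed}.
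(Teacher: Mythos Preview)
Your proof is correct and follows essentially the same route as the paper's own proof: both use Lemmas \ref{lem:T-nonempty}, \ref{lem:strict-transverse-maps-are-downwards-closed}, and \ref{lem:incompressible-have-filling-elements-only} to produce, for each almost-incompressible $[(\Sigma,f)]$, a signed matching with $\kappa\equiv 0$ that recovers $[(\Sigma,f)]$, and then note that there are only finitely many such signed matchings. The only cosmetic difference is that the paper phrases the conclusion as an injection into the finite set (invoking Lemma \ref{lem:same-matching-implies-differ-by-homeo}), whereas you phrase it as a surjection from the finite set via $(\m,\varepsilon)\mapsto[(\Sigma_{\m,\varepsilon},f_{\m,\varepsilon})]$; these are two sides of the same coin.
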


\begin{proof}
Suppose $[(\Sigma,f)]\in\surfaces^{*}(\wl)$ is almost-incompressible.
By Lemmas \ref{lem:T-nonempty} and \ref{lem:incompressible-have-filling-elements-only},
$\T(\Sigma,f)$ is non-empty, and all its elements are almost-filling.
By Lemma \ref{lem:strict-transverse-maps-are-downwards-closed}, there
is $[g]\in\T(\Sigma,f)$ with $\kappa\left(\left[g\right]\right)=\left(0,\ldots,0\right)$.
Recalling the maps $(\m,\varepsilon)$ from $\S$\ref{subsec:Proof-of-Theorem-MAIN}
and relying on Lemma \ref{lem:same-matching-implies-differ-by-homeo},
the pair $\left(\m\left(\left[g\right]\right),\varepsilon\left(\left[g\right]\right)\right)$
can only be obtained under the map $\left(\m,\varepsilon\right)$
on $\T(\Sigma,f)$ for one particular $[(\Sigma,f)]\in\surfaces^{*}(\wl)$.
Therefore, the cardinality of the almost-incompressible maps in $\surfaces^{*}(\wl)$
is at most the cardinality of the set 
\[
\left\{ (\m,\varepsilon)\in\sm^{*}(\wl)\,\middle|\,\k(\m)=(0,0,\ldots,0)\right\} .
\]
The latter set is clearly finite, since its elements are obtained
by choosing a finite number of matchings $\m$ of a finite set and
then a finite number of possible maps $\varepsilon$ from the discs
of $\Sigma_{\m}$ to $\left\{ \pm1\right\} $.
\end{proof}
\begin{rem}
The counting yielding the upper bound in the proof of the last corollary
is much redundant. First, if $\left(\m,\varepsilon\right)\in\sm^{*}\left(\wl\right)$
and $\left(\Sigma_{\m,\varepsilon},f_{\m,\varepsilon}\right)$ is
almost-filling, then $\varepsilon$ assigns $+1$ to each $2$-cell
in $\Sigma_{\m}$ except for, possibly, at most one $2$-cell in every
connected component of $\Sigma_{\m}$. Indeed, if there were two zones
in the same connected component of $\Sigma_{\m,\varepsilon}$ which
are Möbius bands, then a simple closed curve tracing the boundary
of one of these zones, continuing to the second zone, tracing its
boundary and going back to the first zone along a parallel path (thus
creating a kind of a barbell-shape) would be a generic, compressing
curve. Second, it is not hard to see that moving a single Möbius band
from one disc of $\Sigma_{\m}$ to another disc in the same connected
component, does not alter the element in $\sur^{*}\left(\wl\right)$.
Therefore, a given matching $\m$ corresponds to at most $2^{\#~\mathrm{connected~components~of}~\Sigma_{\m}}$
almost-incompressible maps in $\sur^{*}\left(\wl\right)$.
\end{rem}

\section{Remaining proofs and some examples\label{sec:Remaining-proofs-and}}

\subsection{\label{subsec:Proof-of-Corollary-limit-counting}Proof of Corollaries
\ref{cor:limit-counting-formula}, \ref{cor:power-limits} and \ref{cor:moment-convergence}}

We begin with the following lemma.
\begin{lem}
\label{lem:annuli-and-mobius-chi-equal-1}If all $w_{j}\neq1$, $[(\Sigma,f)]\in\surfaces^{*}(\wl)$,
and all the connected components of $\Sigma$ are annuli or Möbius
bands, then $\MCG\left(f\right)$ is trivial and, thus, $\chi^{(2)}(\MCG(f))=1$.
\end{lem}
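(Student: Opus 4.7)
The plan is to compute $\MCG(\Sigma)$ explicitly and then show that its non-trivial elements do not stabilize $[f]$.

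First I would reduce to connected components. Every annulus and every Möbius band has at least one boundary component, so every connected component of $\Sigma$ carries boundary; since an element of $\MCG(\Sigma)$ must fix $\partial\Sigma$ pointwise, it cannot permute connected components. Writing $\Sigma=C_{1}\sqcup\cdots\sqcup C_{m}$, this gives a product decomposition $\MCG(\Sigma)\cong\prod_{i=1}^{m}\MCG(C_{i})$, and correspondingly $\MCG(f)\cong\prod_{i=1}^{m}\MCG(f|_{C_{i}})$. It therefore suffices to show $\MCG(f|_{C})=\{1\}$ for each connected component $C$.

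For a Möbius band component this is immediate from Lemma~\ref{lem:The-mapping-class-group-of-disc-or-mobius}, which gives $\MCG(C)=\{1\}$ already. The real content is the annulus case. If $C$ is an annulus with ordered, oriented boundary components $\partial_{1},\partial_{2}$ labeled by words $w_{i},w_{j}$ (with marked points $v_{1}\in\partial_{1}$, $v_{2}\in\partial_{2}$), then $\MCG(C)\cong\Z$, generated by the Dehn twist $T$ about the core curve of $C$. The homotopy class of $f|_{C}$ relative to $\{v_{1},v_{2}\}$ is determined by the triple $(w_{i},w_{j},\gamma)$, where $\gamma=f_{*}(\alpha)\in\F_{r}$ for any fixed arc $\alpha$ from $v_{1}$ to $v_{2}$. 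The Dehn twist $T$ (fixing $v_{1}$ and $v_{2}$) sends $[\alpha]$ to $[\alpha\cdot\partial_{2}^{\pm 1}]$ in the fundamental groupoid, and hence replaces the triple $(w_{i},w_{j},\gamma)$ by $(w_{i},w_{j},\gamma\cdot w_{j}^{\pm 1})$. For a power $T^{n}$ to fix $[f|_{C}]$ we would need $w_{j}^{n}=1$ in $\F_{r}$, and since $w_{j}\neq 1$ and $\F_{r}$ is torsion-free this forces $n=0$. Thus $\MCG(f|_{C})=\{1\}$ for each annulus component as well.

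Combining the component-wise triviality statements yields $\MCG(f)=\{1\}$. Finally, since $\{1\}$ is finite the $L^{2}$-Euler characteristic is defined and equals $1/|\MCG(f)|=1$, giving $\chi^{(2)}(\MCG(f))=1$. The only delicate point is the groupoid-level bookkeeping in the annulus case; the assumption that all $w_{j}\neq 1$ is used precisely there (and nowhere else) to rule out Dehn-twist symmetries.
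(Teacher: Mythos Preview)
Your proof is correct and follows essentially the same approach as the paper: reduce to connected components, invoke Lemma~\ref{lem:The-mapping-class-group-of-disc-or-mobius} for M\"obius bands, and for annuli use that $\MCG(C)\cong\Z$ is generated by a Dehn twist whose action on $f_{*}$ of a cross-arc multiplies by a nontrivial power of a boundary word. The only cosmetic differences are that the paper multiplies on the left by $w_{1}^{\pm n}$ rather than on the right by $w_{j}^{\pm1}$, and does not spell out the final $\chi^{(2)}(\{1\})=1$ step.
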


\begin{proof}
Note that $\MCG(f)$ is the product of the mapping class groups of
$f$ restricted to the various connected components of $\Sigma,$
so it is sufficient to prove this when $\ell=2$ and $\Sigma$ is
an annulus or $\ell=1$ and $\Sigma$ is a Möbius band. In the latter
case, the whole mapping class group is trivial as stated in Lemma
\ref{lem:The-mapping-class-group-of-disc-or-mobius} (due to \cite[Thm.~3.4]{EPSTEIN}),
and so, in particular, $\MCG(f)=\{1\}$.

Finally, suppose that $\Sigma$ is an annulus. The mapping class group
of $\Sigma$ is isomorphic to $\Z$ and generated by a Dehn twist
$D$ in a curve parallel to the boundary. Consider a directed arc
$\beta$ connecting $v_{1}$ to $v_{2}$ ($v_{i}$ is the marked point
on the boundary component $\delta_{i}$). Then $f(\beta)$ is a loop
in $\wedger$ based at $o$, and we write $f_{*}(\beta)\in\pi_{1}(\wedger,o)=\F_{r}$
for the class of this loop. For every $n\in\Z$, $D^{n}(\beta)$ is
also an arc in $A$ with the same endpoints. If one boundary component
of $\Sigma$ is labeled $w_{1}$, then $f_{*}\left(D^{n}(\beta)\right)=w_{1}^{\pm n}f_{*}(\beta)\neq f_{*}(\beta)$
for all $0\ne n\in\Z$ since $w_{1}\neq1$. Hence $D^{n}\notin\MCG(f)$
and so $\MCG(f)=\{1\}$.
\end{proof}
\begin{lem}
\label{lem:counting-annuli}Let $w_{1}$ and $w_{2}$ be two words
in $\F_{r}$, both $\neq1$\@. Let $d$ be the maximal integer such
that $w_{1}=u^{d}$ with $u\in\F_{r}$. The number of elements $[(\Sigma,f)]\in\surfaces^{*}(w_{1},w_{2})$
such that $\Sigma$ is an annulus is
\[
\begin{cases}
d & \mathrm{if}~w_{1}~\mathrm{is~conjugate~to~either}~w_{2}~\mathrm{or}~w_{2}^{-1},\\
0 & \mathrm{otherwise}.
\end{cases}
\]
\end{lem}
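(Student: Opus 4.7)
The plan is to parameterize admissible annulus maps $(\Sigma, f)$ by elements of $\F_r$ via the fundamental groupoid, and then to count equivalence classes by computing a quotient by a mapping class group action.

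First, I would fix a model annulus $\Sigma$ with marked points $v_1, v_2$ on its two boundary components and an arc $\alpha$ from $v_1$ to $v_2$. Since $\pi_1(\Sigma, v_1) \cong \mathbb{Z}$ is generated by $[\delta_1]$, the loop $\alpha \cdot [\delta_2] \cdot \alpha^{-1}$ must equal $[\delta_1]^{\eta}$ for some sign $\eta = \pm 1$ determined by the relative orientation of $\delta_1$ and $\delta_2$. Setting $c \stackrel{\mathrm{def}}{=} f_*(\alpha) \in \F_r$, admissibility of $f$ forces $c w_2^{\eta} c^{-1} = w_1$, which establishes necessity of conjugacy of $w_1$ to $w_2$ or to $w_2^{-1}$ and hence settles the second case of the lemma. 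Moreover, since non-trivial elements of a free group are never conjugate to their own inverses, the sign $\eta$ is determined by the pair $(w_1, w_2)$, so exactly one orientation configuration is admissible and there is no double counting across orientation choices.

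In the affirmative case, write $w_1 = g w_2^{\epsilon} g^{-1}$ and fix the orientations of $\delta_1, \delta_2$ accordingly. I would next argue, using that $\Sigma$ deformation retracts onto a spine of the form (core circle) $\cup\, \alpha$, that homotopy classes of admissible $f$ rel $\{v_1, v_2\}$ on this fixed $\Sigma$ are in natural bijection with
\[
X \stackrel{\mathrm{def}}{=} \left\{ c \in \F_r : c w_2^{\epsilon} c^{-1} = w_1 \right\}
\]
via $c = f_*(\alpha)$. Since the centralizer of $w_1 = u^d \ne 1$ in $\F_r$ equals the cyclic subgroup $\langle u \rangle$, the set $X$ is a coset of $\langle u \rangle$, i.e.~a torsor for $\mathbb{Z}$.

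Finally, I would quotient by the equivalence relation of $\surfaces^*$. Any such equivalence can, after isotopy, be arranged to fix $\partial\Sigma$ pointwise, so the relevant group is $\MCG(\Sigma, \partial \Sigma) \cong \mathbb{Z}$ generated by the Dehn twist $D$ along the core circle of $\Sigma$. The key geometric computation is that $D \cdot \alpha$ is homotopic rel endpoints to $\alpha \cdot [\delta_2]^{\pm 1}$; correspondingly, $D$ acts on $X$ by right-multiplication by $w_2^{\pm 1} = u^{\pm d}$, so $|X / \langle D \rangle| = |\langle u \rangle / \langle u^d \rangle| = d$, as stated. The main technical point I expect to have to justify carefully is that the Dehn twist acts by multiplication by $u^{\pm d}$ rather than by $u^{\pm 1}$, since this is precisely where the multiplicative factor $d$ enters the count.
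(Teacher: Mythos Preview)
Your approach is essentially the paper's: parameterize admissible annulus maps by $c=f_*(\alpha)$, identify the parameter set $X$ with a coset of the centralizer $\langle u\rangle$, and count $\MCG(\Sigma)$-orbits under the Dehn twist.

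One step needs correction. You assert that $D$ acts on $X$ by right-multiplication by $w_2^{\pm1}=u^{\pm d}$, but the equality $w_2^{\pm1}=u^{\pm d}$ is false in general: $w_2$ is only \emph{conjugate} to $u^{\pm d}$, not equal to it. The paper sidesteps this by describing the Dehn twist as \emph{left}-multiplication by $w_1^{\pm1}$ (equivalently $D\alpha\simeq[\delta_1]^{\pm1}\cdot\alpha$ rather than $\alpha\cdot[\delta_2]^{\pm1}$); since $w_1=u^d$ literally, the quotient $|\langle u\rangle/\langle u^d\rangle|=d$ is then immediate. Your right-multiplication description can be repaired by the short computation $u^n g\cdot w_2=u^{n+\epsilon d}g$ (from $gw_2^{\epsilon}g^{-1}=u^d$), showing that the action on $X\cong\Z$ is indeed translation by $\pm d$; but as written the justification for the factor $d$ is not correct.
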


\begin{proof}
Assume that $\left[\left(\Sigma,f\right)\right]\in\sur^{*}\left(w_{1},w_{2}\right)$
is an annulus. Then, $f:\Sigma\to\wedger$ defines a free homotopy
between $f\left(\delta_{1}\right)$ and $f\left(\delta_{2}\right)$.
Since free homotopy classes of oriented curves in $\wedger$ correspond
to conjugacy classes in $\F_{r}$, this shows that $w_{1}$ must be
conjugate to either $w_{2}$ or $w_{2}^{-1}$, depending on whether
the orientations of the two boundary components of $\Sigma$ agree
or not.

Since no non-identity element of $\F_{r}$ is conjugate to its inverse,
$w_{1}$ cannot be conjugate to both $w_{2}$ and $w_{2}^{-1}$. Without
loss of generality, we assume from now on that $w_{1}$ is conjugate
to $w_{2}$. Let $\beta$ be a directed arc as in the proof of Lemma
\ref{lem:annuli-and-mobius-chi-equal-1}, connecting $v_{1}$ to $v_{2}$.
Denote $b=f_{*}\left(\beta\right)\in\F_{r}$, and note that $w_{1}=bw_{2}b^{-1}$.
Also note that as $\beta$ cuts $\Sigma$ into a disc, the map $f$
is completely determined, up to homotopy, by $f_{*}\left(\beta\right)$.
As in the proof of Lemma \ref{lem:annuli-and-mobius-chi-equal-1},
\[
\left\{ \left(\left[f\right]\circ\left[\rho\right]\right)_{*}\left(\beta\right)\,\middle|\,\left[\rho\right]\in\mcg\left(\Sigma\right)\right\} =\left\{ w_{1}^{n}f_{*}\left(\beta\right)\right\} _{n\in\Z}.
\]
But as the centralizer of $w_{1}$ in $\F_{r}$ is $\left\langle u\right\rangle $,
with $u\in\F_{r}$ the $d$-th root of $w_{1}$ as in the statement
of the lemma, we have $\left\{ c\in\F_{r}\,\middle|\,w_{1}=cw_{2}c^{-1}\right\} =\left\{ u^{n}b\,\middle|\,n\in\Z\right\} $.
Thus, there are exactly $d$ distinct orbits of possible values of
$f_{*}\left(\beta\right)$ under the action of $\mcg\left(\Sigma\right)$,
and therefore exactly $d$ classes of annuli in $\sur^{*}\left(w_{1},w_{2}\right)$.
\end{proof}
\begin{lem}
\label{lem:counting-Mobius}Let $w\neq1$, $w\in\F_{r}$. The number
of elements $[(\Sigma,f)]\in\surfaces^{*}(w)$ such that $\Sigma$
is a Möbius band is 0 if $w$ is not a square and $1$ if $w$ is
a square in $\F_{r}$.
\end{lem}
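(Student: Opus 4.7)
The plan is to classify admissible pairs $(\Sigma,f)\in\surfaces^{*}(w)$ with $\Sigma$ a Möbius band by passing to fundamental groups. Fix a model Möbius band $M$ with oriented boundary $\delta$ and marked point $v\in\delta$. Then $\pi_{1}(M,v)\cong\Z$, generated by a core-based loop $c$, and for an appropriate orientation of $c$ we have $[\delta]=c^{2}$ in $\pi_{1}(M,v)$.

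Since $\wedger$ is a $K(\F_{r},1)$, the set of pointed homotopy classes of maps $f\colon(M,v)\to(\wedger,o)$ is in bijection with $\Hom(\pi_{1}(M,v),\F_{r})\cong\F_{r}$ via $f\mapsto f_{*}(c)$. The admissibility condition $f_{*}([\delta])=w$ then becomes $f_{*}(c)^{2}=w$, so admissible $f$ (up to homotopy rel $v$) correspond exactly to square roots of $w$ in $\F_{r}$. Given such a root $u$, the map $f$ with $f_{*}(c)=u$ exists and is admissible by standard obstruction theory on $K(\F_r,1)$ targets.

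Next I would verify that the $\approx$-equivalence on $\surfaces^{*}(w)$ does not collapse distinct square roots. Any homeomorphism $F\colon M\to M$ with $F(v)=v$ and $F_{*}([\delta])=[\delta]$ satisfies $F_{*}(c^{2})=c^{2}$ in $\pi_{1}(M,v)=\langle c\rangle$; the only automorphisms of $\Z$ are $\pm1$, and $F_{*}(c)=c^{-1}$ would force $F_{*}([\delta])=[\delta]^{-1}\neq[\delta]$. Hence $F_{*}(c)=c$, so $(M,f_{1})\approx(M,f_{2})$ implies $(f_{1})_{*}(c)=(f_{2})_{*}(c)$, and equivalence classes of admissible Möbius-band maps are in bijection with square roots of $w$ in $\F_{r}$.

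Finally, I would invoke the classical unique-root property of free groups: since $\F_{r}$ is torsion-free with cyclic centralizers, the equation $u^{2}=w$ has at most one solution in $\F_{r}$. Thus the set of square roots is empty when $w$ is not a square and a singleton otherwise, which yields the desired count $0$ or $1$. There is no substantial obstacle; the one point to watch is that it is the \emph{oriented} boundary condition (not just set-theoretic preservation of $\delta$) which rules out the $c\mapsto c^{-1}$ auto-equivalence and thereby cleanly identifies equivalence classes with square roots rather than with pairs $\{u,u^{-1}\}$.
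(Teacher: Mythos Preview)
Your proof is correct and follows essentially the same approach as the paper: both arguments identify admissible M\"obius-band maps with square roots of $w$ in $\F_{r}$ and then invoke the uniqueness of roots in free groups. The paper phrases this via a non-separating arc $\alpha$ from $v_{1}$ to itself (which, as a loop, is your generator $c$) and cites Culler's lemma for existence, whereas you invoke the $K(\F_{r},1)$ property directly; these are cosmetic differences rather than a different strategy.
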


\begin{proof}
If $w$ is not a square in $\F_{r}$ then there are no $[(\Sigma,f)]\in\surfaces^{*}(w)$
with $\Sigma$ a Möbius band by Lemma \ref{lem:culler}. So suppose
that $w$ is a square in $\F_{r}$. Then by Lemma \ref{lem:culler},
there is at least one $[(\Sigma,f)]\in\surfaces^{*}(w)$ with $\Sigma$
a Möbius band. Let $[(\Sigma,f)]$ be of this form. Then up to homotopy,
there is a unique arc $\alpha(\Sigma)$ in $\Sigma$ joining $v_{1}$
to itself and not separating $\Sigma$ \cite[Proof of Thm.~3.4]{EPSTEIN}.
We have $f^{*}(a)^{2}=w$, which uniquely specifies $f_{*}(\alpha)$.
Let $u$ be the unique solution of $u^{2}=w$ in $\F_{r}$.

Let $(M_{1},f_{1})$ and $(M_{2},f_{2})$ be admissible maps for $w$
with the $M_{i}$ Möbius bands. For $i=1,2$ let $\alpha_{i}$ be
an embedded directed arc from $v_{1}$ to itself in $M_{i}$ that
does not separate $M_{i}$. Since by the previous paragraph $(f_{1})_{*}(\alpha_{1})=(f_{2})_{*}(\alpha_{2})=u$,
the homeomorphism $h$ from $M_{1}$ to $M_{2}$ that preserves the
markings on boundaries and maps $\alpha_{1}$ to $\alpha_{2}$, has
the property that $f_{2}\circ h$ is homotopic to $f_{1}$ and this
shows $[(M_{1},f_{1})]=[(M_{2},f_{2})].$ \emph{Hence there is exactly
one element $[(\Sigma,f)]\in\surfaces^{*}(w)$ with $\Sigma$ a Möbius
band.}
\end{proof}
\begin{proof}[Proof of Corollary \ref{cor:limit-counting-formula}]
 We assume all $w_{j}\neq1$ and examine the expansion given in Theorem
\ref{thm:main-theorem}. The limit $\lim_{n\to\infty}\trwl^{\O}(n)$
exists, since there are no $[(\Sigma,f)]\in\surfaces^{*}(\wl)$ with
$\chi(\Sigma)>0$ by Lemma \ref{lem:chi-max-non-pos}. Moreover, Lemma
\ref{lem:chi-max-non-pos} gives that 
\[
\surfaces_{0}^{*}(\wl)\stackrel{\mathrm{def}}{=}\{[(\Sigma,f)]\in\surfaces^{*}(\wl)\,:\,\chi(\Sigma)=0\}
\]
consists precisely of surfaces all the connected components of which
are annuli or Möbius bands. Thus 

\begin{eqnarray*}
\lim_{n\to\infty}\trwl^{O}\left(n\right) & = & \sum_{[(\Sigma,f)]\in\surfaces_{0}^{*}(\wl)}\chi^{(2)}\left(\MCG(f)\right)=\sum_{[(\Sigma,f)]\in\surfaces_{0}^{*}(\wl)}1\\
 & = & \left|\surfaces_{0}^{*}(\wl)\right|,
\end{eqnarray*}
the second equality following from Lemma \ref{lem:annuli-and-mobius-chi-equal-1}.
This proves the first statement of the corollary.

Every admissible pair $\left[\left(\Sigma,f\right)\right]\in\sur^{*}\left(\wl\right)$
induces a partition on $\left\{ \wl\right\} $ where every block consists
of the words associated with one connected component of $\Sigma$.
The algebraic characterization given in the statement of Corollary
\ref{cor:limit-counting-formula} follows from the geometric part
using these partitions and Lemmas \ref{lem:counting-annuli} and \ref{lem:counting-Mobius}.
\end{proof}

\begin{proof}[Proof of Corollary \ref{cor:power-limits}]
 Assume $w=u^{d}$, where $u\ne1$ is not a proper power. The first
statement of this corollary follows readily from the algebraic characterization
of $\lim_{n\to\infty}\trwl^{\O}\left(n\right)$ in Corollary \ref{cor:limit-counting-formula}:
the valid partitions of the words $w^{j_{1}},\ldots,w^{j_{\ell}}$
depend only on $j_{1},\ldots,j_{\ell}$, and the weight of every partition
depends only on $d$ and not on $u$. The collection of limits determines
$d$ using, for example, the following two equalities:
\[
\lim_{n\to\infty}\trw^{\O}\left(n\right)=\begin{cases}
1 & \mathrm{if}~d~\mathrm{is~even},\\
0 & \mathrm{if}~d~\mathrm{is~odd,}
\end{cases}~~~~\mathrm{and}~~~~\lim_{n\to\infty}\tr_{w,w}^{\O}\left(n\right)=\begin{cases}
d+1 & \mathrm{if}~d~\mathrm{is~even},\\
d & \mathrm{if}~d~\mathrm{is~odd.}
\end{cases}
\]
The analogous result for $\Sp\left(n\right)$ now follows from Theorem
\ref{thm:orth-symp-relation}. 
\end{proof}

\begin{proof}[Proof of Corollary \ref{cor:moment-convergence}]
 Corollary \ref{cor:power-limits} shows that the joint moments of
$T_{n}\left(w\right),\ldots,T_{n}\left(w^{\ell}\right)$ converge
to the same values as the joint moments of $T_{n}\left(x\right),\ldots,T_{n}\left(x^{\ell}\right)$
for some $x\in B$, as long as $w\ne1$ and is not a proper power.
By the method of moments one can now deduce the corollary, using that
Diaconis and Shahshahani have shown in \cite[\S 3]{Diaconis1994}
that these limits of moments are precisely those of the multivariate
normal distribution described in the statement. 
\end{proof}

\subsection{An example: non-orientable surface words\label{sub:frob-schur-example}}

Fix $s\geq1$. Let $w_{s}=x_{1}^{2}\cdots x_{s}^{2}\in\F_{s}$. Here
we analyze $\surfaces^{*}(w_{s})$. Note that $\chi_{\max}\left(w_{s}\right)=1-\sql\left(w_{s}\right)=1-s$,
so there is no admissible map $\left(\Sigma,f\right)\in\sur^{*}\left(w_{s}\right)$
with $\chi\left(\Sigma\right)>1-s$.
\begin{claim}
\emph{\label{claim:admissible surfaces of non-orientable surface words}Let
$t\in\Z_{\ge0}$ be a non-negative integer. Then there is exactly
one $\left[\left(\Sigma_{t},f_{t}\right)\right]\in\surfaces^{*}\left(w_{s}\right)$
with $\chi(\Sigma_{t})=1-s-t$ that can be realized by an almost-filling
strict transverse map. In particular, there is at most one $\left[\left(\Sigma_{t},f_{t}\right)\right]\in\surfaces^{*}\left(w_{s}\right)$
with $\chi^{(2)}(\MCG(f_{t}))\ne0$ and $\chi(\Sigma_{t})=1-s-t$.
In fact,
\begin{equation}
\chi^{\left(2\right)}\left(\mcg\left(f_{t}\right)\right)=\begin{cases}
\delta_{t,0} & \mathrm{if}~s=1,\\
(-1)^{t}{t+s-2 \choose s-2} & \mathrm{if}~s\ge2,
\end{cases}\label{eq:L2ec for surface words}
\end{equation}
where $\delta_{t,0}$ is the Kronecker delta.}
\end{claim}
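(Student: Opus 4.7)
\textit{Combinatorial reduction.} Since each $x \in B$ appears exactly twice in $w_{s}$, we have $L_{x}=1$ and $M_{L_{x}}=M_{1}$ contains a unique matching. Hence every element of $\sm^{*}(w_{s})$ is determined by a pair $(\kappa,\varepsilon)$ with $\kappa\in\Z_{\geq0}^{B}$, and the constraint in Definition~\ref{def:smatch} forces $\varepsilon$ to assign $-1$ to each type-$(x,i)$ disc of $\Sigma_{\m}$ (there is exactly one such disc per level, because adjacent matchings coincide). Culler's Lemma~\ref{lem:culler}, combined with the fact that $w_{s}$ has non-zero image in the abelianisation of $\F_{r}$, rules out every orientable admissible map; thus $\Sigma_{\m}$ is forced to be the non-orientable surface of genus $s$ with one boundary component (Euler characteristic $1-s$), and by Lemma~\ref{lem:Euler-char-calc} the number of type-$o$ discs satisfies $D_{o}(\kappa)=1$ for every $\kappa$. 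Consequently $\sm^{*}(w_{s})$ is parametrised by pairs $(\kappa,\mu)$ with $\mu\in\{0,1\}$ recording whether the single type-$o$ disc is flipped, and $\chi(\Sigma_{\m,\varepsilon})=1-s-|\kappa|-\mu$.

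\textit{Uniqueness in $\sur^{*}(w_{s})$.} By Definition~\ref{def:surface-from-signed-matching}, $(\Sigma_{\m,\varepsilon},f_{\m,\varepsilon})$ is obtained from $(\Sigma_{\m},f_{\m})$ by connect-summing $|\kappa|+\mu$ copies of $\R P^{2}$ onto chosen $2$-cells of $\Sigma_{\m}$, with $f_{\m,\varepsilon}$ extended by a constant on each added Möbius cap. Because every $2$-cell of $\Sigma_{\m}$ is mapped by $f_{\m}$ into a contractible region of $\wedger$, and because any two disjoint discs in a connected compact surface are related by an ambient homeomorphism fixing $\delta\Sigma$ with its markings, the resulting equivalence class in $\sur^{*}(w_{s})$ depends only on $|\kappa|+\mu$. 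Lemma~\ref{lem:properties-of-m}(2) ensures that every admissible map realisable by an almost-filling strict transverse map arises from some $(\m,\varepsilon)\in\sm^{*}(w_{s})$, so one gets a single class $[(\Sigma_{t},f_{t})]\in\sur^{*}(w_{s})$ per $t\geq0$; this proves the first assertion of the claim, from which the statement about $\chi^{(2)}\ne0$ follows as an immediate corollary (since $\chi^{(2)}\ne0$ forces $\sm^{*}(\Sigma,f)\ne\emptyset$).

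\textit{Computation.} By Theorem~\ref{thm:Formula-for-l2-euler-char} together with the uniqueness just established,
\[
\chi^{(2)}(\MCG(f_{t}))=\sum_{\substack{|\kappa|+\mu=t\\ \mu\in\{0,1\}}}(-1)^{|\kappa|}=(-1)^{t}\binom{t+s-1}{s-1}+(-1)^{t-1}\binom{t+s-2}{s-1}\qquad(t\geq1),
\]
using that $\#\{\kappa\in\Z_{\geq0}^{B}:|\kappa|=k\}=\binom{k+s-1}{s-1}$. Pascal's identity collapses this to $(-1)^{t}\binom{t+s-2}{s-2}$ when $s\geq2$ and to $0$ when $s=1$; the case $t=0$ has the single contributor $(\kappa,\mu)=(0,0)$, giving $\chi^{(2)}(\MCG(f_{0}))=1$. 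This matches (\ref{eq:L2ec for surface words}). As an independent cross-check, the Frobenius--Schur formula (\ref{eq:frobenius}) gives $\tr_{w_{s}}^{\O}(n)=n^{1-s}$, and the expansion $n^{1-s}=(n-1)^{1-s}\sum_{t\geq0}\binom{1-s}{t}(n-1)^{-t}$ matches Theorem~\ref{thm:main-theorem} coefficient by coefficient.

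\textit{Main obstacle.} The delicate step is the uniqueness in $\sur^{*}(w_{s})$---rigorously showing that the placement of $\R P^{2}$-connect-sums is irrelevant for the equivalence class---and it rests on the homogeneity of disc locations within a connected compact surface together with each Möbius cap being mapped into a contractible region of $\wedger$, so that extension by a constant is well-defined up to homotopy independently of the attaching disc.
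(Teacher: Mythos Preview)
Your overall strategy---parametrise $\sm^{*}(w_{s})$ by $(\kappa,\mu)$, show the resulting class in $\sur^{*}(w_{s})$ depends only on $|\kappa|+\mu$, then sum $(-1)^{|\kappa|}$ and invoke Pascal---is exactly the paper's, and your final computation agrees with the paper's line by line. Two points deserve tightening.

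\emph{The step ``thus $\Sigma_{\m}$ is forced to be the non-orientable surface of genus $s$'' is not justified.} Culler's Lemma together with the abelianisation argument only tells you that $\Sigma_{\m}$ is non-orientable; it does not pin down the genus. What you actually need is $D_{o}=1$, and the clean way to get it is to combine Lemma~\ref{lem:Euler-char-calc} (giving $\chi(\Sigma_{\m})=-s+D_{o}$ since all $\rho$-terms vanish) with $\chi(\Sigma_{\m})\le\chi_{\max}(w_{s})=1-s$, stated just before the claim, and the trivial bound $D_{o}\ge 1$. This forces $D_{o}=1$ and $\chi(\Sigma_{\m})=1-s$ simultaneously. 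As written, your argument is circular: you assume the genus to deduce $D_{o}=1$, but the genus is what is in question.

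\emph{Your uniqueness argument is phrased differently from the paper's and needs one more ingredient.} The paper forgets transversion points down to $\kappa\equiv 0$ and observes that the single remaining $o$-zone, together with the forced matchings, determines $[(\Sigma,f)]$. You instead argue that the location of each $\R P^{2}$-connect-sum is irrelevant via ``homogeneity of disc locations''. This is correct, but the homeomorphism of $\Sigma_{\m}$ that moves one small disc to another is an arbitrary element of $\MCG(\Sigma_{\m})$, not a priori of $\MCG(f_{\m})$; you must also check that the two extended maps are homotopic rel $V_{o}$. This follows because $\wedger$ is a $K(\F_{r},1)$ and a map from a M\"obius band into an aspherical space is determined, rel boundary, by its effect on $\pi_{1}$ (and $\F_{r}$ is torsion-free, so the square root of the boundary word is unique)---but this should be said explicitly, since it is the actual content behind ``extension by a constant is well-defined up to homotopy independently of the attaching disc''.
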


Note that for all $t\ge0$, $\Sigma_{t}$ is the non-orientable surface
of genus $s+t$ with one boundary component (namely, the connected
sum of $s+t$ copies of $\R P^{2}$, with a disc removed). When $t=0$,
$\mcg\left(f_{0}\right)$ is trivial and $\chi^{\left(2\right)}\left(\mcg\left(f_{0}\right)\right)=\chi\left(\mcg\left(f_{0}\right)\right)=1$.
When $t=1$, a simple analysis gives that $\mcg\left(f_{1}\right)\cong\pi_{1}\left(\Sigma_{0}\right)\cong\F_{s}$,
in which case $\chi^{\left(2\right)}\left(\mcg\left(f_{1}\right)\right)=\chi\left(\mcg\left(f_{1}\right)\right)=\chi\left(\F_{s}\right)=1-s$.
This agrees with the $t=1$ case in (\ref{eq:L2ec for surface words}).
It intrigues us to wonder whether $\MCG(f_{t})$ is related to some
well-known group when $t\ge2$.
\begin{proof}[Proof of Claim \ref{claim:admissible surfaces of non-orientable surface words}]
 If $\left[\left(\Sigma,f\right)\right]\in\sur^{*}\left(w_{s}\right)$
satisfies $\chi^{\left(2\right)}\left(\mcg\left(f\right)\right)\ne0$,
then it must be realized by some almost-filling strict transverse
map, by Theorem \ref{thm:Formula-for-l2-euler-char} and Lemma \ref{lem:properties-of-m}.
So the second statement of the claim follows from the first one.

Now fix $t\ge0$ and assume that $\left[\left(\Sigma,f\right)\right]\in\sur^{*}\left(w_{s}\right)$
satisfies $\chi\left(\Sigma\right)=1-s-t$ and is realized by some
almost-filling strict transverse map $g$. As $g$ is almost-filling,
it has only arcs and no curves. Note that each letter $x_{i}$ appears
exactly twice in $w_{s}=x_{1}^{2}\cdots x_{s}^{2}$, so there is only
one possible matching for every letter $x_{i}$, matching the two
occurrences of $x_{i}$ in $w_{s}$. Therefore, there is a single
$\left(x_{i},j\right)$-zone for every $1\le i\le s$ and $0\le j<\kappa_{x_{i}}\left(\left[g\right]\right)$
which must be a Möbius band. It is easy to check that there is one
$o$-zone in $g$, which may be a disc or a Möbius band. A simple
Euler characteristic calculation shows that exactly $t$ of the zones
of $g$ are Möbius bands. In particular, $\left|\kappa\left(\left[g\right]\right)\right|\in\left\{ t-1,t\right\} $.

Now we modify $[g]$ by forgetting all points of transversion $(x,i)$
with $i\geq1$. Let $[h]\in\T(\Sigma,f)$ be the resulting transverse
map. This $\left[h\right]$ has exactly one zone, it is an $o$-zone,
and by the previous paragraph, this zone was obtained by gluing $t$
Möbius bands together, and is, thus, necessarily the non-orientable
surface of genus $t$ with one boundary component. Because the matchings
in $\left[h\right]$ are dictated and so is the topological type of
its sole zone, any $\left[h'\right]$ obtained in the same way from
some other $\left[\left(\Sigma',f'\right)\right]$ with the same properties,
would be equivalent to $\left[h\right]$. Namely, we could find $[h']\in\T(\Sigma',f')$
with $[h']=[h\circ\phi]$ for $\phi:\Sigma'\to\Sigma$ a homeomorphism
respecting boundary markings. The same $\phi$ shows $(\Sigma',f')\approx(\Sigma,f)$.
Hence there is exactly one \emph{$\left[\left(\Sigma_{t},f_{t}\right)\right]\in\surfaces^{*}\left(w_{s}\right)$
with $\chi(\Sigma_{t})=1-s-t$ which can be realized by an almost-filling
strict transverse map.}

It is left to prove the equality (\ref{eq:L2ec for surface words}).
We prove it in two different ways. First, as mentioned above, any
almost-filling $\left[g\right]\in\T\left(\Sigma_{t},f_{t}\right)$
satisfies $\left|\kappa\left(\left[g\right]\right)\right|\in\left\{ t-1,t\right\} $,
and the same analysis shows that there is exactly one $\mcg\left(f\right)$-orbit
of almost-filling strict transverse maps in $\T\left(\Sigma_{t},f_{t}\right)$
for every valid choice of $\kappa$. There are $\binom{t+s-1}{t}$
possible $\kappa\in\left(\Z_{\ge0}\right)^{s}$ with $\left|\kappa\right|=t$,
each contributing $\left(-1\right)^{t}$ to (\ref{eq:euler-char-mcgf-formula-1}),
and $\binom{t-1+s-1}{t-1}$ possible $\kappa\in\left(\Z_{\ge0}\right)^{s}$
with $\left|\kappa\right|=t-1$, each contributing $\left(-1\right)^{t-1}$
to (\ref{eq:euler-char-mcgf-formula-1}). The total sum is precisely
the one specified in (\ref{eq:L2ec for surface words}).

The second proof uses the Frobenius-Schur type formula (\ref{eq:frobenius}),
by which for $s=1$
\[
\tr_{x^{2}}^{\O}\left(n+1\right)=1,
\]
and for $s\ge2$,
\[
\tr_{w_{s}}^{\O}\left(n+1\right)=\frac{1}{(n+1)^{s-1}}=\frac{1}{n^{s-1}}\left(1-\frac{1}{n}+\frac{1}{n^{2}}-\ldots\right)^{s-1}=\frac{1}{n^{s-1}}\sum_{t=0}^{\infty}(-1)^{t}\binom{t+s-2}{s-2}\frac{1}{n^{t}}.
\]
Combining these two expressions with Theorem \ref{thm:main-theorem},
we see that $\chi^{\left(2\right)}\left(\mcg\left(f_{t}\right)\right)$
is given by (\ref{eq:L2ec for surface words}).
\end{proof}

\subsection{More examples\label{subsec:More-examples}}

We give here some more details about the examples from Table \ref{tab:examples}.
We elaborate on the exact contributions, in the language of Theorem
\ref{thm:main-theorem}, to the two leading terms with exponents $\chi_{\max}$
and $\chi_{\max}-1$. The data is summarized in Table \ref{tab:examples-elaborated}.
The analysis of these examples was carried out with the help of a
SageMath script, and using various observations and considerations.
We do not describe the analysis here as we do not see it as crucial
-- we only aim to give a sense of how our main theorem plays out
in concrete examples.

The fourth column of Table \ref{tab:examples-elaborated} specifies
the rational expressions for $\trwl^{\O}\left(n+1\right)$ (unlike
the expression in Table \ref{tab:examples} which gave the expression
for $\trwl^{\O}\left(n\right)$), as well as the coefficients of $n^{\chi_{\max}}$
and of $n^{\chi_{\max}-1}$ in the Laurent expansion. The fifth column
is the same as the fifth one in Table \ref{tab:examples}, while the
sixth column lists the equivalence classes of maps $\left[\left(\Sigma,f\right)\right]$
in $\sur^{*}\left(\wl\right)$ with $\chi\left(\Sigma\right)=\chi_{\max}-1$.
Note that by Lemma \ref{lem:max-chi-implies-incompressible}, when
$\chi\left(\Sigma\right)=\chi_{\max}$ the maps are incompressible,
and when $\chi\left(\Sigma\right)=\chi_{\max}-1$, the maps are always
almost-incompressible and sometimes even incompressible (in the table
we point out specifically the cases where the stronger condition holds).

Moreover, by Corollary \ref{cor:finitely many almost incompressible},
there are finitely many such equivalence classes in\linebreak{}
$\sur^{*}\left(\wl\right)$, so we can indeed list them all. By Theorem
\ref{thm:tame-euler-char}, in all these cases, we get concrete, finite
$CW$-complexes of type $K\left(\mcg\left(f\right),1\right)$ for
these maps, which means we can understand the groups pretty well.
Indeed, we were able to compute the exact isomorphism type of the
groups $\mcg\left(f\right)$ in all cases mentioned in the table.
The fact all groups but one are free is probably only due to the fact
that the words in these examples are rather short, which means the
complexes associated with them tend to have low dimensions.
\begin{center}
\begin{table*}[!t]
\begin{centering}
\begin{tabular}{|c|>{\centering}p{1.7cm}|>{\centering}p{0.7cm}|>{\centering}p{4.2cm}|>{\raggedright}p{2.5cm}|>{\raggedright}p{5cm}|}
\hline 
\noalign{\vskip\doublerulesep}
$\ell$ & $\wl$ & $\chi_{\max}$ & $\tr_{\wl}^{\O}\left(n+1\right)$ and two leading terms & Admissible maps with $\chi\left(\Sigma\right)=\chi_{\max}$ & Admissible maps with $\chi\left(\Sigma\right)=\chi_{\max}-1$\tabularnewline[\doublerulesep]
\hline 
\hline 
\noalign{\vskip\doublerulesep}
\multirow{6}{*}{1} & $x^{2}y^{2}$ & $-1$ & {\Large{}$\frac{1}{n+1}=\frac{1}{n}-\frac{1}{n^{2}}+\ldots$} & one $P_{2,1}$ w. ${\scriptstyle \mcg\left(f\right)=\left\{ 1\right\} }$ & one $P_{3,1}$ w.~$\mcg\left(f\right)\cong\F_{2}$\tabularnewline[\doublerulesep]
\cline{2-6} \cline{3-6} \cline{4-6} \cline{5-6} \cline{6-6} 
\noalign{\vskip\doublerulesep}
 & $x^{4}y^{4}$ & $-1$ & {\Large{}$\frac{1}{n+1}=\frac{1}{n}-\frac{1}{n^{2}}+\ldots$} & one $P_{2,1}$ w. ${\scriptstyle \mcg\left(f\right)=\left\{ 1\right\} }$ & two incompr.~$P_{3,1}$ w. $\mcg\left(f\right)\cong\Z$; one $P_{3,1}$
w.~$\mcg\left(f\right)\cong\F_{2}$\tabularnewline[\doublerulesep]
\cline{2-6} \cline{3-6} \cline{4-6} \cline{5-6} \cline{6-6} 
\noalign{\vskip\doublerulesep}
 & $[x,y]^{2}$ & $0$ & {\Large{}$\frac{n^{3}+4n^{2}+3n-4}{\left(n+1\right)(n+3)n}$ $=1+\frac{0}{n}+\ldots$} & one $P_{1,1}$ w. ${\scriptstyle \mcg\left(f\right)=\left\{ 1\right\} }$ & one $P_{2,1}$ w.~$\mcg\left(f\right)\cong\Z$\tabularnewline[\doublerulesep]
\cline{2-6} \cline{3-6} \cline{4-6} \cline{5-6} \cline{6-6} 
\noalign{\vskip\doublerulesep}
 & $xy^{3}x^{-1}y^{-1}$ & $-2$ & {\Large{}$0=\frac{0}{n^{2}}+\frac{0}{n^{3}}+\ldots$} & one $P_{3,1}$ w. ${\scriptstyle \mcg\left(f\right)\cong\mathbb{Z}}$ & one $P_{4,1}$ w.~${\scriptstyle \mcg\left(f\right)}\cong\Z\times\F_{3}$\tabularnewline[\doublerulesep]
\cline{2-6} \cline{3-6} \cline{4-6} \cline{5-6} \cline{6-6} 
\noalign{\vskip\doublerulesep}
 & $xy^{4}x^{-1}y^{-2}$ & $-1$ & {\Large{}$\frac{1}{n+1}=\frac{1}{n}-\frac{1}{n^{2}}+\ldots$} & one $P_{2,1}$ w. ${\scriptstyle \mcg\left(f\right)=\left\{ 1\right\} }$ & three incompr.~$P_{3,1}$ w. ${\scriptstyle \mcg\left(f\right)}\cong\Z$;
one $P_{3,1}$ w. ${\scriptstyle \mcg\left(f\right)}\cong\F_{2}$\tabularnewline[\doublerulesep]
\cline{2-6} \cline{3-6} \cline{4-6} \cline{5-6} \cline{6-6} 
\noalign{\vskip\doublerulesep}
 & $xyx^{2}yx^{3}y^{2}$ & $-2$ & {\Large{}$\frac{3n+5}{\left(n+1\right)(n+3)n}={\scriptstyle \frac{3}{n^{2}}-\frac{7}{n^{3}}+\ldots}$} & three $P_{3,1}$ w. ${\scriptstyle \mcg\left(f\right)=\left\{ 1\right\} }$ & one $P_{4,1}$ w.~$\mcg\left(f\right)\cong\F_{3}$;

one $P_{4,1}$ w.~$\mcg\left(f\right)\cong\F_{6}$\tabularnewline[\doublerulesep]
\hline 
\noalign{\vskip\doublerulesep}
\multirow{2}{*}{2} & $w,w$ for $w=x^{2}y^{2}$ & $0$ & {\Large{}$\frac{n^{3}+4n^{2}+7n+8}{\left(n+1\right)(n+3)n}$ $=1+\frac{0}{n}+\ldots$} & one $A$ w. ${\scriptstyle \mcg\left(f\right)=\left\{ 1\right\} }$ & one $P_{1,2}$ w.~$\mcg\left(f\right)\cong\Z$\tabularnewline[\doublerulesep]
\cline{2-6} \cline{3-6} \cline{4-6} \cline{5-6} \cline{6-6} 
\noalign{\vskip\doublerulesep}
 & $w,w$ for $w=x^{2}y$ & $0$ & {\Large{}$1=1+\frac{0}{n}+\ldots$} & one $A$ w. ${\scriptstyle \mcg\left(f\right)=\left\{ 1\right\} }$ & one $P_{1,2}$ w.~$\mcg\left(f\right)\cong\Z$\tabularnewline[\doublerulesep]
\hline 
\noalign{\vskip\doublerulesep}
3 & $w,w,w$ for $w=x^{2}y^{2}$ & $-1$ & {\Large{}$\frac{3(n^{4}+7n^{3}+13n^{2}+15n+24)}{(n-1)n\left(n+1\right)(n+3)(n+5)}$
$=\frac{3}{n}-\frac{3}{n^{2}}+\ldots$} & three $A\sqcup P_{2,1}$ w.~${\scriptstyle \mcg\left(f\right)=\left\{ 1\right\} }$ & three $A\sqcup P_{3,1}$ w.~${\scriptstyle \mcg\left(f\right)\cong\F_{2}}$;
three $P_{1,2}\sqcup P_{2,1}$ w.~${\scriptstyle \mcg\left(f\right)\cong\Z}$\tabularnewline[\doublerulesep]
\hline 
\end{tabular}
\par\end{centering}
\caption{This table gives more details about the examples from Table \ref{tab:examples}.
Here $A$ denotes an annulus, and as in Table \ref{tab:examples},
$P_{g,b}$ denotes the non-orientable surface of genus $g$ with $b$
boundary components (so $\chi\left(P_{g,b}\right)=2-g-b$).\label{tab:examples-elaborated}}
\end{table*}
\par\end{center}

\appendix

\section{Proof of Theorem \ref{thm:orth-symp-relation}: relationship between
$\protect\O$ and $\protect\Sp$\label{sec:symplectic formula}}

Here we prove Theorem \ref{thm:orth-symp-relation}. Throughout this
appendix, fix $n$ with $2n\ge N\left(\wl\right)$, the latter defined
in (\ref{eq:N0}). For $i\in\left[2n\right]$ denote 
\[
\hat{i}\stackrel{\mathrm{def}}{=}\begin{cases}
i+n & \mathrm{if}~1\le i\le n,\\
i-n & \mathrm{if}~n+1\le i\le2n,
\end{cases}
\]
and
\[
\xi\left(i\right)\stackrel{\mathrm{def}}{=}\sign\left(n+\frac{1}{2}-i\right)=\begin{cases}
1 & \mathrm{if}~1\le i\le n,\\
-1 & \mathrm{if}~n+1\le i\le2n.
\end{cases}
\]
Recall that we think of $\Sp\left(n\right)$ as a subgroup of $\mathrm{GL}_{2n}\left(\C\right)$,
and that the matrix $J$ was defined in (\ref{eq:J}). The following
lemma follows easily from (\ref{eq:isom of two forms of Sp(n)}) and
the fact that $A^{-1}=J^{T}A^{T}J$ for $A\in\Sp\left(n\right)$.
\begin{lem}
\label{lem:inverse-formula}If $A\in\Sp\left(n\right)$ and $i,j\in\left[2n\right]$,
then 
\begin{equation}
\left(A^{-1}\right)_{i,j}=\xi\left(i\right)\xi\left(j\right)A_{\hat{j},\hat{i}}.\label{eq:A^-1 of Sp}
\end{equation}
\end{lem}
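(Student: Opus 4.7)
The plan is to directly unwind the identity $A^{-1} = J A^T J^T$ (which is given to us in the statement) by computing the right-hand side entry by entry. The main tasks are to record the sparse structure of $J$ in the $\hat{\cdot}$ and $\xi$ notation and to perform a short double-sum collapse; there are no genuine obstacles, this is a bookkeeping exercise.

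First I would observe that the block matrix
\[
J=\begin{pmatrix}0_{n} & \Id_{n}\\ -\Id_{n} & 0_{n}\end{pmatrix}
\]
has a single nonzero entry in each row and each column, and that in the notation of the excerpt this is captured concisely by
\[
J_{k,l}=\xi(k)\,\delta_{l,\hat{k}},\qquad J^{T}_{k,l}=J_{l,k}=\xi(l)\,\delta_{k,\hat{l}},
\]
since $\xi(\hat{k})=-\xi(k)$ and the transpose swaps the two nontrivial blocks. (In particular $J^{T}=-J$, which is consistent with $J^{2}=-\Id_{2n}$, but I will not need this.)

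Next I would compute, for $A\in\Sp(n)$ and $i,j\in[2n]$,
\[
(A^{-1})_{i,j}=(J A^{T}J^{T})_{i,j}=\sum_{k,l\in[2n]}J_{i,k}\,(A^{T})_{k,l}\,J^{T}_{l,j}=\sum_{k,l\in[2n]}\xi(i)\,\delta_{k,\hat{\imath}}\,A_{l,k}\,\xi(j)\,\delta_{l,\hat{\jmath}}.
\]
The two Kronecker deltas collapse the double sum to the single term with $k=\hat{\imath}$ and $l=\hat{\jmath}$, giving
\[
(A^{-1})_{i,j}=\xi(i)\,\xi(j)\,A_{\hat{\jmath},\hat{\imath}},
\]
which is exactly \eqref{eq:A^-1 of Sp}. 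The only input beyond matrix multiplication is the preliminary identity $A^{-1}=JA^{T}J^{T}$, which itself comes from rewriting $AJA^{T}=J$ as $A^{-1}=JA^{T}J^{-1}$ together with $J^{-1}=J^{T}$; since the excerpt supplies this identity as a hint, the proof reduces to the one-line computation above.
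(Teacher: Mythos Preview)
Your proof is correct and is exactly the approach the paper indicates: the paper states that the lemma ``follows easily from the fact that $A^{-1}=JA^{T}J^{T}$ for $A\in\Sp(n)$'' and omits the details, while you have simply written out that easy entrywise computation explicitly.
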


Our first goal is to obtain an analog of Theorem \ref{thm:orth-wg-exp}
for $\Sp\left(n\right)$, namely, to obtain a formula for $\trwl^{\Sp}\left(n\right)$
as a finite sum over systems of matchings, only with an additional
sign associated with every such system; see Proposition \ref{prop:word integrals on Sp(n) - first step}
for the precise statement.

We recall some of the notation we use here. Let $2L=2\sum_{x\in B}L_{x}=\sum_{j=1}^{\ell}\left|w_{j}\right|$
denote the total number of letters in $\wl$. The $j$th boundary
component of every surface in $\sur^{*}\left(\wl\right)$ is subdivided
to $\left|w_{j}\right|$ intervals corresponding to the letters of
$w_{j}$, and we denoted by $\I,\I^{+},\I^{-}$ the sets of all $2L$
intervals, the subset of intervals corresponding to positive letters
and its complement, respectively. Likewise, we denote by $\I_{x},\I_{x}^{+},\I_{x}^{-}$
the analogous sets of intervals corresponding to the instances of
$x\in B$. We again identify $\I_{x}$ with the set $[2L_{x}]$, for
each $x\in B$, in the same way as in Section \ref{subsec:Construction-of-amap-on-surface}.
Similarly to the notation from Section \ref{subsec:Construction-of-amap-on-surface},
we denote by $\A=\A(\wl)$ the set of index assignments 
\[
\a\colon\left\{ p_{I}\left(k\right)\,\middle|\,I\in\I,k\in\left\{ 0,1\right\} \right\} \to\left[2n\right],
\]
where for every two immediately adjacent marked points $p,q$ in $\cup_{j=1}^{\ell}C\left(w_{j}\right)$
that belong to different intervals in $\I$ we have $\a\left(p\right)=\a\left(q\right)$.
(Note the range here is $\left[2n\right]$ and not $\left[n\right]$
as in Section \ref{subsec:Construction-of-amap-on-surface}). Given
$\a\in\A$, let $\hat{\a}$ be the assignment obtained after applying
(\ref{eq:A^-1 of Sp}), namely,

\[
\hat{\a}(p_{I}(i))=\begin{cases}
\a(p_{I}(i)) & \text{if \ensuremath{I\in\I^{+}}}\\
\widehat{\a(p_{I}(i))} & \text{if \ensuremath{I\in\I^{-}}.}
\end{cases}
\]
As we shall use Theorem \ref{thm:weingarten-integration} for evaluating
$\trwl^{\Sp}\left(n\right)$, we need the following expression which
gathers the total sign contribution for a given system of matchings
$\m=\{(m_{x,0},m_{x,1})\}_{x\in B}\in\match^{\k\equiv1}$ and an assignment
$\a$. Recall the notation $\delta_{\mathbf{i},m}^{\Sp}$ from Theorem
\ref{thm:weingarten-integration}. We let 
\begin{eqnarray*}
\Delta\left(\a,\m\right) & \stackrel{\mathrm{def}}{=} & \left[\prod_{I\in\I^{-}}\xi\left(\mathbf{a}(p_{I}(0))\right)\xi\left(\mathbf{a}(p_{I}(1))\right)\right]\cdot\prod_{\substack{{\scriptscriptstyle x\in B}\\
k=0,1
}
}\delta_{\hat{\a}|_{\left\{ p_{I}\left(k\right)\,\middle|\,I\in\I_{x}\right\} },m_{x,k}}^{\Sp}\\
 & = & \left[\prod_{I\in\I^{-}}\xi\left(\mathbf{a}(p_{I}(0))\right)\xi\left(\mathbf{a}(p_{I}(1))\right)\right]\cdot\prod_{\substack{{\scriptscriptstyle x\in B}\\
k=0,1
}
}\prod_{\substack{{\scriptstyle {\scriptstyle \left(p_{I}(k),p_{J}(k)\right)}}\\
{\scriptstyle \mathrm{matched~by}~\ensuremath{m_{x,k}}}
}
}\left\langle e_{\hat{\a}(p_{I}(k))},e_{\hat{\a}(p_{J}(k))}\right\rangle _{\Sp},
\end{eqnarray*}
where in the innermost product, each matched pair appears once and
is given its predetermined order. Note that for $i,j\in\left[2n\right]$,
we have 
\begin{equation}
\left\langle e_{i},e_{j}\right\rangle _{\Sp}=e_{i}^{T}Je_{j}=\delta_{\hat{i},j}\xi\left(i\right),\label{eq:expression for symplectic form}
\end{equation}
where here $\delta$ is the Kronecker delta. Also notice that $\Delta\left(\a,\m\right)\in\left\{ -1,0,1\right\} $.
We say $\a\vdash^{*}\m$ if $\Delta(\a,\m)\neq0$. Therefore,
\begin{equation}
\Delta\left(\a,\m\right)=\mathbf{1}_{\a\vdash^{*}\m}\cdot\left[\prod_{I\in\I^{-}}\xi\left(\mathbf{a}(p_{I}(0))\right)\xi\left(\mathbf{a}(p_{I}(1))\right)\right]\cdot\prod_{\substack{{\scriptstyle x\in B}\\
{\scriptstyle k=0,1}
}
}\prod_{\substack{{\scriptstyle {\scriptstyle \left(p_{I}(k),p_{J}(k)\right)}}\\
{\scriptstyle \mathrm{matched~by}~\ensuremath{m_{x,k}}}
}
}\xi\left(\hat{\a}\left(p_{I}\left(k\right)\right)\right).\label{eq:Delta-as-signs}
\end{equation}

\begin{defn}
\label{def:positive letters and orientable matching arcs}Let $\m\in\match^{\k\equiv1}$.
Call a matching arc of $\m$ \emph{orientable }if it pairs an interval
in $\I^{\pm}$ with an interval in $\I^{\mp}$, and\emph{ non-orientable}
otherwise. Let $m$ be one of the matchings in $\m$. In every pair
$\left(m_{(2t-1)},m_{(2t)}\right)$ we think of the corresponding
matching arc in $\Sigma_{\m}$ as directed from its \emph{origin}
-- the interval corresponding to $m_{(2t-1)}$, to its \emph{terminus}
-- the interval associated with $m_{(2t)}$. Let $D$ be a type-$o$
disc of $\Sigma_{\m}$. Every interval in $\I$ that meets $\partial D$
has an orientation coming from the given orientation of $\partial\Sigma_{\m}$.
We say that two intervals that meet $\partial D$ are \emph{co-oriented
}(relative to \emph{$D$})\emph{ }if their orientation induces the
same orientation on $\partial D$, and \emph{counter-oriented} otherwise.
Note that a matching arc is orientable if and only if it matches two
co-oriented intervals meeting $\delta D$.
\end{defn}

In the computation of $\Delta\left(\a,\m\right)$, we attribute every
sign that appears in (\ref{eq:Delta-as-signs}) to one of the type-$o$
discs of $\Sigma_{\m}$. Indeed, every matching arc is at the boundary
of exactly one type-$o$ disc, and every $p_{I}(k)$ also belongs
to exactly one type-$o$ disc.
\begin{lem}[Computation of $\Delta(\a,\m)$]
\label{lem:lemma on signs} ~Assume $\a,\a_{1},\a_{2}\in\A$ and
$\m\in\match^{\k\equiv1}$, and let $D$ be a type-$o$ disc in $\Sigma_{\m}$.
\begin{enumerate}
\item \label{enu:The-number-of-non-orientable matching arcs is even}The
number of non-orientable matching arcs along $\partial D$ is even.
\item \label{enu:three types of contribs to Delta}If $\a\vdash^{*}\m$,
the total sign contribution of $D$ to $\Delta(\a,\m)$ is the product
of:\\
$\left(i\right)$ the sign of the index\footnote{Here and elsewhere in this appendix, the ``sign of an index'' $i$
is $\xi\left(i\right)$.} given by $\a$ at the origin of every matching arc with origin in
$\I^{+}$,\\
$\left(ii\right)$ the sign of the index given by $\a$ at the terminus
of every matching arc with terminus in $\I^{-}$, and\\
$\left(iii\right)$ $\left(-1\right)$ for every matching arc with
origin in $\I^{-}$.
\item \label{enu:same index mod n along o-disc}If $\a\vdash^{*}\m$ and
$p_{I}(k),p_{J}(k)$ are matched by any $m_{x,k}$ then $\a(p_{I}(k))\equiv\a(p_{J}(k))\bmod n$,
and moreover, $\a(p_{I}(k))=\a(p_{J}(k))$ if and only if $m_{x,k}$
corresponds to an orientable matching arc.
\item \label{enu:number of assignments}For fixed $\m$, the number of $\a$
with $\a\vdash^{*}\m$ is $\left(2n\right)^{\#\{\text{type-\ensuremath{o} discs of \ensuremath{\Sigma_{\m}}\}}}$.
\item \label{enu:Delta depends only on m}If $\a_{1},\a_{2}\vdash^{*}\m$,
then $\Delta(\a_{1},\m)=\Delta(\a_{2},\m)$.
\end{enumerate}
\end{lem}

The final statement allows us to define:
\begin{defn}
\label{def:Delta m}For every $\m\in\match^{\kappa\equiv1}$ we let
$\Delta(\m)\stackrel{\mathrm{def}}{=}\Delta(\a,\m)$, defined by any
$\a\vdash^{*}\m$.
\end{defn}

\begin{proof}[\emph{Proof of Lemma \ref{lem:lemma on signs}}]
\emph{ }We prove part by part.

\paragraph*{Part \ref{enu:The-number-of-non-orientable matching arcs is even}.\emph{ }}

The first point is due to the fact that the boundary components of
$\Sigma$ have built-in orientation, and along the boundary of $D$,
the orientation of intervals meeting $\partial\Sigma$ is preserved
when going along an orientable matching arc, and flipped along a non-orientable
matching are. But $\partial D$ is a loop, so the number of orientation
flips must be even.

\paragraph*{Part \ref{enu:three types of contribs to Delta}.\emph{ }}

This follows from (\ref{eq:Delta-as-signs}) by checking case by case
over all possibilities.

\paragraph*{Part \ref{enu:same index mod n along o-disc}.\emph{ }}

We have $\a\vdash^{*}\m$ if and only if for all ordered matched pairs
$p_{I}(k),p_{J}(k)$ of any $m_{x,k}$ 
\begin{equation}
\a\left(p_{I}(k)\right)=\begin{cases}
\widehat{\a\left(p_{J}(k)\right)} & \text{if \ensuremath{I} and \ensuremath{J} are in the same set \ensuremath{\I^{\pm}}},\\
\a\left(p_{J}(k)\right) & \text{if \ensuremath{I} and \ensuremath{J} are in \ensuremath{\I^{\pm}} and \ensuremath{\I^{\mp}}, respectively.}
\end{cases}\label{eq:what-happens}
\end{equation}
This means that when $\a\in\A$ and $\a\vdash^{*}\m$, there is a
constraint on the values of $\a$ at every pair of points that are
adjacent on the boundary of some type-$o$ disc of $\Sigma_{\m}$.
This is similar to the situation for the orthogonal group, but the
constraints are more complicated now. The constraint implies that
the values of $\a$ on the points $p_{I}(k)$ in the boundary of a
fixed type-$o$ disc $D$ of $\Sigma_{\m}$ are determined by the
value at any fixed point $p_{D}$ on the boundary of that disc. The
values of $\a$ are constant along segments of $\partial D$, except
for segments that are matching arcs joining intervals in the same
set $I_{x}^{\pm}$, across which the value of $\a$ jumps by $n\bmod2n$.
These are the non-orientable matching arcs defined in Definition \ref{def:positive letters and orientable matching arcs}. 

\paragraph*{Part \ref{enu:number of assignments}.\emph{ }}

It now follows from Parts \ref{enu:The-number-of-non-orientable matching arcs is even}
and \ref{enu:same index mod n along o-disc} that if for each type-$o$
disc $D$ of $\Sigma_{\m}$, we choose $\a(p)$ for some $p$ in $\partial D$,
then there exists a unique $\a\in\A$ with these prescribed values
and such that $\a\vdash^{*}\m$. Hence, for any $\m$, there are $(2n)^{\#\{\text{type-\ensuremath{o} discs of \ensuremath{\Sigma_{\m}}\}}}$
elements of $\A$ with $\a\vdash^{*}\m$.

\paragraph*{Part \ref{enu:Delta depends only on m}.\emph{ }}

We need to show that for $\m$ fixed, all the $\Delta(\a,\m)$ have
the same sign. Indeed, we collect the contribution to the sign of
every $o$-disc $D$ separately, and show it does not depend on the
particular assignment of indices along $\partial D$. There are two
options for the signs of these indices, where one is a complete negation
of the other. Recall that the sign of $\Delta(\a,\m)$ splits up into
three types of contributions according to Part 2. The contribution
from $\left(iii\right)$ clearly does not depend on $\a$. Now consider
the $\left(i\right)$- and $\left(ii\right)$-type contributions.
\begin{itemize}
\item If $\alpha$ is an orientable matching arc, its $\left(i\right)$-
and $\left(ii\right)$-type contributions to $\Delta(\a_{i},\m)$
are always 1 in total. This is surely the case if $\alpha$ is directed
from $I\in I_{x}^{-}$ to $J\in I_{x}^{+}$. But it is also the case
when $\alpha$ is directed the other way round, as the signs of both
indices at its endpoints are identical. Hence the contributions of
type $(i)$ and type $(ii)$ of orientable matching arcs to either
$\Delta(\a_{1},\m)$ or $\Delta(\a_{2},\m)$ is equal to $1$.
\item Note from the discussion in the proof of Part 3, that $\a_{1}$ and
$\a_{2}$ are related by a sequence of the following type of \emph{flip-moves}:
choose a type-$o$ disc $D$ of $\Sigma_{\m}$, and modify $\a_{1}$
by adding $n$ to $\a(p_{I}(k))$ modulo $2n$, for every $p_{I}(k)$
that meets $\partial D$. Now for any given non-orientable matching
arc $\alpha$, its $\left(i\right)$- and $\left(ii\right)$-type
contribution is the sign of one of the endpoints. Hence the effect
of a flip-move on $\a_{1}$ at a disc $D$ is to change the type $(i)$
and $(ii)$ contributions to $\Delta(\a_{1},\m)$ by $(-1)^{\#\text{non-orientable matching arcs of \ensuremath{\m} meeting \ensuremath{D}}}$.
On the other hand, by Part 1, the total number of non-orientable matching
arcs of $\m$ meeting $D$ is even.
\end{itemize}
This concludes the proof of Lemma \ref{lem:lemma on signs}.
\end{proof}
We can now prove the analog of Theorem \ref{thm:orth-wg-exp} for
$\trwl^{\Sp}\left(n\right)$.
\begin{prop}
\label{prop:word integrals on Sp(n) - first step}For $2n\geq N$
\[
\trwl^{\Sp}\left(n\right)=\sum_{\m\in\match^{\k\equiv1}}\left(2n\right)^{\#\{\text{type-\ensuremath{o} discs of \ensuremath{\Sigma_{\m}}\}}}\Delta\left(\m\right)\prod_{x\in B}\wg_{L_{x}}^{\Sp}(m_{x,0},m_{x,1};n),
\]
with $\Delta\left(\m\right)\in\left\{ 1,-1\right\} $ as defined in
Definition \ref{def:Delta m}.
\end{prop}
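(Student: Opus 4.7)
The plan is to follow the structure of the proof of Theorem~\ref{thm:orth-wg-exp}, with additional care for the signs and index swaps forced by the symplectic structure. First, I would expand each factor $\mathrm{tr}(w_{k}(g_{1},\ldots,g_{r}))$ as in (\ref{eq:word-as-matrix-coefs}) into a sum over index assignments $\a\in\A$ (now valued in $[2n]$) of products of matrix coefficients indexed by the intervals $I\in\I$. For each $I\in\I^{-}$ one applies Lemma~\ref{lem:inverse-formula} to rewrite $(g^{-1})_{i,j}=\xi(i)\xi(j)\,g_{\hat{j},\hat{i}}$; this produces the sign $\xi(\a(p_{I}(0)))\xi(\a(p_{I}(1)))$ and replaces the indices by their hats, so that every factor takes the form $(g_{j})_{\hat{\a}(p),\hat{\a}(q)}$ for appropriate $p,q\in\{p_{I}(0),p_{I}(1)\}$, with the roles of row and column swapped when $I\in\I^{-}$.

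Next, for each generator $x=x_{j}\in B$, apply Theorem~\ref{thm:weingarten-integration} to integrate $g_{j}$ over $\Sp(n)$. This yields a sum over pairs $(m_{x,0},m_{x,1})\in M_{L_{x}}\times M_{L_{x}}$ of $\delta^{\Sp}_{\mathbf{i},m_{x,0}}\cdot\delta^{\Sp}_{\mathbf{j},m_{x,1}}\cdot\wg^{\Sp}_{L_{x}}(m_{x,0},m_{x,1};n)$, where $m_{x,0}$ and $m_{x,1}$ are to be reinterpreted as matchings of the depth-0 and depth-1 endpoints of $\I_{x}$, respectively. Expanding each $\delta^{\Sp}$ factor via $\langle e_{a},e_{b}\rangle_{\Sp}=\delta_{\hat{a},b}\xi(a)$ from (\ref{eq:expression for symplectic form}) turns it into a product of an index constraint (i.e., exactly the condition $\a\vdash^{*}\m$) and a sign. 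Collecting these signs together with those arising from Lemma~\ref{lem:inverse-formula} should reproduce the definition (\ref{eq:Delta-as-signs}) of $\Delta(\a,\m)$.

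It remains to swap the order of summation, summing first over $\m\in\match^{\k\equiv1}$ and then over $\a\in\A$. By Lemma~\ref{lem:lemma on signs}(4), for each fixed $\m$ there are precisely $(2n)^{\#\{\text{type-}o\text{ discs of }\Sigma_{\m}\}}$ assignments $\a$ with $\a\vdash^{*}\m$, and by Lemma~\ref{lem:lemma on signs}(5) each contributes the same sign $\Delta(\m)$; assembling these pieces produces the claimed formula. The main obstacle will be the bookkeeping in the first two steps: one must correctly identify how the row- and column-index matchings produced by the symplectic Weingarten formula correspond, after the sign-dependent row/column swap coming from Lemma~\ref{lem:inverse-formula}, to the depth-0 and depth-1 matchings $m_{x,0},m_{x,1}$ appearing in $\Delta(\a,\m)$, and to verify that the signs from matrix inversion combine cleanly with those from the evaluations of the symplectic form to yield precisely~(\ref{eq:Delta-as-signs}).
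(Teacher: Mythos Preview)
Your proposal is correct and follows essentially the same approach as the paper. The paper's proof is extremely terse---it simply says the formula follows ``by the same arguments as led up to (\ref{eq:marker}), incorporating (\ref{eq:A^-1 of Sp}) and using Theorem~\ref{thm:weingarten-integration}'', then interchanges sums and invokes Lemma~\ref{lem:lemma on signs}, Parts~4 and~5---whereas you have spelled out exactly what those steps entail, including the bookkeeping concern you flag at the end, which is precisely what is absorbed into the definition of $\Delta(\a,\m)$.
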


\begin{proof}
Let $g(I)$ be as in $\S$\ref{subsec:A-rational-function-form-for-orth}.
Assume $2n\geq N$. By the same arguments that led to (\ref{eq:marker}),
incorporating (\ref{eq:A^-1 of Sp}) and using Theorem \ref{thm:weingarten-integration},
we have

\[
\trwl^{\Sp}(n)=\sum_{\a\in\A(\wl)}\sum_{\m\in\match^{\k\equiv1}}\Delta(\a,\m)\prod_{x\in B}\wg_{L_{x}}^{\Sp}(m_{x,0},m_{x,1};n).
\]
\emph{This formula was the original motivation for introducing $\Delta(\a,\m)$.}
Now using Lemma \ref{lem:lemma on signs}, Parts 4 and 5, and interchanging
the sums over $\a$ and $\m$ gives

\begin{align*}
\trwl^{\Sp}(n) & =\sum_{\m\in\match^{\k\equiv1}}\left(\prod_{x\in B}\wg_{L_{x}}^{\Sp}(m_{x,0},m_{x,1};n)\right)\left(\sum_{\a\in\A(w)}\text{\ensuremath{\Delta}(\ensuremath{\a},\ensuremath{\m})}\right)\\
 & =\sum_{\m\in\match^{\k\equiv1}}\left(2n\right)^{\#\{\text{type-\ensuremath{o} discs of \ensuremath{\Sigma_{\m}}\}}}\Delta\left(\m\right)\prod_{x\in B}\wg_{L_{x}}^{\Sp}(m_{x,0},m_{x,1};n)
\end{align*}
as required.
\end{proof}
We can now prove the main result of this subsection and show that
$\trwl^{\Sp}\left(n\right)=\left(-1\right)^{\ell}\cdot\trwl^{\O}\left(-2n\right)$
for large $n$.
\begin{proof}[Proof of Theorem \ref{thm:orth-symp-relation}]
 It follows from Proposition \ref{prop:word integrals on Sp(n) - first step}
and Lemma \ref{lem:connection-of-Weingarten} that
\begin{eqnarray}
\trwl^{\Sp}\left(n\right) & = & \sum_{\m\in\match^{\k\equiv1}}\left(2n\right)^{\#\{\text{\ensuremath{o}-discs of \ensuremath{\Sigma_{\m}}\}}}\Delta\left(\m\right)\prod_{x\in B}\wg_{L_{x}}^{\Sp}(m_{x,0},m_{x,1};n)\nonumber \\
 & = & \sum_{\m\in\match^{\k\equiv1}}\left(2n\right)^{\#\{\text{\ensuremath{o}-discs of \ensuremath{\Sigma_{\m}}\}}}\nonumber \\
 &  & ~~~~~~~~~~~~~~~~~~~~\cdot\prod_{x\in B}\left(-1\right)^{L_{x}}\cdot\sign\left(\sigma_{m_{x,0}}^{-1}\cdot\sigma_{m_{x,1}}\right)\cdot\wg_{L_{x}}^{\O}(m_{x,0},m_{x,1};-2n)\cdot\Delta\left(\m\right)\nonumber \\
 & = & \sum_{{\scriptscriptstyle \m\in\match^{\k\equiv1}}}\left(-2n\right)^{\#\{\text{\ensuremath{o}-discs of \ensuremath{\Sigma_{\m}}\}}}\prod_{x\in B}\wg_{L_{x}}^{\O}\left(m_{x,0},m_{x,1};-2n\right)\Xi\left(\wl;\m\right)\label{eq:using Chi}
\end{eqnarray}
where 
\[
\Xi\left(\wl;\m\right)\stackrel{\mathrm{def}}{=}\left(-1\right)^{\#\{\text{type-\ensuremath{o} discs of \ensuremath{\Sigma_{\m}}\}}}\cdot\left(-1\right)^{L}\cdot\Delta\left(\m\right)\cdot\prod_{x\in B}\sign\left(\sigma_{m_{x,0}}^{-1}\sigma_{m_{x,1}}\right).
\]
We now show that $\Xi\left(\wl;\m\right)$ is independent of $\m$
and equal to $\left(-1\right)^{\ell}$. This will complete the proof
by combining (\ref{eq:using Chi}) with Theorem \ref{thm:orth-wg-exp}.

Our strategy for proving that $\Xi\left(\wl;\m\right)\equiv\left(-1\right)^{\ell}$
consists of three parts:
\begin{enumerate}
\item The fact there are $r=|B|$ different types of letters in $\wl$ can
be ignored, and all letters may be considered as identical.
\item If $\I=\I^{+}$, there is one particular set of matchings $\m$ for
which $\Xi\left(\wl;\m\right)=\left(-1\right)^{\ell}$.
\item The value of $\Xi\left(\wl;\m\right)$ does not change if we make
small local changes: $\left(a\right)$ flipping the direction of one
matching arc, $\left(b\right)$ exchanging the termini of two matching
arcs, or $\left(c\right)$ flipping the orientation of one of the
letters in the word from positive to negative or vice versa.
\end{enumerate}
During this proof we consider the matchings $m_{x,i}$ of $\m$ as
matchings of the letters of the words $\wl$, this is possible since
the letters are in one-to-one correspondence with the intervals $\I$.

\paragraph*{Part I: Consider all letters as identical}

\noindent First, recall the definition from $\S$\ref{subsec:Matchings-and-permutations}
of the permutation $\sigma_{m}\in S_{2k}$ associated with the matching
$m$ belonging to $\m$, and note that the order of the pairs in $m$
does not affect the sign of $\sigma_{m}$, nor $\Delta\left(\m\right)$,
so we ignore it here. (In contrast, the order within each pair does
affect these quantities.) As a result, we can treat all matchings
$\left\{ m_{x,0}\right\} _{x\in B}$ as a single matching $m_{0}\in M_{L}$
of the whole collection of intervals $\I$, where we keep track of
the order within each pair, namely, of which endpoint is the origin
and which the terminus of every matching arc. Similarly, we replace
$\left\{ m_{x,1}\right\} _{x\in B}$ with a single matching $m_{1}\in M_{L}$
of $\I$. The corresponding permutations $\sigma_{m_{0}}$ and $\sigma_{m_{1}}$
lie in $S_{2L}$. From every pair of matchings $m_{0},m_{1}\in M_{L}$,
we can construct a corresponding surface $\Sigma_{m_{0},m_{1}}$ as
in $\S$\ref{subsec:Construction-of-maps}. We define $\Delta\left(m_{0},m_{1}\right)$
accordingly. It is thus enough to show that for every $m_{0},m_{1}\in M_{L}$,
\begin{equation}
\Xi\left(\wl;m_{0},m_{1}\right)\stackrel{\mathrm{def}}{=}\left(-1\right)^{\#\{\text{type-\ensuremath{o} discs of \ensuremath{\Sigma_{m_{0},m_{1}}}\}}}\cdot\left(-1\right)^{L}\cdot\Delta\left(m_{0},m_{1}\right)\cdot\sign\left(\sigma_{m_{0}}^{-1}\sigma_{m_{1}}\right)=\left(-1\right)^{\ell}.\label{eq:Chi with labeless letters}
\end{equation}

\paragraph*{Part II: Particular matchings $m_{0},m_{1}$}

\noindent Next, we show that the equality (\ref{eq:Chi with labeless letters})
holds for a particular pair $m_{0},m_{1}\in M_{L}$, when all letters
in $\wl$ are positive, namely, when $\I=\I^{+}$. The pair will satisfy
$m_{0}=m_{1}$, and so $\sign\left(\sigma_{m_{0}}^{-1}\sigma_{m_{1}}\right)=1$.
We partition the words $\wl$ into singletons of even-length words
and pairs of odd-length words. The matchings $m_{0}$ and $m_{1}$
will only pair letters of words in the same block of this partition.
It is enough to prove (\ref{eq:Chi with labeless letters}) for every
connected component of $\Sigma_{m_{0},m_{1}}$ separately.

First consider the case of a single, even-length word $w$ (which,
by abuse of notation, has length $2L$). Let each of $m_{0},m_{1}$
pair the first interval to the second, the third to the fourth, and
so forth. It is easy to check that in this case, there is exactly
one type-$o$ disc, with $2L$ non-orientable matching arcs at its
boundary, all directed, say, clockwise. In every compatible assignment
of indices $\a\vdash^{*}\left(m_{0},m_{1}\right)$, the sign $\xi$
flips along every matching arc, and as all letters are positive, exactly
half of the matching arcs contribute $\left(-1\right)$ (see Lemma
\ref{lem:lemma on signs}, Part 2), so $\Delta\left(m_{0},m_{1}\right)=\left(-1\right)^{L}$
in this case. Hence the left hand side of (\ref{eq:Chi with labeless letters})
is $\left(-1\right)^{1}\cdot\left(-1\right)^{L}\cdot\left(-1\right)^{L}\cdot1=\left(-1\right)$,
which is the desired outcome as $\ell=1$. See the left hand side
of Figure \ref{fig:particular matchings}.
\begin{figure}[t]
\centering{}\includegraphics[scale=0.8]{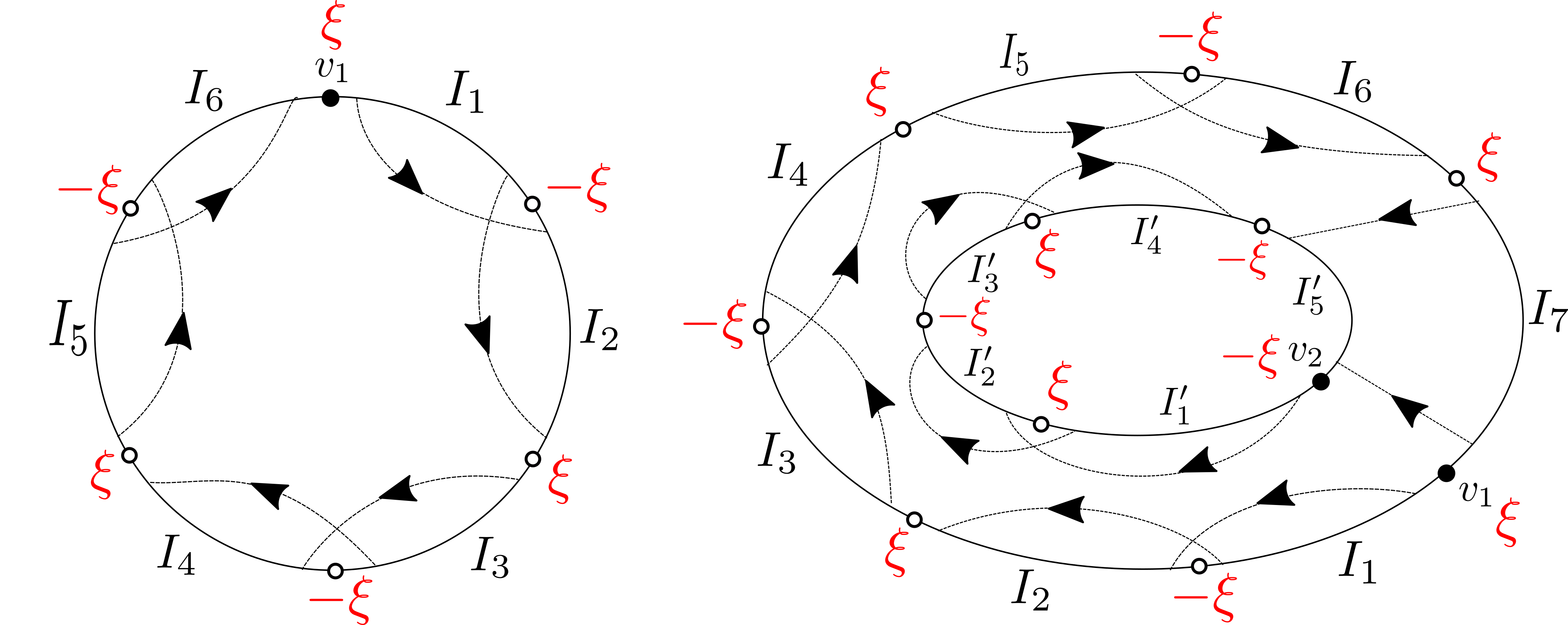}\caption{On the left hand side, there is one word of even length ($6$ in this
example) with all letters positive, and $m_{0}=m_{1}$ match $I_{1}\to I_{2}$,
$I_{3}\to I_{4}$ and $I_{5}\to I_{6}$. An easy computation gives
that $\Xi\left(w;m_{0},m_{1}\right)=-1$ in this case. On the right
hand side, there are two words of odd length each ($7$ and $5$ in
this example) with all letters positive. Here, $m_{0}=m_{1}$ match
$I_{1}\to I_{2}$, $I_{3}\to I_{4}$, $I_{5}\to I_{6}$, $I'_{1}\to I'_{2}$,
$I'_{3}\to I'_{4}$ and $I_{7}\to I'_{5}$. An easy analysis gives
that $\Xi\left(w_{1},w_{2};m_{0},m_{1}\right)=1$ in this case.\label{fig:particular matchings}}
\end{figure}

Second, consider the case of a pair of odd-length words $w_{1},w_{2}$,
of total length $2L$. Let each of $m_{0},m_{1}$ pair the first interval
of each word with the second one, the third with the fourth and so
on, and pair the last interval of $C\left(w_{1}\right)$ with the
last interval of $C\left(w_{2}\right)$. Again, it is easy to verify
there is a single type-$o$ disc in $\Sigma_{m_{0},m_{1}}$, with
$2L$ non-orientable matching arcs at its boundary. At the boundary
of the type-$o$ disc there are $\left|w_{1}\right|$ successive $o$-points
of $C\left(w_{1}\right)$, and then $\left|w_{2}\right|$ o-points
of $C\left(w_{2}\right)$, where the matching arcs separating these
two sequences are the two matchings arcs connecting the last interval
of $w_{1}$ with the last interval of $w_{2}$. In every compatible
assignment $\a\vdash^{*}\left(m_{0},m_{1}\right)$, the signs $\xi$
alternate, and so every pair of matching arcs connecting the same
two intervals of the same word contributes $\left(-1\right)$ to $\Delta\left(m_{0},m_{1}\right)$.
However, both matching arcs connecting the last intervals have the
same sign at their origins, and so their contribution is 1. This shows
that $\Delta\left(m_{0},m_{1}\right)=\left(-1\right)^{L-1}$ in this
case. Hence the left hand side of (\ref{eq:Chi with labeless letters})
is $\left(-1\right)^{1}\cdot\left(-1\right)^{L}\cdot\left(-1\right)^{L-1}\cdot1=1$,
which is the desired outcome as $\ell=2$. See the right hand side
of Figure \ref{fig:particular matchings}.

\paragraph*{Part III: $\Xi$ is invariant under local modifications}

\noindent Finally, we show that the three local modifications we specified
above do not alter the value of $\Xi\left(\wl;m_{0},m_{1}\right)$.
As applying suitable steps of all three types leads from the instance
described in part II of this proof to any given pair of matchings
$m_{0},m_{1}$ and to any orientation of the $2L$ letters (positive/negative),
this will complete the proof. Note that none of these changes affect
the total number of letters, $L$, so we ought to show that they do
not alter the product $\left(-1\right)^{\#\{\text{type-\ensuremath{o} discs of \ensuremath{\Sigma_{m_{0},m_{1}}}\}}}\cdot\Delta\left(m_{0},m_{1}\right)\cdot\sign\left(\sigma_{m_{0}}^{-1}\sigma_{m_{1}}\right)$.

We begin with flipping the direction of one matching arc. Obviously,
this does not change the number of type-$o$ discs. It does change
the sign of one of $\sigma_{m_{0}}$ or $\sigma_{m_{1}}$, and therefore
the sign of $\sigma_{m_{0}}^{-1}\sigma_{m_{1}}$, but it also changes
the contribution of this matching arc to $\Delta\left(m_{0},m_{1}\right)$:
this follows from a simple case-by-case analysis of whether the origin
of the matching arc is in $\I^{+}$ or in $\I^{-}$, and likewise
the terminus of the arc. The analysis is based on Lemma \ref{lem:lemma on signs}
and (\ref{eq:what-happens}).

Next, consider a switch between the termini of two matching arcs $\alpha_{1}$
and $\alpha_{2}$ of, say, $m_{0}$. This switch changes the sign
of $\sigma_{m_{0}}$ and therefore of $\sigma_{m_{0}}^{-1}\sigma_{m_{1}}$.
We distinguish between three cases and show that in each one of them,
there is one more sign change that cancels with the change in $\mathrm{sign}(\sigma_{m_{0}}^{-1}\sigma_{m_{1}})$:
\begin{itemize}
\item Assume that $\alpha_{1}$ and $\alpha_{2}$ both belong to the same
type-$o$ disc $D$ and are directed along the same orientation of
$\partial D$. Then switching the termini splits $D$ into two discs,
and so the sign of $\left(-1\right)^{\#\{\text{type-\ensuremath{o} discs of \ensuremath{\Sigma_{m_{0},m_{1}}}\}}}$
flips. Any compatible assignment $\a$ before the switch remains compatible
after it, and the combined sign contribution of the two arcs (as in
Lemma \ref{lem:lemma on signs}, Part 2) remains unchanged. See Figure
\ref{fig:switching ends of arcs 1}\\
\begin{figure}[t]
\centering{}\includegraphics{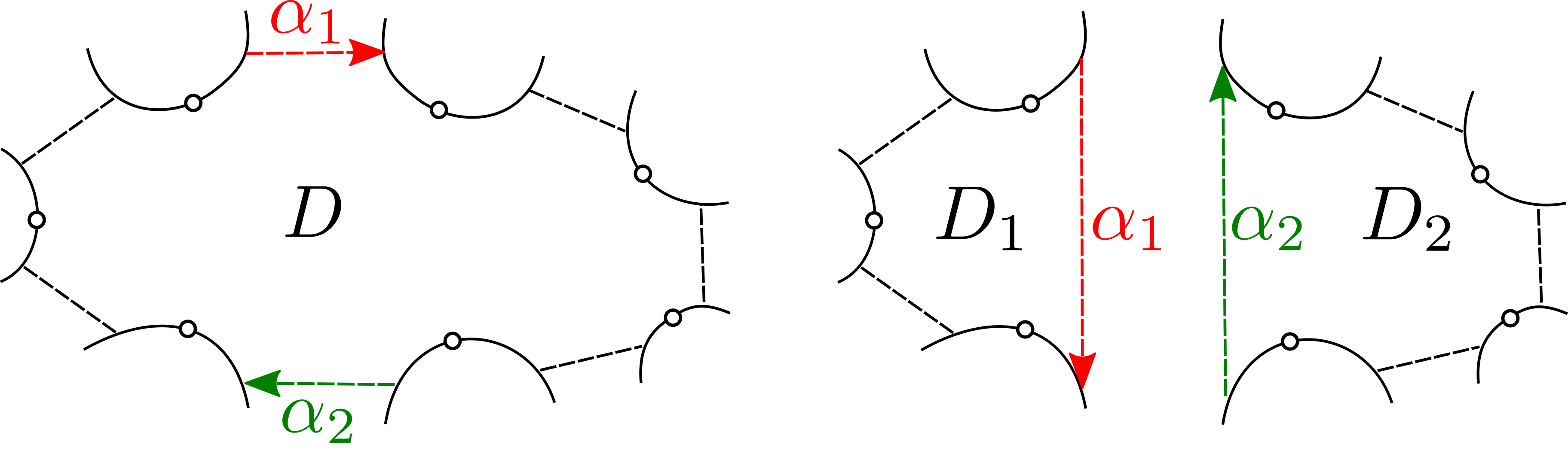}\caption{The left part depicts a type-$o$ disc $D$ with two co-directed matching
arcs $\alpha_{1}$ and $\alpha_{2}$ at its boundary. Switching their
termini results in splitting $D$ into two separate type-$o$ discs:
$D_{1}$ and $D_{2}$, as in the right hand side. This move flips
both $\mathrm{sign}\left(\sigma_{m_{0}}^{-1}\sigma_{m_{1}}\right)$
and $\left(-1\right)^{\#\{\text{type-\ensuremath{o} discs of \ensuremath{\Sigma_{m_{0},m_{1}}}\}}}$,
but leaves $\Delta\left(m_{0},m_{1}\right)$, and therefore also $\Xi\left(\protect\wl;m_{0},m_{1}\right)$,
unchanged.\label{fig:switching ends of arcs 1}}
\end{figure}
\item Assume that $\alpha_{1}$ and $\alpha_{2}$ both belong to the same
type-$o$ disc $D$ and are directed along different orientations
of $\partial D$. Of the two components of $\partial D\setminus\left(\alpha_{1}\cup\alpha_{2}\right)$,
one, denoted $C_{o}$, has the origins of $\alpha_{1}$ and $\alpha_{2}$
as endpoints, and the other, denoted $C_{t}$, has the two termini
as endpoints. Switching the termini corresponds to reflecting $C_{t}$
-- see Figure \ref{fig:switching ends of arcs 2}. By the definition
of compatible assignments, every piece of \texttt{$\partial D\cap\partial\Sigma$}
is assigned a well-defined index in $\left[2n\right]$, and by Lemma
\ref{lem:lemma on signs}, Part 3, two different pieces of \texttt{$\partial D\cap\partial\Sigma$}
are assigned the same index if and only if the corresponding orientations
induced by $\partial\Sigma$ induce, in turn, the same orientation
on $\partial D$. This means that if we preserve the assignment along
$C_{o}$, the signs along $C_{t}$ must be flipped. The number of
type-$o$ discs is preserved. In the terminology of Lemma \ref{lem:lemma on signs},
type-$\left(iii\right)$ contributions to $\Delta\left(m_{0},m_{1}\right)$
do not change. The sign contributions of the matching arcs along $C_{o}$
do not change. Also, the contribution of orientable arcs along $C_{t}$
does not change, nor does the type-$\left(i\right)$ contribution
of $\alpha_{1}$ and $\alpha_{2}$. However, $\Delta\left(m_{0},m_{1}\right)$
does flip. To see this, denote by $\partial_{1},\partial_{2}$ the
two connected components of $\partial D\cap\partial\Sigma$ which
contain (as endpoints) the two termini $t\left(\alpha_{1}\right)$
and $t\left(\alpha_{2}\right)$, respectively ($\partial_{1}$ and
$\partial_{2}$ may be equal). Let $i_{1}$ and $i_{2}$ be the indices
corresponding to $\partial_{1}$ and $\partial_{2}$ in some compatible
assignment (before the flip of $\alpha_{1}$ and $\alpha_{2}$).\\
\begin{figure}[t]
\includegraphics[scale=0.9]{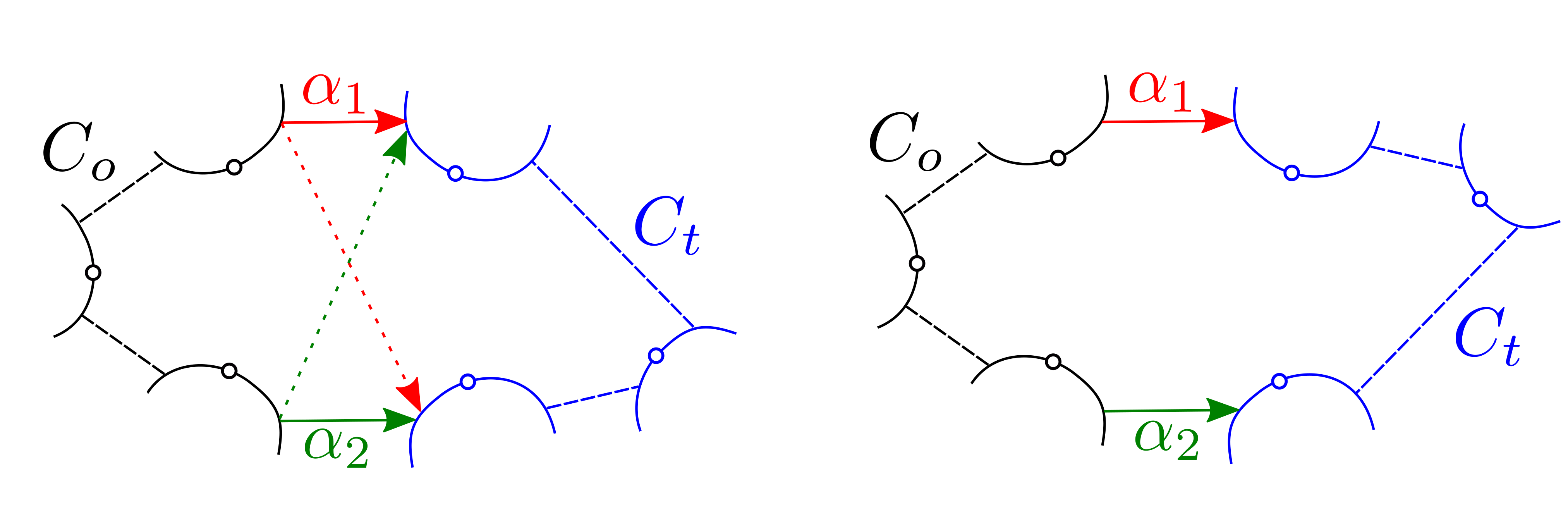}\caption{The left part depicts a type-$o$ disc $D$ with two counter-directed
matching arcs $\alpha_{1}$ and $\alpha_{2}$ at its boundary. The
two connected components of $D\setminus\left(\alpha_{1}\cup\alpha_{2}\right)$
are denoted $C_{o}$ and $C_{t}$. Switching the termini of $\alpha_{1}$
and $\alpha_{2}$ results in reflecting $C_{t}$, as in the right
hand side. This move flips both $\mathrm{sign}\left(\sigma_{m_{0}}^{-1}\sigma_{m_{1}}\right)$
and $\Delta\left(m_{0},m_{1}\right)$, but leaves $\left(-1\right)^{\#\{\text{type-\ensuremath{o} discs of \ensuremath{\Sigma_{m_{0},m_{1}}}\}}}$,
and therefore also $\Xi\left(\protect\wl;m_{0},m_{1}\right)$, unchanged.\label{fig:switching ends of arcs 2}}
\end{figure}
\\
If the orientation of $\partial\Sigma$ along $\partial_{1}$ and
$\partial_{2}$ induces the same orientation on $\partial D$, then
$i_{1}=i_{2}$ and of the two intervals at the termini of $\alpha_{1}$
and $\alpha_{2}$, one is in $\I^{+}$ and the other in $\I^{-}$.
Thus the total type-$\left(ii\right)$ contribution of $\alpha_{1}$
and $\alpha_{2}$ flips. As $C_{t}$ contains an even number of non-orientable
matching arcs in this case, the total sign contribution of the non-orientable
arcs along $C_{t}$ is preserved (as in the proof of Lemma \ref{lem:lemma on signs},
Part 5).\\
If the orientation of $\partial\Sigma$ along $\partial_{1}$ and
$\partial_{2}$ induces different orientations on $\partial D$, then
$i_{2}=\widehat{i_{1}}$ and the two letters at the termini are both
positive or both negative. In this case, the total type-$\left(ii\right)$
contribution of $\alpha_{1}$ and $\alpha_{2}$ is unchanged, the
total type-$\left(i\right)$ and type-$\left(ii\right)$ contribution
of every orientable arc alongs $C_{t}$ is unchanged, but the same
contribution of every non-orientable arc along $C_{t}$ is flipped,
and the total number of non-orientable arcs along $C_{t}$ is odd
(by our assumption about $\partial_{1}$ and $\partial_{2}$).
\item The third and last case is the one where $\alpha_{1}$ and $\alpha_{2}$
belong to different type-$o$ discs. Switching their termini then
leads to merging the two discs into one. In the united type-$o$ disc,
the two arcs are ``co-oriented'', so this case is the reverse of
the first one, and $\Delta\left(m_{0},m_{1}\right)$ remains unchanged.
\end{itemize}
The final small change we consider is that of flipping some letter
from being positive to negative, namely, of flipping an interval in
some $C\left(w_{j}\right)$ from $\I^{\pm}$ to $\I^{\mp}$. Here,
$\sign\left(\sigma_{m_{0}}^{-1}\sigma_{m_{1}}\right)$ is unchanged.
By the first local modification in this part of the proof, we may
assume without loss of generality that this letter is at the termini
of two matching arcs, $\alpha_{1}$ and $\alpha_{2}$. A similar argument
as in the previous paragraph would show that:
\begin{itemize}
\item Assume that $\alpha_{1}$ and $\alpha_{2}$ belong to the same type-$o$
disc $D$ with the same orientation. The flip of the letter then splits
$D$ into two type-$o$ discs. Denote by $\partial_{1}$ and $\partial_{2}$
the pieces of $\partial D\cap\partial\Sigma$ at the termini of $\alpha_{1}$
and $\alpha_{2}$. They must be counter-oriented. We may preserve
the same assignment of indices as before the flip of the letter, but
then the type-$\left(ii\right)$ contribution of both arcs flips when
the letter is flipped. No other change in sign contributions occurs.
\item Assume that $\alpha_{1}$ and $\alpha_{2}$ belong to the same type-$o$
disc $D$ with opposite orientations. The flip of the letter preserves
the number of type-$o$ discs and corresponds to reflecting $C_{t}$.
Here $\partial_{1}$ and $\partial_{2}$ are co-oriented and the signs
along $C_{t}$ must be flipped. There is no change to $\Delta\left(m_{0},m_{1}\right)$:
the total type-$\left(ii\right)$ contribution of $\alpha_{1}$ and
$\alpha_{2}$ is $1$ before and after the flip, and the number of
non-orientable arcs along $C_{t}$ is even.
\item If $\alpha_{1}$ and $\alpha_{2}$ belong to different type-$o$ disc,
the flip is the reverse of the first case.
\end{itemize}
\noindent This completes the proof of Theorem \ref{thm:orth-symp-relation}.
\end{proof}
We now have the analog of Corollary \ref{cor:orth-rational-function}
for $G=\Sp$:
\begin{cor}
\label{cor:symp-rational-function}There is a rational function $\overline{\trwl^{\Sp}}\in\Q(n)$
such that for $2n\ge\max\{L_{x}\::\:x\in B\}$, $\trwl^{\Sp}(n)$
is given by evaluating $\overline{\trwl^{\Sp}}$ at $n$.
\end{cor}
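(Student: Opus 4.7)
The plan is to mimic directly the proof of Corollary \ref{cor:orth-rational-function}, substituting Proposition \ref{prop:word integrals on Sp(n) - first step} for Theorem \ref{thm:orth-wg-exp}. Recall that this proposition gives, for every $n$ with $2n \geq N$, the identity
\[
\trwl^{\Sp}(n)=\sum_{\m\in\match^{\k\equiv1}}(2n)^{\#\{\text{type-}o\text{ discs of }\Sigma_{\m}\}}\,\Delta(\m)\prod_{x\in B}\wg_{L_{x}}^{\Sp}(m_{x,0},m_{x,1};n).
\]

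First I would observe that the index set $\match^{\k\equiv1}(\wl)$ is finite, so the right hand side is a \emph{finite} sum. Next I would argue, summand by summand, that each term is the evaluation at $n$ of some element of $\Z(n)$: the factor $(2n)^{\#\{\text{type-}o\text{ discs of }\Sigma_{\m}\}}$ is a polynomial in $n$; the factor $\Delta(\m)\in\{-1,0,1\}$ is, by Lemma \ref{lem:lemma on signs} and Definition \ref{def:Delta m}, a constant independent of $n$; and each factor $\wg_{L_{x}}^{\Sp}(m_{x,0},m_{x,1};n)$ is by definition the evaluation at $n$ of a rational function (see Theorem \ref{thm:weingarten-integration}, noting that the existence of the symplectic Weingarten functions as elements of $\Z(n)$ is part of that statement, and one could alternatively cite Lemma \ref{lem:connection-of-Weingarten} to reduce to the orthogonal case). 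Since $\Z(n)$ is a ring, a finite sum of finite products of elements of $\Z(n)$ lies in $\Z(n)$, so the whole right hand side defines an element $\overline{\trwl^{\Sp}}\in\Z(n)$, and its evaluation at $n$ agrees with $\trwl^{\Sp}(n)$ whenever $2n\geq N$.

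As an even shorter alternative route one could appeal directly to Theorem \ref{thm:orth-symp-relation}: for $2n\geq N$ we have $\trwl^{\Sp}(n)=(-1)^{\ell}\trwl^{\O}(-2n)$, and Corollary \ref{cor:orth-rational-function} already provides a rational function $\overline{\trwl^{\O}}\in\Z(n)$ whose evaluation at $-2n$ coincides with $\trwl^{\O}(-2n)$ in this range; then $\overline{\trwl^{\Sp}}(n)\stackrel{\mathrm{def}}{=}(-1)^{\ell}\overline{\trwl^{\O}}(-2n)$ is the desired rational function.

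There is no real obstacle: all nontrivial content has already been packaged into Proposition \ref{prop:word integrals on Sp(n) - first step} (or, alternatively, Theorem \ref{thm:orth-symp-relation}), and the corollary is an immediate bookkeeping consequence. The only mild point to double check is that the bound $2n\geq N$ stated in the corollary really is the bound under which Proposition \ref{prop:word integrals on Sp(n) - first step} is valid, which it is by the convention $N=\max\{L_{x}:x\in B\}$ from (\ref{eq:N0}).
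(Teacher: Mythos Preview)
Your proposal is correct. The paper's own proof is precisely your ``shorter alternative route'': it defines $\overline{\trwl^{\Sp}}(n)=(-1)^{\ell}\,\overline{\trwl^{\O}}(-2n)$ using Theorem~\ref{thm:orth-symp-relation} and Corollary~\ref{cor:orth-rational-function}. Your primary approach via Proposition~\ref{prop:word integrals on Sp(n) - first step} is equally valid and equally immediate; the only minor imprecision is that $\Delta(\m)\in\{-1,1\}$ rather than $\{-1,0,1\}$, since by definition it is computed from an $\a$ with $\a\vdash^{*}\m$.
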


\begin{proof}
Theorem \ref{thm:orth-symp-relation} shows that we can obtain $\overline{\trwl^{\Sp}}$
by switching $n$ with $-2n$ in $\overline{\trwl^{\O}}$ (the rational
function from Corollary \ref{cor:orth-rational-function}) and multiplying
by $(-1)^{\ell}$.
\end{proof}
\bibliographystyle{alpha}
\bibliography{database_united}

\begin{thebibliography}{EWPS21}

\bibitem[Bro22]{brodsky2022word}
Y.~Brodsky.
\newblock Word measures on unitary groups: Improved bounds for small
  representations.
\newblock preprint arxiv:2208.11957, 2022.

\bibitem[CG86]{CG}
J.~Cheeger and M.~Gromov.
\newblock {$L_2$}-cohomology and group cohomology.
\newblock {\em Topology}, 25(2):189--215, 1986.

\bibitem[C{\'S}06]{CS}
B.~Collins and P.~{\'S}niady.
\newblock Integration with respect to the {H}aar measure on unitary, orthogonal
  and symplectic group.
\newblock {\em Comm. Math. Phys.}, 264(3):773--795, 2006.

\bibitem[CS08]{CoSt}
B.~Collins and M.~Stolz.
\newblock Borel theorems for random matrices from the classical compact
  symmetric spaces.
\newblock {\em Ann. Probab.}, 36(3):876--895, 2008.

\bibitem[Cul81]{CULLER}
M.~Culler.
\newblock Using surfaces to solve equations in free groups.
\newblock {\em Topology}, 20(2):133--145, 1981.

\bibitem[Del16]{DELIGNE}
P.~Deligne.
\newblock Private communication, 2016.

\bibitem[DP21]{dubach2021words}
G.~Dubach and Y.~Peled.
\newblock On words of non-{H}ermitian random matrices.
\newblock {\em Ann. Probab.}, 49(4):1886--1916, 2021.

\bibitem[DS94]{Diaconis1994}
P.~Diaconis and M.~Shahshahani.
\newblock On the eigenvalues of random matrices.
\newblock {\em J. Appl. Probab.}, 31(A):49--62, 1994.

\bibitem[Dys62]{DYSONI}
F.~J. Dyson.
\newblock Statistical theory of the energy levels of complex systems. {I}.
\newblock {\em J. Math. Phys.}, 3(1):140--156, 1962.

\bibitem[Eps66]{EPSTEIN}
D.~B.~A. Epstein.
\newblock Curves on {$2$}-manifolds and isotopies.
\newblock {\em Acta Math.}, 115:83--107, 1966.

\bibitem[EWPS21]{PW}
D.~Ernst-West, D.~Puder, and M.~Seidel.
\newblock Word measures on ${GL_N}(q)$ and free group algebras.
\newblock preprint arxiv:2110.11099, 2021.

\bibitem[FM12]{FM}
B.~Farb and D.~Margalit.
\newblock {\em A primer on mapping class groups}, volume~49 of {\em Princeton
  Mathematical Series}.
\newblock Princeton University Press, Princeton, NJ, 2012.

\bibitem[For06]{Forrester}
P.~J. Forrester.
\newblock Quantum conductance problems and the {J}acobi ensemble.
\newblock {\em J. Phys. A}, 39(22):6861--6870, 2006.

\bibitem[FS06]{frobenius1906reellen}
G.~Frobenius and I.~Schur.
\newblock {\"U}ber die reellen darstellungen der endlichen gruppen.
\newblock {\em Sitzungsberichte Akademie der Wissenschaften zu Berlin}, pages
  186--208, 1906.

\bibitem[GHJ01]{GHJ}
I.~P. Goulden, J.~L. Harer, and D.~M. Jackson.
\newblock A geometric parametrization for the virtual {E}uler characteristics
  of the moduli spaces of real and complex algebraic curves.
\newblock {\em Trans. Amer. Math. Soc.}, 353(11):4405--4427, 2001.

\bibitem[GJ97]{GJ}
I.~P. Goulden and D.~M. Jackson.
\newblock Maps in locally orientable surfaces and integrals over real symmetric
  matrices.
\newblock {\em Canad. J. Math.}, 49(5):865--882, 1997.

\bibitem[Hal15]{hall2015matrix}
B.~Hall.
\newblock {\em Lie groups, Lie algebras, and representations}.
\newblock Springer, 2015.

\bibitem[HP22]{hanany2022word}
L.~Hanany and D.~Puder.
\newblock Word measures on symmetric groups.
\newblock {\em Int. Math. Res. Not. IMRN}, 2022.
\newblock appeared online, doi.org/10.1093/imrn/rnac084.

\bibitem[HZ86]{HARERZAGIER}
J.~L. Harer and D.~Zagier.
\newblock The {E}uler characteristic of the moduli space of curves.
\newblock {\em Invent. Math.}, 85(3):457--485, 1986.

\bibitem[LP10]{Linial2010}
N.~Linial and D.~Puder.
\newblock Word maps and spectra of random graph lifts.
\newblock {\em Random Structures and Algorithms}, 37(1):100--135, 2010.

\bibitem[L{\"u}c02]{L}
W.~L{\"u}ck.
\newblock {\em {$L^2$}-invariants: theory and applications to geometry and
  {$K$}-theory}, volume~44 of {\em Ergebnisse der Mathematik und ihrer
  Grenzgebiete. 3. Folge. A Series of Modern Surveys in Mathematics}.
\newblock Springer-Verlag, Berlin, 2002.

\bibitem[Mat13]{MAT}
S.~Matsumoto.
\newblock Weingarten calculus for matrix ensembles associated with compact
  symmetric spaces.
\newblock {\em Random Matrices Theory Appl.}, 2(2):26 pages, 2013.

\bibitem[MKR81]{MKRTCHYAN1981}
The equivalence of $\mathrm{Sp}(2n)$ and $\mathrm{SO}(-2n)$ gauge theories.
\newblock {\em Phys. Lett. B}, 105(2):174--176, 1981.

\bibitem[MP13]{MingoPopa}
J.~A. Mingo and M.~Popa.
\newblock Real second order freeness and {H}aar orthogonal matrices.
\newblock {\em J. Math. Phys.}, 54(5):35 pages, 2013.

\bibitem[MP16]{MP}
M.~{Magee} and D.~{Puder}.
\newblock Word measures on unitary groups.
\newblock arXiv preprint 1509.07374 v2, 2016.

\bibitem[MP19]{MP2}
M.~{Magee} and D.~{Puder}.
\newblock Matrix group integrals, surfaces, and mapping class groups {I}:
  ${U}(n)$.
\newblock {\em Invent. Math.}, 218(2):341--411, 2019.

\bibitem[MP21]{MP3}
M.~Magee and D.~Puder.
\newblock Surface words are determined by word measures on groups.
\newblock {\em Israel J. Math.}, 241(2):749--774, 2021.

\bibitem[M{\'S}S07]{MSS07}
J.~A. Mingo, P.~{\'S}niady, and R.~Speicher.
\newblock Second order freeness and fluctuations of random matrices. {II}.
  {U}nitary random matrices.
\newblock {\em Adv. Math.}, 209(1):212--240, 2007.

\bibitem[MV11]{MV}
R.~L. Mkrtchyan and A.~P. Veselov.
\newblock On duality and negative dimensions in the theory of {L}ie groups and
  symmetric spaces.
\newblock {\em J. Math. Phys.}, 52(8):10 pages, 2011.

\bibitem[MW03]{MW}
M.~Mulase and A.~Waldron.
\newblock Duality of orthogonal and symplectic matrix integrals and
  quaternionic {F}eynman graphs.
\newblock {\em Comm. Math. Phys.}, 240(3):553--586, 2003.

\bibitem[Nic94]{Nica94}
A.~Nica.
\newblock On the number of cycles of given length of a free word in several
  random permutations.
\newblock {\em Random Structures Algorithms}, 5(5):703--730, 1994.

\bibitem[Nov17]{Novaes}
M.~Novaes.
\newblock Expansion of polynomial lie group integrals in terms of certain maps
  on surfaces, and factorizations of permutations.
\newblock {\em J. Phys. A Math. Theor.}, 50(7):22 pages, 2017.

\bibitem[PP15]{PP15}
D.~Puder and O.~Parzanchevski.
\newblock Measure preserving words are primitive.
\newblock {\em J. Amer. Math. Soc.}, 28(1):63--97, 2015.

\bibitem[PS14]{parzanchevski2014fourier}
O.~Parzanchevski and G.~Schul.
\newblock On the {F}ourier expansion of word maps.
\newblock {\em Bull. Lond. Math. Soc.}, 46(1):91--102, 2014.

\bibitem[R{\u a}d06]{Radulescu06}
F.~R{\u a}dulescu.
\newblock Combinatorial aspects of {C}onnes's embedding conjecture and
  asymptotic distribution of traces of products of unitaries.
\newblock In {\em Proc. Oper. Alg. Conf., Bucharest}. Theta Foundation, 2006.

\bibitem[Red15]{Redelmeier}
C.~Redelmeier.
\newblock Topological expansion for {H}aar-distributed orthogonal matrices and
  second-order freeness of orthogonally invariant ensembles.
\newblock preprint, arXiv:1511.01087, 2015.

\bibitem[Stu06]{Stukow}
M.~Stukow.
\newblock Dehn twists on nonorientable surfaces.
\newblock {\em Fund. Math.}, 189(2):117--147, 2006.

\bibitem[tD72]{tD1}
T.~tom Dieck.
\newblock Orbittypen und \"aquivariante {H}omologie. {I}.
\newblock {\em Arch. Math. (Basel)}, 23:307--317, 1972.

\bibitem[tD87]{tD2}
T.~tom Dieck.
\newblock {\em Transformation groups}, volume~8 of {\em De Gruyter Studies in
  Mathematics}.
\newblock Walter de Gruyter \& Co., Berlin, 1987.

\bibitem[{von}29]{VN}
J.~{von Neumann}.
\newblock {Zur allgemeinen Theorie des Ma{\ss}es.}
\newblock {\em {Fundam. Math.}}, 13:73--116, 1929.

\bibitem[Wey39]{Weyl}
H.~Weyl.
\newblock {\em The {C}lassical {G}roups. {T}heir {I}nvariants and
  {R}epresentations}.
\newblock Princeton University Press, Princeton, N.J., 1939.

\end{thebibliography}

\noindent Michael Magee, Department of Mathematical Sciences, Durham
University, Lower Mountjoy, DH1 3LE Durham, United Kingdom

\noindent \texttt{michael.r.magee@durham.ac.uk}\\

\noindent Doron Puder, School of Mathematical Sciences, Tel Aviv University,
Tel Aviv, 6997801, Israel\\
\texttt{doronpuder@gmail.com}
\end{document}